\documentclass[11pt,a4paper,reqno]{amsart}
\usepackage[applemac]{inputenc}
\usepackage{amsmath}
\usepackage{amsthm}
\usepackage{amssymb}
\usepackage{curves}
\usepackage{epic}
\usepackage[all]{xy}
\usepackage{graphicx}
\usepackage{euscript}
\usepackage[colorlinks=true,linktocpage=true,pagebackref=false, urlcolor=black, citecolor=black,linkcolor=black]{hyperref}

\voffset=-2cm
\hoffset=-1.5cm
\setlength{\textwidth}{16.2cm}
\setlength{\textheight}{24.1cm}

\parskip=1.2ex


\newtheorem{thm}{Theorem}[section]
\newtheorem{prop}[thm]{Proposition}

\newtheorem{cor}[thm]{Corollary}
\newtheorem{lem}[thm]{Lemma}
\newtheorem{fact}[thm]{Fact}

\newtheorem{factsub}{Fact}[subsection]
\newtheorem{propsub}[factsub]{Proposition}
\newtheorem{lemsub}[factsub]{Lemma}
\newtheorem{corsub}[factsub]{Corollary}

\theoremstyle{definition}
\newtheorem{defn}[thm]{Definition}

\newtheorem{remark}[thm]{Remark}
\newtheorem{remarks}[thm]{Remarks}

\newtheorem{remarksub}[factsub]{Remark}
\newtheorem{defnsub}[factsub]{Definition}

\newcommand{\fin}{\hspace*{\fill}$\square$\\ }

\newenvironment{stepsp}{%
\vspace{2mm}\refstepcounter{thm}\noindent{\bf (\thethm)}\ }%
{\em}

\newenvironment{stepsub}{%
\vspace{2mm}\refstepcounter{factsub}\noindent{\bf (\thefactsub)}\ }%
{\em}

\newcounter{substep}
\def\thesubstep{\arabic{substep}}

\newenvironment{substeps}[1]{%
\refstepcounter{substep}\noindent{(\ref{#1}.\thesubstep)\ }\ }%
{\em}

{\em}


\newcommand{\Z}{{\mathbb Z}} \newcommand{\R}{{\mathbb R}}

\newcommand{\sph}{{\mathbb S}}


\newcommand{\sT}{{\EuScript T}}
\newcommand{\bs}{{\EuScript B}}
\newcommand{\bes}{{\EuScript E}}

\newcommand{\sS}{{\EuScript S}}
\newcommand{\sC}{{\EuScript C}}
\newcommand{\sD}{{\EuScript D}}
\newcommand{\sn}{{\EuScript N}}
\newcommand{\sff}{{\EuScript F}}

\newcommand{\so}{{\EuScript O}}
\newcommand{\sj}{{\EuScript J}}
\newcommand{\si}{{\EuScript I}}

\newcommand{\bl}{{\EuScript L}}




\newcommand{\Int}{\operatorname{Int}} \newcommand{\dist}{\operatorname{dist}}

 \newcommand{\Id}{\operatorname{Id}}

 \newcommand{\mult}{\operatorname{mult}}


\newcommand{\x}{{\tt x}}


\newcommand{\veps}{\varepsilon}

\newcommand{\ol}{\overline}

\makeatletter
\renewcommand{\@seccntformat}[1]{\csname the#1\endcsname.\quad}
\makeatother

\numberwithin{equation}{section}

\begin{document}

\bigskip

\title{APPROXIMATION ON NASH SETS WITH MONOMIAL SINGULARITIES}

\author{E. Baro}
\author{Jos\'e F. Fernando}
\address{Departamento de \'Algebra, Facultad de Ciencias Matem\'aticas, Universidad Complutense de Ma\-drid, 28040 MADRID (SPAIN)}
\curraddr{}
\email{eliasbaro@pdi.ucm.es,josefer@mat.ucm.es}
\author{Jes\'us M. Ruiz}
\address{Departamento de Geometr\'\i a y Topolog\'\i a, Facultad de Ciencias Matem\'aticas, Universidad Complutense de Madrid, 28040 MADRID (SPAIN)}
\curraddr{}
\email{jesusr@mat.ucm.es}

\thanks{Authors supported by Spanish GAAR MTM2011-22435}

\subjclass[2000]{Primary 14P20; Secondary 58A07, 32C05}

\date{\today}

\begin{abstract}
This paper is devoted to the approximation of differentiable semialgebraic functions by Nash functions. Approximation by Nash functions is known for semialgebraic functions defined on an affine Nash manifold $M$, and here we extend it to functions defined on \em Nash sets $X\subset M$ whose singularities are monomial\em. To that end we discuss first \em finiteness \em and \em weak normality \em for such sets $X$. Namely, we prove that (i) \em $X$ is the union of finitely many open subsets, each Nash diffeomorphic to a finite union of coordinate linear varieties of an affine space, \em and (ii) \em every function on $X$ which is Nash on every irreducible component of $X$ extends to a Nash function on $M$\em. Then we can obtain approximation for semialgebraic functions and even for certain semialgebraic maps on Nash sets with monomial singularities. As a nice consequence we show that $m$-dimensional affine Nash manifolds with \em divisorial corners \em which are class $k$ semialgebraically diffeomorphic, for $k>m^2$, are also Nash diffeomorphic.
\end{abstract}
\maketitle

\section{Introduction}

The importance of approximation in the Nash setting arises from the fact that this category is highly rigid to work with; for instance, it does not admit partitions of unity, a usual and fruitful tool when available. On the other hand, there exist finite differentiable semialgebraic partitions of unity and therefore approximation becomes interesting as a bridge between the differentiable semialgebraic category and the Nash one. There are already relevant results concerning absolute approximation in the Nash setting (e.g. Effroymson's approximation theorem \cite[\S1]{ef}). An even more powerful tool is relative approximation that allows approximation having a stronger control over certain subsets. In the '80s Shiota developed a thorough study of Nash manifolds and Nash sets (see \cite{s} for the full collected work); among other things, he devised approximation on an affine Nash manifold \em relative to a Nash submanifold\em. Our purpose is to generalize this type of results developing Nash approximation on an affine Nash manifold relative to a Nash subset; of course, as one can expect we need some conditions concerning the singularities on the Nash subset and we will focus on those whose singularities are of \em monomial type\em. This will require a careful preliminary study of this type of singularities in the Nash context. As an interesting application of our results, we prove that the Nash classification of affine $m$-dimensional Nash manifolds with (divisorial) corners is equivalent to the $\sC^k$ semialgebraic classification for $k>m^2$.

Recall that a set $X\subset \R^n$ is \em semialgebraic \em if it is a Boolean combination of sets defined by polynomial equations and inequalities. A semialgebraic set $M\subset \R^n$ is called an \em (affine) Nash manifold \em if it is moreover a smooth submanifold of (an open subset of) $\R^n$; as in this paper all manifolds are affine we often drop the word affine. A function $f:U\to \R$ on an open semialgebraic subset $U\subset M$ is \em Nash \em if it is smooth and semialgebraic, i.e., its graph is semialgebraic. We denote by $\sn(U)$ the ring of Nash functions on $U$. Similarly, we define the ring $\sS^\nu(U)$ of $\sC^\nu$ semialgebraic functions on $U$ for $\nu \geq 1$. The orthogonal projection of a Nash manifold $M\subset \R^n$ into the tangent space at one of its points is a local diffeomorphism at the point; as it is a semialgebraic map we see that a semialgebraic subset $M\subset\R^n$ is an affine Nash manifold of dimension $m$ if and only if every point $x\in M$ has an open neighborhood $U$ in $\R^n$ equipped with a Nash diffeomorphism $(u_1,\ldots,u_n):U\to\R^n$ that maps $x$ to the origin and such that $U\cap M=\{u_{m+1}=\cdots=u_n=0\}$. Even more, the Nash manifold $M$ can actually be covered with \em finitely many \em open sets of this type (see \cite[2.2]{fgr} and \cite[I.3.9]{s}). This kind of \em finiteness \em property permeates all the theory of semialgebraic sets and functions and plays a relevant role in the present paper.

We are interested in Nash sets, that is, the zero sets of Nash functions $f:M\to \R$. It is well known that the set ${\tt Smooth}(Z)$ of smooth points of a semialgebraic set $Z$, that is, the points $x\in Z$ where the germ $Z_x$ equals the germ of an affine Nash submanifold, is a dense open semialgebraic subset of $Z$ (see \cite{st}). We will focus on Nash sets whose singular points (i.e., points which are not smooth) are of a specific form.

\begin{defn}\label{def:monomial}
Let $X\subset M$ be a set and let $x\in X$. The germ $X_x$ is a \em monomial singularity \em if there is a neighborhood $U$ of $x$ in $M$ equipped with a Nash diffeomorphism $u:U\to\R^m$ with $u(x)=0$ that maps $X\cap U$ onto a union of coordinate linear varieties. That is, there is a (finite) family $\varLambda$ of subsets of indices $\lambda=\{\ell_1,\cdots,\ell_r\}$ of possibly different cardinality $r\leq m$  such that
$$
X\cap U={\bigcup}_{\lambda\in\varLambda}\,\{u_{\lambda}=0\}
$$
where $u=(u_1,\ldots,u_m)$ and $\{u_{\lambda}=0\}$ denotes $\{u_{\ell_1}=\cdots=u_{\ell_r}=0\}$. For simplicity we assume that there are no immersed components, that is, if $\lambda,\lambda'\in\varLambda$ are different then $\lambda\nsubseteq\lambda'$ and $\lambda'\nsubseteq\lambda$. This assures that the germs $\{u_\lambda=0\}_x$, $\lambda\in\varLambda$, are the irreducible components of the germ $X_x$. We call $\varLambda$ a \em type \em of the monomial singularity $X_x$; we also say that $X$ has a \em monomial singularity of type $\varLambda$ at $x$\em.
\end{defn}

Of course, a different Nash diffeomorphism $u'$ may provide a different type $\varLambda'$ and therefore a monomial singularity has several types. Thus, two types $\varLambda$ and $\varLambda'$ will be called \em equivalent \em if the union of the coordinate linear varieties given by $\varLambda$ is Nash diffeomorphic to the one given by $\varLambda'$ as germs in the origin. In Section \ref{sec:mono} we show that this equivalence is in fact global via linear isomorphisms and the resulting classes can be characterized arithmetically (Proposition \ref{prop:loadiso}).

Nash monomial singularity germs are a generalization of Nash normal crossings germs, that is,  locally Nash diffeomorphic to a finite union of coordinate linear hyperplanes (see also Section \ref{sec:mono}). Nash normal crossings have been studied extensively due to their relevance in desingularization problems. In \cite{fgr} several finiteness properties of Nash normal crossings were established and our first objective in Sections \ref{sec:mono} to \ref{sec:Nashgen} is to extend them to the much more general setting here. Specifically, in Section \ref{sec:mono} we study in depth the concept of type and in Section \ref{sec:Nashgen} we prove the following semialgebraic property of the monomial singular locus (see \ref{subsec:loci}).

\begin{prop}\label{satype}
Let $X\subset M$ be a Nash set. Fix a type $\varLambda$ and put
$$
T^{(\varLambda)}=\{x\in X: \text{$X$ has a monomial singularity of type $\varLambda$ at $x$}\}.
$$
Then, the set $T^{(\varLambda)}$ is semialgebraic.
\end{prop}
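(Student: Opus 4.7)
The plan is to replace the existential clause in Definition~\ref{def:monomial} (existence of a Nash diffeomorphism) by an intrinsic list of conditions on the local structure of $X$ at $x$ that are manifestly semialgebraic, and then to conclude by Tarski--Seidenberg.

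First I would show that $x\in T^{(\varLambda)}$ if and only if the germ $X_x$ enjoys the following properties: (i) $X_x$ has exactly $|\varLambda|$ irreducible components, which can be indexed $V_{\lambda,x}$ by $\lambda\in\varLambda$; (ii) each $V_{\lambda,x}$ is the germ of a Nash submanifold of $M$ of codimension $|\lambda|$; and (iii) for every nonempty $I\subset\varLambda$ the intersection $\bigcap_{\lambda\in I}V_{\lambda,x}$ is the germ of a Nash submanifold of codimension $|\bigcup_{\lambda\in I}\lambda|$ in $M$, understood as empty when this exceeds $m$. The forward implication is immediate from the normal form $\bigcup_{\lambda}\{u_\lambda=0\}$. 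For the converse I would straighten the components inductively: using the transversality encoded in (iii) together with the Nash implicit function theorem, one produces local Nash coordinates on $M$ in which each $V_{\lambda,x}$ becomes the coordinate subspace $\{u_\lambda=0\}$; the arithmetic classification of types (Proposition~\ref{prop:loadiso}) then identifies the resulting arrangement with the one prescribed by $\varLambda$.

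Second I would check that conditions (i)--(iii) cut out a semialgebraic subset of $X$. The number of irreducible components of $X_x$, together with their individual dimensions, is constant along the pieces of a suitable semialgebraic stratification of $X$ into Nash submanifolds; any refinement of the iterated singular stratification $X\supset\Sing(X)\supset\Sing^2(X)\supset\cdots$ by a Whitney stratification will do, and each piece is semialgebraic. Once such a stratification is fixed, the dimension of any intersection of components is itself a semialgebraic function of $x$, and the smoothness of each $V_{\lambda,x}$ and of the various $\bigcap_{\lambda\in I}V_{\lambda,x}$ translates into rank conditions on Nash Jacobian matrices. A finite Boolean combination of these semialgebraic conditions carves out $T^{(\varLambda)}$.

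The main obstacle is the straightening step in the converse direction of the characterization: upgrading the numerical data (i)--(iii) to the existence of an honest Nash diffeomorphism placing $X$ in monomial form of type $\varLambda$. I expect this to require a careful induction on $|\varLambda|$, straightening one component at a time while preserving the coordinate form of the previously straightened ones, with transversality fed inductively from condition (iii).
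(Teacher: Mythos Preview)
Your characterization (i)--(iii) is not equivalent to ``$X_x$ is a monomial singularity of type $\varLambda$'', so the straightening step cannot succeed as stated. Here is a concrete counterexample. Take $M=\R^3$ and let $X$ be the union of the three lines
\[
L_1=\{(t,0,0)\},\quad L_2=\{(0,t,0)\},\quad L_3=\{(t,t,0)\},
\]
all lying in the plane $\{x_3=0\}$. Try the type $\varLambda=\{\{2,3\},\{1,3\},\{1,2\}\}$, the type of the three coordinate axes. Each $L_i$ is smooth of codimension $2=|\lambda|$, and every pairwise or triple intersection is the origin, which is smooth of codimension $3=\big|\bigcup_{\lambda\in I}\lambda\big|$. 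So your (i)--(iii) hold. Yet $X_0$ is \emph{not} a monomial singularity: its tangent cone consists of three coplanar lines, which cannot be sent by any linear (hence by any Nash) isomorphism to the three coordinate axes, since the latter span $\R^3$. This is exactly the phenomenon flagged in Remark~\ref{varios}(1).

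The missing ingredient is control of \emph{sums} of tangent spaces, not just of intersections of components. Your condition (iii) records the dimensions $\dim\bigcap_{\lambda\in I}V_{\lambda,x}$, but the load of $\varLambda$ also involves the numbers $\dim\big(\sum_j L_{I_j}\big)$; extremality of the tangent cone is precisely what fails in the example above. In particular, the phrase ``transversality encoded in (iii)'' is where the argument slips: correct dimension of an intersection of smooth germs does not force the tangent spaces to be in general position. A repaired characterization exists (see Corollary~\ref{cor:sufficient}), but note that in the paper's logical order that corollary is deduced \emph{after} Proposition~\ref{satype}, via the finiteness theorem, so you would have to supply an independent local proof.

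The paper avoids this difficulty altogether. Instead of an intrinsic geometric characterization, it encodes ``type $\varLambda$'' as the solvability of a system of polynomial equations (the generators $f_j$ of $I(X)$ lie in the square-free monomial ideal of type $\varLambda$ in some regular system of parameters), and then invokes M.~Artin's approximation theorem \emph{with bounds} to replace the existential quantifier over Nash germs by one over polynomials of bounded degree. That is a first-order condition, hence semialgebraic. A final topological correction (counting connected components of the smooth loci in each dimension) rules out spurious extra components. This route never attempts to straighten the germ.
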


In Section \ref{sec:Nashgen} we analyze the sets of interest in our discussion.

\begin{defn}\label{def:gencrosset}
A Nash set $X\subset M$ has \em monomial singularities in $M$ \em if all germs $X_x$, $x\in X$, are monomial singularities. 
\end{defn}

In fact, we will see that any closed semialgebraic set whose germs are all monomial singularities is a coherent Nash set with monomial singularities (see Lemma \ref{lem:irreducohe}). Now, the main purpose in Section \ref{sec:Nashgen} is the following finiteness property (see the proof in \ref{pfthmfiniteness}).

\begin{thm}\label{thmfiniteness}Let $X\subset M$ be a Nash set with monomial singularities. Then $X$ can be covered with finitely many open sets $U$ of $M$ each one equipped with a Nash diffeomorphism $u:U\rightarrow \R^m$ that maps $X\cap U$ onto a union of coordinate linear varieties.
\end{thm}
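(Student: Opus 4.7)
The plan is to combine the semialgebraic stratification of $X$ by type (Proposition~\ref{satype}) with the standard finite Nash atlas on affine Nash manifolds (\cite[2.2]{fgr}, \cite[I.3.9]{s}). First observe that only finitely many types $\varLambda$ can occur, since each type is a finite antichain in the power set of $\{1,\ldots,m\}$. By Proposition~\ref{satype}, the resulting decomposition $X=\bigsqcup_\varLambda T^{(\varLambda)}$ is finite and its pieces are semialgebraic; moreover, from the local model in Definition~\ref{def:monomial} each nonempty $T^{(\varLambda)}$ is a Nash submanifold of $M$ of dimension $m-|\bigcup_{\lambda\in\varLambda}\lambda|$, since locally it coincides with the common intersection of all components of the local model. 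Invoking the finite Nash atlas on $M$, we may reduce to the case $M=\R^m$, and it then suffices to find, near each stratum, finitely many open sets of $M$ simultaneously linearizing $X$.

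For the covering of a fixed stratum I would argue by downward induction on the depth $d(\varLambda):=|\bigcup_{\lambda\in\varLambda}\lambda|$. The deepest stratum $T^{(\varLambda_{\max})}$ is closed in $X$, and as an affine Nash manifold it admits a finite Nash atlas $\{V_k\}$ with each $V_k$ Nash-diffeomorphic to $\R^{m-d(\varLambda_{\max})}$. For each $V_k$, I would patch the germ-wise linearizing charts $u_x$ at points $x\in V_k$ (which exist by Definition~\ref{def:monomial}) into a single Nash chart $w_k\colon W_k\to \R^m$ on an open neighborhood $W_k\subset M$ of $V_k$ such that $w_k(X\cap W_k)$ is the prescribed union of coordinate linear varieties. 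Removing $\bigcup_k W_k$ leaves a Nash set with monomial singularities of strictly smaller maximal depth inside the complementary open Nash manifold, which by induction admits a finite good cover; the combined family is the desired one.

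The main obstacle is the patching step: the germ-wise charts $u_x$ given by Definition~\ref{def:monomial} can differ wildly as $x$ varies along $T^{(\varLambda)}$, and we must glue them over an entire Nash chart $V_k$ of the stratum. This is essentially a Nash-category analogue of Thom--Mather local triviality along strata, requiring the construction of a Nash tubular neighborhood of $T^{(\varLambda)}$ in $M$ together with a simultaneous Nash straightening of the components of $X$ meeting this tube. The existence of such Nash tubes and straightenings rests on the semialgebraic tameness underlying Proposition~\ref{satype} and on the availability of Nash normal vector fields, and constitutes the principal technical content of the theorem.
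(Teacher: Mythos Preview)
Your outline---stratify $X$ by type via Proposition~\ref{satype} and then linearize $X$ uniformly along each stratum---is indeed the skeleton of the paper's argument (the paper's preparatory Proposition~\ref{fgc} plays the role of your stratification, in a slightly refined form). The genuine gap is that you identify but do not execute the decisive step: producing a single Nash chart $w_k:W_k\to\R^m$ that straightens $X$ over an entire chart $V_k$ of a stratum. Your appeal to a ``Nash-category analogue of Thom--Mather local triviality'' and to ``Nash normal vector fields'' is a description of what is wanted, not a proof; such triviality statements in the Nash category are precisely the content of the theorem and do not come for free from the smooth theory.

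The paper carries this step out by a level-by-level construction (Steps~1--4 in~\ref{pfthmfiniteness}). Near a stratum $\varSigma$, Proposition~\ref{fgc} writes $X$ as a union of closed Nash manifolds $Y_1,\ldots,Y_s$ with $\varSigma\subset\bigcap_iY_i$ and with $Y_{1,x},\ldots,Y_{s,x}$ the irreducible components of $X_x$ for every $x\in\varSigma$; the load of $\{T_xY_1,\ldots,T_xY_s\}$ is then constant along $\varSigma$, fixing a linear model $\bigcup_iL_i$. One first maps $\bigcap_iY_i$ Nash-diffeomorphically into $\bigcap_iL_i$ and then extends from level $\bigcup_{\#I=p}L_I$ to level $p-1$ by filling in the supplementary directions $W_J$ of Lemma~\ref{iniprop} with Nash sections of the relevant orthogonal bundles. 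The crux, and the ingredient your proposal is missing entirely, is the gluing mechanism at each step: the partial maps defined on the various pieces agree on their intersections, hence assemble into a ${}^{\tt c}$-Nash map on a Nash set with monomial singularities; by weak normality (Theorem~\ref{cnashgerms}, equivalently Proposition~\ref{trick} in the coordinate model, proved \emph{before} finiteness exactly for this purpose) this map is genuinely Nash and therefore extends to an open neighborhood. Without this ${}^{\tt c}\sn=\sn$ device---or a substitute for it---there is no way to pass from the pointwise charts $u_x$ of Definition~\ref{def:monomial} to a chart over all of $V_k$, and your induction does not get off the ground.
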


To prove the above result we will study first Nash functions on Nash sets with monomial singularities.

\begin{defn}\label{def:nashfunc}
Let $Z$ be a semialgebraic subset of $M$. We say that a function $f:Z\to \R$ is a \em Nash function \em if there exists an open semialgebraic neighborhood $U$ of $Z$ and a Nash extension $F:U\to \R$ of $f$. We denote by $\sn (Z)$ the ring of Nash functions of $Z$. If $X$ is a Nash set then we say that a function $f:X\to \R$ is a  \em ${\tt c}$-Nash or $^{\tt c}\sn$ function \em if its restriction to each irreducible component is a Nash function. We denote by ${}^{\tt c} \sn (X)$ the ring of $^{\tt c}\sn$ functions. Similarly, we define the ring $\sS^\nu(X)$ of $\sC^\nu$ semialgebraic functions on $X$ and the ring ${}^{\tt c}\sS^\nu(X)$ of $^{\tt c}\sS^\nu$ semialgebraic functions on $X$ for $\nu \geq 1$ ($^{\tt c}\sn$ or $^{\tt c}S^\nu$ functions on $X$ are clearly continuous).
\end{defn}

We point out that if $Z$ is a semialgebraic subset of $M$ and $f:Z\to \R$ is a function with a Nash extension to a non necessarily semialgebraic open neighborhood of $Z$ (see the definition in \ref{subsec:basics}) then there is a Nash extension to a semialgebraic one \cite[1.3]{fgr}.
 
We will show in \ref{pfcnashgerms} that by means of analytic coherence of Nash sets with monomial singularities we have the following notorious \em weak normality property \em:

\begin{thm}\label{cnashgerms}
If $X$ is a Nash set with monomial singularities then $\sn(X)= {}^{\tt c}\sn(X)$. 
\end{thm}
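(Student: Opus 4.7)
The inclusion $\sn(X)\subseteq{}^{\tt c}\sn(X)$ is immediate, so the content of the theorem is the reverse inclusion. Given $f\in{}^{\tt c}\sn(X)$, I must produce a Nash extension of $f$ to an open semialgebraic neighborhood of $X$ in $M$. My plan is local-to-global: use the finiteness Theorem \ref{thmfiniteness} to reduce to finitely many monomial charts, construct a Nash extension on each chart by an explicit inclusion--exclusion formula, and patch globally via analytic coherence.

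By Theorem \ref{thmfiniteness}, cover $X$ by finitely many open sets $U_1,\ldots,U_s\subset M$, each admitting a Nash diffeomorphism $u\colon U_j\to\R^m$ with $X\cap U_j=\bigcup_{\lambda\in\varLambda}V_\lambda$, $V_\lambda=\{u_\lambda=0\}$. Fix one such chart and drop the subscript. Since $f$ is Nash on every global irreducible component of $X$ and each $V_\lambda$ lies in such a component, $f|_{V_\lambda}$ is Nash; by the compatibility of $f$ on overlaps, for every nonempty $S\subseteq\varLambda$ one obtains a well-defined Nash function $g_S=f|_{V_S}$ on the coordinate linear variety $V_S=\{u_{\nu_S}=0\}$ with $\nu_S=\bigcup_{\lambda\in S}\lambda$; pulling back through the projection forgetting the coordinates in $\nu_S$ extends $g_S$ trivially to a Nash function $\widetilde g_S$ on $U\cong\R^m$. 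The candidate extension is
\[
F_j\;=\;\sum_{\emptyset\neq S\subseteq\varLambda}(-1)^{|S|+1}\widetilde g_S.
\]
For any fixed $\mu\in\varLambda$, each $\widetilde g_S|_{V_\mu}$ coincides with the trivial extension of $g_{S\cup\{\mu\}}$ to $V_\mu$; grouping the terms by $T=S\cup\{\mu\}$, the contributions of $S=T$ and $S=T\setminus\{\mu\}$ cancel pairwise whenever $|T|\ge 2$, so only the term $T=\{\mu\}$ survives and gives $F_j|_{V_\mu}=g_{\{\mu\}}=f|_{V_\mu}$. Hence $F_j\in\sn(U_j)$ extends $f|_{X\cap U_j}$.

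The remaining, and hardest, step is to patch $F_1,\ldots,F_s$ into a single Nash extension on a neighborhood of $X$. The differences $F_j-F_k$ are Nash on $U_j\cap U_k$ and vanish on $X\cap U_j\cap U_k$, hence define a \v{C}ech $1$-cocycle with values in the ideal sheaf $\J_X$; a global extension exists precisely when this cocycle is a coboundary. I would use analytic coherence of Nash sets with monomial singularities (Lemma \ref{lem:irreducohe}) to realise these differences in terms of finitely many global Nash generators of $\J_X$, and then combine the analytic Cartan~B theorem on a Stein neighborhood of $X$ with Shiota-type relative Nash approximation along the Nash submanifold stratification provided by the smooth loci of the intersections $\bigcap_{i\in S}X_i$. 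The main obstacle is exactly this last point: the final Nash function must agree with $f$ on $X$ on the nose, not merely be $\sC^k$-close, and it is there that coherence of monomial Nash sets plays its essential role.
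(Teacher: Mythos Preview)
Your proposal contains a circular dependency. You invoke Theorem~\ref{thmfiniteness} to obtain a finite cover by monomial charts, but in the paper the proof of Theorem~\ref{thmfiniteness} (see \ref{pfthmfiniteness}, Step~3) explicitly \emph{uses} Theorem~\ref{cnashgerms}: when gluing $\varphi_s^{*}$ and $\phi_J$ into $\phi_J'$, the components are observed to be ${}^{\tt c}$-Nash on the monomial-singular set $\varOmega'\cup(L_J\cap\varOmega_p)$, and weak normality is invoked to conclude they are Nash and hence extend. So finiteness cannot be assumed here.

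The paper avoids any global cover by arguing sheaf-theoretically. Coherence (Lemma~\ref{lem:irreducohe}) gives $\sj=\si$ for $X$, the $X_i$ and the $X_{ij}$, so that the stalks of $\sn/\sj$ and of $\ker(\phi)\subset\prod_i\sn/\sj_i$ are $\sn(X_x)$ and $\ker(\phi_x)\cong{}^{\tt c}\sn(X_x)$. The purely local inclusion--exclusion formula (your Step~2 is exactly Proposition~\ref{trick}, packaged as Proposition~\ref{locweaknor}) shows the natural map between these stalks is an isomorphism. Since in \S\ref{subsec:strongcoh} the rings $\sn(X)$ and ${}^{\tt c}\sn(X)$ were already identified with the global sections of these two sheaves via the finite-sheaf machinery (Fact~\ref{factfinitesheaf}), the global statement follows immediately. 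No finite cover and no explicit patching of local extensions is needed: the stalkwise isomorphism suffices.

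Even setting circularity aside, your patching step is where the argument actually lives, and the tools you propose are the wrong ones. The cocycle $(F_j-F_k)$ sits in the Nash ideal sheaf $\sj$, and what kills it is the \emph{Nash} Theorem~B for finite sheaves (Fact~\ref{factfinitesheaf} and [CS], [CRS2], [CRS3]), not analytic Cartan~B on a Stein neighborhood followed by approximation. Going through an analytic extension and then forcing exact agreement on $X$ by ``Shiota-type relative approximation'' is essentially Proposition~\ref{closext}, which the paper proves later and whose proof again rests on Theorem~\ref{thmfiniteness}; so the circularity would reappear there as well. The honest fix is to replace your appeal to a finite monomial cover by the stalkwise argument above.
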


The ring $\sn(X)$ has been revealed crucial to develop a satisfactory theory of irreducibility and irreducible components for the semialgebraic setting \cite{fg}; as one can expect such theory extends Nash irreducibility and can be based as well on the ring of analytic functions on the semialgebraic set.

Once we have presented these properties of Nash sets with monomial singularities we approach in Sections \ref{sec:extension}, \ref{section:Aproxfunctions} and \ref{sec:approxmap} the approximation problems. First the ring $\sS^\nu(M)$ of $\sC^\nu$ semialgebraic functions is equipped with an $\sS^\nu$ Whitney topology via $\sS^\nu$ tangent fields (see \ref{topfunc}). The fact that we have $\sS^\nu$ bump functions as well as finite $\sS^\nu$ partitions of unity makes a crucial difference between $\sS^\nu$ and Nash functions and the existence of these glueing functions justify our interest in approximation. In particular, given a closed semialgebraic set $Z$ of $M$ we can extend any $\sS^\nu$ function on $Z$ to $M$ and therefore $\sS^\nu(Z)$ carries the quotient topology making the restriction map $\rho:\sS^\nu(M)\to \sS^\nu(Z)$ a quotient map. If $Z$ is a closed semialgebraic set and $T$ is a semialgebraic subset of $\R^b$ a map $f=(f_1,\ldots,f_b):Z\to T\subset \R^b$ is $\sS^\nu$ if each component $f_i$ is $\sS^\nu$ and $\sS^\nu(Z,T)$ inherits the topology from the product $\sS^\nu(Z,\R^b)=\sS^\nu(Z) \times \cdots \times \sS^\nu(Z)$.

Obviously, approximation problems focus mainly on maps $f:Z\to T$ where either $Z$ is not compact or $T$ is not a Nash manifold. For, by the Stone-Weierstrass approximation theorem \cite[1.6.2]{n2}, if $Z$ is compact any $\sC^\nu$ map $f:Z\to T\subset \R^b$ can be $\sC^\nu$ approximated by a polynomial map $g:Z\to \R^b$. Therefore, if $T$ is a Nash manifold and $g$ is close enough to $f$ then $g(Z)$ is contained in a tubular neighborhood which retracts onto $T$.

In the '80s, Shiota carried a systematic study of approximation over Nash manifolds and showed that if $Z\subset M$ is a (non necessarily compact) closed semialgebraic set then any function in $\sS^\nu(Z)$ can be approximated by Nash functions (see Proposition \ref{prop:absapproxfun}). Of course, it follows that if $N$ is an affine Nash manifold then using a Nash tubular neighborhood of $N$ any map in $\sS^\nu(Z,N)$ can be also approximated by Nash maps in $\sn(Z,N)$ (see Proposition \ref{aproxM}). However, if the codomain is not an affine Nash manifold then the situation gets more complicated. As we have announced before, our purpose in this paper is to obtain this kind of approximation for $\sS^\nu$ maps between certain Nash sets with monomial singularities. Specifically, we say that a Nash set with monomial singularities $X$ is a \em Nash monomial crossings \em if in addition the irreducible components of $X$ are Nash manifolds. Again this mimics the concept of \em normal crossing divisor \em \cite[1.8]{fgr}. Our main result concerning approximation is the following, which is proved in Section \ref{sec:approxmap} (see \ref{pfapproxgc}). 
\begin{thm}\label{approxgc}
Let $X\subset M$ be a Nash set with monomial singularities and let $Y\subset N$ be a Nash monomial crossings. Let $m=\dim(M)$, $n=\dim(N)$ and $q=m\big(\binom{n}{[n/2]}-1\big)$ where $[n/2]$ denotes the integer part of $n/2$. If $\nu\!\ge\! q$ then every $\sS^\nu$ map $f:X\to Y$ that preserves irreducible components can be $\sS^{\nu-q}$ approximated by Nash maps $g:X\to Y$.
\end{thm}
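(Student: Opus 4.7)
The strategy rests on three pillars: the weak normality theorem (Theorem \ref{cnashgerms}), which reduces the approximation to an agreement problem across irreducible components; the finiteness of charts (Theorem \ref{thmfiniteness}), which localizes to finitely many concrete coordinate models; and the manifold approximation results (Propositions \ref{prop:absapproxfun} and \ref{aproxM}), which solve the local problem since the irreducible components of $Y$ are Nash manifolds by the Nash monomial crossings hypothesis. Write $X=X_1\cup\cdots\cup X_s$ and $Y=Y_1\cup\cdots\cup Y_r$ for the irreducible decompositions, and let $\sigma$ be the combinatorial assignment with $f(X_j)\subset Y_{\sigma(j)}$ supplied by the irreducibility-preserving hypothesis. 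A candidate map $g:X\to\R^b$ is a Nash map in $\sn(X)$ precisely when each restriction $g|_{X_j}$ is Nash, by Theorem \ref{cnashgerms}; and the glued $g$ is a map into $Y$ as soon as each piece lies in $Y_{\sigma(j)}$. Thus the task reduces to constructing Nash approximations $g_j:X_j\to Y_{\sigma(j)}$ of $f|_{X_j}$ that agree on every intersection $X_j\cap X_{j'}$.

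Proposition \ref{aproxM} yields individual Nash approximations of each $f|_{X_j}$, but these typically disagree on overlaps, so the heart of the proof is an inductive matching argument run on the stratification of the codomain. The intersections $Y_I=\bigcap_{i\in I}Y_i$ form a finite poset of Nash submanifolds, and by Sperner's theorem the maximum number of irreducible components of $Y$ meeting at a point is $\binom{n}{[n/2]}$, bounding the depth of this stratification. Pulling it back by $f$ induces a compatible stratification on $X$. The induction starts at the deepest strata (where $g$ is already forced to lie in a thin Nash submanifold) and promotes a coherent Nash approximation on the preimage of the deeper strata to one on the preimage of a shallower stratum. Concretely, at each step one has Nash data on a Nash subset $Z\subset X_j$ (the intersection of $X_j$ with the already-treated components), which must be extended to a Nash approximation of $f|_{X_j}$ on all of $X_j$ that agrees with the data on $Z$. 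This relative approximation is solved by $\sS^\nu$-extending the given Nash data to an open semialgebraic neighborhood of $Z$ in $M$ via the charts from Theorem \ref{thmfiniteness} and an $\sS^\nu$ partition of unity, modifying $f|_{X_j}$ near $Z$ so that it coincides with this extension, then applying Proposition \ref{prop:absapproxfun} componentwise and retracting onto $Y_{\sigma(j)}$ through a Nash tubular neighborhood.

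The main obstacle is the quantitative bookkeeping of the derivative loss. Each extension-modify-approximate-retract cycle costs $O(m)$ derivatives because the $\sS^\nu$ extension across the chart cover of $M$ and the Whitney-type partition of unity gluing in dimension $m$ each consume a bounded number of derivatives, and the recursion runs at most $\binom{n}{[n/2]}-1$ times corresponding to the depth of the $Y$-stratification, giving the stated total loss $q=m\bigl(\binom{n}{[n/2]}-1\bigr)$. Beyond the counting, one must verify at every step that the new approximation remains inside the chosen Nash tubular neighborhoods of the intermediate strata $Y_I$, and that it stays coherent with the Nash data already fixed on the deeper levels; this forces the approximations produced at the deepest strata to be chosen much finer than the final target accuracy, and is where the delicate part of the argument lies.
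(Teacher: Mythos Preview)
Your inductive architecture---climb the stratification of $Y$ level by level, invoking Sperner's bound $\binom{n}{[n/2]}$ on the depth---matches the paper's proof. The gap is in the inductive step itself.

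You propose to extend the already-constructed Nash data on $Z=X_j\cap X^{(p)}$ to an $\sS^\nu$ map on a neighborhood, modify $f|_{X_j}$ to agree with it near $Z$, then apply \emph{absolute} approximation (Proposition~\ref{prop:absapproxfun}) and retract. But absolute approximation yields a Nash map merely \emph{close} to the modified one; it will not equal the prescribed Nash data on $Z$. The resulting Nash maps on the various $X_j$ therefore only approximately agree on overlaps, whereas the ${}^{\tt c}$-Nash gluing via Theorem~\ref{cnashgerms} (or Proposition~\ref{cmap}) requires \emph{exact} agreement to produce a well-defined map on $X$. You acknowledge at the end that coherence with the fixed deeper data must be verified, but the mechanism you describe cannot deliver it.

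What the paper uses at this point is \emph{relative} approximation, Proposition~\ref{closextmap}: given an $\sS^\mu$ map $F$ into a Nash manifold and a Nash map $h$ on a monomial-singularity subset that is $\sS^\mu$-close to $F$ there, one obtains a Nash extension of $h$ that is $\sS^{\mu-m}$-close to $F$ globally. The loss of exactly $m$ derivatives per level is the algebraic inclusion $I^\mu(Z)\subset I(Z)\,\sS^{\mu-m}(M)$ (Proposition~\ref{genidsol} together with Lemma~\ref{locpi}), reflecting that the square-free monomial generators of $I(Z)$ have at most $m$ factors. It is not, as you suggest, a cost of chart extension or partition-of-unity gluing---an $\sS^\nu$ partition of unity loses no derivatives. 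Without Proposition~\ref{closextmap} or an equivalent relative extension result, the induction does not close.
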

 
To prove this we need to rework approximation for $\sS^\nu$ functions over Nash set with monomial singularities. Indeed, we start in Section \ref{sec:extension} with a result concerning extension of $\sS^\nu$ functions, which is well-known for Nash submanifolds (see Proposition \ref{topirre}): 

\em If $X\subset M$ is a Nash set with monomial singularities, then there are continuous extension linear maps $\theta:\sS^\nu(X)\to\sS^\nu(M)$ such that $\theta(h)|X=h$\em. 

This is stronger than the mere existence of extensions. This extension result will be used to show in Section \ref{section:Aproxfunctions} that Nash functions on Nash sets with monomial singularities can be relatively approximated (and not only absolutely, Proposition \ref{topirre}). Namely: 

\em An $\sS^\nu$ function $F:M\to \R$ whose restriction to $X$ is Nash can be $\sS^{\nu-m}$ approximated by Nash functions $H:M\to \R$ that coincide with $F$ on $X$\em.

Section \ref{sec:corners} is devoted to the classification of affine Nash manifolds with corners, and our approximation results will be crucial to compare $\sS^\nu$ and Nash classifications. This somehow complements Shiota's results on $\sC^\nu$ classification of Nash manifolds \cite[VI.2.2]{s}.

An \em (affine) Nash manifold with corners \em is a semialgebraic set $Q\subset \R^a$ that is a smooth submanifold with corners of (an open subset of) $\R^a$. In \cite{fgr} it is proved that any Nash manifold with corners $Q\subset \R^a$ is a closed semialgebraic subset of a Nash manifold $M\subset \R^a$ of the same dimension and the Nash closure $X$ in $M$ of the boundary $\partial Q$ is a Nash normal crossings.  Note that we can define naturally $\sS^\nu$ functions and maps and their topologies via the closed inclusion of $Q$ in $M$; of course this does not depend on the affine Nash manifold $M$ (see \ref{Nashmancorners} for further details). A Nash manifold with corners $Q$ has \em divisorial corners \em if it is contained in a Nash manifold $M$ as before such that the Nash closure $X$ of $\partial Q$ in $M$ is a normal crossing divisor (as one can expect this is not always the case and a careful study can be found in \cite[1.12]{fgr}).

\begin{thm}\label{difcor}
Let $Q_1$ and $Q_2$ be two $m$-dimensional affine Nash manifolds with divisorial corners. If $Q_1$ and $Q_2$ are $\sS^\nu$ diffeomorphic for some $\nu> m^2$ then they are Nash diffeomorphic. 
\end{thm}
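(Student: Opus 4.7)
My overall plan is to lift the given $\sS^\nu$ diffeomorphism $f\colon Q_1\to Q_2$ to an ambient $\sS^\nu$ diffeomorphism $F$ between open semialgebraic neighborhoods of the $Q_i$ in the Nash manifolds $M_i\supset Q_i$, in which $X_i=$ (Nash closure of $\partial Q_i$) is a normal crossing divisor; and then to apply the relative Nash approximation developed in Sections \ref{sec:extension}--\ref{section:Aproxfunctions} together with Theorem \ref{approxgc} to replace $F$ by a Nash diffeomorphism $G$ that still sends $X_1$ to $X_2$. Because $X_i$ is in particular a Nash monomial crossings, all the preparatory material of Sections \ref{sec:mono}--\ref{sec:Nashgen} is at my disposal.

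\textbf{Extension step.} To build $F$, I would embed $M_2\subset\R^{a_2}$ and extend each component of $f\colon Q_1\to\R^{a_2}$ to an $\sS^\nu$ function on $M_1$ using $\sS^\nu$ extension; composing with a Nash tubular retraction onto $M_2$ yields an $\sS^\nu$ map $\tilde F\colon V_1\to M_2$ from a semialgebraic neighborhood $V_1$ of $Q_1$. Applying the same procedure to $f^{-1}$ and using that $df$ has full rank $m$ at every point of $Q_1$ (manifolds with corners are smooth of dimension $m$), the inverse function theorem allows shrinking $V_1$ so that $\tilde F$ becomes an $\sS^\nu$ diffeomorphism onto an open neighborhood $V_2$ of $Q_2$. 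To enforce $\tilde F(X_1\cap V_1)=X_2\cap V_2$, I would work locally in normal-crossing Nash coordinates near each corner point, in which $Q_i=\{y_1\ge0,\dots,y_k\ge0\}$ and $X_i=\{y_1\cdots y_k=0\}$; the divisorial hypothesis guarantees each local model has this standard shape, and the finiteness Theorem \ref{thmfiniteness} lets me glue finitely many such local extensions into a single global $F$.

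\textbf{Approximation step.} With $F$ at hand, I first apply Theorem \ref{approxgc} to the restriction $F|_{X_1\cap V_1}\colon X_1\cap V_1\to X_2\cap V_2$, which automatically preserves irreducible components since $F$ is a diffeomorphism, to obtain a Nash map $g_0\colon X_1\cap V_1\to X_2\cap V_2$ close to $F|_{X_1}$ in the $\sS^{\nu-q_1}$ topology. Then I modify $F$ by an $\sS^\nu$ correction supported in a small collar of $X_1$ so that the new map $F'$ satisfies $F'|_{X_1}=g_0$ and is still close to $F$. Finally, applying the relative approximation theorem of Section \ref{section:Aproxfunctions} componentwise (composed once more with the Nash tubular retraction onto $M_2$) produces a Nash map $G\colon V_1\to M_2$ that coincides with $g_0$ on $X_1\cap V_1$ and is close to $F'$ in the $\sS^{\nu-q_2}$ topology, with the total loss $q_2\le m^2$ by a direct arithmetic of the constants in Theorem \ref{approxgc} and the $\sS^{\nu-m}$ loss of the relative approximation.

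\textbf{Conclusion and main obstacle.} For $\nu>m^2$, the Nash map $G$ is then $\sC^1$-close enough to the diffeomorphism $F$ to be a diffeomorphism in its own right; it sends $X_1\cap V_1$ onto $X_2\cap V_2$, and being $\sC^0$-close to $F$ it maps the connected component of $V_1\setminus X_1$ with closure $Q_1$ onto the component of $V_2\setminus X_2$ with closure $Q_2$. Restricting $G$ to $Q_1$ yields the desired Nash diffeomorphism onto $Q_2$. I expect the principal difficulty to lie in the extension step, namely arranging that the extension $F$ carries the entire divisor $X_1$ (including the parts outside $Q_1$, where $f$ does not determine $F$) onto $X_2$, not merely $\partial Q_1$ onto $\partial Q_2$; here the divisorial hypothesis and the weak normality Theorem \ref{cnashgerms}, combined with the finite covering by standard local models guaranteed by Theorem \ref{thmfiniteness}, are what let me glue the local Nash-coordinate extensions into a globally coherent diffeomorphism $F$.
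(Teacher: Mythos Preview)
Your overall strategy matches the paper's: extend the $\sS^\nu$ diffeomorphism $h\colon Q_1\to Q_2$ to an $\sS^\nu$ diffeomorphism $H\colon M\to N$ of Nash envelopes with $H(X)\subset Y$; approximate $H|_X$ by a Nash map via Theorem \ref{approxgc} (with the normal-crossing-divisor bound $q=m(m-1)$ of Remark \ref{rmk:qglobalnormal}); extend that Nash map to a Nash $G\colon M\to N$ which is $\sS^{\nu-m^2}$ close to $H$ by Proposition \ref{closextmap}; conclude that $G$ is a diffeomorphism carrying $Q_1$ onto $Q_2$ by the connected-components argument. Your approximation step is essentially correct, though note that your intermediate modification $F\leadsto F'$ with $F'|_{X_1}=g_0$ is unnecessary: Proposition \ref{closextmap} already says that a Nash map on $X$ which is $\sS^\mu$ close to $H|_X$ admits a Nash extension $\sS^{\mu-m}$ close to $H$, so one goes directly from $g_0$ to $G$.

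The genuine gap is in your extension step, and it is precisely where you anticipate it. Your plan is to build $F$ with $F(X_1)\subset X_2$ by constructing local extensions in standard normal-crossing charts and then ``gluing finitely many such local extensions'' using Theorem \ref{thmfiniteness}. But Theorem \ref{thmfiniteness} only provides charts; it gives no mechanism to make the local extensions agree on overlaps, and a partition-of-unity average $\sum\phi_i F_i$ will in general \emph{not} send $X_1$ into $X_2$, since $X_2$ is not convex in $\R^{a_2}$. Neither weak normality (Theorem \ref{cnashgerms}) nor the extension linear maps of Proposition \ref{topirre} help here, because the target constraint $F(X_1)\subset Y$ is nonlinear. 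The paper solves this by an entirely different device, Proposition \ref{extdpm}: one inducts over the \emph{iterated faces} of $Q_1$ (Section \ref{facesNashmancorners}), extending $f$ first on the Nash closures of the lowest-dimensional faces and climbing up one dimension at a time; at each step the previously constructed maps agree on the boundary normal-crossing divisor by the inductive hypothesis, and Lemma \ref{indextdpm} (resting on the explicit formula of Corollary \ref{cor:extcorners}) glues them with $f$ across the corner. This face induction, not chart gluing, is the missing idea.

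A secondary point: your appeal to ``the inverse function theorem'' to pass from a local diffeomorphism along $Q_1$ to a global diffeomorphism on a neighborhood skips a real step. Local injectivity near each point of $Q_1$ does not by itself give injectivity on a semialgebraic neighborhood; the paper isolates this as Lemma \ref{exthomeo}, a semialgebraic argument that shrinks the domain to kill the ``double points'' accumulating to $Q_1$.
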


All the basic notions and the notation used in this paper will be explained in the following section. The reader can proceed directly to Section \ref{sec:mono} and refer to the Preliminaries when needed. 

\section{Preliminaries}\label{sfunct}

In this section we introduce the concepts and notation needed in the sequel. First, we adopt the following conventions: $M\subset\R^a$ and $N\subset\R^b$ are affine Nash manifolds of respective dimensions $m$ and $n$; $X$ and $Y$ are Nash subsets of an affine Nash manifold and their irreducible components are denoted by $X_i$ and $Y_j$. The semialgebraic sets will be denoted by $S,T,Z$; on the other hand, $Q\subset\R^a$ is an affine Nash manifold with corners, $\partial Q$ is its boundary and $\Int(Q)=Q\setminus\partial Q$ is its interior. Also $\sS^\nu$ and Nash functions on a semialgebraic set will be denoted by $f,g,h,\ldots$ and their extensions to a larger semialgebraic set by $F,G,H,\ldots$ Homomorphisms between rings of Nash functions, $\sS^\nu$ functions, etc. are denoted using greek letters like $\theta,\gamma,\ldots$ but we reserve as usual $\veps$ and $\delta$ to denote (small) positive real numbers or positive semialgebraic functions involved in approximation of functions.

Let us recall some general properties of semialgebraic sets. Semialgebraic sets are closed under Boolean combinations and by quantifier elimination they are also closed under projections. In other words, any set defined by a first order formula in the language of ordered fields is a semialgebraic set \cite[p.\hspace{2pt}28,\hspace{2pt}29]{bcr}. Keeping this in mind, the basic topological constructions as closures, interiors or borders of semialgebraic sets are again semialgebraic. Also images and preimages of semialgebraic sets by semialgebraic maps are again semialgebraic. The \em dimension \em $\dim(Z)$ of a semialgebraic set $Z$ is the dimension of its algebraic Zariski closure \cite[\S2.8]{bcr}. The \em local dimension $\dim(Z_x)$ of $Z$ at a point $x\in Z$ \em is the dimension $\dim (U)$ of a small enough open semialgebraic neighborhood $U$ of $x$ in $Z$. The dimension of $Z$ coincides with the maximum of those local dimensions. For any fixed $d$ the set of points $x\in Z$ such that $\dim(Z_x)=d$ is a closed semialgebraic subset. 

An important property of compact semialgebraic sets is that they can be triangulated \cite[\S9.2]{bcr}. More relevant for us here is that given a Nash manifold $M\subset \R^a$ and some semialgebraic subsets $Z_1,\ldots,Z_s\subset M$, as a consequence of this triangulability property via a one-point compactification of $M$, there is a \em stratification $\mathcal{G}$ of $M$ compatible with $Z_1,\ldots,Z_s$\em. That is, there exists a finite collection $\mathcal{G}$ of disjoint semialgebraic subsets of $M$ called \em strata \em whose union is $M$ and which are affine Nash manifolds Nash diffeomorphic to some $\R^d$ with the following properties:

(1) The sets $Z_1\ldots,Z_s$ are unions of strata, 

(2) The closure in $M$ of a stratum of $\mathcal G$ is a finite union of strata of ${\mathcal G}$. In particular, one deduces that if $\varSigma,\varGamma$ are strata of ${\mathcal G}$, then either $\varSigma\subset\ol{\varGamma}$ or $\varSigma\cap\ol{\varGamma}=\varnothing$. Therefore, if ${\mathcal G}_{\varSigma}=\{\varGamma\in{\mathcal G}:\ \varSigma\subset\ol{\varGamma}\}$, then the semialgebraic set $\bigcup_{\varGamma\in{\mathcal G}_{\varSigma}}\varGamma$ is an open semialgebraic neighborhood of $\varSigma$ in $M$; this fact will be used freely along the sequel.

(3) Every stratum $\varGamma$ is \em connected at every point $x\in\ol\varGamma$\em, that is, $x$ has a basis of neighborhoods $V$ in $M$ with connected intersection $V\cap\varGamma$. This, together with the fact that $\varGamma$ is an affine Nash manifold, implies that the analytic closure $\ol{\varGamma}_x^{\rm an}$ of the germ $\varGamma_x$ is an irreducible analytic germ of dimension $\dim(\varGamma)$.

(4) Suppose the stratum $\varGamma$ is \em adherent \em to another stratum $\varSigma$, that is, $\varSigma\subset\ol\varGamma$. Then $\varSigma$ has a basis of neighborhoods $V$ in $M$ with connected intersection $V\cap\varGamma$. Following the terminology above, we say that $\varGamma$ is \em connected at $\varSigma$\em.

We refer to \cite[2.3]{fgr} for a more detailed presentation of the triangulation theorem and the deduction of the preceding stratification.

\subsection{Basics on the Nash category.}\label{subsec:basics}The open semialgebraic subsets of $M$ are a base of the topology and therefore Nash functions define a sheaf that we will denote by $\sn$. In particular, the sheaf $\sn$ induces a notion of Nash function $f:U\to \R$ over an arbitrary open subset $U$ of $M$ possibly not semialgebraic. In case $U$ is an open subset of $\R^a$, \em Nash \em means that $f$ is smooth and there exists a nonzero polynomial $P(x,t)\in \R[x,t]=\R[x_1,\ldots,x_a,t]$ such that $P(x,f(x))=0$ for all $x\in U$. If $U$ is semialgebraic this definition coincides with the one in the Introduction. We recall that both $\sn(M)$ and $\sn_x$ are noetherian for any $x\in M$ (see \cite[ 8.2.11, 8.7.18]{bcr}) and therefore it makes sense to consider the Nash closure globally and locally. Moreover, \em Nash functions are analytic and therefore Nash manifolds and Nash sets are analytic sets \em \cite[8.1.8]{bcr}. The ring of germs of analytic functions $\so_x$ is always noetherian even thought the ring of global analytic functions $\so(M)$ is noetherian only in case $M$ is compact. However, any arbitrary intersection of global analytic sets is a global analytic set and so the local and the global analytic closure are well-defined \cite[Ch. V, Prop. 16]{n}. We have:

\begin{factsub}\label{fact:globalananash}Let $Z\subset M$ be a semialgebraic set. Then:

\em(i) \em \cite[2.10]{fg} The Nash and the analytic closures of $Z$ coincide. In particular, if $Z$ is global analytic then it is a Nash set.

\hspace*{-1mm}\em(ii) \em \cite[2.8]{fgr} If $Z$ is a Nash set, then its Nash irreducible components are also its global analytic irreducible components.
\end{factsub}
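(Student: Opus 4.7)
The plan is to prove (i) first and then derive (ii) from it together with standard coherence properties of Nash sets. The Nash closure $\ol Z^{\sn}$ and the global analytic closure $\ol Z^{\rm an}$ of $Z$ in $M$ both exist: the former because $\sn(M)$ is noetherian, the latter because arbitrary intersections of global analytic subsets of $M$ are global analytic (cf.\ \cite[Ch.\,V, Prop.\,16]{n}). Since every Nash function is analytic, every Nash subset of $M$ is a global analytic subset, whence the inclusion $\ol Z^{\rm an}\subset\ol Z^{\sn}$ is immediate from the minimality of $\ol Z^{\rm an}$.

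The reverse inclusion is the heart of (i). The idea is to show that $\ol Z^{\rm an}$ is itself a Nash set, so by minimality of $\ol Z^{\sn}$ it must contain $\ol Z^{\sn}$. I would argue locally and then globalize. At each $x\in M$ the inclusion $\sn_x\hookrightarrow\an_x$ is faithfully flat, and a careful analysis, exploiting the fact that $Z_x$ is a semialgebraic germ, shows that the ideal in $\an_x$ of analytic germs vanishing on $Z_x$ is the extension of the ideal in $\sn_x$ of Nash germs vanishing on $Z_x$. Consequently the germ of the analytic closure of $Z$ at $x$ coincides with the germ of the Nash closure of $Z$ at $x$ for every $x\in M$. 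Applying a stratification of $M$ compatible with $\ol Z^{\sn}$ (as recalled in Section \ref{sfunct}) and using noetherianity of $\sn(M)$, I can then exhibit finitely many Nash equations which together cut out $\ol Z^{\rm an}$, thereby proving it is Nash. The ``in particular'' clause follows at once: if $Z$ is global analytic then $Z=\ol Z^{\rm an}=\ol Z^{\sn}$ is Nash.

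For (ii), let $X=X_1\cup\cdots\cup X_r$ be the Nash irreducible decomposition of the Nash set $Z$. The plan is to show that (a) each $X_i$ is analytically irreducible as a global analytic subset of $M$, and (b) the $X_i$ are maximal among global analytic irreducible subsets of $M$ contained in $X$. For (a), an analytic decomposition $X_i=A\cup B$ into two proper global analytic subsets would, at every smooth point $x\in X_i$, force $A_x=(X_i)_x$ or $B_x=(X_i)_x$, because the Nash manifold germ $(X_i)_x$ is analytically irreducible; by analytic coherence of $X_i$ and connectedness of its smooth locus this propagates globally to $A=X_i$ or $B=X_i$, a contradiction. For (b), if $Y$ is a global analytic irreducible subset of $X$ containing some $X_i$ strictly, the decomposition $Y=\bigcup_j(Y\cap X_j)$ together with irreducibility of $Y$ gives $Y\subset X_k$ for some $k$, whence $X_i\subsetneq X_k$, contradicting that $X_i$ is a Nash irreducible component of $X$.

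The main obstacle is the faithfully flat extension statement underlying (i), namely the equality between the ideal of analytic germs and the extension of the ideal of Nash germs vanishing on a semialgebraic germ $Z_x$. Once this is granted, everything else is a formal consequence of noetherianity of $\sn(M)$, the stratification machinery for semialgebraic sets recalled in Section \ref{sfunct}, and the standard coherence properties of Nash sets.
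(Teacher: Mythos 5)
The paper does not prove this statement: it is a \emph{Fact}, cited verbatim from \cite[2.10]{fg} (for part (i)) and \cite[2.8]{fgr} (for part (ii)), so there is no internal proof to compare your attempt against. Evaluating the proposal on its own terms, the outline of (i) is in the right spirit --- the key local input is indeed that $\sn_x\hookrightarrow\so_x$ is faithfully flat and that $I_{\so}(Z_x)=I_{\sn}(Z_x)\so_x$ for a semialgebraic germ $Z_x$ (which the paper later quotes from \cite[8.3.2, 8.6.9]{bcr}), but the passage from germ-level equality of closures to the claim that the global analytic closure is cut out by finitely many Nash equations is only gestured at and is not automatic: the germ of the global analytic closure at a point need not coincide a priori with the analytic closure of the germ, so the appeal to stratification and noetherianity needs to be spelled out.

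The real gap is in part (ii)(a). You argue that $X_i$ is analytically irreducible by checking at smooth points and then ``propagating globally'' using \emph{analytic coherence of $X_i$} and \emph{connectedness of its smooth locus}. Neither hypothesis is available. A Nash irreducible component of a Nash set need not be analytically coherent --- the Whitney umbrella $\{x^2=zy^2\}\subset\R^3$ is Nash irreducible and not coherent at the points of the negative $z$-axis --- and a Nash irreducible Nash set can easily have disconnected smooth locus (e.g.\ $\{y^2=x^2(x^2-1)\}\subset\R^2$, whose smooth locus has two components separated by the isolated singular point at the origin). Your argument uses both assumptions essentially in the open-and-closed propagation step, so as written it breaks down precisely on the class of examples that make (ii) a nontrivial theorem. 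Part (b) is a correct formal reduction to (a), but (a) is where the content lies; the cited source \cite[2.8]{fgr} handles it differently (via the Artin--Mazur realization and complexification arguments), and any repair of your proof would have to avoid coherence and smooth-locus connectedness.
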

 
Since $M$ is a semialgebraic subset of $\R^a$ and because of Definition \ref{def:nashfunc} we apparently have two definitions of Nash function $f:M\to \R$. Of course, both notions are equal because \em every affine Nash manifold has a Nash tubular neighborhood\em. 

\begin{factsub}\label{fact:retract}\cite[8.9.5]{bcr} Let $M\subset \R^a$ be a Nash manifold. Then there exists an open semialgebraic neighborhood $U$ of $M$ in $\R^a$ and a Nash retraction $\rho:U\to M$. 
\end{factsub}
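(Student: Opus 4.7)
The plan is to implement the classical Euclidean tubular--neighborhood construction entirely inside the Nash category. The three ingredients are: a Nash normal bundle of $M$ in $\R^a$, a Nash ``addition'' map from the normal bundle to $\R^a$, and a semialgebraic mechanism that converts a pointwise local diffeomorphism into a global diffeomorphism on a uniform tube.

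\textbf{Step 1 (Nash normal bundle).} First I would exhibit the orthogonal projector $\pi(x)\in\R^{a\times a}$ onto $T_xM$ as a Nash map on $M$. Near any $x_0\in M$ one can write $M=\{f_1=\cdots=f_{a-m}=0\}$ for Nash functions $f_i$ defined on an open semialgebraic neighborhood, with linearly independent differentials; if $A(x)$ is the Jacobian of $(f_1,\ldots,f_{a-m})$, then the Moore--Penrose formula $\pi(x)=I-A(x)^t(A(x)A(x)^t)^{-1}A(x)$ is Nash in $x$ and coincides with the intrinsic projector, so the local descriptions patch to a global Nash map $\pi:M\to\R^{a\times a}$. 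The normal bundle
$$
E=\{(x,v)\in M\times\R^a : \pi(x)v=0\}
$$
is then a Nash submanifold of $\R^{2a}$ of dimension $a$.

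\textbf{Step 2 (Addition map).} Next, consider the Nash map $\Phi:E\to\R^a$ given by $\Phi(x,v)=x+v$, between Nash manifolds of equal dimension $a$. At every point $(x,0)$ of the zero section the differential $d\Phi_{(x,0)}$ is the sum isomorphism $T_xM\oplus N_xM\to\R^a$, so by the Nash inverse function theorem $\Phi$ is a local Nash diffeomorphism at each point of $M\times\{0\}$.

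\textbf{Step 3 (Uniform tube; main obstacle).} The hard part is to upgrade this local statement to a global Nash diffeomorphism on an open tube around the zero section. Consider
$$
\Omega=\{(x,v)\in E:\Phi\text{ is a local diffeomorphism at }(x,v)\text{ and no other point of }E\text{ has the same }\Phi\text{-image}\};
$$
this is an open semialgebraic subset of $E$ containing $M\times\{0\}$. Applying semialgebraic choice (alternatively, a \L ojasiewicz-type inequality comparing the distance from $M\times\{0\}$ to $E\setminus\Omega$ with $\|v\|$) I would produce a continuous positive semialgebraic function $\veps:M\to(0,\infty)$ such that
$$
E_\veps:=\{(x,v)\in E:\|v\|<\veps(x)\}\subset\Omega
$$
and $\Phi|_{E_\veps}$ is injective. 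The delicate step is ruling out collisions between normal rays issuing from distant points of $M$: this requires checking that the semialgebraic ``reach'' function, which measures the infimal positive distance at which two distinct normal rays of $M$ meet, is pointwise positive on $M$, and then shrinking $\veps$ below it.

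\textbf{Step 4 (Conclusion).} Once such $\veps$ is in hand, $U:=\Phi(E_\veps)$ is an open semialgebraic neighborhood of $M$ in $\R^a$, $\Phi|_{E_\veps}:E_\veps\to U$ is a Nash diffeomorphism, and the composition $\rho:=\operatorname{pr}_1\circ(\Phi|_{E_\veps})^{-1}:U\to M$ is the required Nash retraction.
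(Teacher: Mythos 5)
The paper does not prove this statement; it cites it from [BCR, 8.9.5], so there is no internal proof to compare against. Your proposal attempts to reconstruct the Nash tubular--neighborhood theorem from first principles. Steps 1 and 2 are fine (the Moore--Penrose formula does give a Nash projector, $E$ is a Nash manifold, and $\Phi$ is a local diffeomorphism along the zero section by the inverse function theorem).

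The gap is in Step 3, and it is a genuine one. The set $\Omega$ you define is not open: the condition ``no other point of $E$ has the same $\Phi$-image'' is not an open condition when $\Phi$ fails to be proper. One can have $(x,v)\in\Omega$ with a sequence $(x_n,v_n)\to(x,v)$ and ``twins'' $(x_n',v_n')\neq(x_n,v_n)$ satisfying $\Phi(x_n',v_n')=\Phi(x_n,v_n)$, with the $(x_n',v_n')$ escaping to infinity in $E$, so that no contradiction arises in the limit. (Already in one variable, a local diffeomorphism between open semialgebraic subsets of $\R$ can have a uniqueness set that is a half-open interval.) Since the openness of $\Omega$ is the hinge on which your construction of $\veps$ and $E_\veps$ turns, the proof as written does not establish the existence of a uniform injective tube. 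You do gesture at the correct fix --- the ``reach'' function and a \L ojasiewicz-type bound --- and indeed the reach is positive, $1$-Lipschitz, and semialgebraic, and using it (rather than $\Omega$) one can define the tube and the nearest-point Nash retraction directly. But that argument is only named, not carried out, and the part you do carry out does not work as stated. You would need to replace the definition of $\Omega$ by a genuinely open condition (for instance, injectivity of $\Phi$ on a semialgebraically chosen uniform ball around $(x,v)$), or argue via the nearest-point projection on a sub-reach tube, to close the gap.
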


\subsection{Coherence and Nash functions.}\label{subsec:strongcoh} 
In the Introduction we have provided the definition of Nash function on a semialgebraic subset of an affine Nash manifold. Now for a Nash set $X$ it will be useful to recover this definition via global sections of a certain sheaf. Of course, the first choice should be the following. Let $\si$ be the sheaf of $\sn$-ideals given by $\si_x=I(X_x)$, the germs of Nash functions vanishing on $X$. The support of $\si$ is $X$ and for any $x\in M$ we have $(\sn/\si)_x=\sn_x/\si_x=\sn(X_x)$. However, as in the analytic setting, the sheaf $\sn/\si$ has in general a bad local-global behaviour and the ring of global sections is not necessarily $\sn(X)$.

Cartan showed that in the analytic context this can be solved using coherence. Let $N$ be an analytic manifold and $\so$ its sheaf of analytic germs. Recall that a sheaf of $\so$-modules $\sff$ is \em coherent \em if (i) $\sff$ is of finite type, i.e., for every $x\in N$ there is an open neighborhood $U$ and a surjective morphism $\so^n|U \rightarrow \sff|U$ and (ii) any morphism $\so^p|U \rightarrow \sff|U$ has a kernel of finite type. By \cite[Prop. 4]{cart} the sheaf $\so$ is coherent and therefore a sheaf of $\so$-ideals is coherent if and only if it is of finite type. The famous Theorems A and B describe the local-global behaviour of coherent sheaves: \em The stalks are spanned by the global sections \em and \em each $p$-cohomology group of the sheaf with $p>0$ is trivial \em \cite[Thm. 3]{cart}. In particular, for any \em coherent locally analytic \em set $Y\subset N$, i.e., locally described by finitely many analytic equations and such that its sheaf $\si_\so$ of $\so$-ideals of 
germs of analytic functions vanishing on $Y$ is coherent, we have the following properties.

\begin{factsub}\cite[Prop. 14, 15]{cart}\label{fact:cohpure} Every coherent locally analytic set $Y$ of an analytic manifold $N$ is (global) analytic. Moreover, if $Y$ is irreducible then it has pure dimension. 
\end{factsub}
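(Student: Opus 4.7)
The plan is to leverage Cartan's Theorems A and B for the coherent $\so$-ideal sheaf $\si_\so\subset\so$ cutting out $Y$ in $N$.

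For the global analyticity, I would show that the common zero locus $Z\subset N$ of the global sections $\si_\so(N)$ equals $Y$. The inclusion $Y\subset Z$ is immediate since every section of $\si_\so$ vanishes on $Y$. Conversely, take any $x\in N\setminus Y$; since $Y$ is locally analytic, near $x$ we have $\si_{\so,x}=\so_x$ and in particular $1\in\si_{\so,x}$. By Theorem A the stalk $\si_{\so,x}$ is generated as an $\so_x$-module by the restrictions of global sections, so there exist $f_1,\ldots,f_k\in\si_\so(N)$ and $g_1,\ldots,g_k\in\so_x$ with $1=\sum_{i=1}^k g_i\,(f_i)_x$. Evaluating at $x$ gives $\sum_i g_i(x) f_i(x)=1$, so some $f_i(x)\neq 0$ and hence $x\notin Z$. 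Thus $Z=Y$ and $Y$ is exhibited as a global analytic set.

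For the purity assertion, assume $Y$ irreducible and set $d=\dim Y$. Using upper semi-continuity of the local dimension, the set $Y^{<d}=\{x\in Y:\dim Y_x<d\}$ is closed in $Y$. The plan is to realize $Y^{<d}$ and the closure $Y^{(d)}$ of its complement in $Y$ as coherent analytic subsets of $N$. Locally at each $x\in Y$ I would use the primary decomposition of $\si_{\so,x}$ in the noetherian ring $\so_x$ to separate the primary components whose associated prime has maximal dimension $d$ from those with lower-dimensional associated primes, obtaining local ideals $\sj'_x$ and $\sj''_x$ with $\sj'_x\cap\sj''_x=\si_{\so,x}$. One then checks that these locally defined ideals glue to coherent subsheaves $\sj'$, $\sj''$ of $\so$ whose zero sets are respectively $Y^{(d)}$ and $Y^{<d}$. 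Applying the first part of the statement to each of these coherent ideal sheaves yields that $Y^{(d)}$ and $Y^{<d}$ are global analytic; if $Y^{<d}\neq\varnothing$, their union would give a nontrivial decomposition of $Y$, contradicting irreducibility. Hence $Y^{<d}=\varnothing$ and $Y$ has pure dimension $d$.

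The delicate point is the gluing step in the second part: one needs the dimension-based splitting of the primary decomposition to be compatible across overlapping stalks. This rests on the intrinsic characterization of the top-dimensional primary components by the dimension of their associated primes, together with the fact that the minimal primes of $\si_{\so,x}$ correspond to the local irreducible components of $Y$ at $x$ and vary coherently thanks to the coherence of $\si_\so$. I would expect to use Theorem B to turn the local sections of $\sj'$ and $\sj''$ into global ones, closing the argument.
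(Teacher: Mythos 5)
The paper states this result as a citation to Cartan (\cite[Prop.\ 14, 15]{cart}) and does not supply a proof, so there is nothing internal to compare against; I review your argument on its own terms. The first half, using Theorem~A to show the common zero set of the global sections of $\si_\so$ equals $Y$, is correct and standard.

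The second half contains a genuine error. Upper semicontinuity of the local dimension $x\mapsto\dim Y_x$ gives that $\{x:\dim Y_x\ge d\}$ is \emph{closed}, hence $Y^{<d}=\{x:\dim Y_x<d\}$ is \emph{open} in $Y$, not closed as you assert. This matters because $Y^{<d}$ is then not the zero set of your sheaf $\sj''$. If $\sj''_x$ is the intersection of the primary components of $\si_{\so,x}$ with associated primes of dimension $<d$, then its zero set is the union of the low-dimensional irreducible components of $Y_x$, whose support is the \emph{closed} set $Y''=\{x: Y_x \text{ has an irreducible component of dimension}<d\}$; this properly contains $Y^{<d}$ in general (a germ can have both a $d$-dimensional and a lower-dimensional component). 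The argument you intend does work once restated with $Y''$ in place of $Y^{<d}$: one has $Y=Y^{(d)}\cup Y''$, with $\dim Y''\le d-1<d$ so $Y''\ne Y$, and $Y''\ne\varnothing$ forces $Y^{(d)}\ne Y$ (a low-dimensional component is not contained in the union of top-dimensional ones), contradicting irreducibility. But as written the identification of supports is wrong. Finally, the step you flag as delicate --- that $\sj'$ and $\sj''$ glue to coherent $\so$-subsheaves --- really is the crux of Cartan's Proposition 15, and a bare appeal to ``Theorem B'' does not establish it; one needs to verify coherence of the dimension-filtered pieces of $\si_\so$ (e.g.\ via Oka's criterion or the coherence of the associated primes sheaf), which is exactly the content being cited.
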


Back to the Nash setting, even for sheaves of $\sn$-ideals, coherence is not enough (see the Introduction of \cite{crs3}) because semialgebraicity involves a finiteness phenomenon that mere coherence does not capture. Thus we say that a sheaf $\sff$ of $\sn$-ideals is \em finite \em if there is a finite open semialgebraic covering $\{U_i \}$ of $M$ such that each
$\sff|U_i$ is generated by finitely many Nash functions over $U_i$. Since $\sn$ is coherent as a sheaf of $\sn$-ideals \cite[I.6.6]{s} it follows that a \em finite \em sheaf of $\sn$-ideals is coherent. In \cite{cs} and \cite{crs2} the authors prove that this is the correct notion for sheaves of $\sn$-ideals (we remark that in general for $\sn$-modules the right notion is \em strong coherence \em \cite{crs3}). We suggest the survey \cite{crs1} as a general reference.

\begin{factsub}\label{factfinitesheaf}
Let $\sff$ be a finite sheaf of $\sn$-ideals and $x\in M$. Then \em(A) \em the stalk $\,\sff_x$ is generated by global sections, and \em(B) \em every global section $\sigma$ of $\sn/\sff$ is represented by a global Nash function. 
\end{factsub}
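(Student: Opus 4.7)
The plan is to prove both (A) and (B) by reducing to the analytic Cartan theorems and then upgrading by Nash approximation, using in a crucial way that \emph{finite} is strictly stronger than \emph{coherent} and is exactly the hypothesis that makes semialgebraic (hence global) arguments go through.

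First I would observe that since $\sff$ is finite it is coherent as a sheaf of $\sn$-ideals, and its analytic extension $\sff^{\rm an}=\sff\cdot\so$ is a coherent sheaf of $\so$-ideals on the real analytic manifold underlying $M$. Since $M$ is an affine Nash manifold it is (real) Stein, so Cartan's analytic Theorems A and B apply to $\sff^{\rm an}$: each stalk $\sff^{\rm an}_x$ is generated by $\sff^{\rm an}(M)$, and every analytic global section of $\so/\sff^{\rm an}$ lifts to a global analytic function. This takes care of the existence of the required sections at the analytic level; what remains is to produce Nash representatives.

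Now take the finite open semialgebraic covering $\{U_1,\dots,U_r\}$ with Nash generators $f_{i,1},\dots,f_{i,k_i}\in\sff(U_i)$. Any element of $\sff^{\rm an}(M)$, and any analytic lift of a section of $\sn/\sff$, is on each $U_i$ an $\so(U_i)$-combination of the $f_{i,j}$, and the compatibilities on $U_i\cap U_j$ encode a Čech cocycle. For (A), to generate $\sff_x$ by global Nash sections, I would approximate the relevant analytic combinations by Nash combinations on a semialgebraic neighborhood of $x$; by noetherianity of $\sn_x$ the generation only has to be checked modulo finitely many relations, and a sufficiently close approximation suffices. For (B), given $\{h_i\}$ with $h_i-h_j\in\sff(U_i\cap U_j)$, I would solve it analytically via Cartan B to obtain $H^{\rm an}\in\so(M)$ with $H^{\rm an}-h_i\in\sff^{\rm an}(U_i)$, and then approximate $H^{\rm an}$ by a Nash function $H\in\sn(M)$ on the Nash manifold $M$, using the finite covering to arrange that the approximation takes place \emph{modulo} the finite ideal sheaf $\sff$, so that $H-h_i\in\sff(U_i)$ still holds.

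The main obstacle is precisely this relative Nash approximation of analytic data by Nash data modulo a finite sheaf of $\sn$-ideals. In the analytic setting the Čech $H^1$ vanishing is automatic once coherence is known; in the Nash setting one must invoke the semialgebraic analogue, as developed in \cite{cs,crs2}, and verify that the Nash approximant inherits the ideal condition from its analytic model—this is exactly where the hypothesis \emph{finite} (and not merely coherent) is used, because only for finite sheaves do the local Nash generators $f_{i,j}$ on the $U_i$ allow a semialgebraic globalization. Once this approximation step is in place, (A) and (B) follow at once: the Nash combinations produced by the approximation generate the stalk in (A), and the Nash function $H$ obtained in (B) represents the prescribed global section of $\sn/\sff$.
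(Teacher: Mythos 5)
Note first that the paper does not prove Fact~\ref{factfinitesheaf}: it is stated with a reference to \cite{cs,crs2}, where these Nash analogues of Cartan's Theorems A and B are established. Your outline has the right overall shape --- pass to the analytic extension $\sff^{\rm an}$, invoke Cartan's Theorems A and B (valid since $M$ is a second-countable real analytic manifold), then descend to the Nash category by approximation, with the finite semialgebraic covering $\{U_i\}$ and the finitely many Nash generators $f_{ij}$ as the extra structure the finiteness hypothesis supplies.

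The gap is in the descent. You propose to approximate $H^{\rm an}$ by a Nash $H$ ``so that $H-h_i\in\sff(U_i)$ still holds,'' but ordinary $\sS^\nu$-closeness gives no such conclusion: in the decomposition $H-h_i=(H-H^{\rm an})+(H^{\rm an}-h_i)$ the second summand lies a priori only in the analytic ideal $\sff^{\rm an}(U_i)=\sff(U_i)\so(U_i)$, and the first is small but unconstrained, so neither need lie in the Nash ideal $\sff(U_i)$. What is actually required is an Artin-type approximation statement for \emph{linear} systems over the rings $\sn(U_i)$: starting from analytic $a_{ij}\in\so(U_i)$ solving $H^{\rm an}-h_i=\sum_j a_{ij}f_{ij}$, one must produce a Nash $H$ together with Nash coefficients solving the same equations \emph{exactly} and compatibly across the covering. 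That this is possible is precisely the hard theorem of \cite{crs2} (compact case, via Artin--Mazur) and \cite{cs} (noncompact case, via N\'eron--Popescu desingularization); it does not follow from absolute approximation plus noetherianity, and your treatment of (A) has the same gap. In short, your sketch correctly locates the difficulty and correctly explains why finiteness (not mere coherence) is needed, but at the crucial step it invokes the very references whose content that step is; as a self-contained proof it is therefore circular, while as an account of why the paper cites rather than proves this fact it is accurate.
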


This result assures that if $X$ is a Nash subset of $M$ and $I(X)$ stands for its (finitely generated) ideal $\{f\in \sn(M): f(X)=0\}$, then the sheaf of $\sn$-ideals given by $\sj_x=I(X)\sn_x$ \em is the biggest finite sheaf of ideals whose support is $X$ \em (in fact, as we will see later, is the biggest coherent one).  Moreover, let us see that \em there is a natural isomorphism between the ring $\sn(X)$ and the ring of global sections of $\sn/\sj$\em. 

Indeed, by \cite[I.6.5]{s} we have $\Gamma(U,\sj)=I(X)\sn(U)$ for any open semialgebraic subset $U$ of $M$. Then by Fact \ref{factfinitesheaf}.(B) we deduce $\Gamma(U,\sn/\sj)=\sn(U)/I(X)\sn(U)$. On the other hand, the support of $\sn/\sj$ is $X$ and therefore for any open semialgebraic set $U$ containing $X$ we have $\Gamma(X,\sn/\sj)\equiv\Gamma(U,\sn /\sj)\equiv\Gamma(M,\sn/\sj)$. The fact that $\sn(M)/I(X) \equiv\Gamma(M,\sn /\sj)\equiv \Gamma(U,\sn /\sj) \equiv \sn(U)/I(X)\sn(U)$ means that for every Nash function $g:U\to \R$ there is a Nash function $h:M\to \R$ such that $g-h|_U\in I(X)\sn(U)$ and therefore $g=h$ on $X$. In particular,  \em every function $f\in \sn(X)$ has a Nash extension to $M$\em, so that the restriction homomorphism
\begin{equation}\label{eq:nashsect}
\begin{array}{rcl}
\sn(M) & \to & \sn(X)\\
f & \mapsto & f|_X
\end{array}
\end{equation}
is surjective and hence $\sn(X)=\sn(M)/I(X)\equiv \Gamma(X,\sn/\sj)$, as required.

On the other hand, we also need to describe $^{\tt c}\sn(X)$ as a ring of sections of a sheaf. Let $X_1,\ldots,X_s$ be the irreducible components of $X$ and for every $1\hspace{-2pt}\leq \hspace{-2pt}i,j\hspace{-2pt}\leq \hspace{-2pt} s$ let $\sj_i$ and $\sj_{ij}$ be respectively the sheaves of $\sn$-ideals given by $\sj_{i,x}=I(X_i)\sn_x$ and $\sj_{ij,x}=I(X_i\cap X_j)\sn_x$.
We define the morphism 
$$
\phi: \sn/\sj_1 \times \ldots \times \sn/\sj_s\to \prod_{i<j} \sn/\sj_{ij}
$$
induced by
$$
(f_1,\ldots,f_s) \mapsto  (f_i-f_j)_{i<j}.
$$
Consider the kernel sheaf $\ker(\phi)$. Using equation \eqref{eq:nashsect} we define a \em multiple restriction \em map 
$$
{}^{\tt c}\sn(X)\to \sn(M)/I(X_1)\times \cdots \times \sn(M)/I(X_s):f\mapsto (f|_{X_1},\ldots,f|_{X_s})
$$ 
that provides an isomorphism between ${}^{\tt c}\sn(X)$ and the global sections $$\Gamma(M,\ker(\phi))\subset\sn(M)/I(X_1)\times \cdots \times \sn(M)/I(X_s).$$ Indeed, any global section of $\ker(\phi) \subset \sn/\sj_1 \times \ldots \times \sn/\sj_s$ is represented by global Nash functions $(g_1,\ldots, g_s)$. Since $(g_i-g_j)_x \in I(X_i\cap X_j)\sn_x$ for any $x \in X_i\cap X_j$ we deduce that $g_i=g_j$ on $X_i\cap X_j$ and therefore the function defined by $g(x)=g_i(x)$ if $x\in X_i$ is $\tt c$-Nash on $X$, as required.

Roughly speaking, the coherent sheaf $\sn/ \sj$ captures the global notions concerning Nash functions and $\sn/\si$ the local ones. Hence it is natural to ask when $\si=\sj$ and for the answer we look at the analytic functions. Consider the ideal $I_\so(X)$ of all analytic functions vanishing in $X$. Now, if $X$ is coherent as an analytic set then by Theorem A of Cartan the stalks $\si_{\so,x}$ are generated by the global sections $\Gamma(M,\si_\so)=I_{\so}(X)$. Since $I_{\so}(X)$ is generated by $I(X)$ (see \cite[2.8]{fgr}) it follows that $\sj_x\so_x=I(X)\so_x=I_\so(X)\so_x=\si_{\so,x}\supset \si_x\so_x$ and therefore $\sj_x\so_x=\si_x\so_x$. The monomorphism $\sn_x\hookrightarrow\so_x$ is faithful flat  \cite[8.3.2]{bcr}, so that $\sj_x=\si_x$. Then $\si=\sj$ and therefore $(\sn/\sj)_x=(\sn/\si)_x=\sn_x/\si_x=\sn(X_x)$. In particular, since by definition $\sj$ is coherent as $\sn$-sheaf we have that $X$ is also coherent as a Nash set. Moreover, since $\si_{\so,x}=\si_x\so_x$ \cite[8.6.9]{bcr} it is easy to prove that Nash coherence implies analytic coherence. Summarizing we have:

\begin{propsub}\label{fact:cohananash}
A Nash set $X$ is coherent as an analytic set if and only if it so as a Nash set. 
\end{propsub}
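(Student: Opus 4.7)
The plan is to prove both implications separately, in each case exploiting the identity $\si_{\so,x}=\si_x\so_x$ from \cite[8.6.9]{bcr} and the faithful flatness of the inclusion $\sn_x\hookrightarrow\so_x$ from \cite[8.3.2]{bcr}; indeed, the preceding two paragraphs of the paper already lay out every ingredient, so the task is just to organize them cleanly.

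First, for the direction Nash coherence $\Rightarrow$ analytic coherence, I would argue locally. If $\si$ is a coherent sheaf of $\sn$-ideals then around any point $x\in M$ there is a semialgebraic open neighborhood $U$ and finitely many Nash functions $f_1,\ldots,f_r\in\sn(U)$ with $\si|_U=(f_1,\ldots,f_r)\sn|_U$. Viewing the $f_i$ as analytic functions on $U$, at each $y\in U$ the identity recalled above gives
$$
\si_{\so,y}=\si_y\so_y=(f_1,\ldots,f_r)\sn_y\cdot\so_y=(f_1,\ldots,f_r)\so_y,
$$
so the sheaf $\si_\so$ is of finite type on $U$; since $\so$ is a coherent sheaf of rings \cite[Prop.~4]{cart}, it follows that $\si_\so$ is coherent, as required.

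For the converse, analytic coherence $\Rightarrow$ Nash coherence, I would introduce the auxiliary sheaf of $\sn$-ideals $\sj$ with $\sj_x=I(X)\sn_x$, which is finite by construction (the ideal $I(X)\subset\sn(M)$ is finitely generated because $\sn(M)$ is noetherian) and hence coherent. The point is then to show $\sj=\si$, and since the inclusion $\sj_x\subset\si_x$ is tautological it suffices to verify the reverse stalkwise. Assuming $\si_\so$ coherent, Cartan's Theorem A gives $\si_{\so,x}=I_\so(X)\so_x$; using $I_\so(X)=I(X)\so(M)$ from \cite[2.8]{fgr} (which is the semialgebraic/analytic comparison also recorded in Fact \ref{fact:globalananash}), this rewrites as $\si_{\so,x}=\sj_x\so_x$. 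Combining with $\si_{\so,x}=\si_x\so_x$ produces $\si_x\so_x=\sj_x\so_x$, and faithful flatness of $\sn_x\hookrightarrow\so_x$ implies $\si_x=\sj_x$, completing the argument.

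The main obstacle, in the sense of being the only non-formal input, is the identity $I_\so(X)=I(X)\so(M)$ used in the second direction: it is the global step that turns Cartan's stalkwise generation into generation by Nash sections, and it is precisely there that the semialgebraic nature of $X$ enters. Everything else is bookkeeping through coherence and faithful flatness.
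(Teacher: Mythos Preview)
Your proposal is correct and follows essentially the same approach as the paper: the discussion immediately preceding the proposition \emph{is} the paper's proof, and you have organized its ingredients (Cartan's Theorem~A, the identity $I_\so(X)=I(X)\so(M)$, the relation $\si_{\so,x}=\si_x\so_x$, and faithful flatness of $\sn_x\hookrightarrow\so_x$) in exactly the intended way. Your treatment of the Nash $\Rightarrow$ analytic direction is slightly more explicit than the paper's one-line remark, but the idea is identical.
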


Furthermore, if $X$ is coherent then for any semialgebraic open subset $U$ of $M$ we have
\begin{equation}\label{eq:ix}
I(X)\sn(U)=I(X\cap U)
\end{equation}
where $I(X\cap U)=\{f\in \sn(U): f|_{X\cap U}=0\}$. Indeed, let $\sj'$ be the sheaf of $\sn|_U$-ideals given by $\sj'_x=I(X\cap U)\sn_x$. Since $X\cap U$ is a Nash subset of $U$, we know that $\sn(X\cap U)=\Gamma(X\cap U,\sn|_U/\sj')=\sn(U)/I(X\cap U)$. Moreover, $X\cap U$ is also coherent in $U$ and hence $\sj'=\si|_U$, so that $\sn|_U/\sj'=(\sn/\si)|_U=(\sn/\sj)|_U$. Thus $\sn(X\cap U)=\Gamma(X\cap U,\sn|_U/\sj')=\Gamma(U,\sn/\sj)=\sn(U)/I(X)\sn(U)$.

We finish this discussion with a well-known fact included for the sake of completeness.

\begin{propsub}\label{prop:cohirred}
Let $X$ be a coherent Nash set and let $X_1,\ldots,X_s$ be its Nash irreducible components. Then, for every $x\in X$, each germ $X_{i,x}$ is the union of some Nash irreducible components of the germ $X_x$. In particular, the irreducible components $X_1,\ldots,X_s$ are also coherent.
\end{propsub}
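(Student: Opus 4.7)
The argument moves to the Noetherian regular local ring $\so_x$, exploiting the coherence of $X$ to transfer from the Nash to the analytic setting. Since $X$ is coherent, Proposition \ref{fact:cohananash} gives $I_\so(X_x) = I(X)\so_x$; and the flatness of the composition $\sn(M) \to \sn_x \to \so_x$ (localization followed by faithful flatness) together with the finite decomposition $I(X) = \bigcap_i I(X_i)$ in $\sn(M)$ yields
$$
I_\so(X_x) = \bigcap_{i=1}^{s} I(X_i)\so_x.
$$

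Next I would analyze the primary decomposition of this radical ideal in $\so_x$. Its minimal primes $\gtp_1, \ldots, \gtp_k$ correspond to the Nash irreducible components of the germ $X_x$ (Nash and analytic decompositions coincide, by Fact \ref{fact:globalananash}.(ii)). Since each $\gtp_l$ contains the product $\prod_i I(X_i)\so_x$, primeness forces $\gtp_l \supset I(X_i)\so_x$ for some $i$, so the corresponding irreducible component $V(\gtp_l)$ lies in $X_{i,x}$. For the reverse---that every irreducible analytic component $W$ of $X_{i,x}$ equals some $V(\gtp_l)$---I would argue by contradiction: if $W \subsetneq V(\gtp_l) \subset X_{j,x}$ with $j \neq i$, then $W \subset X_i \cap X_j$, which is a proper analytic subset of $X_i$. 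Since $X_i$ is globally analytically irreducible (Fact \ref{fact:globalananash}.(ii)) and coherent (Fact \ref{fact:cohpure} then forces pure dimension $d_i = \dim X_i$), this would give $\dim W = d_i > \dim(X_i \cap X_j) \ge \dim W$, a contradiction.

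The coherence of each $X_i$ (the ``in particular'' statement) is obtained alongside: identifying $I(X_i)\so_x$ with the intersection of the minimal primes $\gtp_l$ having $V(\gtp_l) \subset X_{i,x}$ makes $I(X_i)\so_x$ radical, equal to $I_\so(X_{i,x})$. The \textbf{main obstacle} is the apparent circularity: pure-dimensionality of $X_i$ requires its coherence, but coherence is itself what we are deriving. I would resolve this by an inductive argument on the dimensions of the irreducible components, starting with those of maximal dimension (for which the dimension argument works without pre-assuming coherence, since any component of $X_{i,x}$ has dimension at most $\dim X_i$) and peeling off components in decreasing order of dimension.
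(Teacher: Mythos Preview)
Your plan is sound up to the point where you identify the circularity, but the inductive resolution you propose does not work. The contradiction you want is $\dim W>\dim(X_i\cap X_j)$, and in your main argument you obtain it from $\dim W=d_i$, i.e.\ from a \emph{lower} bound on the dimension of an arbitrary irreducible component $W$ of $X_{i,x}$. In the base case you offer only the \emph{upper} bound $\dim W\le d_i$, which is useless here: nothing prevents $X_{i,x}$ from having a low-dimensional component as long as $X_i$ is not known to be coherent (the Whitney umbrella is the standard obstruction). So even for $X_i$ of maximal dimension, the case $W\subsetneq V(\gtp_l)\subset X_{j,x}$ with $\dim W\le\dim(X_i\cap X_j)$ is not excluded, and the induction never starts. (A minor side remark: the map $\sn(M)\to\sn_x$ is not a localization, so your flatness justification for $\bigcap_iI(X_i)\so_x=I(X)\so_x$ is unreliable; the identity is nevertheless correct via $\bigcap_iI(X_i)\so_x\subset\bigcap_iI_\so(X_{i,x})=I_\so(X_x)=I(X)\so_x$, the last equality by coherence.)

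The paper breaks the circularity by passing to the complexification. The key input, which is \emph{false} over $\R$ but holds unconditionally in the complex analytic category, is that a globally irreducible complex analytic set has pure-dimensional germs at every point \cite[Ch.\,IV, Cor.\,4]{n}. Hence each $\widetilde{X}_{i,x}$ is pure of dimension $d_i$, and the Identity Principle immediately rules out any irreducible component of $\widetilde{X}_{i,x}$ lying inside $\widetilde{X}_{j,x}$ for $j\ne i$; thus the irreducible components of the $\widetilde{X}_{i,x}$'s give precisely the irreducible decomposition of $\widetilde{X}_x$. Coherence of $X$ is used only once, to identify the germ of the complexification with the complexification of the germ, $\widetilde{X}_x=\widetilde{X_x}$ \cite[Prop.\,12]{cart}, so that these components are exactly the complexifications $\widetilde{Y}_k$ of the local irreducible components $Y_k$ of $X_x$. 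Taking real parts gives $X_{i,x}=\bigcup_kY_k$ for a suitable subset of indices, and coherence of each $X_i$ then follows because a finite union of coherent germs is coherent \cite[Prop.\,13]{cart}. The complexification is not a detour but the device that supplies, for free, the pure-dimensionality you were trying to bootstrap.
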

\begin{proof}
By Fact \ref{fact:globalananash} the Nash irreducible components $X_1,\ldots,X_s$ are also the irreducible components in the analytic sense. Since $X$ is a (global) analytic set we can consider its complexification $\widetilde{X}$ whose irreducible components are $\widetilde{X}_1,\ldots, \widetilde{X}_r$ (see \cite[Ch.V, Prop.15,17]{n}). The irreducibility of each $X_i$ implies that each germ $\widetilde{X}_{i,x}$ is pure dimensional \cite[Ch.IV, Cor.4]{n}. The pure dimensionality of the sets $\widetilde{X}_i$ implies, by the Identity Principle, that no irreducible component of the germ $\widetilde{X}_{i,x}$ is contained in $\widetilde{X}_{j,x}$ if $i\neq j$. This shows that the decompositions into irreducible components of $\widetilde{X}_{1,x},\ldots, \widetilde{X}_{s,x}$ provide the decomposition of $\widetilde{X}_x$ into irreducible components. On the other hand, fix a point $x\in M$ and consider the Nash irreducible components $Y_1,\ldots,Y_r$ of the germ $X_x$. These are also the analytic irreducible components \cite[8.3.2]{bcr} and therefore they are coherent \cite[Ch.V, Prop.6]{n}. Furthermore, their complexifications $\widetilde{Y_i}$ are the irreducible components of the complexification $\widetilde{X_x}$ (see \cite[Ch.V, Prop.2]{n}). Now, it follows from the coherence of $X$ that the germ $\widetilde{X}_x$ of the complexification $\widetilde X$ is the complexification $\widetilde{X_x}$ of the germ $X_x$ (see \cite[Prop.12]{cart}). Therefore each $\widetilde{X}_{i,x}$ is the union of some $\widetilde{Y_j}$; hence, each $X_{i,x}$ is the union of some $Y_j$. Thus, $X_{i,x}$ is coherent because it is a finite union of coherent germs \cite[Prop.13]{cart}.
\end{proof}

\subsection{Spaces of differentiable semialgebraic functions.}\label{topfunc} 
Let $M\subset\R^a$ be an affine Nash manifold. Recall that for every integer $\nu\ge 1$ we denote by $\sS^\nu(M)$ the set of all semialgebraic functions $f:M\to\R$ that are differentiable of class $\nu$.

We equip $\sS^\nu(M)$ with the following \em $\sS^\nu$ semialgebraic Whitney topology \em \cite[II.1, p.\hspace{1.5pt}79--80]{s}. Let $\xi_1,\dots,\xi_r$ be semialgebraic $\sS^{\nu-1}$ tangent fields on $M$ that span the tangent bundle of $M$. For every continuous semialgebraic function $\veps:M\to\R^+$ we denote by ${\mathcal U}_\veps$ the set of all functions $g\in\sS^\nu(M)$ such that
$$
|g|<\veps \quad\text{and}\quad|\xi_{i_1}\cdots\xi_{i_\mu}(g)|<\veps\, 
 \text{ for }\, 1\!\le\! i_1,\dots,i_\mu\!\le\! r, 1\!\le\!\mu\!\le\!\nu. 
$$
These sets ${\mathcal U}_\veps$ form a basis of neighborhoods of the zero function for a topology in $\sS^\nu(M)$ that does not depend on the choice of the tangent fields. It is well-known that each $\sS^\nu(M)$ is a Hausdorff topological ring \cite[II.1.6]{s}, but neither a topological vector space nor a Fr\'echet space. Note that the obvious inclusions $\sS^\nu(M)\subset\sS^\mu(M)$, $\nu>\mu$, are continuous. Moreover, since semialgebraic smooth functions on $M$ are Nash by definition, we have $\sn(M)={\bigcap}_\nu\sS^\nu(M)$. The first important result is that the inclusion $\sn(M)\subset \sS^\nu(M)$ is dense.

\begin{factsub}\cite[II.4.1, p.\hspace{1.5pt}123]{s}\label{shiota}
Every semialgebraic $\sS^\nu$ function on $M$ can be approximated in the $\sS^\nu$ topology by Nash functions.
\end{factsub}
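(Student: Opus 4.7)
The plan is to proceed in three steps, exploiting the finiteness built into semialgebraicity in order to circumvent the absence of Nash partitions of unity. First, by means of the Nash tubular retraction $\rho:U\to M$ of Fact \ref{fact:retract}, any function $f\in\sS^\nu(M)$ extends to $\widetilde f=f\circ\rho\in\sS^\nu(U)$, where $U\subset\R^a$ is an open semialgebraic neighborhood of $M$. Along $M$ the tangent fields $\xi_i$ can be taken to be projections of the ambient coordinate vector fields, so a Nash function on $U$ approximating $\widetilde f$ in the $\sS^\nu$ Whitney topology of $U$ will restrict to a Nash approximation of $f$ on $M$. Hence I may assume $M$ is an open semialgebraic subset of $\R^a$ and work with ordinary partial derivatives.

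Second, I would approximate $f$ locally by polynomials along a semialgebraic exhaustion. Choose compact semialgebraic sets $K_1\subset K_2\subset\cdots$ with $K_n$ contained in the semialgebraic interior of $K_{n+1}$ and $M=\bigcup_n K_n$. Via a Whitney-type semialgebraic extension of $f|_{K_n}$ to a closed box containing $K_n$ and the classical Stone--Weierstrass theorem, I obtain polynomials $p_n\in\R[\x_1,\ldots,\x_a]$ that approximate $f$ together with all partial derivatives of order $\le\nu$ on $K_n$ to prescribed tolerances $\veps_n\to 0$.

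Third, I would patch the $p_n$ into a single Nash function. Since Nash partitions of unity are unavailable, the patching relies on Mostowski-style separating Nash functions: given disjoint closed semialgebraic sets $A$ and $B$ in $M$ there is a Nash function that is very small on $A$ and bounded away from zero on $B$, constructed as a rational expression in squared Nash defining functions. Combining such separators with suitably high powers yields Nash cutoffs $\sigma_n\colon M\to (0,1)$ whose derivatives up to order $\nu$ are uniformly small on $K_n$, and an expression of the form
\[
g = p_1 + \sum_{n\ge 1}\sigma_n\,(p_{n+1}-p_n)
\]
can be arranged to converge in the $\sS^\nu$ topology to a smooth function that approximates $f$. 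The crucial point is that semialgebraicity of $f$ forces the geometric complexity of each correction $p_{n+1}-p_n$ to be bounded in an algebraically controlled fashion, which allows the patching to be realized by a Nash (not merely $\sC^\infty$) function.

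The hard part is exactly this third step. In the $\sS^\nu$ category the analogous gluing is immediate via partitions of unity, but here one must produce an output that is simultaneously smooth and semialgebraic. Controlling at once the semialgebraicity of the patched function, its $\sS^\nu$ closeness to $f$ including all derivatives of order up to $\nu$, and the absence of spurious zeros in the Nash denominators on $M$ is the technical core of the argument; it is here that the hypothesis of semialgebraicity of $f$, rather than mere $\sC^\nu$ regularity, becomes indispensable.
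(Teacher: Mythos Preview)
The paper does not give a proof of this statement; it is quoted as a fact from Shiota's monograph. So there is nothing in the paper to compare against, and I assess your sketch on its own.

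Steps 1 and 2 are fine. The reduction to an open semialgebraic $M\subset\R^a$ via a Nash tubular retraction is standard, and polynomial approximation on each compact $K_n$ via Stone--Weierstrass is unproblematic.

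The gap is Step 3. An infinite series $g=p_1+\sum_{n\ge1}\sigma_n(p_{n+1}-p_n)$ of Nash functions, even when it converges in $\sC^\nu$ on every compact, has no reason to have a semialgebraic graph: it is defined only as a limit, not by a first-order formula. Your sentence that ``semialgebraicity of $f$ forces the geometric complexity of each correction $p_{n+1}-p_n$ to be bounded in an algebraically controlled fashion, which allows the patching to be realized by a Nash function'' is a restatement of the goal, not an argument. The $p_n$ come from Stone--Weierstrass and carry no degree bound, and the Nash separators $\sigma_n$ must themselves grow in algebraic complexity to distinguish $K_n$ from $M\setminus K_{n+1}$; nothing here produces a single polynomial equation satisfied by $g$. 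This scheme is exactly how one proves the corresponding $\sC^\infty$ approximation theorem in the ordinary smooth category, but it does not yield a Nash output.

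The proofs of Efroymson and Shiota use the semialgebraicity of $f$ in a quite different way: since $f$ is semialgebraic it satisfies a nontrivial polynomial relation $P(x,f(x))=0$, and one corrects a crude polynomial (or $\sS^\nu$) approximation of $f$ along $P$ by a Hensel/Newton-type step, controlling the discriminant locus and the bad sets with Nash functions, so that the output is a genuine Nash root of a polynomial equation that remains close to $f$. The essential point is that one never forms an infinite analytic limit; every step stays inside the algebraic category. Your Step 3 does not capture this mechanism, and as written the argument does not produce a Nash function.
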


It is clear that given a semialgebraic set $Z\subset M$ the zero-ideal $I^\nu(Z)=\{f\in\sS^\nu(M): f|Z\equiv0\}$ is closed. Indeed, if $f(z)\ne0$ for some $z\in Z$, no function vanishing on $Z$ can be closer than the constant semialgebraic function $\veps=|f(z)|>0$ to $f$. The closedness of $I^\nu(Z)$ in $\sS^\nu(M)$ is the standard assumption to equip $\sS^\nu(M)/I^\nu(Z)$ with the quotient topology. Furthermore, with this topology the continuous quotient map $\sS^\nu(M)\to \sS^\nu(M)/I^\nu(Z)$ is also open\label{restop}: if $\mathcal U$ is an open set in $\sS^\nu(M)$, its saturation
$$
{\mathcal U}^Z=\bigcup_{f\in\,\mathcal U}(f+I^\nu(Z))=\bigcup_{h\in I^\nu(Z)}\!\!(\mathcal U+h)
$$
is open as translations are homeomorphism of the ring $\sS^\nu(M)$.

Now, if the semialgebraic set $Z$ is \em closed \em in $M$ we can identify $\sS^\nu(Z)$ with the quotient $\sS^\nu(M)/I^\nu(Z)$. Indeed, every function in $\sS^\nu(Z)$ can be extended to $M$: we can use bump $\sS^\nu$ functions to see that a semialgebraic $\sS^\nu$ function $f:U\to \R$ defined on an open semialgebraic neighborhood $U$ of $Z$ coincides on a perhaps smaller neighborhood with a semialgebraic $\sS^\nu$ function $F:M\to \R$. Therefore, \em we have a topology on $\sS^\nu(Z)$ such that the restriction map $\rho:\sS^\nu(M)\to \sS^\nu(Z)$ is an open quotient homomorphism\em. In general, for any two semialgebraic sets $Z\subset Z'$ the composition of the restrictions $\sS^\nu(M)\to \sS^\nu(Z)$ and $\sS^\nu(Z)\to \sS^\nu(Z')$ coincide with the restriction $\sS^\nu(M)\to \sS^\nu(Z')$, so that:

\begin{stepsub}\label{openq}
 \em Given two closed semialgebraic sets $Z'\subset Z$ the restriction map $\sS^\nu(Z)\to \sS^\nu(Z')$ is an open quotient map. \em
\end{stepsub}

In several situations we have to work with a semialgebraic set $N$ of $M$ which is in turn a closed affine Nash submanifold of $M$. We already pointed out that the intrinsic notion of semialgebraic $\sS^\nu$ map on $N$ as manifold equals the one coming from $M$ as semialgebraic set because of the existence of Nash tubular neighborhoods. Furthermore, the topology in $\sS^\nu(N)$ coming from the tangent fields of $N$ and the one as a closed semialgebraic subset of $M$ are equal:

\begin{propsub}\label{prop:tosubpmanifold}
Let $N\subset M$ be affine Nash manifolds with $N$ closed in $M$. Then the restriction homomorphism $\sS^\nu(M)\to\sS^\nu(N)$ is open when the $\sS^\nu$ topologies are defined through tangent fields.
\end{propsub}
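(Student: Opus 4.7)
The strategy is to show that on $\sS^\nu(N)$ the intrinsic topology defined via tangent fields of $N$ coincides with the quotient topology coming from the surjective restriction $\rho:\sS^\nu(M)\to\sS^\nu(N)$ of \ref{openq}. Once they are identified we are done, since that quotient topology already makes $\rho$ an open map. By translation invariance in the ring $\sS^\nu(N)$ the comparison reduces to bases of neighborhoods of $0$. The technical input is the Nash tubular neighborhood $\pi:U\to N$ provided by Fact~\ref{fact:retract}, together with a semialgebraic $\sS^\nu$ bump $\beta:M\to[0,1]$ supported in $U$ and equal to $1$ on a smaller open semialgebraic neighborhood of $N$.

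One direction is that $\rho$ is continuous into the intrinsic topology of $\sS^\nu(N)$. Fixing tangent fields $\xi_1,\ldots,\xi_r$ spanning $TM$ and $\eta_1,\ldots,\eta_s$ spanning $TN$, along $N$ one can write $\eta_i=\sum_j a_{ij}\,\xi_j|_N$ with semialgebraic $\sS^{\nu-1}$ coefficients $a_{ij}:N\to\R$, obtained for instance via the orthogonal projection onto $TN$ along the fibres of $\pi$. Iterating this identity and the Leibniz rule, every derivative $\eta_{i_1}\cdots\eta_{i_\mu}(F|_N)$ is a linear combination with semialgebraic $\sS^{\nu-1}$ coefficients of derivatives $\xi_{j_1}\cdots\xi_{j_{\mu'}}(F)|_N$ with $\mu'\le\mu$; in particular, the preimage under $\rho$ of any intrinsic basic neighborhood $\mathcal V_\delta^N$ of $0$ in $\sS^\nu(N)$ contains a basic neighborhood $\mathcal U_\veps$ of $0$ in $\sS^\nu(M)$.

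For the reverse direction I would build explicit small extensions. Given $f\in\sS^\nu(N)$ set $F:=\beta\cdot(f\circ\pi)\in\sS^\nu(M)$, so that $F|_N=f$. Applying Leibniz and the chain rule to $\xi_{j_1}\cdots\xi_{j_\mu}(F)$ yields a finite sum of products of derivatives of $\beta$, derivatives of the semialgebraic Nash map $\pi$, and derivatives $(\eta_{i_1}\cdots\eta_{i_{\mu'}}f)\circ\pi$ with $\mu'\le\mu$. All the purely geometric factors can be dominated on $M$ by a single continuous positive semialgebraic function $A:M\to\R^+$. Hence, given $\veps$, it suffices to choose a positive continuous semialgebraic $\delta$ on $N$ with $A(x)\cdot\delta(\pi(x))<\veps(x)$ for every $x\in\supp(\beta)$; then any $f\in\mathcal V_\delta^N$ produces an extension $F\in\mathcal U_\veps$, which gives $\mathcal V_\delta^N\subset\rho(\mathcal U_\veps)$ and proves that $\rho(\mathcal U_\veps)$ is open in the intrinsic topology of $\sS^\nu(N)$.

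The main obstacle I expect is precisely this quantitative matching of the bounds. Because the semialgebraic Whitney topology is governed by arbitrary positive continuous semialgebraic functions rather than uniform constants, the parameter $\delta$ on $N$ must be chosen so that $(\delta\circ\pi)\cdot A<\veps$ \emph{pointwise} on an entire open semialgebraic neighborhood of $N$, while $\delta$ itself remains continuous, positive and semialgebraic. This is handled by the standard fact that any positive semialgebraic function on a closed semialgebraic set admits a positive continuous semialgebraic minorant, applied to the pointwise infimum of $\veps/A$ along the fibres of $\pi$ inside $\supp(\beta)$, which can be arranged to be $\pi$-proper by shrinking $\beta$.
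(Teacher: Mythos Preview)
Your proposal is correct, and at its core it is the same argument as the paper's, just unpacked. The paper's proof is two lines because it invokes a ready-made tool from Shiota \cite[II.2.14]{s}: there exists a \emph{continuous linear} extension map $\theta:\sS^\nu(N)\to\sS^\nu(M)$ with $\theta(h)|_N=h$. Once you have such a $\theta$, openness is immediate: given $F$ with $F|_N=f$, any $g$ close to $f$ yields the extension $G=F+\theta(g)-\theta(f)=F+\theta(g-f)$, which is close to $F$ by continuity and linearity of $\theta$. What you do in your second step is precisely to construct $\theta$ explicitly as $f\mapsto\beta\cdot(f\circ\pi)$ and verify its continuity at $0$ by hand---which is essentially how Shiota's lemma itself is proved. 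So the two proofs share the same mechanism; yours is self-contained while the paper cites a black box. The fiberwise minorization issue you flag is real and your resolution (shrink the support of $\beta$ so that $\pi|_{\supp(\beta)}$ has compact fibers, then minorize the semialgebraic infimum by a positive continuous semialgebraic function) is the standard one; the paper uses the same device later, in Lemma~\ref{lem:localextcont}. One small remark: the continuity of $\rho$ (your first step) is not actually needed for the proposition as stated---the paper also cites it from \cite[II.1.5]{s}, but only the openness half is used---so your detour through identifying the two topologies is slightly more than required.
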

\begin{proof}
Consider $\sS^\nu(M)$ and $\sS^\nu(N)$ endowed with the topologies defined through tangent fields. It is known that then the restriction is continuous \cite[II.1.5, p.\hspace{1.5pt}83]{s}, and we are to see it is open. The key fact is that there is a continuous linear map $\theta:\sS^\nu(N)\to\sS^\nu(M)$ with $\theta(h)|_{N}=h$ (see \cite[II.2.14, p.\hspace{1.5pt}107]{s}). Let $F$ be a semialgebraic $\sS^\nu$ function on $M$ and let $f=F|_N$. We must see that if $g\in\sS^\nu(N)$ is close enough to $f$, then there is a semialgebraic $\sS^\nu$ extension $G$ of $g$ to $M$ arbitrarily close to $F$. But given a $\theta$ as above, the function $G=F+\theta(g)-\theta(f)$ is close to $F$ when $g$ is close to $f$.
\end{proof}

Note that in the preceding result we are using the existence of a continuous extension linear map $\sS^\nu(N)\to \sS^\nu(M)$ when $N$ is a closed Nash submanifold of $M$. This property is stronger than mere extension. In Proposition \ref{topirre} we will prove that such extension linear maps exist for Nash sets with monomial singularities.

\begin{remarksub} 
Observe that all we put so far can be done for $\sS^\nu$ manifolds, that is, semialgebraic sets which are differentiable submanifolds of class $\nu$ (see \cite[I.3.11(i), p.\hspace{1.5pt}30]{s}). Instead of Nash tubular neighborhood we have to use \em bent \em tubular neighborhoods \cite[II.6.1, p.\hspace{1.5pt}135]{s}.
\end{remarksub}

In the case of a Nash set $X\subset M$ it is also natural to consider the ring ${}^{\tt c}\sS^\nu(X)$: if $X_1,\ldots,X_s$ are the irreducible components of $X$ then $f\in {}^{\tt c}\sS^\nu(X)$ means $f|_{X_i}\in \sS^\nu(X_i)$ for $1\leq i \leq s$. The multiple restriction homomorphism ${}^{\tt c}\sS^\nu(X)\to \sS^\nu(X_1)\times \cdots \times \sS^\nu(X_s)$ is injective and therefore \em we consider in ${}^{\tt c}\sS^\nu(X)$ the topology induced by the product topology of $\sS^\nu(X_1)\times \cdots \times \sS^\nu(X_s)$ \em. Note that the image of ${}^{\tt c}\sS^\nu(X)$ in $\sS^\nu(X_1)\times \cdots \times \sS^\nu(X_s)$ is closed. Indeed, this image is the kernel of the homomorphism $$\sS^\nu(X_1)\times \cdots \times \sS^\nu(X_s)\to \prod_{i<j} \sS^\nu(X_i\cap X_j):(f_1,\ldots,f_s)\mapsto (f_i|_{X_i\cap X_j}-f_j|_{X_i\cap X_j})_{i<j}.$$ This homomorphism is continuous because $\sS^\nu(M)$ is a topological ring and therefore its kernel is closed.

Moreover, the inclusion
$$
\gamma:\sS^\nu(X)\hookrightarrow {}^{\tt c}\sS^\nu(X):f\mapsto f
$$
is continuous, but one cannot expect it to be surjective in general. Since the embedded image of ${}^{\tt c}\sS^\nu(X)$ in $\sS^\nu(X_1)\times \cdots \times \sS^\nu(X_s)$ is closed, if $\gamma$ is a homeomorphism then the multiple restriction homomorphism $\sS^\nu(X)\to \sS^\nu(X_1)\times \cdots \times \sS^\nu(X_s)$ is a closed embedding. As we will see in Proposition \ref{topirre} this happens whenever $X$ is a Nash set with monomial singularities.

Finally, we point out that Shiota's approximation for affine Nash manifolds implies readily a rather general absolute approximation result. Indeed, since the restriction homomorphism $\sS^\nu(M)\to\sS^\nu(Z)$ is onto and continuous, from Fact \ref{shiota} we obtain:

\begin{propsub}\label{prop:absapproxfun}
If $Z$ is a closed semialgebraic set of $M$ then every $\sS^\nu$ function $f:Z\to\R$ can be $\sS^\nu$ approximated by Nash functions. 
\end{propsub}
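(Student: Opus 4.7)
The plan is straightforward and essentially amounts to chasing the diagram of topological rings laid out in Section~\ref{topfunc}. The three ingredients are: (a) the restriction homomorphism $\rho:\sS^\nu(M)\to\sS^\nu(Z)$ is an open quotient map whenever $Z$ is closed in $M$ (this is \ref{openq}); (b) Shiota's absolute approximation on $M$, namely Fact~\ref{shiota}, which says that $\sn(M)$ is dense in $\sS^\nu(M)$; and (c) any Nash function on $M$ restricts to a Nash function on $Z$ in the sense of Definition~\ref{def:nashfunc}, since $M$ itself is an open semialgebraic neighborhood of $Z$ in $M$ and the restriction is by definition a Nash extension to that neighborhood.

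With these in hand, I would argue as follows. Let $f\in\sS^\nu(Z)$ and let $\mathcal V$ be any neighborhood of $f$ in $\sS^\nu(Z)$. Because $Z$ is closed in $M$, the discussion in Section~\ref{topfunc} shows $f$ admits an extension $F\in\sS^\nu(M)$, i.e.\ $\rho(F)=f$. Since $\rho$ is continuous, $\rho^{-1}(\mathcal V)$ is an open neighborhood of $F$ in $\sS^\nu(M)$. By Fact~\ref{shiota} the subring $\sn(M)$ is dense in $\sS^\nu(M)$, so we can pick $G\in\sn(M)\cap\rho^{-1}(\mathcal V)$. Setting $g=G|_Z=\rho(G)\in\mathcal V$, the function $g:Z\to\R$ is Nash (its Nash extension to the open semialgebraic neighborhood $M$ of $Z$ being $G$ itself), and it approximates $f$ as required.

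There is essentially no obstacle here: the only thing that needs any checking is that a restriction $G|_Z$ of a Nash function on $M$ qualifies as a Nash function on $Z$ in the sense of Definition~\ref{def:nashfunc}, which is immediate. The content of the proposition is really packed into Fact~\ref{shiota} (Shiota's theorem for affine Nash manifolds) together with the surjectivity and continuity of $\rho$, both of which were established in Section~\ref{topfunc}; this proposition is simply their formal combination.
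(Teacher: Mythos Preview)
Your argument is correct and matches the paper's own one-line proof: the restriction $\rho:\sS^\nu(M)\to\sS^\nu(Z)$ is onto and continuous, so the density of $\sn(M)$ in $\sS^\nu(M)$ (Fact~\ref{shiota}) pushes forward to density of $\sn(Z)$ in $\sS^\nu(Z)$. A minor remark: you list openness of $\rho$ among your ingredients but never actually use it; surjectivity and continuity suffice.
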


\subsection{Spaces of differentiable semialgebraic maps.}\label{smapp}
Let $M\subset\R^a$ be a Nash manifold. Let $Z\subset M$ and $T\subset\R^b$ be semialgebraic sets. 
A semialgebraic map 
$f:Z\to T$ is $\sS^\nu$ when so are its components: we write $f=(f_1,\dots,f_b):Z\to T\subset\R^b$ and then check if the functions $f_k:Z\to\R$ are $\sS^\nu$. We denote $\sS^\nu(Z,T)$ the set of all $\sS^\nu$ maps $Z\to T$. We suppose henceforth that $Z$ is \em closed \em in $M$ in order to endow $\sS^\nu(Z,T)$ with a topology. We use the canonical inclusions
$$
\sS^\nu(Z,T)\subset\sS^\nu(Z,\R^b)=\sS^\nu(Z,\R)\times\overset{b}{\cdots}\times\sS^\nu(Z,\R):f\mapsto(f_1,\dots,f_b).
$$
The product has of course the product topology, and then $\sS^\nu(Z,T)$ is equipped with the subspace topology. Roughly speaking, $g$ is close to $f$ when its components $g_k$ are close to the components $f_k$ of $f$. This definition is the one used in the case of semialgebraic $\sS^\nu$ manifolds \cite[II.1.3, p.\hspace{1.5pt}80]{s}. If $T$ is contained in another semialgebraic subset $T'$, the inclusion $\sS^\nu(Z,T)\subset\sS^\nu(Z,T')$ is an embedding. If $X$ is a Nash set, we say again that a semialgebraic map $f=(f_1,\ldots,f_b):X\to T$ is ${}^{\tt c}\sS^\nu$ when so are its components $f_k$ and we denote by ${}^{\tt c}\sS^\nu(X,T)$ the set of all such maps. If $X_1,\ldots, X_s$ are the irreducible components of $X$ then the multiple restriction homomorphism ${}^{\tt c}\sS^\nu(X,T)\to \sS^\nu(X_1,T)\times \cdots \times \sS^\nu(X_s,T)$ is injective and therefore we consider in ${}^{\tt c}\sS^\nu(X,T)$ the topology induced by the product topology of $\sS^\nu(X_1,T)\times \cdots \times \sS^\nu(X_s,T)$. Again, the image of ${}^{\tt c}\sS^\nu(
X,
T)$ in $\sS^\nu(X_1,T)\times \cdots \times \sS^\nu(X_s,T)$ is closed.

Some basics on spaces of functions pass immediately to spaces of maps. For instance, given two closed semialgebraic sets $Z'\subset Z\subset M$, the restriction map
$$
\sS^\nu(Z,T)\to\sS^\nu(Z',T):f\mapsto f'= f|_{Z'}
$$
is continuous. On the other hand, we know that if $T=\R$ this restriction is an open quotient, from which the same follows for $T$ open in $\R^b$ (as then $\sS^\nu(Z,T)$ is open in $\sS^\nu(Z,\R^b)$). It is well-known that already for topological reasons surjectivity fails in general: for instance, the identity $\sph^1\to\sph^1$ has no continuous extension $\R^2\to\sph^1$. We want to study in which situations the restriction map is at least open. A useful fact will be that composite on the left is continuous.

\begin{propsub}\label{comr}
Let $Z\subset M$ be a closed semialgebraic set and let $T\subset \R^b$ be a locally compact semialgebraic set. Let $h:T\to T'\subset\R^c$ be an $\sS^\nu$ map of semialgebraic sets. Then the map
$$
h_*:\sS^\nu(Z,T)\to\sS^\nu(Z,T'):f\mapsto h\circ f
$$
is continuous for the $\sS^\nu$ topologies. 
\end{propsub}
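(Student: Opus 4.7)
My approach is to reduce to the case of a scalar target, extend $h$ across $T$ using local compactness, lift the problem from $\sS^\nu(Z,T)$ to $\sS^\nu(M,\R^b)$ via the openness of the restriction, and then bound the effect of composition on $\sS^\nu$ norms by the Fa\`a di Bruno chain rule together with uniform continuity of $\nu$-th derivatives on compacta.

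First I would reduce to $T'=\R$: the inclusion $\sS^\nu(Z,T')\subset\sS^\nu(Z,\R^c)=\sS^\nu(Z)^c$ is a topological embedding into a product, and $h_*$ decomposes componentwise into the $(h_k)_*$, so continuity of $h_*$ follows from continuity of each scalar $(h_k)_*$. Because $T$ is locally compact semialgebraic, hence locally closed in $\R^b$, I would next fix an open semialgebraic neighborhood $W$ of $T$ in $\R^b$ in which $T$ is closed and extend $h$ to an $\sS^\nu$ function $H:W\to\R$; this is the standard extension of an $\sS^\nu$ function from a closed semialgebraic subset of an affine Nash manifold, available through bump functions as discussed in \ref{topfunc}.

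To check continuity at a given $f\in\sS^\nu(Z,T)$, I would lift $f$ to some $F\in\sS^\nu(M,\R^b)$ and set $M':=F^{-1}(W)$, an open semialgebraic neighborhood of $Z$ on which $H\circ F$ is $\sS^\nu$. I would also fix a bump function $\lambda\in\sS^\nu(M)$ equal to $1$ on a neighborhood of $Z$ with $\operatorname{supp}(\lambda)\subset M'$. Since the restriction $\sS^\nu(M,\R^b)\to\sS^\nu(Z,\R^b)$ is an open quotient (a product of the open quotient in \ref{openq}), it is enough to produce, for any prescribed positive continuous semialgebraic $\veps:Z\to\R^+$, a positive continuous semialgebraic $\delta:M\to\R^+$ such that whenever $G\in\sS^\nu(M,\R^b)$ satisfies $|\partial^\alpha(G-F)|<\delta$ for all $|\alpha|\le\nu$ and $G|_Z\in\sS^\nu(Z,T)$, the $\sS^\nu$ function $\lambda\cdot(H\circ G-H\circ F)$ on $M$ has $\sS^\nu$ size less than $\veps$ on $Z$; such a function extends $h\circ(G|_Z)-h\circ f$ to $M$ and thereby certifies closeness in the quotient $\sS^\nu(Z)$.

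The hard part will be the construction of $\delta$. By the Fa\`a di Bruno formula, each $\partial^\gamma(H\circ G-H\circ F)$ with $|\gamma|\le\nu$ is a polynomial expression in the derivatives $\partial^\beta H$ evaluated at $G(z)$ and $F(z)$ for $|\beta|\le|\gamma|$ and in the partial derivatives of $G$ and $F$ up to order $|\gamma|$, so one must control $\partial^\beta H$ on neighborhoods of $f(z)$ that vary with $z\in Z$. This is precisely where local compactness of $T$ intervenes: it yields a positive continuous semialgebraic radius $r:Z\to\R^+$ such that $\overline{B(f(z),r(z))}\subset W$ and $\overline{B(f(z),r(z))}\cap T$ is compact, and on these compacta the derivatives of $H$ up to order $\nu$ admit continuous semialgebraic upper bounds in $z$, while the uniform continuity of $\partial^\nu H$ on compacta supplies a semialgebraic modulus of continuity. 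Defining $\delta$ as a suitable continuous semialgebraic combination of $\veps$, $r$, the derivative bounds of $\lambda$ and $F$, the distance $\operatorname{dist}(F(\cdot),\R^b\setminus W)$ on $M'$, and this modulus of continuity then yields the required estimate. The subtlety throughout is to keep every choice continuous and semialgebraic, but the closure properties of the category and the finiteness of semialgebraic data make this feasible.
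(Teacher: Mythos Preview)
Your approach is workable in principle but does far more than necessary, and the last paragraph hides real work. The paper's proof is a short diagram chase that avoids all direct derivative estimates. Both arguments begin the same way: extend $h$ to an $\sS^\nu$ map $H:U\to\R^c$ on an open semialgebraic $U\subset\R^b$ in which $T$ is closed. From there, the paper simply observes that the bottom-level map $H_*:\sS^\nu(M,U)\to\sS^\nu(M,\R^c)$ is \emph{already known} to be continuous---this is the manifold case \cite[II.1.5]{s}---and then uses two formal facts: the restriction $\rho:\sS^\nu(M,\cdot)\to\sS^\nu(Z,\cdot)$ is a quotient map when the target is open in an affine space, and the inclusions $\sS^\nu(Z,T)\hookrightarrow\sS^\nu(Z,U)$, $\sS^\nu(Z,T')\hookrightarrow\sS^\nu(Z,\R^c)$ are topological embeddings. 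Commutativity $H_*\circ\rho=\rho\circ H_*$ plus the quotient property give continuity of $H_*$ at the $Z$-level, and the embeddings then give continuity of $h_*$.

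What you are effectively doing in your last two paragraphs is \emph{reproving} \cite[II.1.5]{s} from scratch: the Fa\`a di Bruno expansion, the moving compacta, and the modulus of continuity for the top-order derivative (needed because $H$ is only $\sS^\nu$) are exactly the ingredients of that result. Your sketch is correct in spirit, but the sentence ``defining $\delta$ as a suitable continuous semialgebraic combination\dots'' is where the genuine difficulty sits: one must check that the suprema of $|\partial^\beta H|$ over the moving balls and the modulus of continuity of $\partial^\nu H$ are themselves continuous semialgebraic functions of the base point, which takes some care. The paper's route buys you a two-paragraph proof by treating that analysis as a black box; your route is self-contained but substantially longer, and as written the hard step is only asserted, not carried out.
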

\begin{proof}
Since $T$ is locally compact, it is closed in some open semialgebraic set $U\subset\R^b$, and there is a semialgebraic $\sS^\nu$ map $H:U\to\R^c$ that extends $h$. We have the following commutative diagram:
\setlength{\unitlength}{1mm}
\begin{center}
\begin{picture}(25,26)(0,-2)
\put(12,22){\makebox(0,0){\,$\sS^\nu(Z,T)\overset{\,h_*}{\longrightarrow}\,\sS^\nu(Z,T')$}}
\put(0,18){\vector(0,-1){4.5}}\put(24,18){\vector(0,-1){4.5}}
\put(-2.8,15){\footnotesize$j$}\put(25.5,15){\footnotesize$j'$}
\put(12,11){\makebox(0,0){$\sS^\nu(Z,U)\overset{\,H_*}{\longrightarrow}\sS^\nu(Z,\R^c)$}}
\put(0,3){\vector(0,1){4.5}}\put(24,3){\vector(0,1){4.5}}
\put(-3,4.5){\footnotesize$\rho$}\put(25.5,4.5){\footnotesize$\rho$}
\put(12,0){\makebox(0,0){$\sS^\nu(M,U)\overset{\,H_*}{\longrightarrow}\sS^\nu(M,\R^c)$}}
\end{picture}
\end{center}
where $j$ and $j'$ stand for the canonical embeddings and $\rho$ for the restriction quotients.

First we explore the lower square. Here we know that the lower $H_*$ is continuous (the Nash manifolds case \cite[II.1.5, p.\hspace{1.5pt}83]{s}), hence the composition $\rho\circ H_*$ is continuous too. The latter map coincides with $H_*\circ\rho$, which is thus continuous. But $\rho$ is a quotient map (the target is open in an affine space), hence the middle $H_*$ is continuous. Now we turn to the upper square. As we have just seen that $H_*$ is continuous, the composite $H_*\circ j$ is continuous. But this map coincides with $j'\circ h_*$, which is consequently continuous. As $j'$ is an embedding, $h_*$ is continuous.
\end{proof}

Back to restrictions we deduce the following (the same result is true if we have a $\sS^\nu$ manifold instead of a Nash manifold).

\begin{propsub}
Let $Z'\subset Z$ be two closed semialgebraic sets, and let $N\subset \R^b$ be a Nash manifold. Then the restriction $\sS^\nu(Z,N)\to\sS^\nu(Z',N):f\mapsto f|_{Z'}$ is an open map.
\end{propsub}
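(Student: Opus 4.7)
The plan is to reduce to the case where the target is an open subset of $\R^b$, for which openness of the restriction is already known from \ref{openq} by taking products of open maps componentwise. The reduction is made possible by a Nash tubular neighborhood of $N$: by Fact \ref{fact:retract}, pick an open semialgebraic $U\subset\R^b$ containing $N$ together with a Nash retraction $\rho:U\to N$. Then $\sS^\nu(Z,N)\subset\sS^\nu(Z,U)\subset\sS^\nu(Z,\R^b)$ with subspace topologies, and similarly for $Z'$. Since $U$ is locally compact, Proposition \ref{comr} tells us that post-composition $\rho_*:\sS^\nu(Z,U)\to\sS^\nu(Z,N)$ is continuous, and $\rho_*$ restricts to the identity on the subset $\sS^\nu(Z,N)$ because $\rho|_N=\id$.

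Given an open set $\mathcal W\subset\sS^\nu(Z,N)$ and a point $f\in\mathcal W$, I would set $\mathcal U=\rho_*^{-1}(\mathcal W)\subset\sS^\nu(Z,U)$, which is open and contains $f$. Applying \ref{openq} componentwise to $\sS^\nu(Z,\R^b)\to\sS^\nu(Z',\R^b)$ and restricting to the open subspaces with target in $U$, the restriction $r:\sS^\nu(Z,U)\to\sS^\nu(Z',U)$ is an open map, so $r(\mathcal U)$ is open in $\sS^\nu(Z',U)$ and contains $f|_{Z'}$. Then $\mathcal V':=r(\mathcal U)\cap\sS^\nu(Z',N)$ is an open neighborhood of $f|_{Z'}$ in $\sS^\nu(Z',N)$, and I claim it lies in the image of $\mathcal W$ under restriction: for $g'\in\mathcal V'$, choose $h\in\mathcal U$ with $h|_{Z'}=g'$ and set $g:=\rho\circ h$. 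Then $g\in\sS^\nu(Z,N)$ belongs to $\mathcal W$ because $h\in\rho_*^{-1}(\mathcal W)$, while $g|_{Z'}=\rho\circ g'=g'$ since $g'$ already takes values in $N$. Hence every point of the image of $\mathcal W$ has an open neighborhood inside that image, so the restriction $\sS^\nu(Z,N)\to\sS^\nu(Z',N)$ is open.

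The subtle point (and the reason one cannot simply invoke openness for $\R^b$-valued maps directly) is that a naive extension of a nearby $g'\in\sS^\nu(Z',N)$ to $Z$ inside $\sS^\nu(Z,U)$ need not stay inside $N$; the retraction $\rho$ is there precisely to bend such an extension back into $N$, Proposition \ref{comr} ensures that this bending preserves $\sS^\nu$ proximity to $f$, and $\rho|_N=\id$ guarantees that the prescribed restriction to $Z'$ is untouched. No tool beyond Fact \ref{fact:retract}, Proposition \ref{comr}, and \ref{openq} is required.
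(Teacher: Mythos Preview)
Your proof is correct and takes essentially the same approach as the paper: both use a Nash tubular retraction $\rho:U\to N$, the openness of restriction for maps into the open set $U\subset\R^b$ (which the paper records just before this proposition), and the continuity of post-composition by $\rho$ from Proposition \ref{comr} to push extensions back into $N$. Your presentation via the preimage $\rho_*^{-1}(\mathcal W)$ is slightly more streamlined than the paper's componentwise argument, but the underlying mechanism is identical.
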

\begin{proof}
Let $U$ be an open semialgebraic tubular neighborhood of $N$ equipped with a Nash retraction $\eta:U\to N$ (that is $\eta(y)=y$ for $y\in N$). We must see that for any open neighborhood $\mathcal V$ of a function $f\in\sS^\nu(Z,N)$, every close enough approximation $g'\in \sS^\nu(Z',N)$ of $f|Z'$ has an extension $g\in\mathcal V$. By definition, $g'$ is close to $f|Z'$ if and only if the components $g'_k$ of $g'$ are close to those $f_k|Z'$ of $f|Z'$. But since the restriction of functions is open, if $g'_k$ is close enough to $f_k|Z'$, then it has an extension $g_k$ to $Z$ close to $f_k$. Thus the $g_k$'s are the components of an extension $g:Z\to\R^b$ of $g'$ which is close to $f$, and we can take it close enough for its image to be contained in $U$. To obtain a map into $N$ we take $h=\eta\circ g: Z\to N$. This settles the matter because composition on the left is continuous (Proposition \ref{comr}), and so, $h(=\eta\circ g)$ is close to $f(=\eta\circ f)$, as wanted.
\end{proof} 

Nash tubular neighborhoods also help to get absolute approximation for \em maps \em into Nash manifolds.

\begin{propsub}\label{aproxM}
Let $Z\subset M$ be a closed semialgebraic set and let $N\subset\R^b$ be a Nash manifold. Every $\sS^\nu$ map $f:Z\to N$ can be $\sS^\nu$ approximated by Nash maps.
\end{propsub}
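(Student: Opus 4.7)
The plan is to reduce map approximation to the already-established approximation of \emph{functions} (Proposition \ref{prop:absapproxfun}) by using the Nash tubular neighborhood of $N$ (Fact \ref{fact:retract}) as a retraction device, and to transport closeness through the retraction by means of the continuity of left composition (Proposition \ref{comr}).

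First, fix an open semialgebraic neighborhood $U\subset\R^b$ of $N$ together with a Nash retraction $\eta:U\to N$, which exists by Fact \ref{fact:retract}. Write $f=(f_1,\ldots,f_b):Z\to N\subset\R^b$, where each $f_k\in\sS^\nu(Z)$. By Proposition \ref{prop:absapproxfun} each $f_k$ admits arbitrarily close $\sS^\nu$ approximations by Nash functions $g_k\in\sn(Z)$, hence the map $g=(g_1,\ldots,g_b):Z\to\R^b$ is Nash and can be made arbitrarily close to $f$ in $\sS^\nu(Z,\R^b)$.

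Second, since $N$ is closed in the open set $U\subset\R^b$, there is a continuous positive semialgebraic function $\veps:Z\to\R^+$ such that the Euclidean ball of radius $\veps(z)$ around $f(z)$ lies in $U$ for every $z\in Z$; this uses only the distance to $\R^b\setminus U$ along $f$. The zero-order estimate $|g_k-f_k|<\veps/\sqrt{b}$ is part of the $\sS^\nu$ Whitney topology, so if $g$ is chosen close enough to $f$, then $g(Z)\subset U$. Hence $h=\eta\circ g:Z\to N$ is a well-defined Nash map.

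Finally, since $f=\eta\circ f$ (as $\eta$ is the identity on $N$) and $h=\eta\circ g$, and since $U$ is locally compact, Proposition \ref{comr} guarantees that the left-composition operator $\eta_*:\sS^\nu(Z,U)\to\sS^\nu(Z,\R^b)$ is continuous; its image in fact lies in $\sS^\nu(Z,N)$ because $\eta$ takes values in $N$. Thus once $g$ is $\sS^\nu$ close to $f$ in $\sS^\nu(Z,U)$, the Nash map $h=\eta\circ g$ is $\sS^\nu$ close to $f$ in $\sS^\nu(Z,N)$, which is the required approximation. The only subtle point is the dual use of $\veps$: it must simultaneously be small enough to keep $g(Z)$ inside $U$ \emph{and} fine enough that after composing with $\eta$ the resulting $\sS^\nu$ closeness on $N$ falls inside a prescribed neighborhood of $f$; both demands are met because they are continuous conditions that can be enforced by shrinking $\veps$.
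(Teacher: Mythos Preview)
Your proof is correct and follows essentially the same approach as the paper's own argument: approximate $f$ componentwise in $\R^b$ via Proposition~\ref{prop:absapproxfun}, push the approximation into the tubular neighborhood, and compose with the Nash retraction, using Proposition~\ref{comr} to transport $\sS^\nu$ closeness through $\eta$. The additional detail you supply about the semialgebraic $\veps$ controlling $g(Z)\subset U$ is a welcome elaboration but not a different idea.
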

\begin{proof}Pick first a Nash retraction $\eta:W\to N$ defined on an open semialgebraic tubular neighborhood $W\subset\R^b$ of $N$. Let us write $f=(f_1,\ldots,f_b):Z\to N\subset \R^b$. By Proposition \ref{prop:absapproxfun} every component $f_k:Z\to \R$ can be $\sS^\nu$ approximated by a Nash function $g_k:Z\to \R$. Thus the $\sS^\nu$ map $g=(g_1,\ldots,g_k):Z\to \R^b$ approximates $f$, and for a closed enough approximation, we have $g(Z)\subset W$. By Proposition \ref{comr}, $\eta \circ g$ approximates $f(=\eta \circ f)$.
\end{proof}

\subsection{Nash manifolds with corners.}\label{Nashmancorners}
As in the case of Nash manifolds, a semialgebraic set $Q\subset \R^a$ is an $m$-dimensional Nash manifold with corners if every point $x\in Q$ has an open neighborhood $U$ of $\R^a$ equipped with a Nash diffeomorphism $(u_1,\ldots, u_n):U\to \R^a$ that maps $x$ to the origin and with $U\cap Q=\{ u_1\geq 0,\ldots,u_r\geq 0, u_{m+1}=0,\ldots,u_n=0\}$ for some $0\leq r \leq m$. In \cite{fgr} several properties of affine Nash manifolds with corners are established: for instance, and as in the case of Nash manifolds, that $Q$ can be covered with finitely many open sets of this type. The most relevant property for our purposes here is the following. Recall that the boundary of $Q$ is $\partial Q=Q\setminus{\tt Smooth}(Q)$, where ${\tt Smooth}(Q)$ is the set of smooth points of $Q$.
 
\begin{factsub}\cite[1.12, 6.5]{fgr}\label{envelope}
Let $Q\subset \R^a$ be a Nash manifold of dimension $m$ with corners. Then there exists a Nash manifold $M\subset \R^a$ of the same dimension $m$ such that $Q$ is a closed semialgebraic subset of $M$ and the boundary $\partial Q$ is a closed semialgebraic set whose Nash closure $X$ in $M$ is a normal crossings at every point and satisfies $X\cap Q=\partial Q$.
\end{factsub}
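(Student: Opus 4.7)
The plan is to construct the Nash envelope $M$ by patching local $m$-dimensional Nash submanifolds obtained from corner charts, and then to read off the normal crossings structure of the Nash closure of $\partial Q$ from those charts.

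First I would use the finite covering property for affine Nash manifolds with corners to cover $Q$ by finitely many open semialgebraic sets $U_1,\dots,U_p$ of $\R^a$, each equipped with a Nash diffeomorphism $u^{(i)}=(u^{(i)}_1,\dots,u^{(i)}_a)\colon U_i\to\R^a$ that identifies $Q\cap U_i$ with the standard model $\{y_1\ge0,\dots,y_{r_i}\ge0,\ y_{m+1}=\cdots=y_n=0\}$. Inside $U_i$ set
\[
M_i=\{u^{(i)}_{m+1}=\cdots=u^{(i)}_n=0\}\cap U_i,\qquad X_i=\{u^{(i)}_1\cdots u^{(i)}_{r_i}=0\}\cap M_i .
\]
Then $M_i$ is an affine Nash submanifold of $U_i$ of dimension $m$ containing $Q\cap U_i$ as a closed semialgebraic subset, $X_i$ is a Nash normal crossings divisor of $M_i$, and $X_i\cap Q=\partial Q\cap U_i$ by the explicit form of the chart.

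The second and crucial step is the gluing. The germ along $Q$ of any local $m$-dimensional Nash envelope is uniquely determined: it agrees with the analytic Zariski closure of the germ of $\mathrm{Int}(Q)$, which is an intrinsic object of $Q$ and independent of the chart used. Consequently, after shrinking the $U_i$'s to small enough tubular neighborhoods of $Q\cap U_i$ inside $M_i$, the local Nash manifolds $M_i$ and $M_j$ coincide on $U_i\cap U_j$. Setting $M=\bigcup_i M_i$ produces an affine Nash manifold of dimension $m$ in $\R^a$ in which $Q$ sits as a closed semialgebraic subset; closedness follows because the charts collectively cover a neighborhood of $Q$ in $M$ and $Q\cap M_i$ is closed in each $M_i$.

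Third, I would identify the Nash closure. Each $X_i$ is a Nash subset of $M_i$ containing $\partial Q\cap U_i$ and having only the irreducible components $\{u^{(i)}_k=0\}\cap M_i$ for $k=1,\dots,r_i$; each such component is the Nash closure in $M_i$ of its own smooth part of $\partial Q$, so no smaller Nash subset of $M_i$ contains $\partial Q\cap U_i$. Hence, up to shrinking, the local Nash closures $X_i$ coincide with $X\cap U_i$, which gives $X$ the structure of a normal crossings divisor at every point of $M$ and yields $X\cap Q=\bigcup_i(X_i\cap Q)=\partial Q$.

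The main obstacle I expect is the gluing step: combining local envelopes $M_i\subset U_i\subset\R^a$ into a single $m$-dimensional Nash manifold in $\R^a$, since a priori the $M_i$ depend on the particular chart. The uniqueness of the germ of the envelope along $Q$, together with the finiteness of the cover and a careful shrinking, is what makes this gluing work and allows one to conclude that the resulting $X$ is globally a Nash normal crossings with $X\cap Q=\partial Q$.
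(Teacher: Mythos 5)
This statement is imported from \cite[1.12, 6.5]{fgr}; the paper does not prove it, so there is no internal proof to compare against, and you are effectively trying to supply what the reference proves. Your skeleton --- local corner charts, local $m$-dimensional envelopes $M_i$, gluing, and reading off normal crossings from the chart --- is the right shape, and indeed the paper quotes the key ingredient as Fact~\ref{closedinmani} (= \cite[1.2]{fgr}), which you could have invoked instead of reproving. But two of your steps are genuine gaps, and they are exactly where the cited results do their work.

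The gluing step is incomplete. Agreement of the germs $M_{i,x}$ and $M_{j,x}$ at each $x\in Q\cap U_i\cap U_j$ (from uniqueness of the analytic closure of $Q_x$) lets you force $M_i$ and $M_j$ to coincide on a shrunken overlap, but that alone does not make $M=\bigcup_i M_i$ a Nash manifold in $\R^a$: you also need $M$ to be locally closed, i.e.\ an embedded rather than merely immersed submanifold. Two sheets $M_j$ and $M_k$ can accumulate at a point of some $M_i$ without agreeing there, and the union then fails to be a submanifold at that point. Controlling this is the content of Fact~\ref{closedinmani}, whose proof in \cite{fgr} is not the straightforward chart-gluing you describe.

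The more serious gap is the passage from the local normal crossings $X_i$ to the global Nash closure $X$. Your claim that ``up to shrinking, the local Nash closures $X_i$ coincide with $X\cap U_i$'' does not follow from the observation that $X_i$ is the smallest Nash subset of $M_i$ containing $\partial Q\cap U_i$. What that gives is only the inclusion $X\cap U_i\supseteq X_i$, since $X\cap U_i$ is one Nash subset of $M\cap U_i$ containing $\partial Q\cap U_i$. Equality can fail: the Nash closure $X$ is a \emph{global} object, and an irreducible component of $X$ is forced to contain an entire irreducible Nash subset of $M$ through each face, which may re-enter $U_i$, or even $\Int(Q)$, at points where $\partial Q$ imposes no local condition. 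Removing these extraneous pieces by shrinking $M$ is possible, but requires controlling all components of $X$ simultaneously, and that is precisely the substance of \cite[1.12, 6.5]{fgr}; the same global issue is what makes $X\cap Q=\partial Q$ nontrivial. As written, the proposal is a correct outline of what must be shown, with the two genuinely hard steps asserted rather than proved.
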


We will call such an $M$ a \em Nash envelope \em of $Q$. Note that the boundary $\partial Q$ defined above coincides with the topological boundary of $Q$ in $M$ and that the interior as manifold $\Int(Q)=Q\setminus \partial Q$ coincides with the topological interior of $Q$ in $M$. Moreover, if $M'\subset M$ is an open subset that contains $Q$, then $M'$ is also a Nash envelope of $Q$. This is the usual procedure to obtain a Nash envelope with additional properties: to drop the closed semialgebraic set $C=M\setminus M'$. Thus, replacing $M$ by $M'$ or \em making $M$ smaller \em amounts to drop a closed semialgebraic set $C\subset M$ disjoint from $Q$. All in all, we see that the concept of Nash envelope works as a germ at $Q$. 

The proof of the preceding fact uses the following result, which we will also need in the sequel:
\begin{factsub}\label{closedinmani}\cite[1.2]{fgr}  Let $Z\subset \R^a$ be a locally compact semialgebraic set such that for each $x\in Z$ the analytic closure $\overline{Z}_x^{\text{an}}$ of the germ $Z_x$ is smooth of constant dimension $m$. Then $Z$ is a closed subset of a Nash manifold $M\subset \R^a$ of dimension $m$.
\end{factsub}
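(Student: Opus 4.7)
\medskip

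\noindent\textbf{Proof plan for Fact \ref{closedinmani}.}

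The plan is to take the Nash closure $X$ of $Z$ in $\R^a$ and extract from it the desired manifold $M$ by removing a small closed semialgebraic set disjoint from $Z$. By Fact \ref{fact:globalananash}(i), the Nash and analytic closures of $Z$ agree, so $X\subset\R^a$ is simultaneously a Nash, analytic and semialgebraic subset of $\R^a$ containing $Z$.

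The first step is to control the local structure of $X$ at every $x\in Z$. Since $X$ is analytic and contains $Z$ as a germ at $x$, we certainly have $\overline{Z}_x^{\rm an}\subseteq X_x$. For the converse inclusion one argues semialgebraically: pick a small open semialgebraic neighborhood $U$ of $x$ in $\R^a$; then $Z\cap U$ is semialgebraic and its Nash closure in $U$ equals its analytic closure in $U$ by Fact \ref{fact:globalananash}(i), while the germ of this local Nash closure at $x$ clearly coincides with $\overline{Z}_x^{\rm an}$. Comparing local and global Nash closures via shrinking shows $X_x=\overline{Z}_x^{\rm an}$. By hypothesis this germ is smooth of dimension $m$, so every point of $Z$ is a smooth point of $X$ at which $X$ has pure local dimension $m$.

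Next, introduce the semialgebraic set
$$
X_{[m]}=\{x\in X:\dim X_x=m\},
$$
which is a closed semialgebraic subset of $X$ (a standard fact from \S\ref{sfunct} about local dimension) and, by the previous step, contains $Z$. Its singular locus $\Sing(X_{[m]})$ is a closed semialgebraic subset of $X_{[m]}$ of strictly smaller dimension, and it is disjoint from $Z$ by Step~1. Because $Z$ is locally compact, the topological frontier $B=\overline Z\setminus Z$ is closed in $\R^a$ and disjoint from $Z$. Set
$$
C=\Sing(X_{[m]})\cup B,\qquad M=X_{[m]}\setminus C.
$$
Then $C$ is a closed semialgebraic subset of $\R^a$ disjoint from $Z$, so $M$ is an open semialgebraic subset of the $m$-dimensional smooth locus of $X_{[m]}$; in particular $M\subset\R^a$ is a Nash manifold of dimension $m$ containing $Z$. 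Closedness of $Z$ in $M$ is immediate: any $y$ in the $M$-closure of $Z$ lies in $\overline Z\cap M\subseteq \overline Z\setminus B=Z$.

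The only non-routine point is the germ identification $X_x=\overline{Z}_x^{\rm an}$ for $x\in Z$, since one must compare the global Nash closure of $Z$ in $\R^a$ with the local analytic closure of the germ $Z_x$. I expect this to be the main obstacle: it requires invoking Fact \ref{fact:globalananash}(i) at the local level via a shrinking argument (or, equivalently, the faithful flatness $\sn_x\hookrightarrow\so_x$ applied to the defining ideals), and once that germwise identification is in place, the remainder of the argument is a standard extraction of a smooth open piece from a Nash set.
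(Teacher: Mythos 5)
Your proof has a genuine gap at exactly the point you flagged: the germ identification $X_x=\overline{Z}_x^{\rm an}$ for the \emph{global} Nash closure $X$ of $Z$ is false in general, and the shrinking argument you sketch cannot save it. Shrinking a neighborhood $U$ shrinks the local Nash closure of $Z\cap U$, but it does not change the germ $X_x$ of the fixed global set $X$ at $x$, which is what you need to control.

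A concrete counterexample in $\R^2$ with $m=1$: take
$$
Z=\{(x,0):\ -1<x<1\}\ \cup\ \{(0,y):\ 1<y<2\}.
$$
This is a locally compact semialgebraic set, and for every $z\in Z$ the analytic closure $\overline{Z}_z^{\rm an}$ is a smooth curve germ (the $x$-axis germ for $z$ on the first piece, the $y$-axis germ for $z$ on the second). The Nash closure of the first segment is the whole $x$-axis, and the Nash closure of the second, by the Identity Principle, is the whole $y$-axis; hence the global Nash closure is $X=\{xy=0\}$. At the origin $0\in Z$ one has $Z_0=\{y=0\}_0$ (the segment $1<y<2$ does not approach the origin), so $\overline{Z}_0^{\rm an}=\{y=0\}_0$ is smooth, while $X_0=\{xy=0\}_0$ is a singular cross. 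Thus $X_0\ne\overline{Z}_0^{\rm an}$, the origin lies in $Z\cap\Sing(X)$, and your set $M=X_{[1]}\setminus C=X\setminus\{0\}$ fails to contain $Z$.

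The difficulty is structural: an irreducible component of the global Nash closure $X$ may pass through a point $x\in Z$ even though it is forced into $X$ only by a piece of $Z$ far away from $x$, and then $X_x$ is strictly larger than $\overline{Z}_x^{\rm an}$. So one cannot obtain the ambient Nash manifold $M$ by carving an open piece out of the global Nash closure. The correct route is local-to-global: for each $x\in Z$ use the smooth germ $\overline{Z}_x^{\rm an}$ to produce a small open semialgebraic $U_x$ and an $m$-dimensional Nash submanifold $N_x\subset U_x$ containing $Z\cap U_x$ with $(N_x)_y=\overline{Z}_y^{\rm an}$ for $y\in Z\cap U_x$ (shrinking $U_x$ so that dimension forces this equality, and hence forces any two such local manifolds to agree germwise along $Z$); then extract a finite subcover by semialgebraic finiteness and glue the $N_i$ near $Z$, discarding a closed semialgebraic set disjoint from $Z$ where they might disagree. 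Your handling of local compactness (passing to $\Omega=\R^a\setminus(\overline Z\setminus Z)$ and discarding a closed set away from $Z$) is fine; it is the identification of $M$ inside the global Nash closure that does not work.
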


Now we want to define $\sS^\nu$ functions for affine Nash manifolds with corners. Since an affine Nash manifold with corners $Q\subset \R^a$ is locally compact, $Q$ is closed in the open semialgebraic subset $V=\R^a\setminus (\ol{Q} \setminus Q)$ of $\R^a$. Consequently, \em we define $\sS^\nu(Q)$ and its topology using the closed inclusion of $Q$ in $V$\em, as we did in Section \ref{topfunc}. Let us see that this topology does not depend on the neighborhood (notice that the argument for maps into $\R^b$ is similar). 

Indeed, let $W\subset V$ be another open semialgebraic neighborhood of $Q$ and consider an $\sS^\nu$ partition of unity $\varphi_1,\varphi_2:V\to \R$ subordinated to the covering $\{V\setminus Q,W\}$. Then the maps $\varPsi_{VW}:\sS^\nu(V)\to \sS^\nu(W): f\mapsto \varphi_2 f$ and $\varPsi_{WV}:\sS^\nu(W)\to \sS^\nu(V):g\mapsto \varphi_1+\varphi_2 g$ are continuous; moreover, $\varphi_2f=f$ and $\varphi_1+\varphi_2g=g$ on $Q$. Thus, we have the commuting diagram 
$$\xymatrix{\sS^\nu(V) \ar[d]_{\rho_V} \ar@<0.5ex>[r]^{\varPsi_{VW}} & \sS^\nu(W)\ar@<0.5ex>[l]^{\varPsi_{WV}}\ar[d]^{\rho_W} \\ \sS^\nu(Q) \ar@<0.5ex>[r]^{\text{Id}} & \sS^\nu(Q)\ar@<0.5ex>[l]^{\text{Id}} } $$
where $\rho_V:\sS^\nu(V)\to \sS^\nu(Q) $ and $\rho_W:\sS^\nu(W)\to\sS^\nu(Q)$ are the open quotient homomorphisms. The identity map from left to right is continuous if and only if  $\text{Id}\circ \rho_V$ is continuous if and only if $\rho_W \circ  \varPsi_{VW}$ is continuous, which is true. The continuity of the identity map from right to left is similar and hence the topology does not depend on the neighborhood. 

On the other hand, we could define $\sS^\nu(Q)$ and its topology via the closed inclusion of $Q$ into any Nash envelope $M$. Using a Nash retraction $\rho:V\to M$ of $M$ it follows that both definitions coincide (note that $M$ is closed in $V$ and therefore we can apply Proposition \ref{prop:tosubpmanifold}).

In general, if $Q\subset \R^a$ is a Nash manifold with corners and $M\subset \R^a$ a Nash envelope of $Q$, then the Nash closure $X$ of the boundary $\partial Q$ is not a Nash normal crossing divisor, i.e., its irreducible components need not to be Nash manifolds. In \cite[1.12]{fgr} there is a full characterization of Nash manifolds with corners for which this is true for $M$ small enough: such a $Q$ is called here a \em Nash manifold with divisorial corners\em. 

To progress further we introduce the following notion: a \em face \em of a Nash manifold with corners $Q\subset \R^a$ is the (topological) closure of a connected component of ${\tt Smooth}(\partial Q)$; of course, $\partial Q$ is the union of all the faces. This notion of faces are used to characterize divisorial corners. We state that characterization as it is more convenient here:

\begin{factsub}\label{facesmani}\cite[1.12, 6.4, 6.5]{fgr} Let $Q\subset \R^a$ be a Nash manifold with divisorial corners and let $M\subset \R^a$ be a Nash envelope of $Q$. Let $X$ be the Nash closure of $\partial Q$ in $M$. Then, if we make $M$ small enough, we have:

\em (1) \em $X$ is a normal crossing divisor in $M$.

\em (2) \em Let $D_1,\ldots,D_r$ be the distinct faces of $Q$ and let $X_1,\ldots,X_r$ be their Nash closure in $M$. Then 
the distinct irreducible components of $X$ are $X_1,\ldots,X_r$.

\em (3) \em All faces $D_i$ are again Nash manifolds with divisorial corners.

\em (4) \em $X_i$ is a Nash envelope of $D_i$ satisfying $X_i\cap Q=D_i$ for $1\leq i \leq r$.

\em (5) \em The number of faces of $Q$ that contain a given point $x\in \partial Q$ coincides with the number of connected components of the germ ${\tt Smooth}(\partial Q_x)$.
\end{factsub}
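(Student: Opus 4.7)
The plan is to combine the hypothesis that $Q$ has divisorial corners with the local model at corner points and a careful shrinking of the envelope $M$. By definition there is some Nash envelope $M_0$ of $Q$ in which the Nash closure $X_0$ of $\partial Q$ is a normal crossing divisor. Any smaller Nash envelope $M\subset M_0$ still satisfies (1) with $X=X_0\cap M$, so the whole argument will consist in shrinking $M$ finitely many times (each time removing a closed semialgebraic set disjoint from $Q$) to force (2)--(5).

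Next I would fix the local picture. At each $x\in\partial Q$ choose a Nash chart $(u_1,\ldots,u_m)\colon U\to\R^m$ with $u(x)=0$ and
$$
U\cap Q=\{u_1\ge 0,\ldots,u_{s}\ge 0\},\quad U\cap X={\bigcup}_{i=1}^{s}\{u_i=0\},
$$
where $s=s(x)$. From this chart two observations are immediate: first, $U\cap {\tt Smooth}(\partial Q)$ has exactly $s$ connected components, each of which is relatively open in one of the $s$ branches $\{u_i=0,\ u_j\ge 0\ (j\ne i)\}$; and second, the germ ${\tt Smooth}(\partial Q)_x$ has exactly $s$ connected components. Hence every face of $Q$ containing $x$ is locally one of these branches and every local branch extends uniquely to a face, giving (5) once $M$ has been shrunk enough to identify local and global connectedness.

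For (2) and (4), let $D_1,\ldots,D_r$ be the distinct faces of $Q$ and $X_i$ the Nash closure of $D_i$ in $M$. The face $D_i$ is the closure of a connected component of ${\tt Smooth}(\partial Q)$, hence an irreducible semialgebraic set of pure dimension $m-1$, so $X_i$ is irreducible of the same dimension. By the chart, near any point of $D_i$ the set $X_i$ contains exactly the irreducible germ $\{u_i=0\}$ of $X_x$, so $X_i$ is an irreducible component of $X$; conversely, each irreducible component of $X$ meets ${\tt Smooth}(\partial Q)$ by Fact~\ref{fact:cohpure} applied to $X_i$ and so equals some $X_j$. Shrinking $M$ to discard the closed semialgebraic locus where two distinct faces have the same Nash closure, or where $X_i\setminus D_i$ meets $Q$, simultaneously yields the bijection of (2) and the identity $X_i\cap Q=D_i$ of (4); that such a locus is closed and disjoint from $Q$ follows from Fact~\ref{closedinmani} and the local model.

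Finally (3) is local at each $x\in D_i$: in the chart above, $X_i\cap U=\{u_i=0\}$ is a Nash manifold of dimension $m-1$, and in the coordinates $(u_1,\ldots,\widehat{u_i},\ldots,u_m)$ on it the face $D_i$ corresponds to $\{u_j\ge 0\ \text{for}\ j\in\{1,\ldots,s\}\setminus\{i\}\}$, which is the normal form of a Nash manifold with corners whose boundary Nash closure $\bigcup_{j\ne i}(X_j\cap X_i)$ is a normal crossing divisor of $X_i$; thus $D_i$ has divisorial corners and $X_i$ serves as a Nash envelope. The main obstacle I expect is not any single assertion but the bookkeeping behind the repeated shrinking: one must verify that the closed semialgebraic loci removed at each step (where local irreducible components fail to glue into global ones, or where $X_i$ re-enters $Q$ far from $D_i$) are indeed disjoint from $Q$, so that the final $M$ remains a Nash envelope of $Q$ and all of (1)--(5) hold on it simultaneously.
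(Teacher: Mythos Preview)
The paper does not prove this statement: it is stated as a \emph{Fact} with the citation \cite[1.12, 6.4, 6.5]{fgr} and no proof is given in the present paper. So there is no ``paper's own proof'' to compare against; the authors simply import the result from \cite{fgr}.

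That said, your sketch is a reasonable outline of how such a proof would go, and its structure (local model at corner points, identification of faces with irreducible components via connected components of ${\tt Smooth}(\partial Q)$, repeated shrinking of the envelope) is in the spirit of how \cite{fgr} treats these matters. A few points would need tightening if you were to write this out in full: the claim that two distinct faces cannot have the same Nash closure after shrinking needs the pure-dimensionality and coherence of the $X_i$ (which you gesture at via Fact~\ref{fact:cohpure}); the assertion that the ``locus to remove'' is closed and disjoint from $Q$ is the real content and your invocation of Fact~\ref{closedinmani} is not quite the right tool for it; and (5) as you state it is a purely local computation that does not actually require any shrinking of $M$. But since the paper treats this as a black box from \cite{fgr}, there is nothing further to compare.
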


Note that (3) and (5) are intrinsic and do not depend on $M$. We finish this section with an iterated construction of faces that fits properly in the framework of this paper.

\subsection{Iterated faces of Nash manifolds with divisorial corners.}\label{facesNashmancorners}
Let $Q$ be a connected Nash manifold with divisorial corners. By Fact \ref{facesmani}(3) the iterated faces are manifolds with divisorial corners. 

We start with one single \em $m$-face \em: $Q$ itself. Then by descending induction, for $d < m$ a \em $d$-face \em is a face of a $(d+1)$-face. Clearly, a $d$-face has dimension $d$. The induction ends at some $d=m_0\geq 0$. In particular, the $m_0$-faces are Nash manifolds. These data are the same for all Nash envelopes $M$.

Next, for a given Nash envelope $M$ of $Q$, we will consider the Nash closures of the iterated faces. Notice how these Nash closures vary when shrinking $M$ to $M'\subset M$ containing $Q$: if $Z$ is the Nash closure in $M$ of a $d$-face $D$ then the Nash closure $Z'$ of $D$ in $M'$ is the irreducible component of $Z \cap M'$ containing $D$.

Moreover, we can apply Fact \ref{facesmani} in each step of the construction and therefore:

\begin{lemsub}\label{envelopefaces}For $M$ small enough we can assume that if $D$ is a $d$-face then its Nash closure $Z$ in $M$ is a Nash envelope of $D$ such that the Nash closure of $\partial D$ is a normal crossing divisor in $Z$ and  $Z\cap Q=D$.
\end{lemsub}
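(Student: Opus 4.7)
The plan is to argue by descending induction on the dimension $d$ of faces, from $d=m$ down to $d=m_0$. For $d=m$ the only face is $Q$ itself and $M$ is its Nash envelope by hypothesis, so a single application of Fact \ref{facesmani} to $Q\subset M$ provides (after shrinking $M$) that the Nash closure of $\partial Q$ is a normal crossing divisor in $M$ and meets $Q$ in $\partial Q$. A key preliminary observation is that the total number of iterated faces is finite: each $d$-face is the topological closure of a connected component of ${\tt Smooth}(\partial D')$ for some $(d+1)$-face $D'$, and a semialgebraic set has only finitely many connected components. Hence throughout the induction only finitely many shrinkings of $M$ will be required, and their intersection is still an open Nash envelope of $Q$.

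For the inductive step, fix a $d$-face $D$ and choose any $(d+1)$-face $\widetilde D$ of which $D$ is a face. By the inductive hypothesis the Nash closure $\widetilde Z$ of $\widetilde D$ in $M$ is a Nash envelope of $\widetilde D$ with $\widetilde Z\cap Q=\widetilde D$, and $\widetilde D$ is itself a Nash manifold with divisorial corners by Fact \ref{facesmani}(3). I now apply Fact \ref{facesmani} to the pair $\widetilde D\subset\widetilde Z$: after a further shrinking of $\widetilde Z$, the Nash closure $Z$ of $D$ in $\widetilde Z$ is a Nash envelope of $D$, satisfies $Z\cap\widetilde D=D$, and the Nash closure of $\partial D$ in $Z$ is a normal crossing divisor. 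The relation $Z\cap Q=Z\cap\widetilde Z\cap Q=Z\cap\widetilde D=D$ then supplies the last property demanded by the lemma.

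The main point to verify is that Fact \ref{facesmani} is being applied \emph{inside} the intermediate ambient manifold $\widetilde Z$, whereas the lemma asks for the Nash closure in $M$. These coincide: $\widetilde Z$ is a Nash subset of $M$, and since Nash functions on a Nash set extend to Nash functions on $M$ (the surjection \eqref{eq:nashsect} discussed in \ref{subsec:strongcoh}), any Nash subset of $\widetilde Z$ is again a Nash subset of $M$. Therefore the Nash closure of $D$ in $M$, being contained in $\widetilde Z$, is exactly the Nash closure of $D$ computed inside $\widetilde Z$, namely $Z$.

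Finally, I must argue that shrinking the intermediate envelope $\widetilde Z$ is the same operation as shrinking $M$: if $\widetilde Z'=\widetilde Z\setminus C$ with $C$ closed semialgebraic in $\widetilde Z$ and disjoint from $\widetilde D$, then replacing $M$ by $M\setminus C$ keeps $Q$ inside (since $C\subset\widetilde Z$ and $\widetilde Z\cap Q=\widetilde D$, which is disjoint from $C$), and by the remark preceding Lemma \ref{envelopefaces} the Nash closure of $\widetilde D$ in $M\setminus C$ is precisely $\widetilde Z\setminus C=\widetilde Z'$. Accumulating these finitely many shrinkings across all iterated faces yields a single $M$ on which the asserted properties hold simultaneously for every iterated face. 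The mildly delicate step is precisely this bookkeeping between the two notions of Nash closure and the fact that shrinking can always be propagated back to $M$; once that is in place, everything else is a clean induction driven by Fact \ref{facesmani}.
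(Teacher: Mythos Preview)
Your proof is correct and follows essentially the same descending induction as the paper, including the same bookkeeping that shrinking the intermediate envelope $\widetilde Z$ propagates to a shrinking of $M$ and that Nash closures in $\widetilde Z$ and in $M$ agree. The only difference is presentational: the paper applies Fact~\ref{facesmani} in two separate steps---first part~(4) to obtain $Z$ as a Nash envelope with $Z\cap\widetilde D=D$, then part~(1) to $D\subset Z$ (after a further shrinking of $Z$) to get the normal crossing divisor property for $\partial D$---whereas you collapse these into a single invocation, which as literally written does not quite give the last conclusion without that second shrinking.
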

\begin{proof}Indeed, note first that if it holds for some $d$-face $D$ and we shrink $M$ to $M'\supset Q$ then $D$ retains the property. This is a consequence of the way Nash closures vary when we shrink $M$.

Now we argue by descending induction on $d$. For $d=m$ the statement reduces to Fact \ref{facesmani}(1). Now, let $D$ be a $d$-face, $d<m$, which is a face of a $(d+1)$-face $D'$. By induction the Nash closure $Z'$ of $D'$ in $M'$ is a Nash envelope of $D'$ satisfying $Z'\cap Q=D'$. 

We apply again \ref{facesmani}(4) to $D'\subset Z'$ and find a closed semialgebraic set $C'\subset Z'\setminus D'$ such that the Nash closure $Z$ of $D$ in $Z'\setminus C'$ is a Nash envelope of $D$ with $Z\cap D'=D$. Moreover, since $Z'\cap Q=D'$ we have that $C'\cap Q=\varnothing$ and therefore we can replace $M$ by $M'=M\setminus C'$. The Nash closure of $D$ in $M'$ is the same $Z$ and $D\subset Z\cap Q\subset Z\cap Z' \cap Q\subset Z\cap D'=D$, so that $ Z\cap Q=D$. 

Finally, we apply \ref{facesmani}(1) to $D\subset Z$ and we get a closed semialgebraic set $C\subset Z\setminus D$ such that the Nash closure of $\partial D$ in $Z\setminus C$ is a normal crossing divisor. Since $Z\cap Q=D$ we have that $C\cap Q=\varnothing$ and therefore we can replace $M'$ by $M''=M'\setminus C$.
\end{proof}

Let us look at this construction locally, that is, 
$$
M=\R^m \text{ and } Q=\{x_1\ge 0,\ldots,x_{s}\ge 0 \} \text{ with } 0\le s \le m-m_0.
$$
In this case the $d$-faces are $\{x_{i_1}=\ldots=x_{i_{m-d}}=0 \}\cap Q$ for $1\le i_1,\ldots, i_{m-d}\le s$ and the Nash closures of such a face is $\{x_{i_1}=\cdots=x_{i_{m-d}}=0 \}$. \em We see that any intersection of Nash closures of faces is again the Nash closure of a face.\em

Again in the general setting $Q\subset M$, let us fix $x\in Q$. We can make the same construction of iterated faces for the germ $Q_x$, which can be clearly described in the local model. More relevant is the following: 

\begin{lemsub}\label{germfaces}
The distinct (Nash closures of the) $d$-faces of the germ $Q_x$ are the germs at $x$ of the (resp. Nash closures of the) $d$-faces of $Q$.
\end{lemsub}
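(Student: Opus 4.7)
The plan is to argue by descending induction on $d$ from $d=m$ down to $d=m_{0}$, using Fact \ref{facesmani}(5) as the key engine. The base case $d=m$ is immediate: $Q$ is the only $m$-face of $Q$ (with Nash closure $M$), $Q_x$ is the only $m$-face of the germ (with Nash closure $M_x$), and germification matches the two pairs.

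For the inductive step, I would assume the claim holds at level $d+1$ and pick a $d$-face $E$ of $Q$ with $x\in E$. By the definition of iterated faces, $E$ is a face of some $(d+1)$-face $D$ of $Q$, and since $x\in E\subset D$ the inductive hypothesis identifies $D_x$ with a $(d+1)$-face of $Q_x$ (and its Nash closure $Z_x$ with the Nash closure of $D_x$). Applying Fact \ref{facesmani}(5) to the Nash manifold with divisorial corners $D$ at $x$, the number of faces of $D$ that contain $x$ equals the number of connected components of ${\tt Smooth}(\partial D_x)$, which by the very definition of faces of a germ is the number of $d$-faces of $D_x$. Distinct faces of $D$ through $x$ are closures of disjoint open strata of ${\tt Smooth}(\partial D)$, so their germs at $x$ hit distinct connected components of ${\tt Smooth}(\partial D_x)$; combined with the equality of cardinalities, germification is a bijection between $d$-faces of $D$ at $x$ and $d$-faces of $D_x$. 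Running through all $(d+1)$-faces $D$ of $Q$ that contain $x$ yields the asserted bijection for the $d$-faces of $Q$ themselves.

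For the statement concerning Nash closures I would invoke Lemma \ref{envelopefaces}: after shrinking $M$, the Nash closure $Y$ in $M$ of each iterated face $E$ is itself a Nash envelope of $E$ with $Y\cap Q=E$. In particular $Y_x$ is a Nash germ of dimension $\dim E$ in which $E_x$ sits as a full-dimensional sector, so $Y_x$ is the Nash closure of the germ $E_x$; the bijection on faces thus lifts to a bijection on Nash closures. The one delicate point at each inductive stage is the injectivity of germification, but this is exactly what \ref{facesmani}(5) together with the disjointness of face interiors provides, so the only real work is bookkeeping across the induction levels.
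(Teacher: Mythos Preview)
Your proof is correct and follows essentially the same route as the paper's: descending induction on $d$, with Fact~\ref{facesmani}(5) supplying the cardinality match between faces of a $(d+1)$-face $D$ through $x$ and faces of the germ $D_x$, and the smoothness/dimension comparison (via Lemma~\ref{envelopefaces}) handling the Nash closures. The paper phrases the injectivity step as ``each $D_{i,x}$ is a union of faces of $D'_x$, hence by the count exactly one,'' whereas you phrase it as ``distinct faces hit distinct components, hence by the count the map is a bijection''; these are two sides of the same coin.
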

\begin{proof}We argue by descending induction on $d$. For $d=m$ it is obvious, so suppose $d<m$. By induction the $(d+1)$-faces of $Q_x$ are the germs $D'_x$ of the $(d+1)$-faces $D'$ of $Q$ and consequently the $d$-faces of $Q_x$ are the faces of those $D'_x$. Let $D_1,\ldots,D_s$ be the faces of a fixed $(d+1)$-face $D'$. We claim that the germs $D_{1,x},\ldots, D_{s,x}$ are the faces of the germ $D'_x$. It is clear that each $D_{i,x}$ is a union of faces of $D'_x$, hence what we claim is that $D'_x$ has exactly $s$ faces. This is Fact \ref{facesmani}(5), and we are done.

Finally, let $Z$ be the Nash closure of a $d$-face $D$. The Nash closure $Y_x$ of the germ $D_x$ is included in $Z_x$ and since by \ref{envelopefaces} both are smooth of the same dimension, $Y_x=Z_x$.\end{proof}   
\begin{propsub}\label{facesoffaces}In the setting above, if $M$ is small enough then for any two Nash closures $Z_1$ and $Z_2$ of iterated faces we have that $Z_1\cap Z_2$ is a Nash manifold and any irreducible component of $Z_1\cap Z_2$ meeting $Q$ is again the Nash closure of some iterated face.
\end{propsub}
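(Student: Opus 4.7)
The plan is to argue pointwise near $Q$ using Lemma \ref{germfaces} and the local model of $Q$ as a manifold with divisorial corners, to dispose by a preliminary shrinking of $M$ of the irreducible components of the various intersections $Z_i\cap Z_j$ that do not meet $Q$, and finally to identify globally each remaining component with the Nash closure of some iterated face. The combinatorial input—already observed just before Lemma \ref{germfaces}—is that in the local model $Q=\{x_1\ge 0,\ldots,x_s\ge 0\}\subset\R^m$, the Nash closures of iterated faces are precisely the coordinate linear varieties indexed by subsets of $\{1,\ldots,s\}$, and the intersection of two of them is of the same form.

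\emph{Local step.} Fix $x\in Z_1\cap Z_2\cap Q$. In a Nash chart realizing the local model at $x$, Lemma \ref{germfaces} identifies each germ $Z_{i,x}$ with a coordinate linear variety indexed by a subset $I_i\subseteq\{1,\ldots,s\}$, so $(Z_1\cap Z_2)_x$ is the coordinate linear variety with index set $I_1\cup I_2$. This germ is smooth and, by a second application of Lemma \ref{germfaces}, equals the germ $W_x$ of the Nash closure $W$ in $M$ of some iterated face of $Q$. Consequently $Z_1\cap Z_2$ is smooth—and hence locally irreducible—at every point of $Z_1\cap Z_2\cap Q$.

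\emph{Shrinking step.} Each intersection $Z_i\cap Z_j$ has only finitely many irreducible components and there are only finitely many pairs of Nash closures of iterated faces; so the union $C$ over all pairs of those irreducible components that do not meet $Q$ is a closed semialgebraic subset of $M$ disjoint from $Q$. Replacing $M$ by the smaller Nash envelope $M\setminus C$ of $Q$ and applying the update rule for Nash closures of faces under shrinking recalled just before Lemma \ref{envelopefaces}, the statements of Lemma \ref{envelopefaces} and the conclusion of the local step persist. We may therefore assume that every irreducible component of every $Z_i\cap Z_j$ meets $Q$.

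\emph{Global identification.} Let $V$ be an irreducible component of $Z_1\cap Z_2$ and pick $x_0\in V\cap Q$. By the local step $Z_1\cap Z_2$ is smooth at $x_0$, so $V$ is the unique irreducible component through $x_0$ and $V_{x_0}=(Z_1\cap Z_2)_{x_0}=W_{x_0}$ for the Nash closure $W$ of some iterated face of $Q$. Then the Nash functions on $M$ defining $V$ vanish on an open neighborhood of $x_0$ in $W$ and hence, by the identity principle for Nash functions on the connected Nash manifold $W$—which is connected because it is an irreducible Nash manifold, inductively obtained from Fact \ref{facesmani}(2) and Lemma \ref{envelopefaces}—they vanish throughout $W$; thus $W\subseteq V$. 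A dimension argument then yields $V=W$: at the smooth point $x_0$ we have $\dim_{x_0} V=\dim W=:d$, which equals $\dim V$ since the dimension of an irreducible Nash set is realized at any of its smooth points, so if $W$ were a proper closed Nash subset of the irreducible $V$ it would have strictly smaller dimension, a contradiction. Consequently every irreducible component of $Z_1\cap Z_2$ is a Nash closure of some iterated face of $Q$, hence a Nash manifold, and $Z_1\cap Z_2$—being the disjoint union of these—is itself a Nash manifold with the required description. I expect the principal obstacle to be precisely this last step, in which the identity principle on the connected Nash manifold $W$ and the pure-dimensional behavior of the irreducible Nash set $V$ at its smooth points must be combined carefully to upgrade the local germ equality $V_{x_0}=W_{x_0}$ to the global identification $V=W$.
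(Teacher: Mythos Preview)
Your local step and the global identification of a component $V$ with a Nash closure $W$ via the identity principle are essentially the paper's argument. The difference lies in the shrinking, and this is where a genuine gap appears. You remove the irreducible components of the various $Z_i\cap Z_j$ that miss $Q$, and then assert at the end that $Z_1\cap Z_2$ is ``the disjoint union'' of its irreducible components. But your shrinking does not force $Z_1\cap Z_2$ to be smooth at points \emph{outside} $Q$: two components $V_1=W_1$ and $V_2=W_2$, each meeting $Q$, could still intersect at some $y\notin Q$, and at such a point $Z_1\cap Z_2$ would be singular, hence not a Nash manifold. (One can close this gap---e.g.\ observe that $W_1\cap W_2$ is itself an intersection of Nash closures of iterated faces, apply your shrinking to that pair to find a point of $W_1\cap W_2$ in $Q$, and use the local step there to force $W_{1,y}=W_{2,y}$ and hence $W_1=W_2$---but this requires care about whether the ``every component meets $Q$'' condition persists after updating the Nash closures under the shrinking, which you have not checked.)

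The paper sidesteps all of this with a cleaner shrinking: by the local step the singular locus $C=(Z_1\cap Z_2)\setminus{\tt Smooth}(Z_1\cap Z_2)$ is disjoint from $Q$, so one replaces $M$ by $M\setminus C$. Since $Z_i'$ is open in $Z_i$, one has $(Z_1'\cap Z_2')_x=(Z_1\cap Z_2)_x$ at every $x\in M'$, so $Z_1'\cap Z_2'$ is smooth everywhere, hence a Nash manifold outright. Now the irreducible components are precisely the connected components and are automatically disjoint; for a connected component $Y$ meeting $Q$ at $x$, the germ equality $Y_x=(Z_1\cap Z_2)_x=Z_x$ with $Z$ the Nash closure of an iterated face, together with the fact that both $Y$ and $Z$ are \emph{connected Nash manifolds}, gives $Y=Z$ directly---no separate dimension or identity-principle argument is needed. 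Replacing your shrinking by this one fixes the proof with less work.
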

\begin{proof}Consider Nash closures $Z_1$ and $Z_2$ of iterated faces $D_1$ and $D_2$. By \ref{germfaces}, for every $x\in Q$ we have that $Z_{1,x}$ and $Z_{2,x}$ are Nash closures of iterated faces of $Q_x$ and therefore their intersection $Z_{1,x}\cap Z_{2,x}=(Z_1\cap Z_2)_x$ is the Nash closure of a face of $Q_x$. In particular, for every $x\in Q$ we have that $(Z_1\cap Z_2)_x$ is smooth. Thus, the closed semialgebraic set $C=(Z_1\cap Z_2)\setminus {\tt Smooth}(Z_1\cap Z_2)$ does not meet $Q$ and we shrink $M$ to $M'=M\setminus C$. Let $Z'_1$ and $Z'_2$ be the Nash closures of $D_1$ and $D_2$ in $M'$. For all $x\in Z'_1\cap Z'_2$, the germ $Z'_{1,x}\cap Z'_{2,x}=Z_{1,x}\cap Z_{2,x}=(Z_1\cap Z_2)_x$ is smooth.

Now, let $Y$ be a connected component of $Z_1\cap Z_2$ such that there is $x\in Y\cap Q$. Since $(Z_1\cap Z_2)_x$ is smooth we have that $Y_x=(Z_1\cap Z_2)_x$. As before, 
$Z_{1,x}\cap Z_{2,x}=(Z_1\cap Z_2)_x$ is the Nash closure of a face of $Q_x$ and therefore, by \ref{germfaces}, we have that $(Z_1\cap Z_2)_x=Z_x$ where $Z$ is the Nash closure of an iterated face. In particular, since $Y_x=(Z_1\cap Z_2)_x=Z_x$ and both $Y$ and $Z$ are connected Nash manifolds we deduce $Y=Z$.
\end{proof}

\section{Monomial singularity types}\label{sec:mono}
As said in the Introduction a set $X\subset M$ has a monomial singularity at $x\in X$ if the point has a neighborhood $U$ in $M$ equipped with a Nash diffeomorphism $u=(u_1,\ldots,u_m):U\to\R^m$ such that $u(x)=0$ and 
$$
X\cap U={\bigcup}_{\lambda\in\varLambda}\{u_{\lambda}\!=0\}, \quad \text{where \ } \{u_\lambda=0\}=\{u_{\ell_1}=\cdots=u_{\ell_r}=0\},
$$ 
for a certain type $\varLambda$ (see Definition \ref{def:monomial}), that is, the germ $X_x$ is a monomial singularity of type $\varLambda$. Note that any $u$ as above maps each irreducible component $X_i$ of $X_x$ onto some coordinate linear variety $L_{\lambda(i)}=\{u_{\lambda(i)}=0\}$, and so each $X_i$ is non-singular, as well as any intersection $X_{i_1}\cap\cdots\cap X_{i_p}$. Furthermore, the derivative $d_xu:T_xM\to\R^m$ maps the tangent space $T_xX_i$ onto that of $L_{\lambda(i)}$, which is the same $L_{\lambda(i)}$. Thus, $v=(d_xu)^{-1}\circ u:U\to\R^a$ is a diffeomorphism onto its image that maps $X_x$ onto its \em tangent cone\em, that is, the \em union \em of the tangent spaces of its irreducible components; henceforth, by abuse of notation, tangent cone will also mean the \em collection \em of those tangent spaces.

In case $X\cap U=\{u_{\ell_1}=0\}\cup\cdots\cup\{u_{\ell_r}=0\}$, $1\leq \ell_1,\ldots,\ell_r \leq m$, we have a \em Nash normal crossings \em  \cite{fgr}. After the obvious linear change of coordinates we can assume that $X\cap U=\{u_{1}=0\}\cup\cdots\cup\{u_{r}=0\}$. That is, in the context of Nash normal crossings, the number of hyperplanes determines the type up to linear isomorphism. For monomial singularities the characterization of the type is far more involved. Our first aim will be to understand when a family of linear varieties of $\mathbb{R}^m$ is linearly isomorphic to a family of coordinate linear varieties.

To that end, let $\mathcal{L}=\{L_1,\ldots,L_s\}$ be a family of linear varieties of $\R^m$. Henceforth, every 
time we consider a family of linear varieties we assume there are no immersions. 

For each subset $I\subset \{1,\ldots,s\}$ and each $1\leq p\leq s$ we denote 
$$
L_I=\bigcap_{j\in I} L_{j} \quad \text{and}\quad L^{(p)}=\sum_{\# I=p}L_I.
$$
We set $L^{(s+1)}=\{0\}$. For each $I\subset \{1,\ldots,s\}$ with $\# I=p$ we also define  $$V_I=L^{(p+1)}\cap L_I$$
and we denote with $W_I$  any \em supplement \em of $V_I$ in $L_I$. We will use this notation consistently in all what follows. The following equations hold:

\begin{stepsp}\label{eq:ecu1}$L^{(p)}=L^{(p+1)}+\sum_{\#I=p}W_I=\sum_{k=p}^s \sum_{\#J=k}W_J.$
\end{stepsp}

\begin{stepsp}\label{eq:ecu2}$\sum_{\#J=p, \, J\neq I}  L_J\cap L_I \subset V_I$ \, and \, $L_I\supset \sum_{J\supset I} W_J$. 
\end{stepsp}

\begin{stepsp}\label{eq:ecu3}$\dim(W_I)=\dim(L_I)-\dim(V_I)\leq\dim(L_I)-\dim\Big(\sum_{\substack{\# J=p \\ J\neq I}} \, L_J\cap L_I\Big).$ 
\end{stepsp}

\begin{stepsp}\label{eq:ecu4}$\dim(L^{(p)})\leq\dim(L^{(p+1)})+\sum_{\# I=p}\Big(\dim(L_I)-\dim\Big(\sum_{\substack{\# J=p\\ J\neq I}}\, L_J\cap L_I\Big)\Big)$.
\end{stepsp}

Indeed, \ref{eq:ecu2} and \ref{eq:ecu3} are obvious. To prove \ref{eq:ecu1} we pick $u\in L^{(p)}$ and write $u=\sum_{\# I=p}u_I$ where $u_I\in L_I$. Next, write $u_I=v_I+w_I$ where $v_I\in V_I$ and $w_I\in W_I$. Since $\sum_{\# I=p}V_I\subset L^{(p+1)}$ we have
$$u=\sum_{\# I=p}v_I+\sum_{\# I=p}w_I\in L^{(p+1)}+\sum_{\# I=p}W_I$$
and so $L^{(p)}=L^{(p+1)}+\sum_{\# I=p}W_I$, as required. Finally \ref{eq:ecu4} follows from \ref{eq:ecu2} and \ref{eq:ecu3}.

The last inequality \ref{eq:ecu4} gives way to the following notion.
\begin{defn}We say that $\mathcal{L}$ is an \em extremal family \em if the inequality \ref{eq:ecu4} is an equality for all $p=1,\ldots,s$.
\end{defn}
Clearly, this notion does not depend on the ordering the varieties are listed in the family $\mathcal{L}$.
\begin{lem}\label{iniprop}Suppose ${\mathcal L}$ extremal. Then for $p=1,\ldots,s$ and $I\subset\! \{1,\ldots,s\}$ with $\#I\!=p$ we have:

\em (1) \em $L^{(p)}=L^{(p+1)}\oplus\bigoplus_{\# I=p}W_I=\bigoplus_{k=p}^s \, \bigoplus_{\#J=k}W_J$.

\em (2) \em $V_I=\sum_{\substack{\# J=p\\ J\neq I}}\,L_J\cap L_I$.

\em (3) \em $L_I=\bigoplus_{J\supset I}W_J$.

In particular, $L^{(1)}=\bigoplus_J W_J$ and if we choose a basis $\bs_J$ for each $W_J$, then any extension of the union of the $\bs_J$'s to a basis $\bs$ of $\R^m$ satisfies that $\bs\cap L_I$ is a basis of $L_I$ for every $I$.
\end{lem}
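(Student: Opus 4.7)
The plan is to unpack the extremality hypothesis into equalities at each intermediate step of the chain of inequalities that produced \ref{eq:ecu4}, and then leverage those equalities by a decreasing induction to obtain the three statements.

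First I would observe that \ref{eq:ecu4} was obtained by concatenating two estimates: the subadditivity of dimension applied to the sum decomposition in \ref{eq:ecu1}, namely $\dim L^{(p)}\le \dim L^{(p+1)}+\sum_{\#I=p}\dim W_I$, together with the dimension estimate in \ref{eq:ecu3}. Extremality forces both to be equalities at every level $p$. The first equality means precisely that the sum $L^{(p)}=L^{(p+1)}+\sum_{\#I=p}W_I$ is direct, so $L^{(p)}=L^{(p+1)}\oplus\bigoplus_{\#I=p}W_I$; iterating this identity from $p$ up to $s$ yields the second formula in (1). The second equality reads $\dim V_I=\dim\sum_{\#J=p,\,J\neq I}L_J\cap L_I$, which combined with the inclusion in \ref{eq:ecu2} gives the equality $V_I=\sum_{\#J=p,\,J\neq I}L_J\cap L_I$, that is, (2).

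The main step is (3), which I would prove by decreasing induction on $p=\#I$. For $p=s$ the only $J\subset\{1,\ldots,s\}$ containing $I=\{1,\ldots,s\}$ is $J=I$, so the claim reduces to $L_I=W_I$; this holds because $V_I=L^{(s+1)}\cap L_I=\{0\}$. For the inductive step, rewrite (2) as $V_I=\sum_{\#J=p,\,J\neq I}L_{I\cup J}$, and note that each index set $I\cup J$ has cardinality strictly bigger than $p$, because $J\ne I$ and $\#J=\#I=p$ force $J\not\subset I$. By the inductive hypothesis, $L_{I\cup J}=\bigoplus_{K\supset I\cup J}W_K$, and every such $K$ strictly contains $I$. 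Conversely, any $K\supsetneq I$ arises by choosing $J=(I\setminus\{i\})\cup\{j\}$ with $i\in I$ and $j\in K\setminus I$, so that $I\cup J\subset K$. Hence $V_I=\sum_{K\supsetneq I}W_K$; this sum is already direct because it sits inside the direct sum decomposition of $L^{(p+1)}$ provided by (1), and adding $W_I$ yields $L_I=V_I\oplus W_I=\bigoplus_{K\supset I}W_K$.

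For the ``in particular'' statement, set $p=1$ in (1) to obtain $L^{(1)}=\bigoplus_J W_J$, so the disjoint union of the bases $\bs_J$ is a basis of $L^{(1)}$; extend it to a basis $\bs$ of $\R^m$. Since $L_I\subset L^{(1)}$, no vector of $\bs\setminus\bigcup_J\bs_J$ lies in $L_I$; on the other hand, by (3) a vector $v\in\bs_{J_0}\subset W_{J_0}$ belongs to $L_I=\bigoplus_{J\supset I}W_J$ if and only if $J_0\supset I$, by uniqueness of the direct sum decomposition. Thus $\bs\cap L_I=\bigcup_{J\supset I}\bs_J$ is a basis of $L_I$. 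The main obstacle I anticipate is the bookkeeping in the decreasing induction for (3), where one must verify that the collection of index sets $K$ obtained by expanding the various $L_{I\cup J}$ is exactly $\{K:K\supsetneq I\}$ and not a proper subfamily; the elementary combinatorial argument above is the only nontrivial ingredient.
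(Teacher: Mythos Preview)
Your proof is correct and follows essentially the same approach as the paper's: the same dimension-count argument for (1) and (2), and the same descending induction on $\#I$ for (3). You are in fact slightly more explicit than the paper in verifying the combinatorial identity $\{K:K\supsetneq I\}=\bigcup_{\#J=p,\,J\neq I}\{K:K\supset I\cup J\}$, which the paper's final equality in (3) uses without comment.
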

\begin{proof}(1) By \ref{eq:ecu1} we have $L^{(p)}=L^{(p+1)}+\sum_{\# I=p}W_I$ and therefore it is enough to check that $\dim(L^{(p)})=\dim(L^{(p+1)})+\sum_{\# I=p}\dim(W_I)$. Now, by \ref{eq:ecu2} and \ref{eq:ecu3} and since the family is extremal we have
\begin{multline*}
\dim(L^{(p)})\leq\dim(L^{(p+1)})+\sum_{\# I=p}\dim(W_I)\leq \\ \dim(L^{(p+1)})+\sum_{\# I=p}\Big(\dim(L_I)
-\dim\Big(\sum_{\# J=p,\, J\neq I}\!\!\!L_J\cap L_I\Big)\Big)=\dim(L^{(p)}).
\end{multline*}
so that all the inequalities are equalities, as required.

(2) Since $\sum_{\# J=p,J\neq I}L_J\cap L_I\subset V_I$ it is enough to check dimensions coincide. By \ref{eq:ecu1} 
and the last equation in the proof of (1) we deduce $\dim(W_I)=\dim(L_I)-\dim\Big(\sum_{\substack{\# J=p \\ J\neq I}}\,L_J\cap L_I\Big)$
and so
$$
\dim(V_I)=\dim(L_I)-\dim(W_I)=\dim\Big(\sum_{\# J=p,\, J\neq I}\!\!\!L_J\cap L_I\Big).
$$
\indent(3) In view of (1) it suffices to show that $L_I=\sum_{J\supset I}W_J$. We proceed by descending induction on $\#I$, being the first step $I=\{1,\ldots,s\}$ obvious. 
Assume the result true for $\#I> p$  and let us check it  for $\#I=p$. By definition $L_I=V_I\oplus W_I$ and by (2) we have 
$$
V_I=\sum_{\# J=p,\, J\neq I}\!\!\!\! L_J\cap L_I
$$
For each $J\neq I$ with $\# J=p$ we have $\#(J\cup I)>p$ and by induction hypothesis 
$L_J\cap L_I=L_{J\cup I}=\sum_{K\supset J\cup I}W_K$ and so
$$
L_I=W_I+V_I=W_I+\!\!\! \sum_{\# J=p, \, J\neq I}\!\!\!\! L_J\cap L_I=W_I+ \!\!\! \sum_{\# J=p, \, J\neq I} \, \sum_{\, K\supset J\cup I}W_K=\sum_{J'\supset I}W_{J'},
$$ 
as claimed.

For the final assertion of the statement, note that $L^{(1)}=\bigoplus_JW_J$ follows straightforward from (1), and hence we can indeed obtain 
$\bs$ as explained. Furthermore, by (3), $\bs\cap L_I$ is the union of all $\bs_J$ with $J\supset I$ and a basis of $L_I$.
\end{proof}

We will prove that extremal families are linear isomorphic to families of coordinate linear varieties. To that aim we introduce the following definition.

\begin{defn}Let $\mathcal{L}=\{L_1,\ldots,L_s\}$ be a family of linear varieties of $\R^m$. We say that a basis $\bs$ of $\R^m$ is \em adapted to $\mathcal{L}$ \em  if for all $i=1,\ldots,s$ the intersection $\bs\cap L_i$ is a basis of $L_i$.
\end{defn}
For example, for any family of coordinate linear varieties of $\R^m$, the standard basis $\bes$ of $\R^m$ is an adapted basis of the family. Moreover, note that if a family $\mathcal{L}=\{L_1,\ldots,L_s\}$ admits an adapted basis $\bs$ then any bijection $\bs \leftrightarrow \bes$ induces a linear isomorphism $f:\R^m\rightarrow \R^m$ such that $f(L_i)$ is a coordinate linear variety of $\R^m$. Thus, the following is the result we were interested in:
\begin{prop}\label{prop:extremal}
Let $\mathcal{L}$ be a family of linear varieties of $\mathbb{R}^m$. Then ${\mathcal L}$ is a extremal family of $\R^m$ if and only if it there is a basis of $\R^m$ adapted to $\mathcal{L}$. 
\end{prop}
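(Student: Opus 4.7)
The forward implication is essentially contained in the final assertion of Lemma \ref{iniprop}: assuming $\mathcal{L}$ extremal, I would pick for each $J$ a basis $\bs_J$ of the supplement $W_J$ (which exists by the hypothesis); by Lemma \ref{iniprop}(1) the union $\bigcup_J\bs_J$ is linearly independent, so it extends to a basis $\bs$ of $\R^m$, and by Lemma \ref{iniprop}(3) we have $\bs\cap L_I=\bigcup_{J\supset I}\bs_J$, which is a basis of $L_I$. Thus $\bs$ is adapted to $\mathcal{L}$. The substance of the proof lies in the converse.

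For the converse, suppose $\mathcal{L}$ admits an adapted basis $\bs$. The bijection $\bs\leftrightarrow\bes$ (with $\bes$ the standard basis of $\R^m$) induces a linear automorphism $f:\R^m\to\R^m$ such that each $f(L_i)$ is a coordinate linear variety (because $f$ sends a basis of $L_i$ to a subset of $\bes$). All the quantities involved in the definition of extremality, namely $\dim(L_I)$, $\dim(L^{(p)})$, and $\dim(\sum_{\#J=p,\,J\neq I}L_J\cap L_I)$, are built from sums, intersections and dimensions of linear subspaces, and are therefore invariant under the linear isomorphism $f$. Consequently, $\mathcal{L}$ is extremal if and only if $\{f(L_i)\}$ is, and the problem reduces to proving that any family of coordinate linear varieties of $\R^m$ (without immersions) is extremal.

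For this reduction write each coordinate linear variety as $L_i=\mathrm{span}\{e_j:j\in S_i\}$ for some $S_i\subset\{1,\dots,m\}$. A direct combinatorial reading of intersections and sums of coordinate linear varieties yields
\begin{equation*}
L_I=\mathrm{span}\{e_j:j\in{\textstyle\bigcap}_{i\in I}S_i\}\quad\text{and}\quad L^{(p)}=\mathrm{span}\{e_j:\#\{i:j\in S_i\}\ge p\},
\end{equation*}
and analogously
\begin{equation*}
{\sum}_{\#J=p,\,J\neq I}L_J\cap L_I=\mathrm{span}\{e_j:j\in{\textstyle\bigcap}_{i\in I}S_i\text{ and }j\in S_{i'}\text{ for some }i'\notin I\}.
\end{equation*}
Subtracting dimensions one gets
\begin{equation*}
\dim(L_I)-\dim\Bigl({\sum}_{\#J=p,\,J\neq I}L_J\cap L_I\Bigr)=\#\{j:\{i:j\in S_i\}=I\}.
\end{equation*}
Summing over all $I$ with $\#I=p$ (the sets on the right-hand side are pairwise disjoint as $I$ varies) gives $\#\{j:\#\{i:j\in S_i\}=p\}$, which coincides with $\dim(L^{(p)})-\dim(L^{(p+1)})$. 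Hence \ref{eq:ecu4} is an equality for every $p$, proving extremality.

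\textbf{Expected main obstacle.} The only technical point is to make sure that extremality is genuinely transported by the linear isomorphism $f$; once this is observed the proof is purely combinatorial. The bookkeeping of the identity $\dim(L^{(p)})-\dim(L^{(p+1)})=\sum_{\#I=p}(\dim(L_I)-\dim(\sum_{\#J=p,\,J\neq I}L_J\cap L_I))$ in the coordinate case is conceptually simple but requires the careful double counting on $\{j:\#\{i:j\in S_i\}=p\}$ sketched above; I expect no essential difficulty beyond keeping this combinatorics straight.
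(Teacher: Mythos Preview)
Your proof is correct. The forward implication is handled identically to the paper (via Lemma \ref{iniprop}), but for the converse you take a genuinely different route. The paper stays in the abstract linear-algebra framework: it uses the properties of adapted bases recorded in Remark \ref{rmk:adapt} to show directly that the sum $L^{(p)}=L^{(p+1)}+\sum_{\#I=p}W_I$ is a direct sum, and then that $V_I=\sum_{\#J=p,\,J\neq I}L_J\cap L_I$, which together give equality in \ref{eq:ecu4}. You instead use the adapted basis to transport the problem via a linear isomorphism to a family of coordinate linear varieties, and then verify extremality by an explicit double-counting on the index sets $S_i$. Your approach is more concrete and arguably more transparent for a reader who prefers combinatorics; the paper's approach has the advantage of reusing the $W_I$/$V_I$ machinery already set up and needed elsewhere (e.g.\ in Proposition \ref{prop:loadiso}). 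Both arguments ultimately encode the same combinatorial identity, but the packaging is different.
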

\begin{remark}\label{rmk:adapt}If $\bs$ is an adapted basis of $\mathcal{L}=\{L_1,\ldots,L_s\}$ then (i) $\bs \cap L_I$ is a basis of $L_I$ for all $I\subset\{1,\ldots,s\}$, and (ii) $(\sum_{i=1}^\ell L_{J_i})\cap L_I=\sum_{i=1}^\ell(L_{J_i}\cap L_I)$ for all $J_1,\ldots,J_\ell,I\subset\{1,\ldots,s\}$. Indeed, it is enough to notice that if ${\mathcal G}_1,{\mathcal G}_2\subset\bs $ then $L[{\mathcal G}_1]\cap L[{\mathcal G}_2]=L[{\mathcal G}_1\cap{\mathcal G}_2]$ and $L[{\mathcal G}_1]+L[{\mathcal G}_2]=L[{\mathcal G}_1\cup{\mathcal G}_2]$. 
\end{remark}
\begin{proof}[Proof of Proposition \ref{prop:extremal}] We proved one direction in Lemma \ref{iniprop}. Now, assume that $\mathcal{L}$ admits an adapted basis $\bs $ and denote $\bs _i=\bs \cap L_i $ the basis of $L_i$ for $i=1,\ldots,s$. Let us check that ${\mathcal L}$ is extremal. Recall that $L^{(p)}=L^{(p+1)}+\sum_{\# I=p}W_I$ (see \ref{eq:ecu1}) and let us check that the previous sum is direct. It is enough to see that vector $0$ only admits the trivial representation as a sum of vectors of the linear varieties $L^{(p+1)}$ and $W_I$ where $\#I=p$. Indeed, write
$$
0=u_{p+1}+\sum_{\# I=p}w_I \quad \text{ with } u_{p+1}\in L^{(p+1)},\ w_I\in W_I.
$$
Then for any $I$ with $\#I=p$  we have
$$-w_I=u_{p+1}+\!\!\! \sum_{\# J=p, \, J\neq I} \!\!\! w_J \, \in \, \Big(L^{(p+1)}+\!\!\! \sum_{\# J=p, \, J\neq I} \!\!\! W_J\Big)\cap W_I.
$$
By Remark \ref{rmk:adapt},
$$
\Big(L^{(p+1)}+ \!\!\!\! \sum_{\# J=p, \, J\neq I} \!\!\! W_J\Big)\cap W_I\subset \Big(L^{(p+1)}+ \!\!\! \sum_{\# J=p, \, J\neq I} \!\!\!\! L_J\Big)\cap L_I=
L^{(p+1)}\cap L_I+\!\!\! \sum_{\# J=p, \, J\neq I}\!\!\!\! L_J\cap L_I\subset L^{(p+1)}\cap L_I 
$$
so that $-w_I\in (L^{(p+1)}\cap L_I)\cap W_I=V_I\cap W_I=\{0\}$ and so also $u_{p+1}=0$, as required. In particular, we deduce that
$$
\dim(L^{(p)})=\dim(L^{(p+1)})+\sum_{\# I=p}\dim(W_I).
$$
Thus, since $\dim(W_I)=\dim(L_I)-\dim(V_I)$, to prove that $\mathcal{L}$ is extremal it is enough to show that
$$\dim(V_I)=\dim\Big(\sum_{\# J=p , \, J\neq I}\!\!\!\! L_J\cap L_I\Big).$$
But since $\mathcal{L}$ admits an adapted basis we have (see Remark \ref{rmk:adapt})
$$
V_I=L^{(p+1)}\cap L_I= \Big(\sum_{\# K=p+1}\!\!\!\! L_K\Big)\cap L_I= \!\!\! \sum_{\# K=p+1}\!\!\!\! L_K\cap L_I \, \subset \!\!\! \sum_{\# J=p, \, J\neq I} \!\!\!\! L_J\cap L_I\subset L^{(p+1)}\cap L_I=V_I,
$$
as required.
\end{proof}

\begin{remark}\label{sperner} Let $\mathcal{ L}$ be an extremal family of linear varieties (without immersions). Once we know it is, up to linear isomorphism, a family of coordinate linear varieties, we can bound the number of varieties in $\mathcal L$. Indeed, by associating to every coordinate variety $L\subset\R^m$ the set $\{x_{i_1},\dots,x_{i_r}\}$ of the variables appearing in the equations of $L$ we define a bijection from the set of all coordinate linear varieties in $\R^m$ onto the set of all subsets of $\{x_1,\dots,x_m\}$. Clearly, this bijection reverses inclusions, hence transforms $\mathcal L$ in an \em Sperner family \em of a finite set of $m$ elements. Now, it is a beautiful result, the \em Sperner Theorem \em \cite{l}, that such a family has at most
$\binom{m}{[m/2]}$ elements. Thus
$$
\#(\mathcal{L})\le\binom{m}{[m/2]}.
$$
This is behind the value $q$ in Theorem  \ref{approxgc} (see the final step in its proof \ref{pfapproxgc}).
\end{remark}

Now, we need to determine whether two extremal families are linearly isomorphic. We introduce the following general definition. 
\begin{defn}Let $\mathcal{L}=\{L_1,\ldots,L_s\}$ be a family of linear varieties of $\R^m$. As before, we denote for each subset $I\subset \{1,\ldots,s\}$ the intersection $L_I=\bigcap_{j\in I} L_{j}$. Next, to each family of different nonempty subsets $I_1,\ldots,I_r\subset\{1,\ldots,s\},\ r\geq1$, we associate the number
$$
\dim(L_{I_1}+\cdots+L_{I_r}).
$$
The collection of all the previous dimensions will be called the \em load \em of $\mathcal{L}$.
\end{defn}

Note that this notion of load does depend on the ordering the varieties are listed in the family $\mathcal{L}$. Also note that the combinatorial information to determine if a family ${\mathcal L}$ is extremal is contained in the load of ${\mathcal L}$. 

\begin{prop}\label{prop:loadiso}
Let ${\mathcal L}=\{L_1,\ldots,L_s\}$ and ${\mathcal L}'=\{L_1',\ldots,L_s'\}$ be two extremal families of $\R^m$. Then, there is a linear isomorphism $f$ of $\R^m$ such that $f(L_i)=L_i'$ for all $i$ if and only if the loads of ${\mathcal L}$ and ${\mathcal L}'$ coincide. \em If this is the case, we say that the families ${\mathcal L}$ and ${\mathcal L}'$ are \em equivalent.
\end{prop}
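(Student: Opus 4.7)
The implication from left to right is immediate: a linear isomorphism $f$ with $f(L_i)=L_i'$ satisfies $f(L_I)=L_I'$ for every $I$, and therefore preserves each dimension $\dim(L_{I_1}+\cdots+L_{I_r})$, so the loads coincide. The plan for the converse is to show that the load completely determines the dimensions of the blocks $W_J$ of the canonical direct sum decomposition supplied by Lemma \ref{iniprop}, and then to read off a linear isomorphism from any matching bases of those blocks.

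More precisely, since ${\mathcal L}$ and ${\mathcal L}'$ are extremal, Lemma \ref{iniprop}(3) gives
$$
L_I=\bigoplus_{J\supset I}W_J\quad\text{and}\quad L_I'=\bigoplus_{J\supset I}W_J',
$$
for every nonempty $I\subset\{1,\ldots,s\}$, where $J$ runs over the nonempty subsets of $\{1,\ldots,s\}$ containing $I$. Taking dimensions yields a triangular system $\dim L_I=\sum_{J\supset I}\dim W_J$, which I would invert by descending induction on $\#I$ (for $I=\{1,\ldots,s\}$ one simply has $\dim W_I=\dim L_I$, and for smaller $I$, $\dim W_I=\dim L_I-\sum_{J\supsetneq I}\dim W_J$ in terms of already known quantities). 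The values $\dim L_I$ appear in the load as the $r=1$ entries $\dim(L_{I_1})$, so they agree for the two families by hypothesis; hence the induction forces $\dim W_J=\dim W_J'$ for every nonempty $J$.

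To produce the isomorphism, pick bases $\bs_J$ of $W_J$ and $\bs_J'$ of $W_J'$ of matching cardinalities for each nonempty $J$, and complete $\bigcup_J\bs_J$ and $\bigcup_J\bs_J'$ to bases of $\R^m$ by adjoining bases of arbitrary supplements $W_\varnothing$ of $L^{(1)}$ and $W_\varnothing'$ of ${L'}^{(1)}$; these supplements have the same dimension because $\dim L^{(1)}=\dim(L_{\{1\}}+\cdots+L_{\{s\}})$ is itself a member of the load. Any bijection of bases sending $\bs_J$ to $\bs_J'$ extends to a linear automorphism $f$ of $\R^m$ with $f(W_J)=W_J'$ for every $J$, and then $f(L_I)=L_I'$ for every nonempty $I$ by the direct sum formula above; in particular $f(L_i)=L_i'$ for every $i$. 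The only real work is the Möbius-type inversion in the second paragraph, and this is routine once one fixes descending induction on $\#I$; everything else is bookkeeping powered by Lemma \ref{iniprop}.
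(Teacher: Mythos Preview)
Your proof is correct and follows essentially the same route as the paper: both use Lemma \ref{iniprop} to decompose $L_I=\bigoplus_{J\supset I}W_J$, argue that coincidence of loads forces $\dim W_J=\dim W_J'$ for all $J$, and then build the linear isomorphism from matching bases of the $W_J$'s (completed to bases of $\R^m$). You are actually more careful than the paper on one point: where the paper simply asserts that equal loads imply $\#\bs_I=\#\bs_I'$, you spell out the descending-induction inversion of the triangular system $\dim L_I=\sum_{J\supset I}\dim W_J$, which is exactly what justifies that assertion.
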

\begin{proof}
Assume that the loads of ${\mathcal L}$ and ${\mathcal L}'$ coincide. By Lemma \ref{iniprop} we have that $L^{(1)}=\bigoplus_J W_J$ and that to construct a basis $\bs$ of $\R^m$ adapted to ${\mathcal L}$ it is enough to choose a basis $\bs_I$ of $W_I$ for each $I\subset\{1,\ldots,s\}$ and then to extend the union of all $\bs_I$'s to obtain $\bs$. Similarly, we construct a basis $\bs'$ of $\R^m$ adapted to ${\mathcal L}'$ that extends a union of the bases $\bs'_I$'s of the $W'_I$'s.  Since the loads of ${\mathcal L}$ and ${\mathcal L}'$ coincide, $\bs _I$ and $\bs _I'$ have the same number of elements for each $I\subset\{1,\ldots,s\}$. Thus, bijections $\bs_I \leftrightarrow \bs'_I$ induce another $\bs \leftrightarrow \bs'$. Hence, there is a linear isomorphism $f$ of $\R^m$ that maps $W_I$ onto $W_I'$ for all $I$.  Since again by Lemma \ref{iniprop} we know that $L_i=\bigoplus_{i\in I} W_I$ and $L'_i=\bigoplus_{i\in I} W'_I$, we are done. The converse implication is clear since isomorphisms preserve dimensions.
\end{proof}

\begin{remarks}\label{varios}(1) We have a necessary condition for a union of non-singular germs to be a monomial singularity: its tangent cone must be extremal (for a full characterization, just involving arithmetic conditions, see Corollary \ref{cor:sufficient}). Note that given a collection of linear varieties, the mere list of their dimensions and the dimensions of their intersections is not enough to determine if they are a monomial singularity: for instance, three lines through the origin in $\R^3$ are a monomial singularity if and only if they generate $\R^3$, hence we cannot drop the dimension of the sum of the lines. 

(2) Since the equivalence of extremal families of linear varieties is determined by their loads, it is an arithmetic relation as mentioned in the Introduction.

(3) Let $X_x$ and $X'_x$ be two monomial singularities. Then, $X_x$ and $X'_x$ are Nash isomorphic if and only if their types $\varLambda$ and $\varLambda'$ are equivalent if and only if their tangent cones have up to reordering the same load. Thus, in the end, the type of a monomial singularity is characterized arithmetically by the load of the tangent cone.
\end{remarks}

\section{Nash monomial singularity germs}
Here we will prove the semialgebraicity result stated in Proposition \ref{satype}. But previously we must analyse the ideals of monomial singularities.
\subsection{Square-free monomial ideals.}\label{sqfree} 
In this section $\sD$ will either denote one of the domains: (i) $\sn(\R^m)$ of global Nash functions on $\R^m$, (ii) $\sn_m$ of Nash function germs at the origin in $\R^m$ and (iii) $\so_m$ of analytic function germs at the origin in $\R^m$.  A \em square-free monomial ideal \em of $\sD$ is an ideal generated by monomials $x^\sigma=x_1^{\sigma_1}\cdots x_m^{\sigma_m}$ with exponents $\sigma_i=0$ or $1$. Our aim here is to show that: \em the ideals of unions of coordinate linear varieties are exactly the square-free monomial ideals. \em To that end, it will be useful the following lemma.

\begin{lemsub}\label{key}
Let $I\subset \sD$ be a proper ideal that admits a system of generators $f_1,\ldots,f_\ell$ that do not depend on $x_1$. Then, $x_1$ is a non zero divisor $\mod I$.
\end{lemsub}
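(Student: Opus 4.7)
The plan is to reduce the statement to the following key decomposition, valid uniformly in the three rings $\sD$: denoting by $\sD'\subset\sD$ the subring of elements \emph{independent of $x_1$} (constant in $x_1$), every $h\in\sD$ admits a unique expression
\[
h=\alpha+x_1 h',\qquad \alpha\in\sD',\ h'\in\sD.
\]
Uniqueness is trivial (set $x_1=0$, then cancel $x_1$ using that $\sD$ is a domain). Existence amounts to showing that whenever $h\in\sD$ vanishes on $\{x_1=0\}$ it is divisible by $x_1$ in $\sD$. This is the only nontrivial ingredient, and I would treat the three cases as follows: for $\sD=\so_m$, divide the convergent power series; for $\sD=\sn_m$, the same manipulation is available on Nash power series, or one can invoke the faithful flatness $\sn_m\hookrightarrow\so_m$ together with the analytic case; for $\sD=\sn(\R^m)$, define $\alpha(x_2,\dots,x_m):=h(0,x_2,\dots,x_m)$ (a Nash function on the hyperplane, then lifted trivially), and check that $(h-\alpha)/x_1$, which is smooth and semialgebraic on $\R^m\setminus\{x_1=0\}$, extends smoothly across $\{x_1=0\}$ because locally at every point of that hyperplane the germ case gives a Nash representative; being smooth and semialgebraic on $\R^m$, it is Nash.

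Given the decomposition, the proof of the lemma is a one-line computation. Suppose $x_1 g\in I$, so
\[
x_1 g=\sum_{i=1}^\ell a_i f_i,\qquad f_i\in\sD'.
\]
Decompose each $a_i=\alpha_i+x_1 b_i$ with $\alpha_i\in\sD'$ and $b_i\in\sD$. Then
\[
x_1 g=\sum_i\alpha_i f_i+x_1\sum_i b_i f_i.
\]
The first summand lies in $\sD'$ and the other terms lie in $x_1\sD$; by the uniqueness of the decomposition $\sD=\sD'\oplus x_1\sD$, we conclude $\sum_i\alpha_i f_i=0$ and hence $x_1 g=x_1\sum_i b_i f_i$. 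Since $\sD$ is an integral domain, we may cancel $x_1$ to obtain $g=\sum_i b_i f_i\in I$, as required.

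The properness hypothesis on $I$ plays no role in the argument beyond ensuring we are in a nontrivial situation; what really matters is the algebraic structure of $\sD$ relative to its subring $\sD'$. The main (indeed only) obstacle is the existence half of the decomposition in the global Nash case $\sD=\sn(\R^m)$, where semialgebraicity of $(h-\alpha)/x_1$ together with smoothness across the hyperplane must be checked globally. Once this local-to-global divisibility is in hand, the lemma follows by the formal computation above.
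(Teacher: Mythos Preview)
Your proposal is correct and is essentially the paper's own argument, just rephrased through the direct sum decomposition $\sD=\sD'\oplus x_1\sD$: the paper writes each coefficient as $h_i=x_1q_i+r_i$ with $r_i=h_i(0,x_2,\dots,x_m)$ and $q_i=(h_i-r_i)/x_1\in\sD$, substitutes $x_1=0$ to kill $\sum r_if_i$, and cancels $x_1$ in the domain $\sD$. The only difference is that you spell out in more detail why $(h-\alpha)/x_1$ lands in $\sD$ in the global Nash case, whereas the paper simply asserts $q_i\in\sD$.
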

\begin{proof}
We have to show that if $x_1g\in I$ for some $g\in \sD$, then $g\in I$. But, if 
$$
x_1g=h_1f_1+\cdots+h_\ell f_\ell \text{ for some } h_i\in \sD,
$$
we can write $h_i=x_1q_i+r_i$, where $r_i=h_i(0,x_2,\ldots,x_m)$ does not depend on $x_1$, and 
$$
q_i=\frac{h_i-r_i}{x_1}\in \sD.
$$
Therefore,
$$
x_1g=x_1(q_1f_1+\ldots+q_\ell f_\ell)+(r_1f_1+\ldots+r_\ell f_\ell).
$$
If we make $x_1=0$, and since $r_i,f_i$ do not depend on $x_1$, we deduce that
$$
r_1f_1+\ldots+r_\ell f_\ell=0$$
and hence $x_1g=x_1(q_1f_1+\ldots+q_\ell f_\ell)$. As $\sD$ is a domain we conclude that $g=q_1f_1+\ldots+q_\ell f_\ell\in I$, as desired. 
\end{proof}

\begin{defnsub}\label{def:squarefree}Let $X=L_1\cup \cdots \cup L_s$ be a union of coordinate linear varieties of $\R^m$. We say that a \em $x_i$ is a variable of $L_j$ \em if $x_i=0$ is one of the equations of $L_j$. Now, we first consider the collection of all monomials $x_{j_1}\cdots x_{j_s}$ such that $x_{j_i}$ is a variable of $L_i$ for each $i\in \{1,\ldots,s\}$; then, if a variable of $x_{j_1}\cdots x_{j_s}$ appears several times (because it comes from several $L_i$'s) we just take it once. The resulting monomials are the \em associated square-free monomials of $X$\em.
\end{defnsub}

\begin{propsub}\label{sqfreestate}Let $X=L_1\cup\cdots\cup L_s$ be a union of coordinate linear varieties of $\R^m$. Then its associated square-free monomials generate: \em (i) \em  the ideal $I(X)$ of global Nash functions vanishing in $X$, \em (ii) \em the ideal $I(X_0)$ of Nash function germs vanishing on the germ at the origin $X_0$ and \em (iii) \em the ideal $I_{\so}(X_0)$ of analytic function germs vanishing on $X_0$.
\end{propsub}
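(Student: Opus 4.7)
The plan is to establish (ii) and (iii) in parallel by induction on $s$ (writing $\sD$ for either $\sn_m$ or $\so_m$), using Lemma \ref{key} at the crucial step, and then to deduce (i) from (ii) via the coherence machinery of Section \ref{subsec:strongcoh}. In all three cases the inclusion $J \subset I(X_0)$ (where $J$ denotes the ideal generated by the associated square-free monomials of $X$) is immediate: every such monomial contains, by construction, a factor from each prime $\gtp_i=(x_{\lambda_i})$, so it vanishes on every $L_i$ and therefore on $X$.

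Writing $I(X_0)=\bigcap_{i=1}^s\gtp_i$, I proceed by induction on $s$. The base $s=1$ reduces to $I(L_{1,0})=(x_{\lambda_1})$: in $\so_m$ this is iterated Hadamard, and in $\sn_m$ it follows via faithful flatness of $\sn_m\hookrightarrow\so_m$ (see \cite[8.3.2]{bcr}). For the inductive step, set $X=L_1\cup X'$ with $X'=L_2\cup\cdots\cup L_s$ and let $J'$ be the ideal generated by the associated sq-free monomials of $X'$; by induction $J'=\bigcap_{i\ge 2}\gtp_i$, so $I(X_0)=\gtp_1\cap J'$. After relabelling assume $\gtp_1=(x_1,\ldots,x_t)$, and split the generators of $J'$ into those involving some $x_j$ with $j\le t$ (generating $J'_1$) and those involving none (generating $J'_0$). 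Since $J'_1\subseteq\gtp_1$ one checks $\gtp_1\cap J'=J'_1+(\gtp_1\cap J'_0)$; moreover every generator of $J'_1$ already contains a variable of $L_1$, so it is itself an associated sq-free monomial of $X$ and $J'_1\subset J$.

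Everything thus reduces to proving $\gtp_1\cap J'_0=\gtp_1\cdot J'_0$, which I expect to be the main obstacle. Granting it, each generator $x_jn$ of $\gtp_1\cdot J'_0$ (with $j\le t$ and $n\in J'_0$ involving none of $x_1,\ldots,x_t$) realises the choice ``$x_j$ from $L_1$, original choices from $L_2,\ldots,L_s$'', hence lies in $J$. For the identity itself I use a subsidiary induction on $t$: the case $t=1$ is direct, since Lemma \ref{key} says that $x_1$ is a non-zero divisor modulo $J'_0$ (whose generators do not depend on $x_1$), giving $(x_1)\cap J'_0=x_1J'_0$. For $t>1$ I descend to $\sD/(x_1)$, which is again a ring of the same type in one fewer variable and in which the image of $J'_0$ retains generators independent of $x_2,\ldots,x_t$; the inductive hypothesis yields the identity modulo $(x_1)$, and a final application of Lemma \ref{key} clears the $x_1$-component. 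Part (ii) follows at this stage, either from a parallel run of the induction inside $\sn_m$ or by contracting from $\so_m$ via faithful flatness.

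For (i), let $\widehat J\subset\sn(\R^m)$ be the ideal generated by the associated sq-free monomials and $\widehat{\sj}$ its finite sheafification. By Lemma \ref{lem:irreducohe} the Nash set $X$ is coherent, so the finite sheaf $\sj$ associated to $I(X)$ has $\sj_x=I(X_x)$ and $\sn(\R^m)/I(X)=\Gamma(\R^m,\sn/\sj)$ by Fact \ref{factfinitesheaf}. At each $x\in\R^m$ the variables not vanishing at $x$ are units of $\sn_x$ and can be factored out of the generators of $\widehat{\sj}_x$; by selecting, for every $L_i\not\ni x$, a variable of $L_i$ not vanishing at $x$ (available precisely because $x\notin L_i$), the essential parts become exactly the associated sq-free monomials of the local germ $X_x=\bigcup_{i:\,x\in L_i}L_{i,x}$. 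Invoking (ii) at the translated point $x$ gives $\widehat{\sj}_x=I(X_x)=\sj_x$, so $\sn/\widehat{\sj}=\sn/\sj$; taking global sections produces $\widehat J=I(X)$.
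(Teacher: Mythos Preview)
Your proof is correct but takes a different route from the paper's. The paper argues uniformly in all three rings $\sD$: it first checks that the associated square-free monomials cut out $X$ set-theoretically, then invokes the real Nullstellensatz, reducing everything to showing that any square-free monomial ideal is real; this is done by an induction on the total number of variable occurrences, using Lemma~\ref{key}, to write the ideal as an intersection of primes generated by variables. You instead prove directly that $J=\bigcap_i\gtp_i$ by induction on $s$, splitting the generators of $J'$ according to whether they meet $\gtp_1$, and using a sub-induction on $t=\#\lambda_1$ together with Lemma~\ref{key} to obtain $\gtp_1\cap J'_0=\gtp_1\cdot J'_0$. Your approach avoids the Nullstellensatz entirely, at the cost of a slightly more elaborate induction; the paper's approach is shorter and handles (i)--(iii) in one stroke.

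Two remarks on your treatment of (i). First, your induction for (ii)/(iii) uses only Lemma~\ref{key} and elementary ideal manipulations, and Lemma~\ref{key} is stated and proved for $\sD=\sn(\R^m)$ as well; hence the \emph{same} induction already gives (i) directly, making the detour through coherence and sheaves unnecessary. Second, as written your argument for (i) invokes Lemma~\ref{lem:irreducohe}, which appears later in the paper; there is no circularity, but you could avoid the forward reference by noting directly that a finite union of linear subspaces is analytically coherent (Cartan \cite[Prop.~13]{cart}) and then applying Proposition~\ref{fact:cohananash}.
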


\begin{proof}All cases are the same, so we write down (i). We first show that $X$ is the zero set of the system defined by its associated square-free monomials. Indeed, if $L_i$ has equations $x_{i_1}=\cdots=x_{i_r}=0$ we can use instead 
$x_{i_1}^2+\cdots+x_{i_r}^2=0$, so that $X$ is given by
$$
0={\prod}_i(x_{i_1}^2+\cdots+x_{i_r}^2)={\sum}_\ell({\prod}_ix_{i_\ell})^2,
$$
and so the equations ${\prod}_ix_{i_\ell}=0$ define $X$. Of course, we can eliminate repetitions of variables in $\prod_i x_{i_\ell}$ to get the equations given by the associated square-free monomials. 

By the real Nullstellensatz it remains to show that every square-free monomial ideal $I$ is real. To that end, it is enough to prove that:

\em 
A square-free monomial ideal $I$ is an intersection of prime ideals generated by subsets of variables appearing in the given generators of $I$. 
\em

We argue by induction on the total number of times the variables occur in the given generators following \cite{hd}. For instance, in $(x_1x_2,x_1x_3)$ there are $4$ occurrences. In the induction step we will use Lemma \ref{key}. For one occurrence there is nothing to prove, and the same happens for the more general case when each generator is one single variable. Suppose now some variable appears in all monomials, say $I=(x_1f_1,\dots,x_1f_p)$ where the $f_k$'s are square-free monomials without $x_1$. For the induction we only need to show $I=(x_1)\cap (f_1,\dots,f_p)$. Here, the inclusion left to right is clear. For the other suppose $x_1h\in 
(f_1,\dots,f_p)$. By Lemma \ref{key}, the variable $x_1$ is not a zero-divisor $\mod(f_1,\dots,f_p)$, hence this ideal contains $h$ and consequently $x_1h\in I$. Thus we can assume that some generator has at least two variables and that no variable appears in all of them; say $I=(x_1f_1,\ldots,x_1f_p,g_1,\ldots,g_q)$, $f_1\ne1$ and no $g_j$ contains $x_1$. We claim that
$$
I=(x_1,g_1,\ldots,g_q)\cap(f_1,f_2,\ldots,f_p,g_1,\ldots,g_q),
$$
which gives way to induction again. The inclusion to prove is right to left, which reduces to: \em if $x_1h\in (f_1,f_2,\ldots,f_p,g_1,\ldots,g_q)$ then $x_1h\in I$\em. But, by Lemma \ref{key}, $x_1$ is not a zero-divisor $\mod (f_1,f_2,\ldots,f_p,g_1,\ldots,g_q)$, and consequently $h\in (f_1,f_2,\ldots,f_p,g_1,\ldots,g_q)$ so that $x_1h\in I$, as claimed.
\end{proof}
From this result we directly obtain:
\begin{corsub}\label{cor:type}
A Nash germ $X_x$ is a monomial singularity if and only if the ideal $I(X_x)\subset \sn_{M,x}$ is generated by square-free monomials on some local coordinates at $x$.
\end{corsub}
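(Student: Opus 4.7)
The plan is to prove both directions directly from the definitions, using Proposition \ref{sqfreestate} as the main engine.

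\textbf{Forward direction.} Suppose $X_x$ is a monomial singularity. By Definition \ref{def:monomial}, there is a Nash diffeomorphism $u=(u_1,\ldots,u_m):U\to\R^m$ with $u(x)=0$ such that $u(X\cap U)$ is a union of coordinate linear varieties in $\R^m$. The components $u_1,\ldots,u_m$ form a system of local coordinates at $x$ and induce an isomorphism $u^*:\sn_m\to\sn_{M,x}$. The pulled-back ideal $(u^*)^{-1}(I(X_x))$ is exactly the ideal at $0$ of the germ $u(X\cap U)_0$, and by Proposition \ref{sqfreestate}(ii) this ideal is generated by square-free monomials in the variables of $\R^m$. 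Transporting back via $u^*$, the ideal $I(X_x)$ is generated by square-free monomials in the coordinates $u_1,\ldots,u_m$.

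\textbf{Reverse direction.} Suppose there are local coordinates $u_1,\ldots,u_m$ at $x$, coming from a Nash diffeomorphism $u:U\to V\subset\R^m$ with $u(x)=0$, in which $I(X_x)$ is generated by square-free monomials $m_1,\ldots,m_k$. Let $Y\subset\R^m$ be the zero set of the ideal $J=(m_1,\ldots,m_k)\subset\sn(\R^m)$; then $Y$ is plainly a union of coordinate linear varieties of $\R^m$. Applying Proposition \ref{sqfreestate}(ii) to $Y$ we obtain that $I(Y_0)=J\sn_m$. Since $X$ is a Nash set, $X_x$ coincides with the zero-germ at $x$ of any system of Nash generators of $I(X_x)$, so under the identification provided by $u$ we have $u(X\cap U)_0=Y_0$. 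Shrinking $U$ to a smaller neighborhood $\tilde U\subset U$ so that $u(\tilde U)$ is an open ball centered at $0$, and then composing $u|_{\tilde U}$ with a Nash diffeomorphism from that ball to $\R^m$, we obtain a Nash diffeomorphism $\tilde u:\tilde U\to\R^m$ with $\tilde u(x)=0$ mapping $X\cap\tilde U$ onto a union of coordinate linear varieties of $\R^m$. This is precisely the condition in Definition \ref{def:monomial}, so $X_x$ is a monomial singularity.

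\textbf{Main subtlety.} The only nontrivial step is the identification of $X_x$ with the zero set of $I(X_x)$ on both sides, namely that the germ is really recovered from its ideal. This is unproblematic because $X$ is given globally as a Nash set, and thus $X\cap U$ agrees, as a germ at $x$, with the common zero set of any Nash generators of $I(X_x)$; under $u$ this transports to the corresponding statement for $Y_0$, which is covered by Proposition \ref{sqfreestate}(ii). Everything else is bookkeeping with the isomorphism $u^*$ and the shrinking argument to end up with a diffeomorphism onto all of $\R^m$ as required by the definition.
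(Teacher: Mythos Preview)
Your proof is correct and follows exactly the approach the paper intends: the corollary is stated there as an immediate consequence of Proposition~\ref{sqfreestate}, and you have simply spelled out the details of that deduction. One small remark: in the reverse direction the claim $I(Y_0)=J\sn_m$ is not actually needed (and does not follow from the \emph{statement} of Proposition~\ref{sqfreestate}(ii) alone, only from its proof that square-free monomial ideals are real); what you really use is just that the zero set of square-free monomials is a union of coordinate linear varieties, which you already noted.
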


After this discussion of square-free monomials ideals we can turn to:

\subsection{Semialgebraicity of monomial singularities loci.}\label{subsec:loci}
We know this to be true for normal crossings \cite[1.5]{fgr}, and we are to generalize the argument there. But we give full details because of its technical nature.

\begin{proof}[Proof of Proposition \ref{satype}]\label{proofsatype} We will prove the semialgebraicity of $T^{(\varLambda)}$ for any fixed type $\varLambda$. Consider the Nash ideal $I=I(X)$ of $X$, which is finitely generated by some $f_1,\dots,f_p\in\sn(M)$. By Corollary \ref{cor:type} there are square-free monomials $m^{(\varLambda)}_1(\x),\ldots,m^{(\varLambda)}_r(\x)\in\Z[\x_1,\ldots,\x_m]$ depending only on $\varLambda$ such that $x\in T^{(\varLambda)}$ if and only if 

\vspace{1mm}\setcounter{substep}{0}
\begin{substeps}{proofsatype}\label{ast2}
\em There is a regular system of parameters $u=(u_1,\dots,u_m)$ of the local regular ring $\sn_{M,x}$ such that $X_x=\{f_1=0,\ldots,f_p=0\}_x=\{m^{(\varLambda)}_1(u)=\cdots=m^{(\varLambda)}_r(u)=0\}_x$.
\end{substeps}

\vspace{1mm}
We must show that this condition is semialgebraic. Before proceeding, we apply the Artin-Mazur Theorem \cite[8.4.4]{bcr} to assume that $M$ is an open subset of a nonsingular algebraic set $V\subset\R^n$ and $f_1,\dots,f_p$ are the restrictions to $M$ of some polynomial functions that we denote by the same letters. Let $J$ be the ideal of $V$ in the polynomial ring $\R[\x]=\R[\x_1,\ldots,\x_n]$, and let $b_1,\dots,b_q$ be generators of $J$. By \cite[8.7.15]{bcr} the stalk $\sn_{M,x}$ at a point $x\in M$ is the henselization of the localization of $\R[\x]/J$ at the ideal $(\x-x)=(\x_1-x_1,\dots,\x_n-x_n)$. The henselization of the local ring $\R[\x]_{(\x-x)}$ is $\R[[\x-x]]_\text{alg}$ and so $\sn_{M,x}=\R[[\x-x]]_\text{alg}/J_x$ where $J_x=J\R[[\x-x]]_\text{alg}$. Therefore, the parameters $u_i$ are the classes modulo $J_x$ of some $h_i\in\R[[\x-x]]_\text{alg}$; let $B_{kx},H_{ix}$ stand for the derivatives at $x$ of the $b_k,h_i$.
 
Suppose that condition (\ref{proofsatype}.\ref{ast2}) holds for a point $x\in M$. We deduce that all $f_j$'s belong to the ideal of $\sn_{M,x}$ generated by $m^{(\varLambda)}_1(u),\ldots,m^{(\varLambda)}_r(u)$; hence 
\begin{enumerate}
\item[(1)] There are Nash function germs $g_{ej},a_{jk}\in\R[[\x-x]]_\text{alg}$ such that 
\begin{equation}\label{eq:star}
f_j={\sum}_em^{(\varLambda)}_e(h_1,\ldots,h_m)g_{ej}+{\sum}_k a_{jk}b_k.
\end{equation}
\end{enumerate}
On the other hand, that the $u_i$'s form a regular system of parameters of $\sn_{M,x}$ just means that
\begin{enumerate}
\item[(2)] The $h_i$'s vanish at $x$, and the linear forms $H_{ix}$ are linearly independent over $\R$ modulo the linear forms $B_{kx}$.
\end{enumerate}

Let us now see how these new two conditions are semialgebraic. We look at equation \eqref{eq:star} as a system of polynomial equations in the unknowns ${\tt h}_i,{\tt g}_{ej},{\tt a}_{jk}$. Then we recall M. Artin's approximation theorem with bounds \cite[6.1]{ar}: 

\vspace{1mm}
\begin{substeps}{proofsatype}\em
For any integer $\alpha$ there exists another integer $\beta$ which only depends on $n,\alpha,$ the degrees of the $f_j$'s,  the degrees of the $m^{(\varLambda)}_e$'s, the degrees of the $b_k$'s and the number of variables ${\tt h}_i,{\tt g}_{ej},{\tt a}_{ij}$, such that the polynomial equations
$$
f_j={\sum}_em^{(\varLambda)}_e({\tt h}_1,\ldots,{\tt h}_m){\tt g}_{ej}+{\sum}_k{\tt a}_{jk}b_k
$$
have an exact solution in the local ring $\R[[\x-x]]_{\rm alg}$ if they have an approximate solution modulo $(\x-x)^{\beta}$; furthermore that exact solution coincides with the approximate solution till order $\alpha$. 
\end{substeps}

\vspace{1mm}
Now, fix $\alpha=2$, so that the exact solution coincides with the approximate one till order $2$, and define $S$ as the set of points $x\in M$ such that:
\begin{itemize}
\item[(1*)] There are polynomials $\mathrm{h}_i,\mathrm{g}_{ej},\mathrm{a}_{jk}\in\R[\x]$ of degree $\leq\beta$ such that
\begin{equation}\label{eq:ast}
f_j\equiv{\sum}_em^{(\varLambda)}_e(\mathrm{h}_1,\ldots,\mathrm{h}_m)\mathrm{g}_{ej}+{\sum}_k\mathrm{a}_{jk}b_k\mod(\x-x)^{\beta}.
\end{equation}
\item[(2*)] The polynomials $\mathrm{h}_i$ vanish at $x$ and their derivatives $\mathrm{H}_{i,x}$ at $x$ are linearly independent modulo the linear forms $B_{kx}$. 
\end{itemize}

Notice that if the approximate solution $\mathrm{h}_i$ verifies (2*) then the exact one $h_i$ verifies (2). Thus, if the equation \eqref{eq:ast} has an \em approximate \em solution $\mathrm{h}_i,\mathrm{g}_{ej},\mathrm{a}_{jk}\in\R[\x]$ of degree $\leq\beta$ modulo $(\x-x)^{\beta}$ satisfying (2*), then the equation \eqref{eq:star} has an \em exact \em solution $h_i,g_{ej},a_{jk}\in\R[[\x-x]]_\text{alg}$ satisfying (2). Since the converse implication is trivial, both assertions are equivalent. Now, the existence of approximate solutions of fixed order $\beta$ (described by conditions (1*) and (2*) above) is a first order sentence, and we conclude that the set $S$ of points $x\in M$ for which conditions (1) and (2) hold true (or equivalently conditions (1*) and (2*) hold) is a semialgebraic set. 

Next we analyze the exact meaning of (1) and (2); let $x\in S$. From (1) we get that $$\{m^{(\varLambda)}_1(u)=\cdots=m^{(\varLambda)}_r(u)=0\}_x\cap M_x\subset \{f_1=0,\ldots,f_p=0\}_x\cap M_x$$ and therefore
\begin{multline}\label{eq:bullet}
X_x=\{f_1=0,\ldots,f_p=0\}_x\cap M_x= \\ =\Big( \{m^{(\varLambda)}_1(u)=\cdots=m^{(\varLambda)}_r(u)=0\}_x\cap M_x\Big) \cup(Y_{1,x}\cup\cdots\cup Y_{s,x}),
\end{multline}
where the $Y_{\ell,x}$'s are the irreducible Nash components of $X_x$ on which some $m^{(\varLambda)}_i(u)$ does not vanish identically. Thus we must get rid of those $Y_{\ell,x}$'s. To that end we use the topology of the germ $X_x$.

Let us denote $X'_x=\bigcup_{\lambda\in \varLambda}\{u_\lambda=0\}_x \cap M_x$. Recall that by definition of the monomials $m^{(\varLambda)}_1(\x),\ldots,m^{(\varLambda)}_r(\x)$ we also have 
$$X'_x=\{m^{(\varLambda)}_1(u)=\cdots=m^{(\varLambda)}_r(u)=0\}_x\cap M_x.$$ For every integer $d$ denote $c_d$ the number of connected components of dimension $d$ of the
smooth locus of $X'_x$; this number only depends on $\varLambda$ and $d$. Now consider the semialgebraic set
$$
{\tt Smooth}_d(X)=\{x\in X: \text{the germ $X_x$ is smooth of dimension $d$}\}.
$$
We know that the set
$$
C_d=\{x\in X:\text{the germ ${\tt Smooth}_d(X)_x$ has $c_d$ connected components}\}
$$
is semialgebraic \cite[4.2]{fgr}, and so is the intersection $C=\bigcap_dC_d$. To conclude, we see that $S\cap C=T^{(\varLambda)}$.

Clearly $S\cap C$ contains $T^{(\varLambda)}$. For the other inclusion, pick $x\in S\cap C$. We must see that the $Y_{\ell,x}$'s in equation \eqref{eq:bullet} are redundant. Otherwise, let $d$ be the biggest dimension of the non-redundant ones. We can write
$$
{{\tt Smooth}_d(X)}_x=
 \Big({\tt Smooth}_d (X'_x)
 \setminus {\bigcup}_\ell Y_{\ell,x}\Big)\cup
 \Big({\tt Smooth}_d\Big({\bigcup}_\ell Y_{\ell,x}\Big)\setminus
 X'_x\Big),
$$
where only non-redundant $Y_{\ell,x}$'s are considered. As $x\in C_d$, in the left hand side we have exactly $c_d$ connected components. In the right hand side, we have at least $c_d$ coming from the first bracket. This is so because we have $c_d$ connected components from ${\tt Smooth}_d(X'_x)$ by definition of $c_d$, and none of them can be lost inside $\bigcup_\ell Y_{\ell,x}$. Indeed, if one of these connected components, say $E$, was contained in the union ${\bigcup}_\ell Y_{\ell,x}$, it would be contained in some of the $Y_{\ell,x}$ of dimension $d$ (dimension cannot be bigger by construction). The connected component $E$ is an open subset of $\{u_{\lambda}=0\}_x\cap M_x$ for some $\lambda\in \varLambda$ and therefore $\{u_{\lambda}=0\}_x\cap M_x= Y_{\ell,x}$ because both are irreducible and have the same dimension. In particular $Y_{\ell,x}$ is redundant, a contradiction. Now, looking at the second bracket, if $Y_{\ell,x}$ is not redundant of dimension $d$, the germ
$$
{\tt Smooth}_d(Y_{\ell,x})\setminus X'_x
$$
is not empty, and adds some connected component, which is impossible. This contradiction completes the argument.
\end{proof}

\subsection{Nash functions on Nash monomial singularity germs.} 
In Definition \ref{def:nashfunc} we introduced Nash and $\tt c$-Nash functions. We now prove that both concepts coincide for Nash monomial singularity germs. This will allow in the next section to show the corresponding result for Nash sets with monomial singularities (see Theorem \ref{cnashgerms}).

\begin{propsub}\label{trick}
Let $X=L_1\cup\cdots\cup L_s$ be a union of coordinate linear varieties in $\R^m$, and let $h:X\to\R$ be a semialgebraic ${}^{\tt c}\sS^\nu$ (resp. $^{\tt c} \sn$) function. For every non-empty set of indices $I$ we consider the intersection $L_I=\bigcap_{i\in I}L_i$ and the orthogonal projection $\pi_I:\R^m\to L_I$. Then the function
$$
H={\sum}_I(-1)^{\#I+1}h\circ\pi_I.
$$
is a well defined $\sS^\nu$ (resp. Nash) extension of $h$ to $\R^m$. 

In particular, $\sS^\nu(X)\equiv{}^{\tt c}\sS^\nu(X)$ and $\sn(X)\equiv{}^{\tt c}\sn(X)$.
\end{propsub}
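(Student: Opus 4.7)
The plan is to check in order (i) that each summand $h\circ\pi_I$ is a well-defined function of the required regularity on $\R^m$, (ii) that the resulting $H$ restricts to $h$ on $X$ via an inclusion-exclusion cancellation, and (iii) that the ``in particular'' statements are immediate consequences.

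For (i), since each $L_i$ is a coordinate linear variety, every intersection $L_I=\bigcap_{i\in I}L_i$ is again a coordinate linear variety, and $L_I\subset L_i\subset X$ for any $i\in I$. The orthogonal projection $\pi_I:\R^m\to L_I$ is linear (zeroing out the coordinates appearing in the equations of $L_I$), in particular Nash. Moreover, orthogonal projections onto coordinate subspaces commute and satisfy $\pi_i\circ\pi_j=\pi_{\{i,j\}}$, so iterating we obtain $\pi_I=\pi_{i_1}\circ\cdots\circ\pi_{i_k}$ in any order. Since $h|_{L_i}$ is $\sS^\nu$ (resp.\ Nash) and $L_I\subset L_i$ is an affine Nash submanifold, $h|_{L_I}$ is $\sS^\nu$ (resp.\ Nash), and therefore $h\circ\pi_I:\R^m\to\R$ is $\sS^\nu$ (resp.\ Nash). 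Hence $H$ is well-defined with the required regularity on all of $\R^m$.

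For (ii), fix $x\in X$ and put $I_0:=\{i:x\in L_i\}\neq\varnothing$. For $i\in I_0$ we have $\pi_i(x)=x$, and the commutation of coordinate projections then gives $\pi_I(x)=\pi_{I\setminus I_0}(x)$ for every non-empty $I$, with the convention $\pi_{\varnothing}=\mathrm{id}$. Grouping the sum defining $H(x)$ according to $K:=I\setminus I_0$, the contribution of each $K$ is
$$
h(\pi_K(x))\cdot(-1)^{\#K+1}{\sum}_J(-1)^{\#J},
$$
where $J$ runs over subsets of $I_0$ such that $K\cup J\neq\varnothing$. If $K\neq\varnothing$, $J$ ranges over all subsets of $I_0$, and since $I_0\neq\varnothing$ one has $\sum_{J\subset I_0}(-1)^{\#J}=(1-1)^{\#I_0}=0$, so these contributions vanish. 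If $K=\varnothing$, $J$ ranges over the non-empty subsets of $I_0$ and $-\sum_{\varnothing\neq J\subset I_0}(-1)^{\#J}=-(0-1)=1$, yielding $h(x)$. Thus $H(x)=h(x)$.

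Finally, for (iii), $H$ is already defined on the whole ambient $\R^m$, so any ${}^{\tt c}\sS^\nu$ (resp.\ ${}^{\tt c}\sn$) function on $X$ automatically extends to an $\sS^\nu$ (resp.\ Nash) function in the sense of Definition \ref{def:nashfunc}, giving the two ring equalities. The only delicate point in the argument is the inclusion-exclusion cancellation in step (ii); everything else is bookkeeping, but it crucially depends on the commutation $\pi_i\circ\pi_j=\pi_{\{i,j\}}$, which holds precisely because the $L_i$ are \emph{coordinate} linear varieties and would fail for arbitrary linear subspaces.
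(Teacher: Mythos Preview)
Your proof is correct. Both you and the paper establish step (i) in essentially the same way---noting that each $L_I$ sits inside some $L_i$, so $h\circ\pi_I=(h|_{L_i})\circ\pi_I$ is a composition of maps of the required regularity---but for the crucial step (ii) you take a genuinely different route. The paper argues by induction on $s$: setting $Y=L_1\cup\cdots\cup L_{s-1}$ and $G=\sum_{s\notin J}(-1)^{\#J+1}h\circ\pi_J$, it uses the induction hypothesis $G|_Y=h|_Y$ together with the observation $\pi_s(Y)\subset Y$ to check that $h=G+(h-G)\circ\pi_s$ on $X$, and then expands the right-hand side into the full alternating sum. You instead fix $x\in X$, isolate the index set $I_0=\{i:x\in L_i\}$, and collapse the sum directly via the binomial identity $\sum_{J\subset I_0}(-1)^{\#J}=0$. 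Your argument is shorter and exposes the combinatorial core (inclusion--exclusion over $I_0$) more transparently; the paper's induction is more structural and perhaps suggests more readily how one might discover the formula in the first place. Both hinge on exactly the same geometric fact you flag at the end: that $\pi_i$ fixes $L_j$ setwise (equivalently $\pi_i\circ\pi_j=\pi_{\{i,j\}}$), which is special to \emph{coordinate} linear varieties.
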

\begin{proof} 
We write the proof for the differentiable case, as the Nash one is a copy.
First note that every $L_I$ is contained in some $L_i$, hence $h\circ\pi_I=(h|L_i)\circ\pi_I$ is a composition of semialgebraic $\sS^\nu$ functions, hence a semialgebraic $\sS^\nu$ function. Consequently, $H$ is a sum of semialgebraic $\sS^\nu$ functions, and so a semialgebraic $\sS^\nu$ function. Thus the thing to check is that $H$ extends $h$, which we prove by induction on $s$. 

For $s=1$, we just have $H=h\circ\pi_1$, where $\pi_1$ is the orthogonal projection onto $X=L_1$. Thus, $\pi_1|_X=\Id_X$ and $H|_X=h$. Now suppose $s>1$ and the result true for $s-1$ coordinate linear varieties. Denote $Y=L_1\cup\cdots\cup L_{s-1}$, and
$$
G={\sum}_{s\notin J}(-1)^{\#(J)+1}h\circ\pi_J,
$$
that is, $J$ runs among the non-empty sets of indices that do not contain $s$. The induction hypothesis says that $G|_Y=h|_Y$, that is, $h-G$ vanishes on $Y$. Let us deduce from this that $H|_X=h$.

Denote $\pi_s:\R^m\to L_s$ the orthogonal projection onto $L_s$. Since the $L_i$'s are coordinate linear varieties, $\pi_s(L_i)=L_i\cap L_s\subset L_i$, which implies that $\pi_s(Y)\subset Y$ and that $(h-G)\circ\pi_s$ vanishes on $Y$ as $h-G$ does. On the other hand, $(h-G)\circ\pi_s$ and $h-G$ coincide on $L_s$, because $\pi_s$ is the identity on $L_s$. Thus $(h-G)-(h-G)\circ\pi_s$ vanishes on $X$, that is, 
$$
h=G+(h-G)\circ\pi_s\quad\text{on}\quad X.
$$
But:
\begin{align*}
(h-G)\circ\pi_s&=h\circ\pi_s-{\sum}_{s\notin J}(-1)^{\#(J)+1}h\circ\pi_J\circ\pi_s\\
 &=h\circ\pi_s-{\sum}_{s\notin J}(-1)^{\#(J)+1}h\circ\pi_{J\cup\{s\}}
 =h\circ\pi_s+{\sum}_{s\notin J}(-1)^{\#(J)+2}h\circ\pi_{J\cup\{s\}}\\
 &={\sum}_{s\in K}(-1)^{\#(K)+1}h\circ\pi_K , 
\end{align*}
that is, $K$ runs among the non-empty subsets of indices that do include $s$. Hence \em on $X$ \em we have that
$$
h={\sum}_{s\notin J}(-1)^{\#(J)+1}h\circ\pi_J+{\sum}_{s\in K}(-1)^{\#(K)+1}h\circ\pi_K={\sum}_{I}(-1)^{\#(I)+1}h\circ\pi_I=H,
$$
as required.
\end{proof}

We remark that Proposition \ref{trick} defines a \em extension linear map \em $h\mapsto H$
$$
\sS^\nu(X)\equiv{}^{\tt c}\sS^\nu(X)\to\sS^\nu(\R^m),\quad\sn(X)\equiv{}^{\tt c}\sn(X)\to\sn(\R^m)
$$
for the local models $X=L_1\cup\cdots\cup L_s$. Note however that the above extensions disregard topologies and therefore we must return to this identification later (see Lemma \ref{lem:localextcont}). In any case, we deduce the following. 

\begin{propsub}\label{locweaknor}
Let $X\subset M$ be a set such that $X_x$ is a Nash monomial singularity germ. Then the inclusions $\sn(X_x)\to{}^{\tt c}\sn(X_x)$ and $\sS(X_x)\to{}^{\tt c}\sS(X_x)$ are bijective.
\end{propsub}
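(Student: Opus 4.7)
Injectivity of both maps is immediate: if $f\in\sn(X_x)$ (resp.\ $\sS^\nu(X_x)$) vanishes on each irreducible component of the germ $X_x$, then it vanishes on the union, hence on $X_x$. So the whole content is surjectivity, and the plan is to reduce it to the local-model statement already proved in Proposition \ref{trick}.

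By hypothesis and Definition \ref{def:monomial}, after shrinking $M$ to a neighborhood $U$ of $x$ there exists a Nash diffeomorphism $u:U\to\R^m$ with $u(x)=0$ carrying $X\cap U$ onto a union of coordinate linear varieties $X':=L_1\cup\cdots\cup L_s\subset\R^m$. Pulling back along $u$ gives ring isomorphisms $u^*:\sn(X'_0)\to\sn(X_x)$ and $u^*:\sS^\nu(X'_0)\to\sS^\nu(X_x)$; by Proposition \ref{prop:cohirred} (or directly, since $u$ is a diffeomorphism) the irreducible components of $X_x$ correspond under $u$ to the germs $L_{i,0}$, hence $u^*$ also induces isomorphisms on the $^{\tt c}\sn$ and $^{\tt c}\sS^\nu$ sides. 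Thus it suffices to prove surjectivity for the germ of the local model at the origin.

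So let $h\in{}^{\tt c}\sn(X'_0)$ (resp.\ $^{\tt c}\sS^\nu(X'_0)$). By definition, each restriction $h|L_{i,0}$ is the germ of a Nash (resp.\ $\sS^\nu$) function defined on some open semialgebraic neighborhood of $0$ in $L_i$; shrinking these neighborhoods we obtain a common open semialgebraic neighborhood $V\subset\R^m$ of $0$ such that $h$ is represented by a genuine ${}^{\tt c}\sn$ (resp.\ $^{\tt c}\sS^\nu$) function on $X'\cap V$. The formula
\[
H=\sum_I(-1)^{\#I+1}h\circ\pi_I
\]
of Proposition \ref{trick} makes perfect sense on a possibly smaller open semialgebraic neighborhood $V'\subset V$ of $0$ (it is enough that the orthogonal projections $\pi_I$ map $V'$ into $V$), and the proof of Proposition \ref{trick} applies verbatim at this local level: each summand $h\circ\pi_I$ is Nash (resp.\ $\sS^\nu$) because $L_I\subset L_i$ for some $i$, and the inductive argument showing $H|_{X'\cap V'}=h|_{X'\cap V'}$ uses only that the $L_i$ are coordinate linear varieties, a property preserved on any neighborhood of the origin. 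Taking germs at $0$, $H_0\in\sn(\R^m_0)$ (resp.\ $\sS^\nu(\R^m_0)$) restricts to $h$, which gives the desired preimage.

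The main conceptual point, which is the only nontrivial one, is that the extension formula of Proposition \ref{trick} is local and functorial in the sense required: this lets us pass freely between the germwise statement we need and the global statement already proved. No further difficulty is expected.
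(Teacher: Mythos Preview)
Your argument is correct and follows the same overall strategy as the paper: transport the germ to the local model $X'=L_1\cup\cdots\cup L_s\subset\R^m$ via the Nash chart $u$, and then invoke the explicit extension formula of Proposition~\ref{trick}. The only difference is a minor technical one in how you bridge the gap between a \emph{germ} at the origin and Proposition~\ref{trick}, which is stated for functions on all of $\R^m$. You observe (correctly) that the formula $H=\sum_I(-1)^{\#I+1}h\circ\pi_I$ and the inductive proof that $H|_{X'}=h$ work verbatim on any open ball $V'$ about the origin, since orthogonal projections onto coordinate varieties are contractions and hence map $V'$ into itself. The paper instead keeps Proposition~\ref{trick} exactly as stated and, having represented $h$ on $X'\cap B(0,\varepsilon)$, composes with the Nash diffeomorphism $\psi:B(0,\varepsilon)\to\R^m$, $x\mapsto x/\sqrt{\varepsilon^2-\|x\|^2}$, which preserves each $L_i$, to obtain a genuine ${}^{\tt c}\sn$ function on all of $L_1\cup\cdots\cup L_s$ before applying Proposition~\ref{trick}. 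Your route is a touch more direct; the paper's avoids re-running the induction and just quotes the proposition as a black box. Either way the content is the same.
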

\begin{proof}
We prove the Nash case, the $\sS^\nu$ one is similar. It is enough to prove that the inclusion is surjective. Fix a semialgebraic open neighborhood $U$ of $x\in X$ and $u:U\to \R^m$ a Nash diffeomorphism such that $u(X\cap U)$ is a union of coordinates linear varieties $L_1,\ldots,L_s$. Any $f_x\in {}^{\tt c}\sn(X_x)$ is represented by a semialgebraic map $f:V\cap X\to \R$ where $V$ is an open semialgebraic subset of $U$ such that $u(V)$ is a ball $B=B(0,\veps)$ centered at the origin. Consider the Nash diffeomorphism $\psi:B\to \R^m,\ x\mapsto \frac{x}{\sqrt{\veps^2-\|x\|^2}}$, which satisfies $\psi(L_i\cap B)=L_i$. The map $f\circ u^{-1} \circ \psi^{-1}:L_1\cup \cdots \cup L_s\to \R$ is a $\tt c$-Nash function and by Proposition \ref{trick} it has a Nash extension $H:\R^m\to \R$. Hence $H \circ \psi \circ u:V\to \R$ is a Nash extension of $f$.
\end{proof}

Even though we will not use it until Section \ref{sec:corners} let us write down here the following consequence of Proposition \ref{trick}.

\begin{corsub}\label{cor:extcorners}Let $X=\{x_1\cdots x_s=0 \}\subset \R^m$ and $Q=\{x_1\geq 0,\ldots, x_s\geq 0 \}\subset \R^m$ with $1\leq s \leq m$. Let $f:Q \to \R$ and let $g:X\to \R$ be $\sS^\nu$ functions that coincide on $Q\cap X$. Then, they define a $\sS^\nu$ function on $Q\cup X$, that is, there is an $\sS^\nu$ function $\zeta: \R^m \to \R$ such that $\zeta|_Q=f$ and $\zeta|_X=g$.
\end{corsub}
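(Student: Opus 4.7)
The strategy is to first extend $f$ to $\R^m$, then use the explicit extension formula from Proposition \ref{trick} to build a ``correction'' that fixes $\zeta$ on $X$ without disturbing $f$ on $Q$.

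\emph{Step 1: Extend $f$.} Since $Q$ is closed in $\R^m$, the discussion in \ref{Nashmancorners} identifies $\sS^\nu(Q)$ with $\sS^\nu(\R^m)/I^\nu(Q)$, so the restriction map $\sS^\nu(\R^m)\to\sS^\nu(Q)$ is an open quotient and in particular surjective. Pick an $\sS^\nu$ extension $F:\R^m\to\R$ of $f$. Then $h:=g-F|_X$ is an $\sS^\nu$ function on $X$, and on the overlap $Q\cap X$ we have $g=f$ and $F|_X=f$, so $h$ vanishes on $Q\cap X$.

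\emph{Step 2: Extend $h$ via Proposition \ref{trick}.} Write $X=L_1\cup\cdots\cup L_s$ with $L_i=\{x_i=0\}$, and for each nonempty $I\subset\{1,\dots,s\}$ let $L_I=\bigcap_{i\in I}L_i$ and let $\pi_I:\R^m\to L_I$ be the orthogonal projection that zeroes the coordinates indexed by $I$ and keeps the rest. Proposition \ref{trick} provides the $\sS^\nu$ extension
$$
H=\sum_{\varnothing\ne I\subset\{1,\dots,s\}}(-1)^{\#I+1}\,h\circ\pi_I\in\sS^\nu(\R^m),
$$
with $H|_X=h$.

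\emph{Step 3: Check that $H$ vanishes on $Q$.} Fix $x\in Q$, so $x_i\ge 0$ for every $i\le s$. For any nonempty $I\subset\{1,\dots,s\}$ the point $\pi_I(x)$ has $i$-th coordinate equal to $0$ for $i\in I$ and equal to $x_i\ge 0$ for $i\in\{1,\dots,s\}\setminus I$; hence $\pi_I(x)\in Q$. Moreover $\pi_I(x)\in L_I\subset X$. Therefore $\pi_I(x)\in Q\cap X$, where $h$ is zero, so every summand of $H(x)$ vanishes. This is the crucial (and only non-routine) point: the coordinate orthant structure of $Q$ is precisely what guarantees $\pi_I(Q)\subset Q\cap X$.

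\emph{Step 4: Conclude.} Set $\zeta:=F+H\in\sS^\nu(\R^m)$. Then $\zeta|_Q=F|_Q+H|_Q=f+0=f$, while $\zeta|_X=F|_X+h=F|_X+(g-F|_X)=g$, as required. No serious obstacle arises; the whole argument rests on the stability of $Q$ under the projections $\pi_I$, which lets Proposition \ref{trick}'s formula simultaneously extend $h$ on $X$ and vanish on $Q$.
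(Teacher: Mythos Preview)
Your proof is correct and follows essentially the same approach as the paper: extend $f$ to $F$, apply the explicit formula of Proposition~\ref{trick} to the difference $h=g-F|_X$, and use the key observation that $\pi_I(Q)\subset Q\cap X$ to see the resulting extension $H$ vanishes on $Q$. The only cosmetic difference is that the paper also extends $g$ to some $G:\R^m\to\R$ and writes $h=(F-G)|_X$, which is unnecessary since only $G|_X=g$ is ever used.
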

\begin{proof}Consider $\sS^\nu$ extensions $F,G:\R^m\to \R$ of $f,g$. Then $h=F-G|_X$ vanishes on $Q\cap X$. Let $H:\R^m\to \R$ be the $\sS^\nu$ extension of $h$ given by Proposition \ref{trick}. If $x\in Q$ and $I\subset \{1,\ldots,s\}$ is nonempty then $\pi_I(x)\in Q\cap X$. Thus, by definition $H|_Q=0$ and $\zeta=F-H$ solves the problem. 
\end{proof}

\begin{remarksub}
Observe that for the Nash case, the previous result is trivially true. Given $f$ and $g$ Nash, the unique Nash extension $F$ to $\R^m$ of $f$ coincides, by the Identity Principle, with $g$ on $X$.
\end{remarksub}

\section{Nash sets with monomial singularities}\label{sec:Nashgen}

Recall from the Introduction that a Nash set $X$ of $M$ is a Nash set with monomial singularities if $X_x$ is a monomial singularity for every $x\in X$ (Definition \ref{def:gencrosset}). In this section we prove the finiteness and weak normality (Theorems \ref{thmfiniteness} and \ref{cnashgerms}). First let us state a fundamental fact concerning Nash sets with monomial singularities. 

\begin{lem}\label{lem:irreducohe}
Let $X\subset M$ be a closed semialgebraic set whose germs $X_x$, $x\in X$, are all monomial singularities. Then $X$ is a coherent Nash set. In particular, its irreducible components $X_1,\ldots,X_s$ are pure dimensional and any union of intersections of them is also a Nash set with monomial singularities.
\end{lem}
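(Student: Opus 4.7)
The plan is, in order: (i) prove $X$ is a Nash set and is coherent; (ii) deduce pure dimensionality of the irreducible components; (iii) verify the monomial singularity property is preserved under taking intersections and then unions of the $X_i$.

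\textbf{Step (i).} I will first show that the analytic ideal sheaf $\si_\so$ with $(\si_\so)_y = I_\so(X_y)$ is locally of finite type. Fix $x \in X$ and pick the Nash diffeomorphism $u: U \to \R^m$ from Definition \ref{def:monomial} so that $X \cap U = \bigcup_{\lambda \in \varLambda} \{u_\lambda = 0\}$; consider the associated square-free monomials $m_1(u), \ldots, m_r(u) \in \sn(U)$ of Definition \ref{def:squarefree}. For any $y \in U$, set $I_y = \{i : u_i(y) = 0\}$ and $\varLambda_y = \{\lambda \in \varLambda : \lambda \subseteq I_y\}$, so that $(X \cap U)_y = \bigcup_{\lambda \in \varLambda_y} \{u_\lambda = 0\}_y$. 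Given any local associated square-free monomial $\prod_{\lambda \in \varLambda_y} u_{\ell(\lambda)}$ (with $\ell(\lambda) \in \lambda \subseteq I_y$), I extend the choice to all of $\varLambda$ by selecting $\ell(\lambda) \in \lambda \setminus I_y$ for $\lambda \notin \varLambda_y$ (nonempty by definition of $\varLambda_y$); the resulting global monomial $m_i(u)$ equals the local one multiplied by $\prod_{\lambda \notin \varLambda_y} u_{\ell(\lambda)}$, a unit in $\so_y$ since none of its factors vanish at $y$. Applying Proposition \ref{sqfreestate}(iii) at $y$ then yields $(\si_\so)_y = (m_1(u), \ldots, m_r(u))\so_y$, so $\si_\so|_U$ is finitely generated. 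Since $\so$ is coherent, $\si_\so$ is a coherent sheaf; Fact \ref{fact:cohpure} gives that $X$ is a global analytic set, hence a Nash set by Fact \ref{fact:globalananash}(i), and Proposition \ref{fact:cohananash} promotes analytic coherence to Nash coherence.

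\textbf{Step (ii).} Proposition \ref{prop:cohirred} gives the coherence of each irreducible component $X_i$ and describes $X_{i,y}$ as a union of certain irreducible Nash components of $X_y$ for every $y$. Applying Fact \ref{fact:cohpure} to the coherent irreducible analytic set $X_i$ yields the claimed pure dimensionality.

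\textbf{Step (iii).} Finite intersections and finite unions of Nash sets are Nash sets, so any $Y = \bigcup_{F \in \mathcal{F}} \bigcap_{i \in F} X_i$ is a closed semialgebraic Nash set. For $y \in Y$, choose a chart $u : U \to \R^m$ with $u(y) = 0$ trivialising $X$, and by Step (ii) write each $X_{i,y} = \bigcup_{\lambda \in \varLambda_i} \{u_\lambda = 0\}_y$ for some $\varLambda_i \subseteq \varLambda$. Distributing intersections over unions gives
$$\bigcap_{i \in F} X_{i,y} \;=\; \bigcup_{(\lambda_i) \in \prod_{i \in F} \varLambda_i} \bigl\{u_{\bigcup_{i \in F} \lambda_i} = 0\bigr\}_y,$$
itself a union of coordinate linear varieties in the $u$-coordinates. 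Taking the union over $F \in \mathcal{F}$ and discarding non-maximal varieties exhibits $Y_y$ as a union of coordinate linear varieties without immersed components, hence a monomial singularity.

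The main obstacle will be the book-keeping in Step (i): one must verify that the single global list $m_1(u), \ldots, m_r(u)$ generates $(\si_\so)_y$ at every $y \in U$, not only at the distinguished point $x$. Once this is established, the remaining assertions of the lemma reduce to formal applications of the sheaf-theoretic tools from Sections \ref{sfunct} and \ref{sec:mono}.
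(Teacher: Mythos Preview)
Your proof is correct and follows the same overall architecture as the paper's: establish analytic coherence, upgrade to Nash coherence via Proposition~\ref{fact:cohananash}, then invoke Proposition~\ref{prop:cohirred} and Fact~\ref{fact:cohpure} for the irreducible components, and finish with the local check that intersections and unions of coordinate linear varieties remain of the same kind.

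The one genuine difference is in Step~(i). The paper dispatches analytic coherence in a single line by observing that $X$ is locally a finite union of analytic manifolds and citing Cartan's result that a finite union of coherent analytic sets is coherent. You instead give a direct computation: you exhibit the square-free monomials $m_1(u),\dots,m_r(u)$ on the chart $U$ and verify, via the unit-factor trick, that they generate the stalk $I_\so(X_y)$ at \emph{every} $y\in U$, not just at the center $x$. This is a legitimate and more self-contained route---it uses only Proposition~\ref{sqfreestate} and the coherence of $\so$, and avoids the external citation---at the cost of the extra bookkeeping you yourself flag. Both arguments land at the same place; the paper's is shorter, yours makes the finite generation of $\si_\so$ completely explicit and in fact shows directly that the Nash ideal sheaf $\si$ is \emph{finite} in the sense of \S\ref{subsec:strongcoh}, which is a slightly stronger conclusion than bare coherence.
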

\begin{proof}
By definition $X$ is locally the zero set of analytic functions. It is coherent in the analytic sense because it is locally a finite union of analytic manifolds \cite[Prop. 13]{cart}. Therefore by Fact \ref{fact:cohpure} it is a global analytic set, hence a Nash set by Fact \ref{fact:globalananash}. Furthermore, by Proposition \ref{fact:cohananash} it follows that $X$ is also coherent in the Nash sense. Let $X_1,\ldots,X_s$ be its irreducible components. By Facts \ref{fact:cohpure} and Proposition \ref{prop:cohirred} they are coherent and pure dimensional. Moreover, $X_{i,x}$ is the union of some irreducible components of $X_x$ for any $x\in X_i\subset X$ (Proposition \ref{prop:cohirred}). Therefore each $X_i$ is a Nash set with monomial singularities. Finally, that any union of intersections of irreducible components is a Nash set with monomial singularities is a local problem, hence reduces to observe that any intersection of coordinate linear varieties is again a coordinate linear variety and any union of coordinate linear varieties is a Nash set with monomial singularities.
\end{proof}
We now prove weak normality making use of the notation and concepts introduced in \ref{subsec:strongcoh}. The corresponding result for $\sS^\nu$ and ${}^{\tt c}\sS^\nu$ functions is also true with the corresponding topologies (Proposition \ref{topirre}), but we must postpone it until finiteness has been proved.

\begin{stepsp}{\em Proof of Theorem \em\ref{cnashgerms}.}\label{pfcnashgerms} Let $\sj$ be the sheaf of $\sn$-ideals given by $\sj_{x}=I(X)\sn_x$. Let $X_1,\ldots,X_s$ be the irreducible components of $X$. For $1 \leq i,j \leq s$ denote  $X_{ij}=X_i\cap X_j$ and let $\sj_i$ and $\sj_{ij}$ be respectively the sheaves of $\sn$-ideals given by $\sj_{i,x}=I(X_i)\sn_x$ and $\sj_{ij,x}=I(X_{ij})\sn_x$. By Lemma \ref{lem:irreducohe} all the Nash sets $X$, $X_i$ and $X_{ij}$ are coherent and therefore we have that $(\sn/\sj)_{x}=\sn_x/ I(X_x)=\sn(X_x)$, $(\sn/\sj_i)_{x}=\sn_x/ I(X_{i,x})=\sn(X_{i,x})$ and $(\sn/\sj_{ij})_{x}=\sn_x/ I(X_{ij,x})=\sn(X_{ij,x})$. In \ref{subsec:strongcoh} we showed that $\sn(X)$ can be naturally identified with the global sections of $\sn/\sj$ and ${}^{\tt c}\sn(X)$ with the global sections of the kernel of the sheaf morphism $\phi:\sn/\sj_1\times \cdots \times \sn/\sj_s\to \prod_{i<j} \sn/\sj_{ij}$ which by coherence at each stalk is $$\phi_x:\sn(X_{1,x})\times \cdots \times \sn(X_{s,x})\to \prod_{i<j} \sn(X_{ij,x}):(f_1,\ldots,f_s)\mapsto (f_i|_{X_{ij}}-f_j|_{X_{ij}})_{i<j}$$ for every $x\in M$. In particular, $\ker (\phi)_x=\ker (\phi_x)=\{(f_1,\ldots,f_s):\ f_i|_{X_{ij}}-f_j|_{X_{ij}}=0, i<j\}$. 

Consider the multiple restriction monomorphism ${\tt i}:\sn/\sj\to \ker(\phi) \subset \sn/\sj_1\times \cdots \times \sn/\sj_s$ given at the level of stalks by 
$$
{\tt i}_x:\sn(X_x)\to \ker (\phi)_x\subset \sn(X_{1,x})\times \cdots \times \sn(X_{s,x}):f\mapsto(f|_{X_{1,x}},\dots,f|_{X_{s,x}}).
$$
We prove that each ${\tt i}_x$ is actually surjective and therefore ${\tt i}:\sn/\sj\to \ker(\phi)$ induces a surjection on global sections $\sn(X)\to {}^{\tt c}\sn(X)$, as required. To prove the surjectivity of ${\tt i}_x$ consider the commutative diagram 
\begin{displaymath}
\xymatrix{
\sn(X_x) \ar[d] \ar[r]^{{\tt i}_x} & \ker(\phi)_x \ar[dl]\\
{}^{\tt c}\sn(X_x) & }
\end{displaymath}
where the vertical arrow is the inclusion and $\ker(\phi)_x\to {}^{\tt c}\sn(X_x)$ is the injective morphism that maps $(f_1,\cdots,f_s)$ to the function germ in ${}^{\tt c}\sn(X_x)$ that equals $f_i$ on $X_{i,x}$. By Proposition \ref{locweaknor} the vertical arrow is an isomorphism and therefore all maps are isomorphisms, and we are done.\fin
\end{stepsp}

We will need the following notion in the proof of the finiteness property (Theorem \ref{thmfiniteness}).
\begin{defn}Let $X\subset M$ be a Nash set with monomial singularities of pure dimension $d$. Then the type $\varLambda$ of $X$ at any point $x\in X$ must consist of sets $\lambda=\{\ell_1,\ldots,\ell_r\}$ of cardinal $r=\dim(M)-d$; the number of those sets is \em the multiplicity of $X$ at $x$ \em and will be denoted $\mult(X,x)$. 
\end{defn}
Note this coincides with the usual multiplicity of the local ring $\so/I_{\so}(X_x)$. For, $I_\so(X_x)$ is clearly the intersection of $\mult(X,x)$ prime ideals of height $\dim(M)-d$ and multiplicity $1$.

We start the proof of Theorem \ref{thmfiniteness} with a geometric preamble.

\begin{prop}\label{fgc}
Let $X\subset M$ be a Nash set with monomial singularities. Then $X$ is the union of finitely many connected Nash manifolds $\varSigma$ on which the type of $X$ is constant and each is contained in an open semialgebraic set $U$ such that 
$$
X\cap U=Y_1\cup\cdots\cup Y_s,
$$
where the $Y_i$'s are closed Nash manifolds in $U$. Furthermore, $\varSigma\subset Y_1\cap\cdots\cap Y_s$ and for 
each $x\in\varSigma$ the germs $Y_{1,x},\dots,Y_{s,x}$ are the irreducible components of $X_x$.
\end{prop}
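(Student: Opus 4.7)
The plan is to use Proposition \ref{satype} to partition $X$ by singularity type, stratify $M$ compatibly, and then on a neighborhood of each stratum $\varSigma$ glue the local monomial branches into global Nash submanifolds $Y_i$.

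First, I would note that the set of possible types $\varLambda$ of monomial singularities in $M$ is finite: a type is a Sperner family of subsets of $\{1,\ldots,m\}$ (cf.\ Remark \ref{sperner}). By Proposition \ref{satype} each $T^{(\varLambda)}=\{x\in X : X\text{ has type }\varLambda\text{ at }x\}$ is semialgebraic, so $X=\bigsqcup_\varLambda T^{(\varLambda)}$ is a finite semialgebraic partition. I would then take a semialgebraic stratification $\mathcal{G}$ of $M$ compatible with all $T^{(\varLambda)}$ and with the irreducible components $X_j$ of $X$. The strata contained in $X$ are finitely many connected Nash manifolds, each Nash diffeomorphic to some $\R^d$, and on each such stratum $\varSigma$ the type of $X$ is constant, say equal to $\varLambda=\{\lambda_1,\ldots,\lambda_s\}$.

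Fix such a stratum $\varSigma\subset T^{(\varLambda)}$ and set $U=\bigcup_{\varGamma\in\mathcal{G}_\varSigma}\varGamma$ where $\mathcal{G}_\varSigma=\{\varGamma\in\mathcal{G}:\varSigma\subset\ol\varGamma\}$; this is an open semialgebraic neighborhood of $\varSigma$ in $M$ in which $\varSigma$ is closed. At every $x\in\varSigma$ the monomial structure provides an open Nash neighborhood $V_x\subset U$ and Nash coordinates $u^x$ with
\[
X\cap V_x={\bigcup}_{i=1}^{s}M^x_i,\qquad M^x_i=(u^x)^{-1}(\{u^x_{\lambda_i}=0\}),
\]
each $M^x_i$ a Nash submanifold of $V_x$. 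The crux of the construction is to label these $s$ local branches consistently as $x$ varies. I would view the map sending $x\in\varSigma$ to the set of irreducible components of $X_x$ (equivalently their tangent spaces, sitting in a Grassmann bundle over $M$) as a semialgebraic $s$-sheeted covering of $\varSigma$; since $\varSigma$ is Nash diffeomorphic to $\R^d$ and hence simply connected, the covering is trivial and yields $s$ coherent global branch labels. Setting $Y_i:=\bigcup_{x\in\varSigma}M^x_i$, taken over a locally finite subcover by the $V_x$'s, produces a well-defined semialgebraic set containing $\varSigma$ whose germ at each $x\in\varSigma$ is $M^x_{i,x}$.

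To verify $Y_i$ is a closed Nash submanifold of $U$, I would shrink $U$ if necessary by removing closed semialgebraic subsets disjoint from $\varSigma$ so that at every $y\in Y_i$ the analytic closure $\ol{(Y_i)_y}^{\,\mathrm{an}}$ is smooth of constant dimension $m-|\lambda_i|$; then Fact \ref{closedinmani} places $Y_i$ as a closed subset of a Nash manifold of the same dimension, and a further shrinking identifies the two. A direct check then yields $X\cap U=Y_1\cup\cdots\cup Y_s$, $\varSigma\subset Y_1\cap\cdots\cap Y_s$, and that $Y_{i,x}$ equals the $i$-th irreducible component of $X_x$ for $x\in\varSigma$. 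Finiteness of the resulting covering of $X$ follows from finiteness of the stratification. The main technical obstacle is the consistent global gluing: the local decompositions come with arbitrary orderings and must be made coherent along $\varSigma$, and the resulting $Y_i$'s must be genuine Nash submanifolds, not merely locally Nash. Simple connectedness of the strata dispatches the monodromy issue, and Fact \ref{closedinmani} converts the local smoothness into the required Nash manifold structure.
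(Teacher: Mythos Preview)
Your overall plan matches the paper's: stratify $M$ compatibly with the type loci, fix a stratum $\varSigma$, and use its connectedness to label the local branches of $X_x$ coherently along $\varSigma$. Your monodromy argument via the simple connectedness of $\varSigma\cong\R^d$ is a legitimate alternative to what the paper does (which is to show directly that the set of $x\in\varSigma$ at which two candidate branch-germs coincide is clopen in $\varSigma$). The gap is in how you pass from a consistent labeling to actual Nash submanifolds. You define $Y_i=\bigcup_{x\in\varSigma}M^x_i$ over a \emph{locally finite} subcover by the $V_x$'s, but a locally finite union of semialgebraic sets need not be semialgebraic, and you give no argument that $Y_i$ is; without this you cannot invoke Fact~\ref{closedinmani}, and the subsequent shrinkings are unjustified. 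Even refining to a finite subcover (which already needs justification, $\varSigma$ not being compact) you would still have to show the pieces $M_i^{x_j}$ agree on overlaps in some \emph{semialgebraic} neighborhood of $\varSigma$, not merely germinally at points of $\varSigma$.

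The paper sidesteps all gluing by never using local monomial charts to build the $Y_i$. It works instead through the global irreducible components $X_1,\dots,X_s$ of $X$ (each pure-dimensional by Lemma~\ref{lem:irreducohe}) and observes that for any stratum $\varGamma\subset X_i$ of dimension $d_i=\dim X_i$, the analytic closure $\overline{\varGamma}^{\,\mathrm{an}}_x$ at each $x\in\overline{\varGamma}$ is already an irreducible component of $X_{i,x}$. For a fixed $\varSigma$ it selects, at one reference point $x_0\in\varSigma$, those strata $\varGamma_{ik}$ adherent to $\varSigma$ whose germ-closures give the components of $X_{i,x_0}$, and the clopen argument then shows the \emph{same} strata work at every $x\in\varSigma$. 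The closures $\overline{\varGamma}_{ik}$ are automatically semialgebraic, locally compact, and have smooth analytic germ-closures everywhere, so Fact~\ref{closedinmani} applies directly to yield the Nash manifolds $S_{ik}$. Replacing your union of local charts by strata already present in the finite stratification $\mathcal{G}$ is precisely the finiteness device your construction lacks.
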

\begin{proof}
Let $X_1,\dots,X_s$ be the irreducible components of $X$, of dimensions $d_1\le\cdots\le d_s=\dim(X)$; the $X_i$'s are also Nash sets with monomial singularities, and have pure dimension. Now, we choose a finite semialgebraic stratification ${\mathcal G}$ of $M$ compatible with $X$ and the $X_i$'s. This can be done so that each stratum is locally connected at every adherent point in $M$ of it (see Preliminaries). Furthermore, by Proposition \ref{satype} we can suppose that the types of $X$ and all $X_i$'s are constant on every stratum. For, $\mathcal{G}$ can be chosen compatible with the semialgebraic sets defined by all types $\varLambda$ of the singularities of $X$ and the $X_i$'s.

\vspace{1mm}\setcounter{substep}{0}
\begin{substeps}{fgc}\label{fgc1step}\em
Let $\varGamma\in{\mathcal G}$ be a stratum of dimension $d_i$ contained in $X_i$. Then for each point $x\in\ol{\varGamma}\subset X_i$ the Nash closure $\ol{\varGamma}_x^{\rm an}$ of the germ $\varGamma_x$ is non-singular of dimension $d_i$; in fact, it is an irreducible component of $X_{i,x}$.
\end{substeps}

Indeed, given $x\in\ol{\varGamma}$ and since $X_i$ is a Nash set with monomial singularities of pure dimension $d_i$ at $x$, there exist non-singular germs $Y_{1,x},\dots, Y_{r_i,x}$, all of dimension $d_i$, such that
$$
\ol{\varGamma}_x^{\rm an}\subset X_{i,x}=Y_{1,x}\cup\cdots\cup Y_{r_i,x}.
$$
Since all germs $Y_{1,x},\dots, Y_{r_i,x},\,\ol{\varGamma}_x^{\rm an}$ are irreducible of the same dimension, there exists an index $k=1,\ldots,{r_i}$ such that $\ol{\varGamma}_x^{\rm an}=Y_{k,x}$, which implies (\ref{fgc}.\ref{fgc1step}).

Let us fix a stratum $\varSigma\in{\mathcal G}$ contained in $X$ and let us fix an irreducible component $X_i$ with $\varSigma \subset X_i$ (note here that $X_i$ contains $\varSigma$ if $X_i\cap\varSigma\ne\varnothing$). Since the type of each $X_i$ is constant on $\varSigma$ we have that $\mult(X_i,x)$ is a constant $r_i$ independent of $x\in \varSigma$ . Consider 
$$
{\mathcal G}_{\varSigma, i}=\{\varGamma\in{\mathcal G}:\varSigma\subset\ol{\varGamma}\subset X_i\}.
$$
We claim that

\vspace{1mm}
\begin{substeps}{fgc}\label{fgc2step}\em
There exist strata $\varGamma_{i1}, \ldots,\varGamma_{ir_i}\in{\mathcal G}_{\varSigma, i}$ such that for every $x\in\varSigma\subset X_i$
$$
X_{i,x}=\ol{\varGamma}_{i1,x}^{\rm an}\cup\cdots\cup\ol{\varGamma}_{ir_i,x}^{\rm an}
$$
is the decomposition of the analytic germ $X_{ix}$ into irreducible components. 
\end{substeps}

Indeed, the union $\varDelta=\bigcup_{\varGamma\in{\mathcal G}_{\varSigma,i}}\varGamma$ is a neighborhood of $\varSigma$ in $X_i$, so that $\varDelta_x=X_{i,x}$ for every $x\in\varSigma$. Now, fix a point $x_0\in\varSigma$. We have
$$
\bigcup_{\varGamma\in{\mathcal G}_{\varSigma,i}}\ol{\varGamma}^{\rm an}_{x_0}=\ol{\varDelta}^{\rm an}_{x_0}=X_{i,x_0}=A_{1,x_0}\cup\cdots\cup A_{r_i,x_0},
$$
where $A_{1,x_0},\ldots,A_{r_i,x_0}$ are distinct, non-singular germs at $x_0$ of dimension $d_i$. By (\ref{fgc}.\ref{fgc1step}), $A_{k,x_0}=\ol{\varGamma}_{ik,x_0}^{\rm an}$ for some $\varGamma_{ik}\in{\mathcal G}_{\varSigma,i}$ of dimension $d_i$. As far, the strata $\varGamma_{i1},\ldots,\varGamma_{ir_i}$ work only for the chosen point $x_0$, but we see readily that they work for all points in $\varSigma$.

Indeed, for any other point $x\in\varSigma\subset X_i$ we have 
$$
\ol{\varGamma}_{i\ell, x}^{\rm an}\subset X_{i,x}=Y_{1,x}\cup\cdots\cup Y_{r_i,x},
$$
where $Y_{1,x},\ldots,Y_{r_i,x}$ are distinct, non-singular germs of dimension $d_i$ at $x$. Thus, every $\ol{\varGamma}_{i\ell , x}^{\rm an}$ coincides with one of the $Y_{k,x}$'s, and what we must see is that $\ol{\varGamma}_{i\ell , x}^{\rm an}\neq\ol{\varGamma}_{i\ell' , x}^{\rm an}$ for $\ell\neq\ell'$. Consider the set $E$ defined as follows in two different ways:
$$\{x\in\varSigma:\ol{\varGamma}_{i\ell , x}^{\rm an}\neq\ol{\varGamma}_{i\ell' ,x}^{\rm an}\}=\{x\in\varSigma:\dim(\ol{\varGamma}_{i\ell , x}^{\rm an}\cap\ol{\varGamma}_{i\ell' ,x}^{\rm an})<d_i\}.
$$
Let us show that the first description implies that $E$ is closed in $\varSigma$, while the second implies that $E$ is open in $\varSigma$ (there is a subtlety here since we first consider germs and then analytic closures).

Consider $x\in\varSigma$. Since $\ol{\varGamma}_{i\ell , x}^{\rm an}$ is non-singular of dimension $d_i$, it is the germ at $x$ of a affine Nash manifold $D$ of dimension $d_i$, hence the point $x$ has a neighborhood $V^x$ such that 
$\varGamma_{i\ell}\cap V^x\subset D$, and so $\ol{\varGamma}_{i\ell , y}^{\rm an}\subset D_y$ for $y\in \varSigma\cap V^x$.
But $\ol{\varGamma}_{i\ell , y}^{\rm an}$ has dimension $d_i$, and $D_y$ is irreducible of that dimension, hence $\ol{\varGamma}_{i\ell , y}^{\rm an}= D_y$. Similarly, shrinking $V^x$ we find a affine Nash manifold $D'$ of dimension $d_i$ such that $\ol{\varGamma}_{i\ell' , y}^{\rm an}= D'_y$ for $y\in\varSigma\cap V^x$. Thus
$$
E\cap V^x=\{y\in\varSigma\cap V^x: D_y\neq D'_y\}=\{y\in\varSigma\cap V^x:\dim(D_y\cap D'_y)<d_i\}.
$$
Now the fact that $E$ is closed and open in $\varSigma$ is clearer.
 
Finally, since $x_0\in E$, the semialgebraic set $E$ is not empty, and since $\varSigma$ is connected, we conclude that $E=\varSigma$, as desired. The claim (\ref{fgc}.\ref{fgc2step}) is proved.

To complete the argument, consider the locally compact semialgebraic set $T_{ik}=\ol{\varGamma}_{ik}\supset\varSigma$ for $k=1,\dots, r_i$. Since 
$\ol{T}_{ik ,x}^{\rm an}=\ol{\varGamma}_{ik ,x}^{\rm an}$ for each $x\in T_{ik}$, condition (\ref{fgc}.\ref{fgc1step}) implies that 
there exists a connected affine Nash manifold $S_{ik}\subset M$ of dimension $d_i$, containing $T_{ik}$, 
hence $\varSigma$, with $S_{ik ,x}=\ol{T}_{ik ,x}^{\rm an}$ for $x\in\varSigma$ (see Fact \ref{closedinmani}). 

Thus:
$$
X_{i,x}=\ol{\varGamma}_{i1,x}^{\rm an}\cup\cdots\cup\ol{\varGamma}_{ir_i,x}^{\rm an}=
\ol{T}_{i1,x}^{\rm an}\cup\cdots\cup\ol{T}_{ir_i,x}^{\rm an}=
S_{i1,x}\cup\cdots\cup S_{ir_i,x},
$$
and
\begin{equation}\label{eq:equalgerm}
X_x={\bigcup}_iX_{i,x}={\bigcup}_i\big(S_{i1,x}\cup\cdots\cup S_{ir_i,x}\big)
\end{equation}
for all $x\in\varSigma$. It follows that $\varSigma$ is contained in  $\text{Int}_{X}\big({\bigcup}_i(S_{i1}\cup\cdots\cup S_{ir_i})\big)$, which is an open semialgebraic subset of $X$. Therefore, there exists an open semialgebraic neighborhood $U$ of $\varSigma$ in $M$ such that
$$
X\cap U={\bigcup}_i\big(S_{i1}\cup\cdots\cup S_{ir_i}\big) \cap U.
$$ 
Moreover, since each $S_{ik}$ is locally closed we can assume that $S_{ik}$ is closed in $U$. Finally note that 
$$
\varSigma\subset {\bigcap}_i\big(S_{i1}\cap\cdots\cap S_{ir_i}\big).
$$
Thus, we end by renaming the $S_{ik}\cap U$'s as the $Y_j$'s of the statement. Concerning the last assertion there, we only need to remark that no irreducible component of a germ $X_{i,x}$ is contained in one of $X_{j,x}$, because $X_i$ and $X_j$ are irreducible of pure dimension. 
\end{proof}

\begin{stepsp}{ \em Proof of Theorem \em\ref{thmfiniteness}.}\label{pfthmfiniteness}
By Proposition \ref{fgc} we are reduced to the following claim.

\vspace{1mm}\setcounter{substep}{0}
\begin{substeps}{pfthmfiniteness}\em
Let $X\subset M$ be a Nash set with monomial singularities that is a union of closed Nash manifolds $Y_1,\dots, Y_s$. Let $\varSigma\subset Y_1\cap\cdots\cap Y_s$ be a connected Nash manifold such that: \em(i) \em the type of $X$ is constant on $\varSigma$ and \em (ii) \em $Y_{1,x},\dots,Y_{s,x}$ are the distinct irreducible components of the germ $X_x$ for every $x\in\varSigma$. Then $\varSigma$ can be covered by finitely many open semialgebraic sets $U$ equipped with Nash diffeomorphisms $u: U\to\R^m$ such that $u(X\cap U)$ is a union of coordinate linear varieties.
\end{substeps}

To start with, we refine the fact that the type $\varLambda$ is constant on $\varSigma$. We show that the load of $\{T_xY_1,\ldots,T_xY_s\}$ is the same for all $x\in \varSigma$. Since $\varSigma$ is connected it is enough to show that the load is locally constant. Fix $x\in \varSigma$ and let $u:U\mapsto \R^m$ be a local Nash diffeomorphism with $u(x)=0$ and $u(X\cap U)=\bigcup_{\lambda\in \varLambda}\{u_{\lambda}=0\}$. By (ii), for $U$ small enough, the irreducible components of $X\cap U$ are $Y_1\cap U,\ldots,Y_s\cap U$ and we denote $L_i=u(Y_i\cap U)$. For any point $y \in  \varSigma\cap U \subset (Y_1\cap \cdots \cap Y_s) \cap U= u^{-1}(L_1\cap \cdots \cap L_s)$ we have $d_y(T_yY_i)=L_i$, and consequently the loads of $\{T_yY_1,\ldots,T_yY_s\}$ and $\{T_xY_1,\ldots,T_xY_s\}$ are the same. Thus, the load is locally constant in $\varSigma$ and therefore constant. 

After this, fix any family $\mathcal{L}=\{L_1,\ldots,L_s\}$ whose type is $\varLambda$ and whose load is that of $\{T_xY_1,\ldots,T_xY_s\}$ for all $x\in \varSigma$.

Now, $X$ being a Nash set with monomial singularities, we can assume that every intersection $Y_{i_1}\cap\cdots\cap Y_{i_r}$ is an affine Nash manifold, and at every $x\in Y_1\cap\cdots\cap Y_s$ we have
$$
T_x(Y_{i_1}\cap\cdots\cap Y_{i_r})=T_xY_{i_1}\cap\cdots\cap T_xY_{i_r},
$$
or equivalently $\dim_x(Y_{i_1}\cap\cdots\cap Y_{i_r})=\dim(L_{i_1}\cap\cdots\cap L_{i_r})$. Indeed, since $\varSigma$ is connected we can assume that $Y_1,\ldots,Y_s$ are connected. Then, by the Identity Principle $Y_1,\ldots,Y_s$ are the irreducible components of $X$ because by hypothesis their germs at any point $x\in \varSigma$ are the distinct irreducible components of $X_x$. Moreover, again by the Identity Principle, for any $x\in X$ the irreducible components of $X_x$ are the $Y_{i,x}$ with $x\in Y_i$. Thus, and since $X$ has monomial singularities, for any $x\in X$ the germ $Y_{i_1,x}\cup \cdots\cup Y_{i_r,x}$ is Nash diffeomorphic to a union of germs of coordinate varieties. In particular, $Y_{i_1,x}\cap \cdots\cap Y_{i_r,x}$ is Nash diffeomorphic to a germ of a coordinate variety and the equality of dimensions above is straightforward.

After this preparation, the union $\bigcup_iL_i$ is the model to which $X=\bigcup_i Y_i$ must be diffeomorphic \em near $\varSigma$ in the local semialgebraic sense\em. To confirm this, we work up the levels 
$$\bl^{(p)}=\big\{L_I=\bigcap_{i\in I}L_i:\#I=p\big\}$$ and the corresponding levels $$\mathcal{Y}^{(p)}=\big\{Y_I=\bigcap_{i\in I}Y_i:\#I=p\big\}$$  for $p=1,\ldots,s$. For the moment being we do not look for Nash diffeomorphisms whose images are $\R^m$, they will be just open semialgebraic subsets of $\R^m$. We will fix this later.

\em Step 1\em. At bottom level $p=s$ we have one single intersection $Y_1\cap\cdots\cap Y_s$ which is a Nash manifold of the same dimension that the linear coordinate variety $L_1\cap\cdots\cap L_s$. Thus, $Y_1\cap\cdots\cap Y_s$, hence $\varSigma$, can be covered by finitely many open semialgebraic sets $W$ such that the intersection $U_s=W\cap Y_1\cap\cdots\cap Y_s$ is Nash diffeomorphic to an open semialgebraic set $\varOmega_s$ of $L_1\cap\cdots\cap L_s$. Since there are finitely many $W$'s, we can suppose simply that $\varSigma\subset U_s$ and we have a Nash diffeomorphism $\varphi_s:\varOmega_s\to U_s$. Denote $S=\varphi_s^{-1}(\varSigma)$.

\em Step 2\em. Now we formulate carefully the recursion procedure. Suppose we have, at level $p$, open semialgebraic subsets $\varOmega_p$ and $U_p$ of $\bigcup_{\#I=p}L_I$ and $\bigcup_{\#I=p}Y_I$ respectively, with $\varSigma\subset U_p$, and a continuous semialgebraic map $\varphi_p:\varOmega_p\to U_p$ which restricts to Nash diffeomorphisms $L_K\cap\varOmega_p\to Y_K\cap U_p$; here $K$ stands for any subset of $\{1,\ldots,s\}$ with at least $p$ indices. We also assume that $\varOmega_k\subset \varOmega_p$ and $\varphi_p|_{\varOmega_k}=\varphi_k$ for $k=p+1,\ldots,s$. We want an extension to a similar map $\varphi_{p-1}$ at level $p-1$, after some suitable finite semialgebraic partition of $\varSigma$. We proceed as follows.

\em Step 3\em. Let $Y_J$ be an intersection of $Y_i$'s at level $p-1$ and consider the continuous semialgebraic restriction $\phi_J$ of $\varphi_p$ to $L_J\cap\varOmega_p$. Moreover, for any set $I$ of $p$ indices the restriction induces a Nash diffeomorphism $L_J\cap L_I\cap\varOmega_p\to Y_J\cap Y_I\cap U_p$ (note that every intersection of $Y_J$ with another $Y_{J'}$ at  
the same level $p-1$ is included in some intersection $Y_I$ at level $p$). By considering a finite open semialgebraic covering of $Y_J$ we can suppose it is Nash 
diffeomorphic to an affine space, and we pick any identification $Y_J\equiv\R^a$. Of course $a=\dim(L_J)$. 

Now, for any subset $I$ of $p$ indices the tangent bundle of the affine Nash manifold $Y_J\cap Y_I\subset\R^a$ is generated by its global Nash sections \cite[12.7.10]{bcr}. Since $U_s$ is contained in all those $Y_J\cap Y_I$'s, the sums
$$
\sum_{\#I=p}T_x(Y_J\cap Y_I)\subset\R^a\equiv L_J\qquad\text{for $x\in U_s$}
$$
are the fibers of a Nash vector bundle $\sT$ on $U_s$ also generated by its global Nash sections. The same is true for the orthogonal bundle $\sT^\perp$, say it is generated by finitely many global Nash sections $\zeta_i:U_s\to\R^a$. 
For every $x\in\varSigma$ the codimension $c$ of
$\sum_{\#I=p}T_x(Y_J\cap Y_I)$ in $T_xY_J$ is that of $\sum_{\#I=p}L_J\cap L_I$ in $L_J$ (since the load of $\bl=\{L_1,\ldots,L_s\}$ and $\{T_xY_1,\ldots,T_xY_s\}$ are the same). Therefore, for every $x\in\varSigma$, we have that $c$ of the $\zeta_i(x)$'s are independent. After considering once again a finite open covering of $\varSigma$, we can assume we have exactly $c$ independent Nash maps $\zeta_1,\dots,\zeta_c:U_s\to\R^a\equiv Y_J$ such that $\zeta_1(x),\dots,\zeta_c(x)$ span $\sT_x^\perp$ for every $x\in\varSigma$. In other words, 
$$
\R^a={\sum}_{\#I=p}T_x(Y_J\cap Y_I)+L[\zeta_1(x),\dots,\zeta_c(x)].
$$
for $x\in\varSigma$.

Next consider $V_J=\sum_{\#I=p}L_J\cap L_I$ and notice that the orthogonal supplement $W_J$ of $V_J$ in $L_J\subset\R^m$ is a coordinate linear variety of dimension $c$ generated by suitable vectors $e_{\ell_1},\dots,e_{\ell_c}$ of the canonical basis. Let $\varOmega'=\varOmega_s\oplus W_J$ which is an open semialgebraic subset of the coordinate linear variety $L'=(L_1\cap\cdots\cap L_s)\oplus W_J$. We extend $\varphi_s$ as follows:
$$
\varphi^*_s(v+\alpha_1e_{\ell_1}+\cdots+\alpha_ce_{\ell_c})
 =\varphi_s(v)+\alpha_1\zeta_1(\varphi_s(v))+\cdots+\alpha_c\zeta_c(\varphi_s(v)).
$$
This extension $\varphi^*_s$ is defined on $\varOmega'$. Since $\varOmega'\cap(L_J\cap \varOmega_p)=\varOmega_s$ and $\phi_J|_{\varOmega_s}=\varphi_s$, the maps $\varphi^*_s$ and $\phi_J$ glue into
a continuous semialgebraic map $\phi_J':\varOmega'\cup(L_J\cap \varOmega_p)\to\R^a\equiv Y_J$. Clearly, the components of $\phi_J'$ are $\tt c$-Nash functions on the Nash sets with monomial singularities $\varOmega'\cup(L_J\cap \varOmega_p)$. Consequently, they are Nash functions, and $\phi_J'$ has a Nash extension $\ol\phi_J:\varOmega_J\to\R^a\equiv Y_J$ to some open semialgebraic neighborhood $\varOmega_J$ of $\varOmega'\cup(L_J\cap \varOmega_p)$ in $L_J$. We claim that \em $\ol\phi_J$ is a local diffeomorphism at every $v\in S=\varphi_s^{-1}(\varSigma)$\em.

Indeed, fix such a $v\in S$ and put $x=\varphi_s(v)$. Since $\ol\phi_J$ is a diffeomorphism on every $L_J\cap L_I\cap\varOmega_p$ with $\#I=p$ we have 
 $d_v\ol\phi_J(L_J\cap L_I)=T_x(Y_J\cap Y_I)$
and so $d_v\ol\phi_J(L_J)\subset\R^a$ contains the sum $\sum_{\#I=p}T_x(Y_J\cap Y_I)$. Moreover, for every $e_{\ell_k}$ we have
$$
\ol\phi_J(v+te_{\ell_k})=x+t\zeta_k(x), \ t\in\R.
$$
Hence, $d_v\ol\phi_J(e_{\ell_k})=\zeta_k(x)$. Consequently, 
$$
\R^a={\sum}_{\#I=p}T_x(Y_J\cap Y_I)+L[\zeta_1(x),\dots,\zeta_c(x)]\subset d_v\ol\phi_J(L_J),
$$
and $d_v\ol\phi_J:L_J\to\R^a\equiv Y_J$ is onto, hence a linear isomorphism (as $a=\dim(L_J)$). Our claim is proved.

Consequently, shrinking $\varOmega_J$, $\ol\phi_J$ is a local Nash diffeomorphism onto an open semialgebraic 
neighborhood $U_J$ of $\varSigma$ in $Y_J$. In this situation there is a finite open semialgebraic covering $\{\varOmega_{Jk}\}$ of 
$\varOmega_J$ such that every restriction $\ol\phi_J|_{\varOmega_{Jk}}$ is a Nash diffeomorphism onto an 
open semialgebraic set $U_{Jk}$ of $Y_J$ (see \cite[9.3.9, p.\hspace{1.5pt}226]{bcr}). As usual, this means that we can suppose $\ol\phi_J:\varOmega_J\to U_J$ is a diffeomorphism.

By construction, the semialgebraic set $\bigcup_J\varOmega_J$ is a neighborhood of 
$S$ in $\bigcup_JL_J$, hence it contains an open semialgebraic neighborhood 
$\varOmega_{p-1}$. Notice that
the diffeomorphisms $\ol\phi_J$ glue together to give a continuous semialgebraic extension 
$\varphi_{p-1}:\varOmega_{p-1}\to U_{p-1}$ of $\varphi_p$ that verifies all conditions required.

\em Step 4\em. Thus climbing from level to level, we get a continuous semialgebraic map $\varphi_1:\varOmega_1\to U_1$ from an open semialgebraic neighborhood $\varOmega_1$ of $S$ in $L_1\cup\cdots\cup L_s$ into another $U_1$ of $\varSigma$ in $X=Y_1\cup\cdots\cup Y_s$, which induces Nash diffeomorphisms $L_i\cap\varOmega_1\to Y_i\cap W_1$. Then we apply again the 
same argument above to extend $\varphi_1$ to a Nash diffeomorphism $\varphi:\varOmega\to U$ from an 
open semialgebraic neighborhood $\varOmega$ of $S$ in $\R^m$ onto one $U$ of $\varSigma$ in $M$. (Of course in climbing 
we have used many finite semialgebraic coverings and shrunk the neighborhood of $\varSigma$, so that what we 
really have is a finite collection of such maps whose images $U$ cover $\varSigma$.) 

\em Final arrangement\em. In this situation, $\varphi^{-1}:U\to\varOmega$ is the diffeomorphism we were looking for, except that $\varOmega$ need not be $\R^m$.
To amend this, notice that $S=\varphi^{-1}(\varSigma)\subset L_1\cup\cdots\cup L_s$ is contained in the 
intersection $L_1\cap\cdots\cap L_s$. Now this intersection can be written as the intersection $H_{j_1}\cap \cdots \cap H_{j_q}$ of the coordinate hyperplanes $H_j$ that contain some $L_i$ (that is, we just collect the equations $x_{j_\ell}=0$ of all $L_i$'s). This said, we know that $\varOmega$ 
contains a smaller neighborhood of $S$, which is a finite union of open semialgebraic sets Nash 
diffeomorphic to $\R^m$, by diffeomorphisms that preserve the coordinate hyperplanes $H_{j_\ell}$, hence the coordinate linear varieties $L_i$ (see \cite[4.4.5]{fgr}). Composing $\varphi^{-1}$ 
with the latter we obtain the diffeomorphism $u$ we sought.
\fin
\end{stepsp}

The preceding construction gives a characterization of monomial singularities as announced before.

\begin{cor}\label{cor:sufficient}
Let $X_x$ be a Nash germ whose irreducible components $X_{1,x},\ldots,X_{s,x}$ are non-singular. Then $X_x$ is a monomial singularity if and only if the tangent cone $\{T_xX_1,\ldots,T_xX_s\}$ is a extremal family and 
$$
\dim(X_{i_1,x}\cap \cdots \cap X_{i_r,x})=\dim(T_xX_{i_1}\cap \cdots \cap T_xX_{i_r})
$$
for $1\le i_1<\cdots<i_r\le s$.
\end{cor}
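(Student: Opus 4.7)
The necessity direction is essentially built into the setup. If $X_x$ is a monomial singularity, pick a Nash diffeomorphism $u\colon U\to\R^m$ with $u(x)=0$ identifying $X\cap U$ with a union of coordinate linear varieties $\bigcup_\lambda\{u_\lambda=0\}$ whose individual pieces correspond to the irreducible components $X_{i,x}$. The derivative $d_xu$ carries $T_xX_i$ onto the coordinate linear variety $\{u_{\lambda(i)}=0\}$, so the tangent cone is linearly isomorphic to a family of coordinate linear varieties and hence admits an adapted basis; by Proposition~\ref{prop:extremal} it is extremal. The dimension equalities $\dim(X_{i_1,x}\cap\cdots\cap X_{i_r,x})=\dim(T_xX_{i_1}\cap\cdots\cap T_xX_{i_r})$ are immediate once $X_x$ is identified with a union of coordinate linear varieties, since then both sides equal the dimension of the corresponding coordinate linear variety.

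For sufficiency, the strategy is to replay in the germ setting the climbing-by-levels construction used in the proof of Theorem~\ref{thmfiniteness} (step \ref{pfthmfiniteness}). First, apply Proposition~\ref{prop:extremal} to the tangent cone $\{T_xX_1,\ldots,T_xX_s\}$ to produce a linear isomorphism of $\R^m$ carrying the tangent cone to a family $\mathcal{L}=\{L_1,\ldots,L_s\}$ of coordinate linear varieties. After this preliminary linear change of coordinates we may assume $T_xX_i=L_i$, and the hypothesis on dimensions guarantees that the $X_i$'s behave combinatorially exactly like the $L_i$'s with respect to intersections. This is the discrete input needed to drive the recursion.

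Now one builds, exactly as in \ref{pfthmfiniteness}, a continuous semialgebraic map $\varphi_p\colon\varOmega_p\to U_p$ between neighborhood germs of $\bigcup_{\#I=p}L_I$ and $\bigcup_{\#I=p}X_I$ at the origin that restricts to a Nash diffeomorphism on each $L_K\cap\varOmega_p\to X_K\cap U_p$ for $\#K\ge p$. The base case $p=s$ is just a Nash diffeomorphism between the smooth germs $(L_1\cap\cdots\cap L_s)_0$ and $(X_1\cap\cdots\cap X_s)_x$, which have the same dimension by hypothesis. The inductive step from level $p$ to $p-1$ extends $\varphi_p$ across each $L_J$ with $\#J=p-1$ using a Nash section of the orthogonal bundle to $\sum_{\#I=p}T(X_J\cap X_I)$ inside $TX_J$; the extremality of the tangent cone, together with the dimension equalities, ensures that the rank computation of $d_0\overline\phi_J$ performed in \ref{pfthmfiniteness} still yields a linear isomorphism. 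Weak normality at the germ level (Proposition~\ref{locweaknor}, applied to the local models) allows the glueing of the extensions on the different $L_J$'s into a single Nash map. After reaching $p=1$ one extends off $\bigcup_i L_i$ by the same device to obtain a local Nash diffeomorphism $\varphi\colon(\R^m,0)\to(M,x)$ with $\varphi(L_i)=X_i$, as required.

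The main obstacle is really packaged into the induction step: one must check that the orthogonal-supplement construction still produces a map whose differential at $0$ is surjective onto $T_xX_J\equiv\R^{\dim L_J}$. This is where the two hypotheses genuinely combine: extremality gives that the supplement $W_J$ of $V_J=\sum_{\#I=p}L_J\cap L_I$ in $L_J$ has the ``correct'' dimension, while the equality $\dim(X_J\cap X_I)=\dim(L_J\cap L_I)$ ensures that $\sum_{\#I=p}T_x(X_J\cap X_I)$ has the same codimension inside $T_xX_J$ as $V_J$ has inside $L_J$. Everything else (Nash generators of tangent bundles, glueing of local Nash diffeomorphisms, the final identification of a neighborhood of $0$ with $\R^m$ preserving the coordinate hyperplanes via \cite[4.4.5]{fgr}) is exactly as in \ref{pfthmfiniteness} and goes through without change in the germ setting.
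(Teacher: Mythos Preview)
Your necessity direction and your overall plan for sufficiency (invoking the level-by-level construction from \ref{pfthmfiniteness}) match the paper's approach. However, you have skipped the one genuinely new ingredient that the corollary requires beyond quoting \ref{pfthmfiniteness}: you never establish that each intersection $X_{i_1,x}\cap\cdots\cap X_{i_r,x}$ is \emph{non-singular}. You simply assert that $(X_1\cap\cdots\cap X_s)_x$ is a ``smooth germ'' at the base step, and later speak of $T_x(X_J\cap X_I)$ and of ``Nash generators of tangent bundles'' for $X_J\cap X_I$, but the hypotheses only give you dimension equalities, not smoothness. In the construction of \ref{pfthmfiniteness} this smoothness was available because $X$ was already assumed to have monomial singularities; here it is precisely what must be proved.

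The paper isolates this as the key point and proves it by an inductive reduction to the case of two components: if $X_{1,x},X_{2,x}\subset\R^a_x$ are non-singular with $\dim(X_{1,x}\cap X_{2,x})=\dim(T_xX_1\cap T_xX_2)$, then $X_{1,x}\cap X_{2,x}$ is non-singular. This is shown by projecting orthogonally onto $L=T_xX_1+T_xX_2$, where the images of $X_{1,x}$ and $X_{2,x}$ become transversal, so their intersection is a Nash manifold germ of dimension $\dim(L_1\cap L_2)$; the dimension hypothesis then forces $\pi(X_{1,x}\cap X_{2,x})$ to coincide with this transversal intersection. Once all the $X_{I,x}$ are known to be non-singular, the dimension equalities give $T_x(X_{I,x})=\bigcap_{i\in I}T_xX_i$, and only then does the climbing argument of \ref{pfthmfiniteness} go through verbatim, as you outline.
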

\begin{proof}The only if part is clear. For the converse implication, we have to prove first that each intersection $X_{i_1,x}\cap \cdots \cap X_{i_r,x}$ is non-singular. Once this is done, by hypothesis
$$
T_x(X_{i_1,x}\cap \cdots \cap X_{i_r,x})=T_xX_{i_1}\cap \cdots \cap T_xX_{i_r}
$$
which enables us to repeat the preceding proof \ref{pfthmfiniteness}. 

Using the obvious induction argument, to show that each intersection $X_{i_1,x}\cap \cdots \cap X_{i_r,x}$ is non-singular, we are reduced to prove that: \em If $X_{1,x},X_{2,x}\subset\R^a_x$ are two non-singular germs such that
$\dim(X_{1,x}\cap X_{2,x})=\dim(T_xX_{1,x}\cap T_xX_{2,x})$, then $X_{1,x}\cap X_{2,x}$ is non-singular\em.

Indeed, as it is well-known, the orthogonal projection $\pi_j:\R^a\to L_j$ induces a Nash diffeomorphism between $X_{j,x}$ and $L_{j,x}$. Consider next the orthogonal projections $\pi:\R^a\to L=L_1+L_2$ and $\rho_j:L\to L_j$. Since $\pi_j=\rho_j\circ\pi$ it holds that $X'_{j,x}=\pi(X_{j,x})\subset L_x$ is Nash diffeomorphic to $X_{j,x}$ and in particular $\pi(X_{1,x}\cap X_{2,x})$ is a Nash subset germ of each $X_{j,x}$ Nash diffeomorphic to $X_{1,x}\cap X_{2,x}$; hence, $\dim(\pi(X_{1,x}\cap X_{2,x}))=\dim(L_1\cap L_2)$. On the other hand, since $X'_{1,x}$ and $X'_{2,x}$ in $L_x$ are transversal, $X'_{1,x}\cap X'_{2,x}$ is the germ of a Nash manifold of dimension $\dim(L_1\cap L_2)$. As $\pi(X_{1,x}\cap X_{2,x})$ is a Nash subset germ of $X'_{1,x}\cap X'_{2,x}$ of its same dimension, and the latter is irreducible, we deduce that they are equal and therefore $X_{1,x}\cap X_{2,x}$ is non-singular, as required.\end{proof}

We cannot drop the condition on the dimension of the intersections and the tangent spaces. In $\R^4$, let $X_1=\{x_3=x_4=0\}$, $X_2=\{x_2=0,x_4=x_1^2\}$ and let $X$ be their union. The tangent cone of $X$ at the origin is $\{x_3=0,x_4=0\},\{x_2=0,x_4=0\}$ and therefore it is extremal. However, the intersection $X_1\cap X_2$ is the origin.

\section{Extension linear maps for Nash sets with monomial singularities}\label{sec:extension}

This section is preliminary for Nash approximation. As explained in \ref{topfunc}, for any Nash set $X$ the ring $\sS^\nu(X)$ of $\sS^\nu$-functions is equipped with the topology as the quotient $\sS^\nu(X)=\sS^\nu(M)/I^\nu(X)$. If $X_1,\ldots,X_s$ are the irreducible components of $X$ then ${}^{\tt c}\sS^\nu(X)$ is equipped with the topology induced by the inclusion ${}^{\tt c}\sS^\nu(X)\to\sS^\nu(X_1)\times \cdots \times\sS^\nu(X_s)$ given by the multiple restriction $f\mapsto (f|_{X_1},\ldots,f|_{X_s})$. Moreover, the inclusion $\gamma:\sS^\nu(X)\to {}^{\tt c}\sS^\nu(X)$ is always continuous. Here our purpose is to prove that if $X$ is a Nash set with monomial singularities then there exist a continuous extension linear map $\theta:\sS^\nu(X)\to\sS^\nu(M)$ and that $\gamma$ is a homeomorphism (Proposition \ref{topirre}). Both statements will be deduced from the existence of a continuous extension linear map ${}^{\tt c}\theta:{}^{\tt c}\sS^\nu(X)\to\sS^\nu(M)$. Recall that in Proposition \ref{trick} we already found such an extension for 
unions of coordinate linear varieties. However the continuity of the extension trick there may fail because composition on the right with the orthogonal projections need not be continuous \cite[II.1.5, p.\hspace{1.5pt}83]{s}. Our purpose now is to amend this.

\begin{lem}\label{lem:localextcont}
Let $X=L_1\cup\cdots\cup L_s$ be a union of coordinate linear varieties in $\R^m$. Then there is a continuous extension linear map ${}^{\tt c}\theta:{}^{\tt c}\sS^\nu(X)\to\sS^\nu(\R^m)$. 
\end{lem}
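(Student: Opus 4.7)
The plan is to proceed by induction on the number $s$ of coordinate linear varieties. The base case $s=1$ is immediate: $X = L_1$ is a closed affine Nash submanifold of $\R^m$, and the existence of a continuous linear extension $\sS^\nu(L_1) \to \sS^\nu(\R^m)$ is the result of Shiota \cite[II.2.14]{s} invoked in the proof of Proposition \ref{prop:tosubpmanifold}.

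For the inductive step, set $X' = L_1 \cup \cdots \cup L_{s-1}$. Since the original family admits no immersions, neither does $\{L_1, \ldots, L_{s-1}\}$, so by the inductive hypothesis we have a continuous linear extension ${}^{\tt c}\theta': {}^{\tt c}\sS^\nu(X') \to \sS^\nu(\R^m)$. Given $h = (h_1, \ldots, h_s) \in {}^{\tt c}\sS^\nu(X)$, put $G = {}^{\tt c}\theta'(h_1, \ldots, h_{s-1})$ and $g = h_s - G|_{L_s}$. The compatibility of $h$ on the intersections $L_s \cap L_i$ together with the identity $G|_{L_i} = h_i$ shows that $g \in \sS^\nu(L_s)$ vanishes on $L_s \cap L_i$ for every $i<s$. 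Now let $\sigma(s) \subset \{1, \ldots, m\}$ be the set of coordinate indices that vanish on $L_s$, fix a Nash bump function $\chi: \R \to [0,1]$ with $\chi(0)=1$ and $\chi \equiv 0$ outside $[-1,1]$, and set
$$
\chi_s(x) = \prod_{\alpha \in \sigma(s)} \chi(x_\alpha), \qquad \tilde g(x) = g(\pi_s(x))\,\chi_s(x),
$$
where $\pi_s: \R^m \to L_s$ is the orthogonal projection. Since $\chi_s|_{L_s} \equiv 1$ and $\pi_s|_{L_s}$ is the identity, $\tilde g|_{L_s} = g$; while for $x \in L_i$ with $i<s$, the point $\pi_s(x)$ lies in $L_i \cap L_s$, so $g(\pi_s(x)) = 0$ and hence $\tilde g|_{L_i} = 0$. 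Therefore ${}^{\tt c}\theta(h) := G + \tilde g$ defines a linear extension of $h$ to $\R^m$.

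The main obstacle is the continuity of $g \mapsto \tilde g$, since the naive extension $g \mapsto g \circ \pi_s$ fails to be continuous in the Whitney semialgebraic topology: the fibers of $\pi_s$ are non-compact, so a semialgebraic control $\veps': \R^m \to \R^+$ decaying at infinity cannot be pulled back to a positive control on $g$ via $\pi_s$. The cutoff $\chi_s$ is designed precisely to remedy this by truncating in the fiber directions of $\pi_s$ to the compact box $[-1,1]^{\sigma(s)}$. Given a continuous semialgebraic $\veps': \R^m \to \R^+$, the function
$$
\veps(y) = \inf\{\veps'(x) : \pi_s(x) = y,\ |x_\alpha| \le 1 \text{ for all } \alpha \in \sigma(s)\}
$$
on $L_s$ is positive, continuous and semialgebraic by compactness of the fiber box. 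A routine application of the Leibniz rule, using that $\partial^\beta(g \circ \pi_s)$ vanishes whenever $\beta$ involves a coordinate in $\sigma(s)$ and that the derivatives of $\chi$ up to order $\nu$ are uniformly bounded, translates a control on $g$ and its derivatives of order $\le \nu$ by a fixed multiple of $\veps$ into a control on $\tilde g$ and its derivatives by $\veps'$. Combined with the inductive continuity of ${}^{\tt c}\theta'$ and the continuity of the restriction $\sS^\nu(\R^m) \to \sS^\nu(L_s)$, this establishes the continuity of ${}^{\tt c}\theta$ and completes the proof.
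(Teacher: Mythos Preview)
Your argument is essentially correct and takes a genuinely different route from the paper's, but there is one slip to fix: there is no such thing as a \emph{Nash} bump function, since Nash functions are analytic and an analytic function vanishing on an open interval is identically zero. What you need (and what your argument actually uses) is an $\sS^\nu$ bump function, for instance $\chi(t)=(1-t^2)^{\nu+1}$ on $[-1,1]$ extended by zero. With this correction everything goes through: $\tilde g$ lands in $\sS^\nu(\R^m)$, the derivatives of $\chi$ up to order $\nu$ are bounded, and your compact-fiber infimum $\veps(y)$ gives the required positive continuous semialgebraic control.

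As for the comparison: the paper does not induct on $s$. Instead it takes the inclusion--exclusion extension $H=\sum_I(-1)^{\#I+1}h\circ\pi_I$ from Proposition~\ref{trick} wholesale, observes that the obstruction to continuity is again the non-compactness of the fibers of the $\pi_I$, and cures it by restricting each summand to the tube $\varOmega_I=\{\dist(x,L_I)<1\}\setminus\bigcup_{j\notin I}L_j$, where $\|x\|^2<1+\|\pi_J(x)\|^2$ allows a \L ojasiewicz-type estimate; these local continuous extensions are then glued by a partition of unity. Your induction replaces this with a single cutoff $\chi_s$ in the fiber directions of $\pi_s$, which plays exactly the same role as the paper's tube restriction. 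Your approach is arguably more transparent and avoids the \L ojasiewicz inequality, at the cost of invoking Shiota's extension theorem \cite[II.2.14]{s} for the base case; the paper's proof is self-contained in that it never needs an external extension result.
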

\begin{proof}
Fix a non-empty set of indices $I=\{i_1,\dots,i_r\}\subset \{1,\ldots,s\}$ and set $L_I=\bigcap_{i\in I}L_i$ and $X^I=\bigcup_{i\in I}L_i$. By Proposition \ref{trick} every ${}^{\tt c}\sS^\nu$ function $h:X^I\to\R$ has the following $\sS^\nu$ extension to $\R^m$
$$
H_I=\!\sum_{\varnothing\ne J\subset I}(-1)^{\#(I)+1}h\circ\pi_J.
$$
Now consider
the open semialgebraic set
$$
\varOmega_I=\{x\in\R^m:\dist(x,L_I)<1)\}\setminus\bigcup_{j\notin I}L_j,
$$
which satisfies $X^I\cap\varOmega_I=X\cap\varOmega_I$. We claim that the extension linear map
$$
{}^{\tt c}\theta_I:{}^{\tt c}\sS^\nu(X^I)\to\sS^\nu(\varOmega_I):h\mapsto H_I |_{\varOmega_I}
$$
is continuous. Note that this is our previous extension linear map followed by a restriction that makes it continuous.

Indeed, it is enough to show that $h\mapsto h\circ\pi_J|_{\varOmega_I}$ is continuous. Here we consider the topology defined in ${}^{\tt c}\sS^\nu(X^I)$ as subset of $\sS^\nu(L_{i_1})\times\cdots\times\sS^\nu(L_{i_r})$ (this is the reason to keep referring to semialgebraic ${}^{\tt c}\sS^\nu$ functions, although we already know they are all $\sS^\nu$ functions). Clearly, it is enough to see that if all restrictions $h|_{L_i},\,i\in I,$ are close enough to zero, then $h\circ\pi_J|_{\varOmega_I}$ is arbitrarily close to zero. Thus, pick any positive continuous semialgebraic function $\veps:\varOmega_I\to\R$. We know from \L ojasiewicz's inequality \cite[2.6.2]{bcr} that there are a constant $C>0$ and an integer $p$ large enough so that
$$
\frac{1}{(C+\|x\|^2)^p}<\veps(x)\quad\text{for every $x\in\varOmega_I$}.
$$
Let $x\in\varOmega_I$ and $J\subset I$; in particular, $L_I\subset L_J$. Then for the orthogonal projection $\pi_J:\R^m\to L_J$ we have
$$
\|x\|^2=\dist(x,L_J)^2+\|\pi_J(x)\|^2\le\dist(x,L_I)^2+\|\pi_J(x)\|^2<1+\|\pi_J(x)\|^2
$$
and so
$$
\frac{1}{(C+1+\|\pi_J(x)\|^2)^p}<\frac{1}{(C+\|x\|^2)^p}<\veps(x).
$$
Denote $\delta(x)=\frac{1}{(C+1+\|x\|^2)^p}$ and suppose that all restrictions $h|_{L_i},\,i\in I,$ are $\delta$ close to zero in the $\sS^\nu$ topology. Let us check that $h\circ\pi_J$ is $\veps$ close to zero. Look at any partial derivative
$$
\frac{\partial^{|\alpha|}(h\circ\pi_J)}{\partial x_1^{\alpha_1}\cdots\partial x_m^{\alpha_m}}(x),\quad \text{ where } |\alpha|=\alpha_1+\cdots+\alpha_m\leq\nu,
$$
at a point $x\in\varOmega_I$. Since composition with $\pi_J$ is substituting zero for the coordinates in $L_J$, we see that $h\circ\pi_J$ does not depend on those coordinates, which implies that the above partial derivative is zero whenever such a coordinate appears in the derivative. Thus we look at derivatives that do not include them. But for those, we have
$$
\Big|\frac{\partial^{|\alpha|}(h\circ\pi_J)}{\partial x_1^{\alpha_1}\cdots\partial x_m^{\alpha_m}}(x)\Big|
 =\Big|\frac{\partial^{|\alpha|}h}{\partial x_1^{\alpha_1}\cdots\partial x_m^{\alpha_m}}(\pi_J(x))\Big|
 <\delta(\pi_J(x))<\veps(x)
$$
because $\pi_J(x)\in L_J\subset L_i$ for some $i\in I$. Hence ${}^{\tt c}\theta_I$ is continuous, as required.

Finally, we glue the ${}^{\tt c}\theta_I$'s. Consider a semialgebraic $\sS^\nu$ partition of unity $\{\phi,\phi_I:I\}$ subordinated to 
$\{\R^m\setminus X,\varOmega_I:I\}$, which is an open semialgebraic covering of $\R^m$. Define
$$
{}^{\tt c}\theta:{}^{\tt c}\sS^\nu(X)\to\sS^\nu(\R^m):h\mapsto {\sum}_I\phi_I\cdot{}^{\tt c}\theta_I(h|_{X^I})
$$
where each $\phi_I\cdot{}^{\tt c}\theta_I(h|_{X^I})$ extends by zero off $\varOmega_I$. Since $\phi$ vanishes on $X$, $\sum_I\phi_I\equiv1$ on $X$; hence, ${}^{\tt c}\theta(h)$ is a semialgebraic $\sS^\nu$ extension of $h$. Finally, ${}^{\tt c}\theta$ is continuous because so are all ${}^{\tt c}\theta_I$'s, as required.
\end{proof}

The preceding construction can be extended by finiteness to an arbitrary Nash set with monomial singularities.

\begin{prop}\label{topirre}
Let $X\subset M$ be a Nash set with monomial singularities. Then there are continuous linear maps $\theta:\sS^\nu(X)\to\sS^\nu(M)$ such that $\theta(h)|_X=h$. Moreover, if $X_1,\dots,X_s$ are the irreducible components of $X$, then the multiple restriction homomorphism $\sS^\nu(X)\to\sS^\nu(X_1)\times\cdots\times\sS^\nu(X_s)$ is a closed embedding.
\end{prop}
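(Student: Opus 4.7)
The plan is to first build a continuous extension linear map from ${}^{\tt c}\sS^\nu(X)$ to $\sS^\nu(M)$, then deduce everything else from it. By Theorem \ref{thmfiniteness}, we can cover $X$ with finitely many open semialgebraic sets $U_1,\ldots,U_r$ of $M$, each equipped with a Nash diffeomorphism $u_i:U_i\to\R^m$ taking $X\cap U_i$ to a union of coordinate linear varieties. Adjoining $U_0=M\setminus X$ gives a finite open semialgebraic covering of $M$; choose an $\sS^\nu$ partition of unity $\{\phi_0,\phi_1,\ldots,\phi_r\}$ subordinated to it. For each $i\geq 1$, Lemma \ref{lem:localextcont} pulled back through $u_i$ gives a continuous extension linear map ${}^{\tt c}\theta_i:{}^{\tt c}\sS^\nu(X\cap U_i)\to\sS^\nu(U_i)$. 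Then define
$$
{}^{\tt c}\theta:{}^{\tt c}\sS^\nu(X)\to\sS^\nu(M),\quad h\mapsto\sum_{i=1}^r\phi_i\cdot{}^{\tt c}\theta_i(h|_{X\cap U_i}),
$$
where each summand is extended by zero off $U_i$. Since $\phi_0$ vanishes on $X$, we have $\sum_{i\geq 1}\phi_i\equiv 1$ on $X$, hence ${}^{\tt c}\theta(h)|_X=h$. Continuity of ${}^{\tt c}\theta$ follows because each restriction $h\mapsto h|_{X\cap U_i}$ is continuous, each ${}^{\tt c}\theta_i$ is continuous by Lemma \ref{lem:localextcont}, and multiplication by $\phi_i$ followed by extension by zero is continuous in $\sS^\nu(M)$ since $\sS^\nu(M)$ is a topological ring and the multiplicand has support inside $U_i$.

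Next I handle the homeomorphism claim for the inclusion $\gamma:\sS^\nu(X)\hookrightarrow{}^{\tt c}\sS^\nu(X)$, which is already known to be continuous. Surjectivity: for any $h\in{}^{\tt c}\sS^\nu(X)$, the function ${}^{\tt c}\theta(h)\in\sS^\nu(M)$ restricts to $h$ on $X$, so $h\in\sS^\nu(X)$. For bicontinuity I compose ${}^{\tt c}\theta$ with the restriction $\rho:\sS^\nu(M)\to\sS^\nu(X)$, which is an open quotient homomorphism (see \ref{topfunc}). The composite $\rho\circ{}^{\tt c}\theta:{}^{\tt c}\sS^\nu(X)\to\sS^\nu(X)$ is continuous and is precisely $\gamma^{-1}$ (since ${}^{\tt c}\theta(h)|_X=h$). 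Thus $\gamma$ is a homeomorphism. Composing, $\theta={}^{\tt c}\theta\circ\gamma:\sS^\nu(X)\to\sS^\nu(M)$ is a continuous linear extension map, settling the first assertion.

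For the closed embedding assertion, the multiple restriction homomorphism factors as
$$
\sS^\nu(X)\stackrel{\gamma}{\longrightarrow}{}^{\tt c}\sS^\nu(X)\stackrel{\iota}{\hookrightarrow}\sS^\nu(X_1)\times\cdots\times\sS^\nu(X_s).
$$
The first factor is a homeomorphism by the paragraph above, and the second is by definition a topological embedding whose image we already identified in \ref{topfunc} as the kernel of a continuous homomorphism into $\prod_{i<j}\sS^\nu(X_i\cap X_j)$, hence closed. Therefore the composite is a closed embedding.

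The main technical input is Lemma \ref{lem:localextcont} (already proved), whose continuity is what forces the insertion of the cut-off sets $\varOmega_I$; beyond that, the present proof is a routine partition-of-unity globalization followed by the observation that the existence of any continuous extension automatically upgrades the a priori continuous bijection $\gamma$ to a homeomorphism and thereby yields the closed embedding.
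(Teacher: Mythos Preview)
Your proof is correct and follows essentially the same approach as the paper: both construct ${}^{\tt c}\theta$ by globalizing Lemma \ref{lem:localextcont} via Theorem \ref{thmfiniteness} and a partition of unity, then use the identity $\gamma^{-1}=\rho\circ{}^{\tt c}\theta$ to upgrade $\gamma$ to a homeomorphism and set $\theta={}^{\tt c}\theta\circ\gamma$. The paper compresses your first paragraph into a single sentence, but the substance is identical.
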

\begin{proof}
Using a partition of unity and the preceding local result we obtain a continuous extension linear map 
${}^{\tt c}\theta:{}^{\tt c}\sS^\nu(X)\to\sS^\nu(M)$. Consider the commutative diagram 
\setlength{\unitlength}{1mm}
\begin{center}
\begin{picture}(60,15)
\small
\put(0,13){$\sS^\nu(X)\overset{\gamma}{\longrightarrow}{}^{\tt c}\sS^\nu(X)\hookrightarrow\sS^\nu(X_1)\times\cdots\times\sS^\nu(X_s)$}
\put(7.5,10.5){\vector(1,-1){8}}\put(17,3){\vector(-1,1){8}}\put(23,11){\vector(0,-1){7}}
\put(7,4){$\theta$}
\put(14,8){$\rho$}\put(24,7){${}^{\tt c}\theta$}
\put(18.5,0){$\sS^\nu(M)$}
\end{picture}
\end{center}
where $\theta={}^{\tt c}\theta\circ\gamma$ and $\rho:\sS^\nu(M)\to\sS^\nu(X)$ is the continuous restriction epimorphism. All maps here are continuous and therefore $\theta={}^{\tt c}\theta\circ\gamma$ is a continuous extension linear map. On the other hand, $\gamma^{-1}=\rho\circ{}^{\tt c}\theta$ and so it is continuous, that is, $\gamma$ is a homeomorphism. In \ref{topfunc} we proved that the inclusion of $\sS^\nu(X)$ in $\sS^\nu(X_1)\times\cdots\times\sS^\nu(X_s)$ is closed and therefore the multiple restriction is a closed embedding.
\end{proof}

\section{Approximation for functions}\label{section:Aproxfunctions}

We now discuss approximation for Nash sets with monomial singularities. By Proposition \ref{prop:absapproxfun} absolute approximation is possible in a general situation. However, we are interested for the applications in a stronger relative version which in Proposition \ref{closext} we prove for Nash sets with monomial singularities. Let us see first some useful facts that relate the zero-ideal of a Nash set and this relative approximation.

\begin{lem}\label{lem:equivzeroapprox}
Let $X$ be a Nash set of a Nash manifold $M\subset \R^a$ of dimension $m$ and let $\nu\geq m$. If $I^\nu(X)\subset I(X)\sS^{\nu-m}(M)$ then every semialgebraic $\sS^\nu$ function $F:M\to \R$ whose restriction to $X$ is Nash can be $\sS^{\nu-m}$ approximated by Nash functions $H:M\to \R$ that coincide with $F$ on $X$.
\end{lem}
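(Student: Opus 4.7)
The strategy is to correct a global Nash extension of $f := F|_X$ by a Nash perturbation built from the ideal $I(X)$, whose $\sS^{\nu-m}$ ``coefficients'' (provided by the hypothesis on $I^\nu(X)$) can be replaced by Nash coefficients via Shiota's absolute approximation.

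\textbf{Construction.} First I would invoke the surjectivity of the restriction $\sn(M)\to\sn(X)$ (equation \eqref{eq:nashsect}) to lift $f\in\sn(X)$ to a Nash function $\tilde F\in\sn(M)$ with $\tilde F|_X = f$. Then $F-\tilde F\in\sS^\nu(M)$ vanishes on $X$, so
$$F-\tilde F\in I^\nu(X)\subset I(X)\sS^{\nu-m}(M).$$
Since $\sn(M)$ is noetherian, fix a finite generating set $g_1,\dots,g_k$ of $I(X)$ and write
$$F-\tilde F=\sum_{i=1}^k g_i\varphi_i,\qquad \varphi_i\in\sS^{\nu-m}(M).$$
By Shiota's theorem (Fact \ref{shiota}) applied in $\sS^{\nu-m}$, choose Nash functions $\psi_i\in\sn(M)$ that $\sS^{\nu-m}$-approximate $\varphi_i$ arbitrarily well, and set
$$H:=\tilde F+\sum_{i=1}^k g_i\psi_i\in\sn(M).$$
Since each $g_i$ vanishes on $X$, we have $H|_X=\tilde F|_X=f=F|_X$. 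On the other hand,
$$H-F=\sum_{i=1}^k g_i(\psi_i-\varphi_i),$$
and continuity of multiplication in the topological ring $\sS^{\nu-m}(M)$ guarantees that, with the $\psi_i$'s chosen close enough to the $\varphi_i$'s, $H-F$ lands in any prescribed $\sS^{\nu-m}$-neighborhood of $0$.

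\textbf{Main obstacle.} The only delicate ingredient is the global Nash extension $\tilde F$ of $f$ from $X$ to all of $M$, which rests on the surjectivity of $\sn(M)\to\sn(X)$; this holds as soon as $X$ is a coherent Nash set, and in particular in the situation of interest here, since Nash sets with monomial singularities are coherent by Lemma \ref{lem:irreducohe}. Everything else is a formal manipulation combining the ideal inclusion $I^\nu(X)\subset I(X)\sS^{\nu-m}(M)$, Shiota's absolute approximation, and the fact that $\sS^{\nu-m}(M)$ is a topological ring.
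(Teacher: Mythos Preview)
Your argument is correct and essentially identical to the paper's: extend $F|_X$ to a Nash function on $M$, write the difference $F-\tilde F\in I^\nu(X)\subset I(X)\sS^{\nu-m}(M)$ as a finite $I(X)$-combination with $\sS^{\nu-m}$ coefficients, and approximate those coefficients by Nash ones via Fact~\ref{shiota}. One small remark on your ``main obstacle'': the surjectivity of $\sn(M)\to\sn(X)$ in \eqref{eq:nashsect} holds for \emph{every} Nash subset $X\subset M$ (it only uses that the sheaf $\sj_x=I(X)\sn_x$ is finite), so you do not need to invoke coherence or monomial singularities here; the lemma is stated and proved for arbitrary Nash sets.
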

\begin{proof}Suppose that $I^\nu(X)\subset I(X)\sS^{\nu-m}(M)$. Since $F|_X$ is Nash there is some Nash function $G:M\to\R$ with $G|_X=F|_X$. Therefore $F-G$ is a semialgebraic $\sS^\nu$ function vanishing on $X$, so that $F-G=\sum\psi_if_i$ for some Nash functions $f_i\in I(X)$ and some $\sS^{\nu-m}$ functions $\psi_i$. But by absolute approximation (Fact \ref{shiota}), we find Nash functions $\varphi_i$ that are $\sS^{\nu-m}$ close to $\psi_i$, and then $H=G+ \sum\varphi_if_i$ is a Nash function $\sS^{\nu-m}$ close to $F$ such that $H|_X=F|_X$.
\end{proof}

Under coherence, it is enough to control ideals for a finite covering. 

\begin{lem}\label{locpi}
Let $X\subset M$ be a coherent Nash set. Suppose there is a finite semialgebraic open covering $X\subset U_1\cup\cdots\cup U_s$ such that 
$I^\nu(X\cap U_i)\subset I(X\cap U_i)\sS^{\nu-m}(U_i)$ for $1\le i\le s$. Then $I^\nu(X)\subset I(X)\sS^{\nu-m}(M)$.
\end{lem}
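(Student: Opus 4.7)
The plan is to patch the local factorizations provided by the hypothesis using an $\sS^\nu$ partition of unity on $M$, absorbing the region away from $X$ into a single global Nash equation of $X$.

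First, since $X$ is closed in $M$ and contained in the open set $U_1\cup\cdots\cup U_s$, I choose an open semialgebraic neighborhood $V'$ of $X$ with $\ol{V'}\subset U_1\cup\cdots\cup U_s$, and set $U_0:=M\setminus\ol{V'}$, so that $\{U_0,U_1,\ldots,U_s\}$ is an open semialgebraic covering of $M$. Then I fix an $\sS^\nu$ partition of unity $\{\phi_0,\phi_1,\ldots,\phi_s\}$ subordinated to this covering and arranged so that $\phi_0\equiv 0$ on $V'$; this is possible because the $\sS^\nu$ category admits bump functions. I also fix a Nash function $f_0\in I(X)$ with $X=\{f_0=0\}$, for instance a sum of squares of finitely many generators of $I(X)$.

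Given $F\in I^\nu(X)$, I write $F=\phi_0F+\sum_{i=1}^s\phi_iF$ and treat each summand separately. For each $i\ge 1$, the restriction $F|_{U_i}$ lies in $I^\nu(X\cap U_i)\subset I(X\cap U_i)\sS^{\nu-m}(U_i)$ by hypothesis; coherence of $X$ together with equation \eqref{eq:ix} yields $I(X\cap U_i)=I(X)\sn(U_i)$, so I can write $F|_{U_i}=\sum_{j,k} f_{ijk}\, a_{ijk}\mu_{ij}$ with $f_{ijk}\in I(X)$, $a_{ijk}\in\sn(U_i)$, and $\mu_{ij}\in\sS^{\nu-m}(U_i)$. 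Since $\phi_i$ has closed support in $U_i$, each product $\phi_i a_{ijk}\mu_{ij}$ extends by zero to an $\sS^{\nu-m}$ function on $M$, and the identity $\phi_iF=\sum_{j,k}f_{ijk}(\phi_i a_{ijk}\mu_{ij})$ holds on all of $M$. For $i=0$, the function $\phi_0F$ vanishes on $V'\supset X$, so the pointwise quotient $\psi_0:=\phi_0F/f_0$ is $\sS^\nu$ on $M\setminus X$ and identically zero on $V'\setminus X$; glueing via the open covering $\{V',M\setminus X\}$ of $M$ produces a global $\sS^\nu$ function $\psi_0$ with $\phi_0F=f_0\psi_0$. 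Summing the two contributions exhibits $F$ as an element of $I(X)\sS^{\nu-m}(M)$, as required.

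The main point to get right is the arrangement that $\phi_0$ vanish in an open neighborhood of $X$, not merely on $X$ itself; this is what makes the division $\phi_0F/f_0$ trivially extend to an $\sS^\nu$ function across $X$. Without such a precaution one would need a \L ojasiewicz-type inequality controlling $\phi_0F$ and its derivatives against $f_0$ near $X$, which would be the real obstacle. With the precaution in place, no derivatives are lost in the glueing step, and the loss of $m$ derivatives in the conclusion is inherited entirely from the hypothesis.
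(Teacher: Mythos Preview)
Your proof is correct and follows essentially the same strategy as the paper: an $\sS^\nu$ partition of unity combined with the coherence identity $I(X\cap U_i)=I(X)\sn(U_i)$ to globalize the local factorizations. The only difference lies in how you handle the region away from $X$. The paper simply adjoins $M\setminus X$ to the covering and observes that the hypothesis $I^\nu(X\cap U_i)\subset I(X\cap U_i)\sS^{\nu-m}(U_i)$ holds there trivially (the intersection being empty, both ideals are the whole ring), so the same uniform argument applies to every piece; the subordination of the partition of unity already forces the corresponding $\varphi$ to have support inside $M\setminus X$, hence to vanish near $X$. Your version makes this vanishing explicit via the shrunken neighborhood $V'$ and then divides $\phi_0F$ by a single global equation $f_0$ of $X$. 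This works perfectly well, but it is slightly more laborious than the paper's uniform treatment, and the extra care you flag in your last paragraph is in fact automatic from subordination.
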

\begin{proof}
Adding $M\setminus X$ to the covering we can assume $M=\bigcup_i U_i$. Let $\{\varphi_i\}_i$ be an $\sS^\nu$ partition of unity subordinated to the $U_i$'s; recall that $\ol{\{\varphi_i>0\}}\subset U_i$. Since $X$ is coherent, $I(X)$ generates $I(U_i\cap X)$ (see equation \eqref{eq:ix} after Fact \ref{fact:cohananash}). We also know that $I(X)$ is finitely generated, say by $f_1,\dots,f_p$. Now let $f\in\sS^\nu(M)$ vanish on $X$. It follows that $f|_{U_i}$ vanishes on $U_i\cap X$ and, by hypothesis, there are Nash functions $g_{ik}\in \sS^{\nu-m}(U_i)$ such that
$$
f|_{U_i}=\sum_kg_{ik}(f_k|_{U_i}).
$$
Now consider the functions $\varphi_ig_{ik}$. Although defined on $U_i$, they vanish off $\ol{\{\varphi_i>0\}}\subset U_i$, hence can be extended by zero off $U_i$. Thus we have in fact $\varphi_ig_{ik}\in\sS^{\nu-m}(M)$. Finally one readily checks that
$$
f={\sum}_{k}\big(\,{\sum}_i \varphi_ig_{ik}\big)f_k\in I(X)\sS^{\nu-m}(M), 
$$
which concludes the proof.
\end{proof}

These lemmas reduce the relative approximation problem for Nash sets with monomial singularities to the study of the zero-ideal of a union of coordinate linear varieties.

\begin{prop}\label{genidsol}
Let $X\subset\R^m$ be a union of coordinate linear varieties $L_1,\ldots,L_s$ and let $I(X)$ be the ideal of Nash functions vanishing on $X$. Denote by $x^\sigma$ the square-free monomials associated to $X$ (see \em Definition \ref{def:squarefree}\em) and let $\#(x^\sigma)$ be the number of variables in $x^\sigma$. Let $\nu\ge\max_\sigma\#(x^\sigma)$ and let $f:\R^m\to\R$ be an $\sS^\nu$ function vanishing on $X$. Then 
$$
f={\sum}_\sigma f_\sigma x^\sigma,
$$
where $f_\sigma$ is an $\sS^{\nu-\#(x^\sigma)}$ function for each $\sigma$.
\end{prop}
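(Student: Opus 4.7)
The plan is to induct on the pair $(m,s)$ in lexicographic order, mirroring the algebraic splitting used in Proposition \ref{sqfreestate} but controlling the loss of differentiability at each step via Hadamard's lemma. The base case $s=1$ reads $X=L_1=\{x_{j_1}=\cdots=x_{j_r}=0\}$, whose associated square-free monomials are $x_{j_1},\ldots,x_{j_r}$ (so $\#(x^\sigma)=1$), and follows from the standard telescoping--Hadamard identity
$$f(x)=\sum_{k=1}^r x_{j_k}\!\int_0^1 \frac{\partial f}{\partial x_{j_k}}(0,\ldots,0,tx_{j_k},x_{j_{k+1}},\ldots,x_m)\,dt,$$
whose coefficients lie in $\sS^{\nu-1}$, matching $\nu-\#(x_{j_k})$.

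For the inductive step with $s\geq 2$, I would distinguish cases according to a chosen variable $x_1$. If $x_1$ lies in no $V_i$, then $X=\R_{x_1}\times X'$ with $X'\subset\R^{m-1}$ carrying the same associated monomials, and the inductive hypothesis applied in $\R^{m-1}$ with $x_1$ acting as a smooth parameter (the Hadamard integrals depend smoothly on parameters without affecting the regularity count) yields the decomposition. If $x_1$ lies in every $V_i$, so that $X\subset\{x_1=0\}$ and $x_1$ is itself an associated monomial, I would use Hadamard in $x_1$ to write $f(x)=f_0(x_2,\ldots,x_m)+x_1\alpha(x)$ with $\alpha\in\sS^{\nu-1}$; the function $f_0$ vanishes on $X$ regarded as a subset of $\R^{m-1}$, and induction on $m$ decomposes $f_0$ over the associated monomials of $X$ not involving $x_1$, while the term $x_1\alpha$ handles the monomial $x_1$ itself with the correct regularity.

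The main case is when $x_1$ lies in some but not all $V_i$; set $I=\{i:x_1\in V_i\}$ and $J=\{i:x_1\notin V_i\}$, both nonempty. Writing again $f=f_0+x_1\alpha$, the function $f_0$ vanishes on $X\cap\{x_1=0\}$, and induction on $m$ decomposes it over the associated monomials of $X\cap\{x_1=0\}$ in $\R^{m-1}$; since those generate the same ideal as the associated monomials of $X$ not involving $x_1$, this yields a decomposition of $f_0$ indexed by the latter (with zero coefficients where necessary). The key new observation is that $\alpha$ vanishes on $X_J:=\bigcup_{i\in J}L_i$: for $i\in J$ the variable $x_1$ is free on $L_i$, and evaluating $0=f|_{L_i}=f_0|_{L_i}+x_1\alpha|_{L_i}$ first at $x_1=0$ forces $f_0|_{L_i}=0$, whence $x_1\alpha|_{L_i}=0$ and $\alpha|_{L_i}=0$ by continuity. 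Since $X_J$ is a union of $|J|<s$ coordinate linear varieties, induction (reducing $s$ with the same $m$) gives $\alpha=\sum_\rho \alpha_\rho x^\rho$ over the associated monomials $\rho$ of $X_J$, with $\alpha_\rho\in\sS^{(\nu-1)-\#(x^\rho)}$. Since each such $\rho$ involves only variables drawn from $V_i$ with $i\in J$ (so $x_1\notin\rho$) and one may pick $x_1$ from every $V_i$ with $i\in I$, the product $x_1 x^\rho$ is an associated monomial of $X$ with $\#(x_1 x^\rho)=\#(x^\rho)+1$, and $\alpha_\rho$ has the prescribed regularity $\sS^{\nu-\#(x_1 x^\rho)}$. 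The main obstacle is precisely this simultaneous identification of associated monomials under the two reductions (via $X\cap\{x_1=0\}$ and via $X_J$) together with the verification that every regularity count lines up; once this combinatorial bookkeeping is in place the induction closes cleanly.
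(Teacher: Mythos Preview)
Your argument is essentially the paper's own proof: both split off a variable via Hadamard's lemma, apply induction on $s$ to the quotient $\alpha$ (which vanishes on the union $X_J$ of the $L_i$'s not containing $x_1$), and induction on $m$ to the restriction $f_0=f(0,x')$. Two small points worth tightening: the paper avoids your first case ($x_1$ in no $V_i$) by simply choosing $x_1$ to be a variable of $L_1$, which is cleaner since ``applying the inductive hypothesis with $x_1$ as a smooth parameter'' is not a direct application of the statement as formulated; and the paper isolates as a lemma that the Hadamard quotient $(f(x)-f(0,x'))/x_1$ is \emph{semialgebraic} (its graph is the closure of a semialgebraic graph), a point you use without comment.
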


\begin{remarks}\label{remgenidsol}Note that, since the monomials $x^\sigma$ are square-free, none has more than $m$ variables, that is $\#(x^\sigma)\le m$. Consequently, $I^\nu(X)\subset I(X)\sS^{\nu-m}(M)$. 
\end{remarks}

The proof of Proposition \ref{genidsol} consists of a double induction on the dimension $m$ and the number of linear varieties involved. We will use the following lemma:

\begin{lem}\label{easy}
Let $f$ be an $\sS^\nu$ function on $\R^m$. Write $x=(x_1,x')$ the variables in $\R^m$. Then there is an expansion
$$
f(x)=f_1(x)x_1+f(0,x'),
$$
where $f_1$ is an $\sS^{\nu-1}$ function.
\end{lem}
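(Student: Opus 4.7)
The proof is the standard Hadamard-type expansion adapted to the semialgebraic setting. My plan is to define the candidate $f_1$ explicitly, check the required regularity via an integral representation, and then observe that semialgebraicity comes for free from the quotient formula.

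First, I would set
$$
f_1(x)=\int_0^1\frac{\partial f}{\partial x_1}(tx_1,x')\,dt.
$$
Under the change of variable $s=tx_1$, or directly by the fundamental theorem of calculus applied to $\varphi(t)=f(tx_1,x')$, one computes
$$
f_1(x)\,x_1=\int_0^1\frac{d}{dt}f(tx_1,x')\,dt=f(x)-f(0,x'),
$$
which already gives the desired expansion $f(x)=f_1(x)x_1+f(0,x')$.

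Next, I would verify that $f_1$ is of class $\sC^{\nu-1}$ by differentiation under the integral sign: since $\partial f/\partial x_1\in\sC^{\nu-1}$, any partial derivative $\partial^{|\alpha|}/\partial x^\alpha$ with $|\alpha|\le\nu-1$ can be brought inside the integral, producing a continuous integrand in $(t,x)$; compactness of $[0,1]$ then yields continuity of the derivative of $f_1$. This is classical Hadamard and does not use semialgebraicity.

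For semialgebraicity the integral formula is not directly useful, but the expansion already obtained shows that on the open semialgebraic set $\{x_1\neq 0\}$ we have
$$
f_1(x)=\frac{f(x)-f(0,x')}{x_1},
$$
which is clearly a semialgebraic function of $x$; and on $\{x_1=0\}$ the integral evaluates to $f_1(0,x')=\partial f/\partial x_1(0,x')$, which is also semialgebraic in $x'$. Hence the graph of $f_1$ is the union of two semialgebraic sets and therefore semialgebraic. Combined with the $\sC^{\nu-1}$ smoothness this gives $f_1\in\sS^{\nu-1}(\R^m)$, completing the proof. There is no real obstacle here; the only subtlety is that one must not try to argue semialgebraicity directly from the integral (integrals of semialgebraic functions need not be semialgebraic), but rather from the algebraic quotient formula valid away from $\{x_1=0\}$ and continuity across that hyperplane.
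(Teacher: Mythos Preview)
Your proof is correct and follows essentially the same approach as the paper: define $f_1$ via the Hadamard integral, deduce $\sC^{\nu-1}$ regularity by differentiation under the integral sign, and recover semialgebraicity from the quotient formula $(f(x)-f(0,x'))/x_1$ off the hyperplane $\{x_1=0\}$. The only cosmetic difference is that the paper phrases the last step as ``the graph of $f_1$ is the closure of the graph of the quotient,'' whereas you split the graph into its restriction to $\{x_1\ne0\}$ and the explicit value $f_1(0,x')=\partial f/\partial x_1(0,x')$ on the hyperplane; both arguments are equivalent via continuity of $f_1$.
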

\begin{proof}
We know this from elementary Analysis with
$$
f_1(x)=\int_0^1\frac{\partial}{\partial x_1}f(tx_1,x')dt,
$$
but an integral can well not be semialgebraic; however, this particular one is semialgebraic. Indeed, the function $g(x)=(f(x)-f(0,x'))/x_1$ is semialgebraic in $x_1\ne0$, and the graph of $f_1$ is the closure of the graph of $g$. Hence $f_1$ is in fact semialgebraic.
\end{proof}

\begin{proof}[Proof of Proposition \em\ref{genidsol}]
We argue by induction on the dimension. If $m=1$ then $X=\R$ or $X=\{0\}$; in the first case it is obvious and in the second one it follows from Lemma \ref{easy} with $m=1$ and $f(0)=0$. Hence we assume $m>1$ and the result proved for dimension $m-1$. Let $X\subset\R^m$ be a union of coordinate linear varieties $L_1,\dots,L_s$ of $\R^m$ and let $f$ be an $\sS^\nu$ function vanishing on $X$. We argue again by induction, now on the number $s$ of $L_i$'s. 

The case $s=1$. We have $X=L_1$ and can suppose the equations of $L_1$ are $x_1=\cdots=x_r=0$. Lemma \ref{easy} gives $f=f_1x_1+g_1$, where $f_1$ is $\sS^{\nu-1}$ and $g_1$ is $\sS^\nu$ and does not depend on $x_1$. Again by the lemma, $g_1=f_2x_2+g_2$, where $f_2$ is $\sS^{\nu-1}$ and $g_2$ is $\sS^\nu$ and does not depend on either $x_1$ or $x_2$. And so on, till we have:
$$
f=f_1x_1+\cdots+f_rx_r+g,
$$
where all $f_k$'s are $\sS^{\nu-1}$ and $g$ does not depend on either of $x_1,\dots,x_r$. Thus, substituting $0$ for these variables does not affect $g$, and since $f$ vanishes on $x_1=\cdots=x_r=0$ we get $g\equiv0$. In other words, $f=f_1x_1+\cdots+f_rx_r$ and we are done.

Let now $s>1$ and assume the result known for less than $s$ linear varieties in $\R^m$.

We can suppose that $x_1$ is a variable of $L_1$ (see \ref{def:squarefree} for the precise terminology). Denote as usual $x=(x_1,x')$ the variables in $\R^m$. By Lemma \ref{easy} once again we have an $\sS^{\nu-1}$ function $f_1$ such that
$$
f(x)=f_1(x)x_1+g(x').
$$

We claim that $g$ vanishes on the $L_i$'s that do not contain the variable $x_1$. Indeed for such a coordinate linear variety $L_i$, $z=(z_1,z')\in L_i$ if and only if $(0,z')\in L_i$. Since $f$ vanishes on $L_i$ we have
$$
0=f(0,z')=f_1(0,z')0+g(z')=g(z')=g(z),
$$
which implies that $f(x)=f_1(x)x_1$ on $L_i$. Consequently, since $f|_{L_i} \! =0$, $f_1$ vanishes on $L_i\cap\{x_1\!\ne\!0\}$, which is dense in $L_i$, and so $f_1$ vanishes on $L_i$. Thus $f_1$ vanishes on the union $E$ of all $L_i$ that do not have the variable $x_1$. This excludes $L_1$, hence $E$ is a union of less than $s$ coordinate linear varieties, and we can apply the induction hypothesis \em on the number of coordinate varieties \em to the $\sS^{\nu-1}$ function $f_1$ to get an expression
$$
f_1(x)={\sum}_\tau f_{1\tau} x^\tau,
$$
for some $\sS^{\nu-1-\#(x^\tau)}$ functions $f_{1\tau}$. Here we have $\nu-1\ge\#(x^\tau)$. To check that, notice that in this sum, each monomial $x^\tau$ contains one variable from each $L_i$ in $E$ and all the others $L_i$ have the variable $x_1$, so that $x^\tau x_1$ is one of our generators of $I(X)$. Since $\#(x^\tau x_1)=1+\#(x^\tau)$, we have
$$
\nu-1\ge\max_\sigma\#(x^\sigma)-1\ge\#(x^\tau).
$$
Next we turn to the other summand $g(x')$. If some $L_i$ is the hyperplane $x_1=0$, then $f$ vanishes on that hyperplane, and so
$$
0=f(0,x')=f_1(x)0+g(x').
$$
We conclude 
$$
f(x)=f_1(x)x_1={\sum}_\tau f_{1\tau} x^\tau x_1
$$
and the argument is complete. Hence, suppose that all $L_i$ have some variable other than $x_1$.

We consider $x'$ as coordinates in $\R^{m-1}$ and denote $L'_i\subset\R^{m-1}$ the coordinate linear variety with the same equations that $L_i$, $x_1$ excluded in case it is in $L_i$, that is, $L_i'$ corresponds to $L_i\cap(\{0\}\times\R^{m-1})$ after erasing the first coordinate, which is $0$. Now, $g(x')$, as a function on $\R^{m-1}$, vanishes on $L'_1\cup\cdots\cup L'_s$. Indeed, if $z'\in L'_i$, then $z=(0,z')\in L_i$ and since $f$ vanishes on $L_i$,
$$
0=f(z)=f(0,z')=f_1(z)0+g(z'),
$$
as claimed. 

Thus, we can apply to $g(x')$, which is $\sS^\nu$, the induction hypothesis \em on the dimension \em to get:
$$
g(x')=\sum_\sigma g_\sigma(x'){x'}^\sigma,
$$
where the $g_\sigma$'s are $\sS^{\nu-\#(x^\sigma)}$. Here each monomial ${x'}^\sigma$ has one variable from each $L'_i$, hence from each $L_i$, and so it is in fact one generator of $I(X)$. 

All in all the expression
$$
f(x)=f_1(x)x_1+g(x')=\sum_\tau f_{1\tau}x^\tau x_1+\sum_\sigma g_\sigma{x'}^\sigma
$$
verifies all conditions required, and the proof is complete.
\end{proof}

As remarked before, this settles approximation for Nash sets with monomial singularities, with some restriction on differentiability classes. We state that solution in full below.

\begin{prop}\label{closext}
Let $X$ be a Nash set with monomial singularities in a Nash manifold $M\subset \R^a$ of dimension $m$. Let $\nu\!\ge\! m$ and
let $F:M\to\R$ be an $\sS^\nu$ function. Then every Nash function $h:X\to\R$ which is $\sS^\nu$ close enough to $f=F|_{X}$ has a Nash extension $H:M\to\R$ which is $\sS^{\nu-m}$ close to $F$. In particular, an $\sS^\nu$ function $F$ whose restriction to $X$ is Nash can be $\sS^{\nu-m}$ approximated by Nash functions $H$ that coincide with $F$ on $X$.
\end{prop}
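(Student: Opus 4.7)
The plan is to split the argument into two steps: first establish the ``in particular'' statement (absolute relative approximation for a single $F$ whose restriction to $X$ is already Nash), and then bootstrap to the full relative statement using the continuous extension linear map from Proposition \ref{topirre}.

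For the first step, the goal will be to verify the ideal inclusion
$$I^\nu(X)\subset I(X)\sS^{\nu-m}(M),$$
since, by Lemma \ref{lem:equivzeroapprox}, this is precisely what is needed. By Lemma \ref{lem:irreducohe}, $X$ is coherent, so by Lemma \ref{locpi} it suffices to check the inclusion locally on a finite open semialgebraic cover of $X$ in $M$. Theorem \ref{thmfiniteness} supplies exactly such a cover $\{U_i\}$ together with Nash diffeomorphisms $u_i:U_i\to\R^m$ mapping $X\cap U_i$ onto a union of coordinate linear varieties $X^{(i)}\subset\R^m$. Since Nash diffeomorphisms pull back Nash functions to Nash functions and $\sS^\nu$ functions to $\sS^\nu$ functions, and identify the corresponding ideals, the local inclusion $I^\nu(X\cap U_i)\subset I(X\cap U_i)\sS^{\nu-m}(U_i)$ reduces to the analogous statement for $X^{(i)}$ in $\R^m$. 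That statement is exactly Proposition \ref{genidsol}, together with Remark \ref{remgenidsol} which guarantees the uniform bound $\#(x^\sigma)\le m$ on the number of variables in each associated square-free monomial, so that the loss of differentiability is at most $m$.

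For the second step, fix a continuous extension linear map $\theta:\sS^\nu(X)\to\sS^\nu(M)$ provided by Proposition \ref{topirre}, so that $\theta(g)|_X=g$ for every $g\in\sS^\nu(X)$. Given a Nash function $h:X\to\R$ sufficiently $\sS^\nu$ close to $f=F|_X$, set
$$G=F-\theta(f)+\theta(h)\in\sS^\nu(M).$$
Then $G|_X=F|_X-f+h=h$ is Nash, and by continuity of $\theta$ the function $G$ is $\sS^\nu$ close to $F$ when $h$ is $\sS^\nu$ close to $f$, hence in particular $\sS^{\nu-m}$ close. Applying the ``in particular'' statement already proved to $G$ produces a Nash function $H:M\to\R$ with $H|_X=G|_X=h$ and $H$ $\sS^{\nu-m}$ close to $G$; the triangle inequality then gives $H$ $\sS^{\nu-m}$ close to $F$, as required.

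No essential obstacle is expected: the substantive content lies in Proposition \ref{genidsol} (the local ideal computation in $\R^m$), Lemma \ref{locpi} (the local-to-global glueing via coherence), and Proposition \ref{topirre} (the continuous extension linear map). The present argument is mostly assembly; the only delicate point is controlling the uniform loss of $m$ differentiability classes, which is ensured by the square-free nature of the ideals attached to monomial singularities.
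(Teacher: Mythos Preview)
Your proof is correct and your first step matches the paper's exactly: both invoke coherence (Lemma \ref{lem:irreducohe}), the finiteness cover (Theorem \ref{thmfiniteness}), the local ideal computation (Proposition \ref{genidsol} and Remark \ref{remgenidsol}), and glue via Lemmas \ref{locpi} and \ref{lem:equivzeroapprox}.

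The second step differs slightly. The paper does not invoke the continuous extension linear map $\theta$ of Proposition \ref{topirre}; instead it uses only that the restriction $\rho:\sS^\nu(M)\to\sS^\nu(X)$ is an \emph{open} quotient map (a general fact for any closed semialgebraic set, see \ref{topfunc}). Concretely: given an $\sS^{\nu-m}$ neighborhood $\mathcal U$ of $F$, the set $\mathcal U'=\mathcal U\cap\sS^\nu(M)$ is open in $\sS^\nu(M)$, hence $\rho(\mathcal U')$ is an open $\sS^\nu$ neighborhood of $f$; any Nash $h$ in $\rho(\mathcal U')$ then has an $\sS^\nu$ extension $G\in\mathcal U'$, to which one applies the particular case. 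Your route via $G=F-\theta(f)+\theta(h)$ works equally well (the ``triangle inequality'' step is justified by addition being continuous in the topological ring $\sS^{\nu-m}(M)$), but it spends the stronger, monomial-specific Proposition \ref{topirre} where the paper gets by with the elementary openness of $\rho$. The payoff of your approach is that the extension $G$ is produced explicitly and linearly in $h$; the payoff of the paper's is that it keeps the logical dependencies minimal at this point.
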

\begin{proof}
First we prove the particular case when $F|_{X}$ is Nash. By Theorem \ref{thmfiniteness}, $X$ can be covered by finitely many open semialgebraic sets $U_i$ each equipped with a Nash diffeomorphism $u_i:U_i\rightarrow \R^m$ that maps $X\cap U_i$ onto a union of coordinate linear varieties. Hence by Lemmas \ref{lem:equivzeroapprox} and \ref{locpi} it is enough to prove $I^\nu(X\cap U_i)\subset I(X\cap U_i)\sS^{\nu-m}(U_i)$ for all $i$, but this follows from Proposition \ref{genidsol}. Now we deduce the general case. Let $\mathcal U\subset\sS^{\nu-m}(M)$ be an open $\sS^{\nu-m}$ neighborhood of $F$. Then $\mathcal U'=\mathcal U\cap\sS^\nu(M)$ is open in $\sS^\nu(M)$, and since the restriction $\rho:\sS^\nu(M)\to\sS^\nu(X)$ is an open homomorphism, $\rho(\mathcal U')$ is an open $\sS^\nu$ neighborhood of $f$ in $\sS^\nu(X)$. Thus, if our Nash function $h$ is in $\rho(\mathcal U')$, it has a semialgebraic $\sS^\nu$ extension $G\in\mathcal U'\subset\mathcal U$. By the particular case, there are Nash 
functions $H:M\to\R$ arbitrarily $\sS^{\nu-m}$ close to $G$ with $H|_{X}=h$. Since $\mathcal U$ is $\sS^{\nu-m}$ open, we can choose $H\in\mathcal U$.
\end{proof}

\section{Approximation for maps}\label{sec:approxmap}

The aim of this section is to prove approximation for maps instead of functions (Theorem \ref{approxgc}). In \ref{smapp} we defined and equipped with a topology the spaces of $\sS^\nu$ and ${}^{\tt c}\sS^\nu$ maps from a Nash set into a semialgebraic set. We apply extension for functions (Proposition \ref{topirre}) to prove that $\sS^\nu$ and ${}^{\tt c}\sS^\nu$ maps coincide for Nash sets with monomial singularities.

\begin{prop}\label{cmap}
Let $X\subset M$ be a Nash set with monomial singularities whose irreducible components are $X_1,\dots, X_s$ and let $T\subset \R^b$ be a semialgebraic set. The multiple restriction homomorphism $\sS^\nu(X,T)\to\sS^\nu(X_1,T)\times\cdots\times\sS^\nu(X_s,T)$ is a closed embedding that provides a topological identification $\sS^\nu(X,T)\equiv{}^{\tt c}\sS^\nu(X,T)$. 
\end{prop}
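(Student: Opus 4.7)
The whole statement will be deduced from the function case (Proposition \ref{topirre}) by working componentwise in the ambient $\R^b\supset T$. Recall that by definition
\begin{equation*}
\sS^\nu(X,T)\subset\sS^\nu(X,\R^b)=\sS^\nu(X)^b\qquad\text{and}\qquad{}^{\tt c}\sS^\nu(X,T)\subset{}^{\tt c}\sS^\nu(X,\R^b)={}^{\tt c}\sS^\nu(X)^b,
\end{equation*}
each carrying the subspace topology from the product. Likewise $\sS^\nu(X_i,T)\subset\sS^\nu(X_i)^b$.

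My first step is the topological identification $\sS^\nu(X,T)\equiv{}^{\tt c}\sS^\nu(X,T)$. By Proposition \ref{topirre} the canonical continuous inclusion $\gamma_0:\sS^\nu(X)\to{}^{\tt c}\sS^\nu(X)$ is a homeomorphism, so its $b$-fold product $\gamma_0^{\times b}:\sS^\nu(X,\R^b)\to{}^{\tt c}\sS^\nu(X,\R^b)$ is a homeomorphism as well. At the level of underlying set-theoretic maps $\gamma_0^{\times b}$ is the identity, and in particular it sends maps with image in $T$ to maps with image in $T$; restricting to these two subspaces therefore yields a homeomorphism $\gamma:\sS^\nu(X,T)\to{}^{\tt c}\sS^\nu(X,T)$, which is precisely the canonical continuous inclusion.

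For the closed embedding, I recall from \ref{smapp} that the tautological inclusion ${}^{\tt c}\sS^\nu(X,T)\hookrightarrow\sS^\nu(X_1,T)\times\cdots\times\sS^\nu(X_s,T)$ is an embedding by definition of the topology, and its image is closed, being cut out by the (closed) compatibility conditions $f_i|_{X_i\cap X_j}=f_j|_{X_i\cap X_j}$ for $i<j$. Composing with the homeomorphism $\sS^\nu(X,T)\equiv{}^{\tt c}\sS^\nu(X,T)$ from the previous step turns this into the multiple restriction homomorphism $\sS^\nu(X,T)\to\prod_i\sS^\nu(X_i,T)$, which is therefore a closed embedding.

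I do not expect a genuine obstacle: the argument is a formal consequence of Proposition \ref{topirre} together with the fact that taking $b$-fold products and then passing to the subspace of $T$-valued tuples both preserve the homeomorphism and closed-embedding properties. The only point to handle with a little care is checking that the passage from $\sS^\nu(X)\equiv{}^{\tt c}\sS^\nu(X)$ to the analogous identification for $T$-valued maps really reduces to these two elementary topological operations, which it does because the topologies on $\sS^\nu(X,T)$ and ${}^{\tt c}\sS^\nu(X,T)$ were defined through exactly this componentwise recipe.
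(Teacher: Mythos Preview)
Your proof is correct and follows essentially the same route as the paper's: both reduce to Proposition \ref{topirre} by taking the $b$-fold product to pass from $\sS^\nu(X)$ to $\sS^\nu(X,\R^b)$, and then restrict to the subspace of $T$-valued maps via the canonical embeddings $\sS^\nu(\cdot,T)\hookrightarrow\sS^\nu(\cdot,\R^b)$. Your treatment is slightly more explicit about where closedness of the image comes from (citing the compatibility conditions noted in \S\ref{smapp}), whereas the paper packages the passage from $\R^b$ to $T$ in a single commutative-diagram sentence, but the substance is identical.
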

\begin{proof}
Indeed, since ${}^{\tt c}\sS^\nu(X,\R^b)={}^{\tt c}\sS^\nu(X,\R)^b$, Proposition \ref{topirre} gives the result for $T=\R^b$. Then, for arbitrary $T\subset\R^b$, we have the commutative diagram
\setlength{\unitlength}{1mm}
\begin{center}
\begin{picture}(50,16)(0,-1)
\put(25,12){\makebox(0,0){$\,\sS^\nu(X,T)\,\,\longrightarrow\,\,{}^{\tt c}\sS^\nu(X,T)\,\hookrightarrow\,\sS^\nu(X_1,T)\,\times\cdots\times\,\sS^\nu(X_s,T)$}}
\put(-14,8.5){\vector(0,-1){4.5}}\put(34.5,8.5){\vector(0,-1){4.5}}\put(66,8.5){\vector(0,-1){4.5}}
\put(26,0){\makebox(0,0){$\sS^\nu(X,\R^b)\longrightarrow{}^{\tt c}\sS^\nu(X,\R^b)\hookrightarrow\sS^\nu(X_1,\R^b)\times\cdots\times\sS^\nu(X_s,\R^b)$}}
\end{picture}
\end{center}
As the result is true for the lower row and the vertical arrows are continuous, it follows for the upper one. 
\end{proof}

Relative approximation extends straightforwardly to maps into $\R^b$. Moreover, we can use Nash tubular neighborhoods to show the following.

\begin{prop}\label{closextmap}
Let $M\subset \R^a$ and $N\subset \R^b$ be Nash manifolds of dimensions $m$ and $n$ respectively and let $X\subset M$ be a Nash set with monomial singularities. Let $\nu\!\ge\! m$ and let $F:M\to N$ be an $\sS^\nu$ map. Then every Nash map $h:X\to N$ which is $\sS^\nu$ close enough to $f=F|_{X}$ has a Nash extension $H:M\to N$ which is $\sS^{\nu-m}$ close to $F$.
\end{prop}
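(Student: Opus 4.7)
The plan is to reduce to the function case (Proposition \ref{closext}) by means of a Nash tubular neighborhood of $N$. By Fact \ref{fact:retract} pick an open semialgebraic neighborhood $U$ of $N$ in $\R^b$ with a Nash retraction $\eta:U\to N$. Write $F=(F_1,\ldots,F_b)$ with $F_k\in\sS^\nu(M)$ and $h=(h_1,\ldots,h_b)$ with $h_k\in\sn(X)$; the hypothesis that $h$ is $\sS^\nu$ close to $f=F|_X$ translates, via the product topology on $\sS^\nu(X,\R^b)=\sS^\nu(X)^b$, into each $h_k$ being $\sS^\nu$ close to $F_k|_X$ in $\sS^\nu(X)$.

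Given an $\sS^{\nu-m}$ open neighborhood $\mathcal V$ of $F$ in $\sS^{\nu-m}(M,N)$, invoke Proposition \ref{comr} (with $\eta$ in place of $h$ and $\nu-m$ in place of $\nu$) to see that $\eta_*:\sS^{\nu-m}(M,U)\to\sS^{\nu-m}(M,N)$ is continuous. Since $\eta\circ F=F$, the preimage $\eta_*^{-1}(\mathcal V)$ is an $\sS^{\nu-m}$ open neighborhood of $F$ in $\sS^{\nu-m}(M,U)$; and because $\sS^{\nu-m}(M,U)$ is open in $\sS^{\nu-m}(M,\R^b)=\sS^{\nu-m}(M)^b$, one can pick open neighborhoods $\mathcal W_k$ of $F_k$ in $\sS^{\nu-m}(M)$ with $\mathcal W_1\times\cdots\times\mathcal W_b\subset\eta_*^{-1}(\mathcal V)$. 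Applying Proposition \ref{closext} to each pair $(F_k,h_k)$ produces $\sS^\nu$ open neighborhoods $\mathcal U_k$ of $F_k|_X$ in $\sS^\nu(X)$ such that every Nash function in $\mathcal U_k$ admits a Nash extension lying in $\mathcal W_k$.

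Now, for every Nash map $h=(h_1,\ldots,h_b):X\to N$ in the $\sS^\nu$ open neighborhood $(\mathcal U_1\times\cdots\times\mathcal U_b)\cap\sS^\nu(X,N)$ of $f$, choose Nash extensions $\tilde H_k\in\mathcal W_k$ of the $h_k$; their collection $\tilde H=(\tilde H_1,\ldots,\tilde H_b)$ is a Nash map in $\mathcal W_1\times\cdots\times\mathcal W_b\subset\sS^{\nu-m}(M,U)$, so in particular $\tilde H(M)\subset U$. Define $H:=\eta\circ\tilde H:M\to N$; this is Nash, and since $\tilde H|_X=h$ takes values in $N$ while $\eta|_N=\Id_N$, we have $H|_X=h$. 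Finally, $H=\eta_*(\tilde H)\in\eta_*(\eta_*^{-1}(\mathcal V))\subset\mathcal V$, so $H$ is $\sS^{\nu-m}$ close to $F$, as required. The sole genuinely technical ingredient is the function-level approximation of Proposition \ref{closext}; the delicate point in the reduction is to keep $\tilde H(M)$ inside the tubular neighborhood $U$, which is built into the choice $\mathcal W_1\times\cdots\times\mathcal W_b\subset\sS^{\nu-m}(M,U)$.
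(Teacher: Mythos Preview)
Your proof is correct and follows essentially the same approach as the paper: reduce to the function case (Proposition \ref{closext}) componentwise, then compose with a Nash retraction $\eta$ onto $N$, using continuity of left composition (Proposition \ref{comr}) to control closeness. Your version is simply more explicit about the bookkeeping of open neighborhoods and about why $\tilde H(M)\subset U$, whereas the paper's argument states these steps more tersely.
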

\begin{proof}
Consider a Nash retraction $\eta:W\to N$ of $N$ in $\R^b$ (see Fact \ref{fact:retract}). 
By Proposition \ref{closext} 
there is a Nash extension $G:M\to\R^b$ of $h$ which is $\sS^{\nu-m}$ close to $F$ as a map into 
$\R^b$. Since $F(X)\subset N\subset W$ we can choose the approximation close enough so that $G(X)\subset W$, and then 
$H=\eta\circ G:M\to N$ is a well defined Nash map that extends $h$. But composition on the left is continuous (Proposition \ref{comr}) and therefore the composite $H=\eta\circ G$ is close to the composite $\eta\circ F=F$, provided $G$ is close enough to $F$.
\end{proof}

The preceding extension is the key fact to obtain an approximation result for maps into Nash sets with monomial singularities. What we want is to approximate differentiable semialgebraic maps $f:X\to Y$ of Nash sets. The first restriction is that $f$ must \em preserve irreducible components, \em that is, it must 
map each irreducible component $X_i$ of $X$ into one $Y_k$ of $Y$. Indeed, suppose that for each $k$ the component $X_i$ 
contains a point $x_k$ with $f(x_k)\notin Y_k$ and pick $\veps>0$ smaller than all distances 
$\dist(f(x_k), Y_k)$. If $f$ can be approximated by Nash maps, pick one $g:X\to Y$ that is $\veps$ 
close. Since $g$ is Nash, the inverse images $g^{-1}(Y_k)$ are Nash sets and therefore, since the irreducible $X_i$ is contained in their union, we deduce that $g(X_i)\subset Y_k$ for some $k$. 
Thus $g(x_k)\in Y_k$ and we get
$$
 \dist(f(x_k),Y_k)\le\dist(f(x_k),g(x_k))<\veps,
$$
a contradiction. 

To progress further we pause to look at Nash sets with monomial singularities from a global viewpoint. We recall that among \em local \em normal crossings one distinguishes the so-called  \em normal crossing 
divisors \em by asking their irreducible components to be non-singular \cite[1.8]{fgr}. This is a global condition that can be applied to monomial singularities. We call a Nash set with monomial singularities $X$ a \em Nash monomial crossings \em if its irreducible components $X_1,\dots,X_s$ are all non-singular; in other words, $X_1,\dots,X_s$ are Nash manifolds. 
\begin{prop}\label{glgc}
Let $X$ be a Nash monomial crossings whose irreducible components we denote $X_1,\dots,X_s$. Then all intersections $X_{i_1}\cap\cdots\cap X_{i_r}$ are Nash manifolds, and any union of them is a Nash monomial crossings.
\end{prop}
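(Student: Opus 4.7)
The plan is to handle the two claims separately and reduce each to the local monomial model supplemented by coherence.

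For the intersection claim I would fix $I=\{i_1,\dots,i_r\}$, set $Y=\bigcap_{j=1}^rX_{i_j}$, and verify smoothness of the germ $Y_x$ at each $x\in Y$. Choose a monomial chart $u:U\to\R^m$ around $x$ identifying $X\cap U$ with $\bigcup_{\lambda\in\varLambda}\{u_\lambda=0\}$, so that the germs $\{u_\lambda=0\}_x$ are the irreducible components of $X_x$. Since $X$ is coherent by Lemma \ref{lem:irreducohe}, Proposition \ref{prop:cohirred} gives that each $X_{i_j,x}$ is a union of some of those components; but $X_{i_j}$ is a Nash manifold, hence $X_{i_j,x}$ is irreducible and thus coincides with a single $\{u_{\lambda_j}=0\}_x$. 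Therefore
$$
Y_x=\bigcap_{j=1}^r\{u_{\lambda_j}=0\}_x=\{u_{\lambda_1\cup\cdots\cup\lambda_r}=0\}_x,
$$
a coordinate linear variety germ, in particular smooth. As this holds at every $x\in Y$, Fact \ref{closedinmani} applied on each locus of constant local dimension yields that $Y$ is a Nash manifold.

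For the second claim let $Z=\bigcup_\alpha Y_\alpha$, where each $Y_\alpha$ is an intersection of some $X_{i_j}$'s. Lemma \ref{lem:irreducohe} already tells us that $Z$ is a coherent Nash set with monomial singularities, so only the Nash manifold character of its irreducible components is left. If $W$ is such a component, the decomposition $W=\bigcup_\alpha(W\cap Y_\alpha)$ into finitely many closed Nash subsets, together with the irreducibility of $W$, forces $W\subset Y_\alpha$ for some $\alpha$; maximality of $W$ among irreducible subsets of $Z$ then makes $W$ an irreducible component of that $Y_\alpha$.

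It remains to show that each irreducible component of the (possibly non-equidimensional) Nash manifold $Y_\alpha$ is itself a Nash manifold. Decompose $Y_\alpha=\bigsqcup_dY_\alpha^{(d)}$ by local dimension; each $Y_\alpha^{(d)}$ is open and closed in $Y_\alpha$, hence a pure-dimensional closed Nash submanifold of $M$. An irreducible Nash subset of $Y_\alpha$ lies in a single $Y_\alpha^{(d)}$, and if such a subset $W$ has full dimension $d$, then at every $x\in W$ the germ $W_x\subset (Y_\alpha^{(d)})_x$ has dimension $d$ and therefore equals the smooth irreducible ambient germ $(Y_\alpha^{(d)})_x$. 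Hence $W$ is open and closed in $Y_\alpha^{(d)}$, so a finite disjoint union of connected Nash manifolds. Consequently every irreducible component of $Z$ is a Nash manifold, and $Z$ is a Nash monomial crossings. The main technical hurdle is precisely this last verification: irreducible components of $Y_\alpha$ could a priori be singular Nash subsets of the smooth $Y_\alpha$, and the decisive point is that pure dimensionality (supplied by Lemma \ref{lem:irreducohe}) confines each such component to a single stratum $Y_\alpha^{(d)}$ and, at full dimension there, forces it to be a union of connected components.
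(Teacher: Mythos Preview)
Your argument for the first claim mirrors the paper's: at each $x\in Y$ you identify each $X_{i_j,x}$ with a single coordinate-variety germ (the paper does this by a direct dimension argument showing no $X_{i,x}$ can sit inside another, you via Proposition~\ref{prop:cohirred} plus smoothness of $X_{i_j}$; the content is identical) and conclude that $Y_x$ is smooth. The appeal to Fact~\ref{closedinmani} is unnecessary---a closed semialgebraic set all of whose germs are smooth is already a Nash submanifold---but not wrong.

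The second claim has a gap at the very end. You correctly reduce to showing that an irreducible component $W$ of $Z$ is a Nash manifold, correctly observe that $W$ is an irreducible component of some $Y_\alpha$, and correctly confine $W$ to a single stratum $Y_\alpha^{(d)}$. But you then invoke pure dimensionality of $W$ (from Lemma~\ref{lem:irreducohe}) to force $\dim W=d$, and that inference fails: pure dimensionality only says $\dim W_x$ is constant along $W$, not that this constant equals the ambient dimension---a line in $\R^2$ is pure dimensional without being full dimensional. What actually gives $W=Y_\alpha^{(d)}$ locally is maximality, used once more: the connected component $C$ of $Y_\alpha^{(d)}$ containing $W$ is a connected Nash manifold, hence irreducible, so $W\subset C\subset Y_\alpha$ and maximality of $W$ as an irreducible component of $Y_\alpha$ yields $W=C$. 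The paper compresses this entire second half into one sentence: once each intersection $X_I$ is shown to be a Nash manifold, $Z$ is a finite union of irreducible Nash manifolds (the connected components of the $X_I$'s), and the irreducible components of such a union are among those pieces, hence Nash manifolds; Lemma~\ref{lem:irreducohe} supplies the monomial-singularity condition.
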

\begin{proof}
By Lemma \ref{lem:irreducohe} it is enough to show that every intersection $X_I=X_{i_1}\cap\cdots\cap X_{i_r}$ is an affine Nash manifold. Fix a point $x\in X_I$. The distinct irreducible components of the germ $X_x$ are the germs $X_{i,x}$ such that $x\in X_i$. Indeed, if, say, $X_{i,x}\subset X_{j,x}$, then $\dim(X_i)=\dim(X_i\cap X_j)$ and since $X_i$ is irreducible, $X_i\subset X_j$, a contradiction. Now pick any Nash coordinates $u$ of $M$ at $x$ such that $u(x)=0$ and $X_x={\bigcup}_{\lambda\in\varLambda}\{u_{\lambda}\!=0\}_x$. Observe that $u$ maps each irreducible component $X_{i,x}$ of $X_x$ onto a coordinate linear variety $L_i\subset\R^m$. In particular this happens for $i=i_1,\dots,i_r$, and we deduce 
$$
u(X_{I,x})=u\big(X_{i_1,x}\cap\cdots\cap X_{i_r,x}\big)=L_{i_1,0}\cap\cdots\cap L_{i_r,0}.
$$
Since any intersection of linear varieties is a linear variety, the germ $X_{I,x}$ is non-singular, and we are done.
\end{proof}

After this remark we can prove approximation for Nash maps between Nash set with monomial singularities. 

\begin{stepsp}{\em Proof of Theorem \em \ref{approxgc}.}\label{pfapproxgc} By hypothesis, $f$ maps each irreducible component $X_i$ of $X$ into some irreducible component of $Y$; we choose one and denote it by $Y_{k(i)}$. As usual, we classify the intersections $Y_i$'s into levels: at level $p$ we have the intersections $Y_I=\bigcap_{i\in I}Y_i$ with $\#I=p$. At bottom level $s$ we have the intersection $Y_1\cap\cdots\cap Y_s$ of all irreducible components. In general, at level $p$ we have the union $Y^{(p)}=\bigcup_{\#I=p}Y_I$ of intersections $Y_I$ at level $p$. Now, we denote $X_I$ the intersection of all $X_i$'s with $k(i)\in I$, and say this is an intersection of $X_i$'s at level $p$. Quite naturally, we denote $X^{(p)}$ the union of all these $X_I$. We keep in mind this leveled collection of $X_I$'s as an inverse image of the leveled collection of $Y_I$'s.

Observe that in this construction there may be empty intersections. Thus, we stop at the last non-empty $X^{(r)}=\bigcup_IX_I$, which implies that all $X_I$'s at this level are disjoint. 

After this organization of data, note that every $X^{(p)}$ is a Nash set with monomial singularities and the corresponding $Y^{(p)}$ is a Nash monomial crossings (Lemma \ref{lem:irreducohe} and Proposition \ref{glgc}). To complete the setting, note that $f$ restricts to an $\sS^\nu$ map $f^{(p)}:X^{(p)}\to Y^{(p)}$. We are to approximate every $f^{(p)}$ starting at the bottom level $p=r$ and climbing to level $p=1$. In each jump of level we will use Proposition \ref{closextmap} and therefore there will be a loss of differentiability. As $f=f^{(1)}$ this will complete the proof. The proof runs in several steps.

\em Step 1. \em We start at level $p=r$. Since $X^{(r)}\ne\varnothing$, we have the map $f^{(r)}:X^{(r)}\to Y^{(r)}$ and also $Y^{(r)}\ne\varnothing$. Since the union $X^{(r)}=\bigcup_IX_I$ is disjoint and $f^{(r)}$ maps $X_I$ into the corresponding $Y_I$ we only have to approximate the restriction $f^{(r)}|_{X_I}:X_I\to Y_I$ for $X_I\ne\varnothing$ (and so also $Y_I\ne\varnothing$). But $Y_I$ is one single intersection, hence an affine Nash manifold (Proposition \ref{glgc}). By Proposition \ref{aproxM} there are Nash maps $g_I:X_I\to Y_I$ arbitrarily $\sS^\nu$ close to $f^{(r)}|_{X_I}$. These $g_I$'s glue into the desired approximation $g^{(r)}$ of $f^{(r)}$. The construction itself guarantees that $g^{(r)}$ maps each intersection $X_I$ at level $r$ into the corresponding $Y_I$.

\em Step 2. \em Now suppose we are at level $p\ge r$ (hence $X^{(p)}\ne\varnothing$), and we have Nash maps $g^{(p)}:X^{(p)}\to Y^{(p)}$ arbitrarily $\sS^{\nu-mj}$ close to $f^{(p)}:X^{(p)}\to Y^{(p)}$ where $j(=r-p)$ is the number of levels already done. Moreover, we have that both $f^{(p)}$ and $g^{(p)}$ map each non-empty intersection $X_I$ at level $p$ into $Y_I$. The argument that follows only needs these non-empty intersections. In view of our discussion of topologies and irreducible 
components (Proposition \ref{topirre}), the restrictions $g_I=g^{(p)}|_{X_I}:X_I\to Y_I$ ($I$ with $p$ indices 
and $X_I\ne\varnothing$) are Nash approximations of the restrictions $f_I=f^{(p)}|_{X_I}:X_I\to Y_I$. Let us now approximate $f^{(p-1)}:X^{(p-1)}\to Y^{(p-1)}$.

\em Step 3. \em Let $X_J$ be an intersection of $X_i$'s at level $p-1$. Every intersection of $X_J$ with another $X_{J'}$ at 
the same level $p-1$ is included in some intersection $X_I$ at level $p$, hence consider the restrictions to 
$X_J\cap X^{(p)}$ of $f^{(p)}$ and of $g^{(p)}$; let $\varphi$ stand for the first and $\psi$ for the second 
one. Of course, we only care for the non-empty intersections of that type.
The intersection $X_J\cap X^{(p)}$ is a Nash set with monomial singularities, and the restrictions are maps into $Y_J$, which is a Nash 
manifold because $Y$ is a Nash monomial crossings (Proposition \ref{glgc} again). By Proposition \ref{closextmap} if $\psi$ is $\sS^\mu$ close enough to $\varphi$, it has an extension $\psi_J:X_J\to Y_J$ 
arbitrarily $\sS^{\mu-m}$ close to $f_J$ (in fact, that proposition gives an extension to $M$ that we restrict 
to $X_J$). In our case we take
$\mu=\nu-mj$, hence $\mu-m=\nu-m(j+1)$. The $\psi_J$'s glue together into a $\tt c$-Nash 
map $g^{(p-1)}:X^{(p-1)}\to Y^{(p-1)}$ arbitrarily $\sS^{\nu-m(j+1)}$ close to $f^{(p-1)}:X^{(p-1)}\to Y^{(p-1)}$. Notice that $g^{(p-1)}$ maps each non-empty intersection $X_J$ at level $p-1$ into $Y_J$. By Proposition \ref{cmap} the $\tt c$-Nash maps are Nash, also concerning topologies of 
maps, and we have 
obtained the desired approximation of $f^{(p-1)}$. 

\em Final arrangement. \em In the process above the differential class loses $m$ units at every level jump. As we start at level $r$ with an $\sS^\nu$ approximation, in the end have an $\sS^{\nu-m(r-1)}$ approximation. Now it remains to prove that $r\le \binom{n}{[n/2]}$. Thus pick any point $x\in X_I$ with $X_I\ne\varnothing$ at level $r$. Since $f(X_I)\subset Y_I$, the point $y=f(x)$ is in $Y_I=Y_{k_1}\cap\cdots\cap Y_{k_r}$, and the $r$ germs $Y_{k_1,y},\dots,Y_{k_r,y}$ are irreducible components of $Y_y$. Since $Y\subset N$ has a monomial singularity at $y$, its tangent cone is an extremal family, and by Remark \ref{sperner} it has at most $\binom{n}{[n/2]}$ elements. This gives the required bound for $r$.
\fin
\end{stepsp}
\begin{remark}\label{rmk:qglobalnormal}Note that the bound $\binom{n}{[n/2]}$ is sharp: we could have all the coordinates varieties of dimension $[n/2]$. The loss of differentiability class in the approximation obtained above has been formulated to show that it only depends on dimensions. A different obvious bound for $r$ is the number of irreducible components $Y_{k(i)}$ of $Y$ we had chosen to start with, and that number can certainly be smaller than $\binom{n}{[n/2]}$. For instance, if $Y\subset N$ is a normal crossing divisor, i.e., $\text{codim}(Y)=1$, then $r\leq n$ and so Theorem \ref{approxgc} holds for $q=m(n-1)$. However, in general it is difficult to get a better estimation of its size if we do not know a priori the dimensions of the $Y_i$'s. \fin
\end{remark}

\section{Classification of affine Nash manifolds with corners}\label{sec:corners}

In this section we use our approximation results to deduce Theorem \ref{difcor} concerning the classification of affine Nash manifolds with corners. First, recall that a map $h:Z\to T$ is an $\sS^\nu$ diffeomorphism if it is a bijection and both $h$ and $h^{-1}$ are $\sS^\nu$ maps. With respect to approximation, diffeomorphisms of affine Nash manifolds behave well.

\begin{fact}\label{diffo}\cite[II.1.7, p.\hspace{1.5pt}86]{s} Let $h:M\to N$ be an $\sS^\nu$ diffeomorphism of affine Nash manifolds. If an $\sS^\nu$ map $g:M\to N$ is $\sS^\nu$ close enough to $h$, then $g$ is also an $\sS^\nu$ diffeomorphism, and $g^{-1}$ is $\sS^\nu$ close to $h^{-1}$.
\end{fact}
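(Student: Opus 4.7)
The plan is to reduce the statement to a perturbation-of-the-identity problem and then handle local invertibility, global bijectivity, and the continuity of inversion. Setting $\phi=h^{-1}\circ g:M\to M$, Proposition \ref{comr} (continuity of composition on the left) guarantees that $\phi$ is $\sS^\nu$ close to $h^{-1}\circ h=\id_M$ whenever $g$ is $\sS^\nu$ close enough to $h$. Since $g=h\circ\phi$ and $g^{-1}=\phi^{-1}\circ h^{-1}$, it suffices to prove the following local form of the fact: \emph{if $\phi:M\to M$ is $\sS^\nu$ close enough to $\id_M$, then $\phi$ is an $\sS^\nu$ diffeomorphism of $M$ whose inverse $\phi^{-1}$ is $\sS^\nu$ close to $\id_M$.} Once this is achieved, composing $\phi^{-1}$ on the right by the fixed $\sS^\nu$ map $h^{-1}$ (which is again continuous by the argument of Proposition \ref{comr} applied to the inverse direction) yields the closeness of $g^{-1}$ to $h^{-1}$.

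For the local diffeomorphism property, I would use the very definition of the $\sS^\nu$ Whitney topology through tangent fields $\xi_1,\dots,\xi_r$ spanning $TM$. Applied to $\phi-\id_M$ (componentwise, after embedding $M\subset\R^a$), an $\sS^1$-small positive semialgebraic bound $\veps$ controls the first-order derivatives of $\phi$ uniformly; consequently the differential $d_x\phi$ differs from $d_x\id_M$ by a linear map of operator norm bounded by a multiple of $\veps(x)$ at every $x\in M$. Choosing $\veps$ small enough makes $d_x\phi$ invertible for every $x$, so the inverse function theorem provides local $\sS^\nu$ diffeomorphisms, and the quantitative form of that theorem also gives bounds on the local inverses in terms of $\veps$, which will later yield that $\phi^{-1}$ is $\sS^\nu$ close to $\id_M$.

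The main obstacle is passing from local to global bijectivity, since $M$ is not assumed compact. For injectivity I would argue by contradiction: if $\phi(x)=\phi(y)$ with $x\ne y$, I would connect $x$ and $y$ by a piecewise Nash semialgebraic arc inside the same connected component of $M$ (available thanks to semialgebraic triangulability, \ref{sfunct}) and apply a mean-value argument along this arc to produce a point where $d\phi$ would have to kill a unit tangent vector, contradicting the uniform $\veps$-control. For surjectivity, local invertibility makes $\phi(M)$ open in $M$; to see $\phi(M)$ is also closed, I would choose $\veps$ to decay at infinity using a Łojasiewicz-type positive semialgebraic weight $\veps(x)\le C/(1+\|x\|^2)^p$, which forces $\|\phi(x)-x\|\to 0$ as $\|x\|\to\infty$ and hence $\phi$ to be proper. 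Connectedness then yields $\phi(M)=M$ on each component.

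Finally, the quantitative inverse function theorem applied with the semialgebraic $\veps$ above bounds the $\sS^\nu$ distance of $\phi^{-1}$ to $\id_M$ by a function of $\veps$, tending to zero with $\veps$. Combined with $g^{-1}=\phi^{-1}\circ h^{-1}$ and the continuity of right composition by the fixed $\sS^\nu$ map $h^{-1}$, this proves that $g^{-1}$ is $\sS^\nu$ close to $h^{-1}$, finishing the proof. The delicate point throughout is that all estimates must be uniform over the non-compact $M$; this is exactly why the Whitney topology is defined with a varying positive semialgebraic function $\veps$ rather than a constant, and why Łojasiewicz-type inequalities are indispensable in the semialgebraic setting.
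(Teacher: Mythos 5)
The paper does not give its own proof of this fact --- it is quoted directly from Shiota [Sh, II.1.7] --- so there is no internal argument to compare against; one can only assess the correctness of your sketch. The opening reduction to the perturbation $\phi=h^{-1}\circ g$ of $\Id_M$ via Proposition \ref{comr}, and the use of the tangent-field description of the $\sS^\nu$ Whitney topology to make $d_x\phi$ everywhere invertible, are both sound.

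The global injectivity step, however, contains a genuine gap. The claim that $\phi(x)=\phi(y)$ with $x\ne y$ would, via ``a mean-value argument along an arc,'' yield a single point where $d\phi$ annihilates a unit tangent vector is false: a map whose differential is invertible at every point can still identify distinct points (local diffeomorphism does not imply injectivity), and the mean-value theorem for vector-valued maps gives only componentwise Rolle points or an integral estimate, never a single interior point where the full derivative degenerates. Following an arc $\gamma$ from $x$ to $y$ you can at best extract an estimate of the form $\int_0^1 \lVert (d_{\gamma(t)}\phi-\Id)\,\gamma'(t)\rVert\,dt \ge \lvert x-y\rvert$, and turning that into a contradiction requires a controlled relation between the semialgebraic Whitney bound $\veps$ along $\gamma$ and the \emph{length} of $\gamma$. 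On a non-compact $M$ without any injectivity-radius or curvature control that relation is exactly where the real work lies, and it is not addressed. Two further, smaller issues: continuity of the precomposition $\psi\mapsto\psi\circ h^{-1}$ is \emph{not} what Proposition \ref{comr} provides (that proposition is about postcomposition $f\mapsto h\circ f$), so a separate argument is needed there; and for surjectivity, the weight $\veps$ must be made to decay near the whole semialgebraic frontier $\overline{M}\setminus M$ in $\R^a$, not just at Euclidean infinity, since $M$ need not be closed in $\R^a$.
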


In particular, from this and Proposition 2.D.3 we deduce that for all $\nu\ge 1$ \em every $\sS^\nu$ diffeomorphism $f:M\to N$ can be approximated by Nash diffeomorphisms, \em hence the $\sS^\nu$ classification and the Nash classification coincide for affine Nash manifolds. Our Theorem 1.8 says this is true for manifolds \em with corners \em (for suitable $\nu$). 

For the proof of Theorem \ref{difcor} we need some results that can be of interest by themselves. 

\begin{lem}\label{exthomeo}
Let $f:T\to T'$ be a  semialgebraic local homeomorphism of locally compact semialgebraic sets which restricts to a homeomorphism from a closed semialgebraic subset $S$ of $T$ onto another $S'$ of $T'$. Then $f$ restricts to a homeomorphism from an open semialgebraic neighborhood $W$ of $S$ in $T$ onto another $W'$ of $S'$ in $T'$.
\end{lem}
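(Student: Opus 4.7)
The plan is to construct a continuous semialgebraic section $\sigma : W' \to T$ of $f$ on an open semialgebraic neighborhood $W'$ of $S'$ in $T'$, extending $(f|_S)^{-1} : S' \to S$. Once such a section is built, its image $W := \sigma(W')$ will be open in $T$ (because $\sigma$, being a section of a local homeomorphism, is itself a local homeomorphism and hence an open map), will contain $S = \sigma(S')$, and $f|_W : W \to W'$ will be a semialgebraic homeomorphism with inverse $\sigma$.

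I begin with a preliminary reduction. For each $s \in S$, the local-homeomorphism hypothesis yields an open semialgebraic neighborhood $V_s \subseteq T$ of $s$ such that $f|_{V_s} : V_s \to f(V_s)$ is a semialgebraic homeomorphism onto an open semialgebraic subset of $T'$. Exploiting the continuity of $(f|_S)^{-1} : S' \to S$, I shrink $V_s$ so that $f^{-1}(S') \cap V_s = S \cap V_s$. Consequently, the semialgebraic set $A := f^{-1}(S') \setminus S$ has closure $\overline{A}$ disjoint from $S$, and so, replacing $T$ by the open semialgebraic subset $T \setminus \overline{A}$, I may assume henceforth that $f^{-1}(S') = S$. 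In particular $f^{-1}(f(s)) = \{s\}$ for each $s \in S$, since such a preimage is contained in $f^{-1}(S') = S$ and $f|_S$ is injective.

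Next, on each $f(V_s)$ I have the local semialgebraic section $\sigma_s := (f|_{V_s})^{-1}$, and on $f(V_s) \cap S'$ every $\sigma_s$ coincides with $(f|_S)^{-1}$. I shrink further, taking $V_s = (f|_{V_s})^{-1}(B_s)$ with $B_s \subseteq T'$ a semialgebraic open ``ball'' around $f(s)$ of radius given by a positive continuous semialgebraic function $\varepsilon$ on $S$. By a \L ojasiewicz-type estimate and the continuity of $(f|_S)^{-1}$, I may choose $\varepsilon$ so small that whenever $B_s \cap B_t \neq \emptyset$ the overlap is semialgebraically connected and contains a point of $S'$ (for instance, $f(s)$ or $f(t)$ itself). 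On such an overlap, $\sigma_s$ and $\sigma_t$ are continuous, semialgebraic, and both agree with $(f|_S)^{-1}$ at a point of $S'$; by the uniqueness of continuous lifts of a connected set through a local homeomorphism, they coincide on the whole overlap. The glued map $\sigma : W' := \bigcup_{s \in S} B_s \to T$ is then the continuous semialgebraic section required.

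The main technical obstacle is this last gluing step: two semialgebraic local sections of a local homeomorphism that coincide on a subset agree only on those connected components of their common domain that meet the subset. Everything therefore hinges on the existence of a semialgebraic function $\varepsilon$ on $S$ producing a family $\{B_s\}$ whose non-empty pairwise intersections are connected and meet $S'$; this I would obtain via the \L ojasiewicz inequality applied to the semialgebraic distance functions involved, in the spirit of the tubular-neighborhood constructions already used elsewhere in the paper.
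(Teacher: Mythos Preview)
Your preliminary reduction to $f^{-1}(S')=S$ is correct and coincides with the paper's first paragraph. The divergence comes afterwards, and your proposal has a genuine gap precisely where you locate the ``main technical obstacle.''

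You want a semialgebraic $\varepsilon:S\to\R^+$ so that the balls $B_s\subset T'$ of radius $\varepsilon(s)$ satisfy: (a) each nonempty $B_s\cap B_t$ is connected, and (b) each nonempty $B_s\cap B_t$ meets $S'$. Neither is justified. For (a), metric balls in an arbitrary locally compact semialgebraic set $T'$ need not be connected, and intersections of two such balls even less so; this is not a \L ojasiewicz-type statement. For (b), your parenthetical ``for instance, $f(s)$ or $f(t)$ itself'' would require that $B_s\cap B_t\neq\varnothing$ force $|f(s)-f(t)|<\max(\varepsilon(s),\varepsilon(t))$, which already fails for equal radii in $\R^n$. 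One can try to control the variation of $\varepsilon$ (a Lipschitz condition on $\varepsilon$ along $S'$), but you have not done this, and even then (a) remains. Finally, even granting a pointwise gluing, you do not explain why the resulting $\sigma:W'\to T$, defined through an uncountable family, is \emph{semialgebraic}; a first-order description of its graph is needed and is not supplied.

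The paper avoids all of this by passing immediately to a \emph{finite} semialgebraic covering $U_1,\dots,U_r$ of $T$ with each $f|_{U_i}$ a homeomorphism onto its image (this is \cite[9.3.9]{bcr}), and then proving by induction on $r$ that two of the $U_i$'s can be merged into one after deleting a closed semialgebraic set disjoint from $S$. The merging step is an explicit manipulation with a semialgebraic shrinking of the two-set cover $\{f(U_1),f(U_2)\}$ and a direct check that the obstruction set $C'=\{x\in V_1\setminus V_2:\exists\,y\in V_2,\ f(y)=f(x)\}$ has closure missing $S$. This finite, inductive argument sidesteps both the connectedness issues and the semialgebraicity of a glued section; if you want to repair your approach, the natural move is to replace your uncountable family $\{V_s\}$ by this finite cover and then argue compatibility of the finitely many sections $\sigma_i=(f|_{U_i})^{-1}$ near $S'$, which is essentially what the paper's induction accomplishes.
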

\begin{proof}We are to prove first that we may assume $f^{-1}(S')=S$. Consider the semialgebraic set
$$
C=\{x\in T\setminus S: \text{there is some $y\in S$ such that $f(y)=f(x)$}\}.
$$
We claim that no point in $S$ is adherent to $C$. For, suppose there are sequences $\{x_k\}_k$ off $S$ converging to $x\in S$ and $\{y_k\}_k$ in $S$ such that $f(y_k)=f(x_k)$. Then
$$
{\lim}_{k} f(y_k)={\lim}_{k} f(x_k)=f(x)\in S'
$$
and since $f|_S:S\to S'$ is a homeomorphism, $\{y_k\}_k$ must converge to $x$. But $f$ is injective near $x$, hence $x_k=y_k\in S$ for $k$ large, a contradiction. Thus replacing $T$ by $T\setminus \overline{C}$ and $T'$ by its open image (since $f$ is a local homeomorphism it is an open map)  we can assume that $f^{-1}(S')=S$.

Now, since $f$ is a local homeomorphism, there is a finite open semialgebraic covering $U_1,\dots,U_r$ of $T$ such that all restrictions $f|_{U_i}:U_i\to f(U_i)$ are homeomorphisms \cite[9.3.9]{bcr}. Since $f$ is open, we can also suppose that the $f(U_i)$'s form a cover of $T'$. If $r=1$ we are done, so let us see that when $r>1$ we can modify the covering to reduce the number $r$ of open sets, which will give the result.

First of all we consider the open semialgebraic set $\varDelta=f(U_1)\cup f(U_2)$ and find a semialgebraic shrinking of the covering $\{f(U_1),f(U_2)\}$, i.e., open semialgebraic sets 
$\varDelta_1,\varDelta_2$ that still cover $\varDelta$ and such that $\varDelta\cap \overline{\varDelta}_i\subset f(U_i)$ (for instance, take a continuous semialgebraic function $\delta$ on $\varDelta$ with $\delta|_{\varDelta\setminus f(U_1)}=-1$ and $\delta|_{\varDelta\setminus f(U_2)}=1$ using \cite[2.6.9]{bcr} and set $\varDelta_1=\{\delta>-\frac{1}{2} \}$ and $\varDelta_2=\{ \delta<\frac{1}{2}\}$).

Next, consider the open semialgebraic sets 
$V_i=U_i\cap f^{-1}(\varDelta_i)$. We claim that
\begin{equation}\label{eq2}
S\cap(V_1\cup V_2)=S\cap (U_1\cup U_2).
\end{equation}

Indeed, for the relevant inclusion right to left, let us consider $x\in S\cap(U_1\cup U_2)$. Then $
f(x)\in S'\cap (f(U_1)\cup f(U_2))=S'\cap\varDelta$. We can assume that $f(x)\in\varDelta_1\subset f(U_1)$, so that $f(x)=f(y)$ for some $y\in U_1$. Thus $y\in f^{-1}(S')=S$ and, since $f$ is injective on $S$, we deduce that $x=y\in U_1\cap f^{-1}(\varDelta_1)=V_1$.

Now we show that no $x\in S\cap(U_1\cup U_2)$ is adherent to the semialgebraic set
$$
C'=\{x\in V_1\setminus V_2: \text{there is some $y\in V_2$ such that $f(y)=f(x)$}\}.
$$
Indeed, suppose there are sequences $\{x_k\}_k$ in $V_1\setminus V_2$ converging to $x\in S\cap(U_1\cup U_2)$ and $\{y_k\}_k$ in $V_2$ with $f(y_k)=f(x_k)$. Since $f(x)\in f(U_1)\cup f(U_2)=\varDelta$ we have 
$$
f(x)={\lim}_{k} f(x_k)={\lim}_{k} f(y_k)\in\varDelta\cap \overline{f(V_2)}\subset\varDelta\cap\overline{\varDelta_2}\subset f(U_2),
$$
and since $f|_{U_2}:U_2\to f(U_2)$ is a homeomorphism, there exists  $y\in U_2$ with $y=\lim_{k} y_k$. Consequently, $f(y)=f(x)\in S'$. As $f^{-1}(S')=S$, we have $y\in S$ and since $f|_{S}$ is injective, $y=x$. Thus the two sequences $\{x_k\}_k$ and $\{y_k\}_k$ converge to $x$ and $f$ being locally injective, $x_k=y_k$ for $k$ large, a contradiction. We have so proved that 
\begin{equation}\label{eq3}
S\cap(U_1\cup U_2)\cap\overline{C'}=\varnothing.
\end{equation}

Now set $W_2=(V_1\cup V_2)\setminus \overline{C'}$, which is an open semialgebraic neighborhood of 
$$
S\cap W_2=S\cap(V_1\cup V_2)=S\cap (U_1\cup U_2)
$$
the first equality by equation \eqref{eq3} and the second one by equation \eqref{eq2}.

Finally, by definition of $C'$, $f$ is injective on $W_2$, and so $f$ restricts to a homeomorphism $W_2\to f(W_2)$. All in all we can replace the two open sets $U_1$ and $U_2$ of the initial covering $U_1,\dots,U_r$ by the single open set $W_2$ so getting a new covering of $S$ by $r-1$ open sets, as desired.
\end{proof}

\begin{lem}\label{indextdpm}
Let $Q\subset\R^a$ be a Nash manifold with corners. Let $M\subset \R^a$ be a Nash envelope of $Q$ such that the Nash closure $X$ of $\partial Q$ in $M$ is a normal crossings. Let $N\subset \R^b$ be a Nash manifold and $f:Q\to N$, $g:X\to N$ be $\sS^\nu$ maps such that $g|_{\partial Q}=f|_{\partial Q}$. Then for $M$ small enough there exists an $\sS^\nu$ map $H:M\to N$ such that $H|_Q=f$ and $H|_{X}=g$.
\end{lem}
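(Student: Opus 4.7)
The plan is to reduce to the case $N=\R$, build local extensions via Corollary \ref{cor:extcorners}, and glue with an $\sS^\nu$ partition of unity.

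First, fix a Nash tubular neighborhood $W\subset\R^b$ of $N$ with a Nash retraction $\eta:W\to N$ (Fact \ref{fact:retract}). Viewing $f,g$ as $\R^b$-valued and working componentwise, it suffices to produce an $\sS^\nu$ function $\widetilde H:M\to\R^b$ such that $\widetilde H|_Q=f$ and $\widetilde H|_X=g$; then, since $f(Q)\cup g(X)\subset N\subset W$, after shrinking $M$ we have $\widetilde H(M)\subset W$, and $H=\eta\circ\widetilde H$ is the desired $N$-valued extension (note $\eta$ is the identity on $N$, so $H$ still restricts to $f$ on $Q$ and $g$ on $X$). So reduce to $N=\R$, $b=1$.

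Next, cover $M$ by a finite family of open semialgebraic sets $\{U_i\}$, each of one of the following four types:
\begin{itemize}
\item[(a)] $U_i\cap(Q\cup X)=\varnothing$;
\item[(b)] $U_i\cap X=\varnothing$ and $Q\cap U_i$ is closed in $U_i$;
\item[(c)] $U_i\cap Q=\varnothing$ and $X\cap U_i$ is closed in $U_i$;
\item[(d)] $U_i$ is a neighborhood of some $x_i\in\partial Q=Q\cap X$ equipped with a Nash diffeomorphism $\varphi_i:U_i\to\R^m$ that simultaneously sends $Q\cap U_i$ onto the standard corner $\{y_1\ge0,\dots,y_s\ge0\}$ and $X\cap U_i$ onto the normal crossings $\{y_1\cdots y_s=0\}$.
\end{itemize}
For (d), the definition of a Nash manifold with corners provides local coordinates linearizing $Q$, and since $X$ is the Nash closure of $\partial Q$ which locally equals $\{y_1\cdots y_s=0\}\cap Q$, its Nash closure in these coordinates is precisely $\{y_1\cdots y_s=0\}$ (possibly after shrinking $U_i$); the hypothesis that $X$ is a normal crossings ensures this compatibility holds globally after shrinking $M$.

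On each $U_i$ I produce an $\sS^\nu$ function $H_i:U_i\to\R$ with $H_i|_{Q\cap U_i}=f|_{Q\cap U_i}$ and $H_i|_{X\cap U_i}=g|_{X\cap U_i}$. In case (a), set $H_i\equiv 0$. In case (b), since $Q\cap U_i$ is closed semialgebraic in $U_i$, extend $f|_{Q\cap U_i}$ to an $\sS^\nu$ function on $U_i$ using a bump-function argument (see \ref{topfunc}). In case (c), do the symmetric construction with $g$. In case (d), transport $f|_{Q\cap U_i}$ and $g|_{X\cap U_i}$ to the local model via $\varphi_i$; they agree on $Q\cap X\cap U_i=\partial Q\cap U_i$ because $f|_{\partial Q}=g|_{\partial Q}$, so Corollary \ref{cor:extcorners} provides an $\sS^\nu$ extension to $\R^m$, and pulling back by $\varphi_i$ yields $H_i$.

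Finally, take a semialgebraic $\sS^\nu$ partition of unity $\{\phi_i\}$ subordinate to $\{U_i\}$ and set $\widetilde H=\sum_i\phi_iH_i$, each summand extended by zero off $U_i$. For any $x\in Q$, every $H_i$ with $\phi_i(x)\ne0$ satisfies $H_i(x)=f(x)$, hence $\widetilde H(x)=\sum_i\phi_i(x)f(x)=f(x)$; likewise $\widetilde H|_X=g$. This completes the construction modulo the $N$-valued reduction above.

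The main obstacle is checking (d): guaranteeing that around each boundary point there are Nash coordinates that simultaneously linearize $Q$ and $X$. This is what makes the hypothesis ``$X$ is a normal crossings'' essential and where one must invoke the global structure from Fact \ref{envelope} to obtain the two compatible local models used by Corollary \ref{cor:extcorners}; once this is in hand, the remaining gluing and $\R^b$-to-$N$ reduction are routine via partitions of unity and Nash tubular neighborhoods.
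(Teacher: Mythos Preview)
Your approach is essentially the same as the paper's: build local extensions via Corollary~\ref{cor:extcorners}, glue with an $\sS^\nu$ partition of unity, and use a Nash tubular retraction to land in $N$. The paper organizes the cover slightly differently (it only covers $\partial Q$ with type-(d) charts and adds $U_0=\Int(Q)$, then shrinks $M$ to the resulting neighborhood of $Q$, rather than also covering $X\setminus Q$ and $M\setminus(Q\cup X)$ as you do), but this is cosmetic.

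The one point where your argument is incomplete is precisely the one you flag: the existence of charts of type~(d). Your heuristic ``the Nash closure of $\partial Q\cap U_i$ in these coordinates is $\{y_1\cdots y_s=0\}$'' does not by itself show that $X\cap U_i$ equals this set; the Nash closure of $\partial Q$ taken in $M$ and then restricted to $U_i$ need not coincide with the Nash closure taken in $U_i$, so a priori $X_x$ could have extra hypersurface components beyond $\{u_i=0\}_x$. The paper does not argue this directly either: it simply invokes \cite[Thm.~1.11]{fgr}, which is exactly the finiteness statement that $\partial Q$ admits a finite cover by open semialgebraic $U_i$ with Nash diffeomorphisms $u:U_i\to\R^m$ satisfying simultaneously $U_i\cap Q=\{u_1\ge0,\dots,u_{k_i}\ge0\}$ and $U_i\cap X=\{u_1\cdots u_{k_i}=0\}$. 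Once you cite that result (rather than Fact~\ref{envelope}, which only gives the envelope and the condition $X\cap Q=\partial Q$), your proof is complete and matches the paper's.
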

\begin{proof}Let $m=\dim(M)=\dim(Q)$. By Theorem \cite[Thm.\hspace{2pt}1.11]{fgr} there is a finite open covering $U_1,\ldots,U_{\ell}$ of $\partial Q$ equipped with Nash diffeomorphisms $(u_{i1},\ldots,u_{im}):U_i\to \R^m$ with $U_i\cap X=\{u_{i1}\cdots u_{ik_i}=0 \}$ and $U_i\cap Q=\{u_{i1}\geq0,\ldots,u_{ik_i}\geq 0\}$ for some $1\leq k_i\leq m$. By Corollary \ref{cor:extcorners} for each $U_i$ there is an $\sS^\nu$ map $\zeta_i:U_i\to\R^b$ such that $\zeta_i=f$ on $U_i\cap Q$ and $\zeta_i=g$ on $U_i\cap X$.  Set $U_0=\text{Int}(Q)$ and $\zeta_0=f|_{U_0}$. Let $\sigma_0, \sigma_1,\ldots, \sigma_{\ell}$ be an $\sS^\nu$ partition of unity subordinated to $\{U_0, U_1,\ldots,U_{\ell}\}$. Consider $U=\bigcup_{i=0}^\ell U_i$ and the $\sS^\nu$ map $H=\sum_{i=0}^{\ell}\sigma_i\zeta_i:U\to \R^b$ which coincides with $f$ on $Q$ and with $g$ on $X$. This map $H$ solves the problem except that its image is not contained in $N$. To amend this, pick a Nash retraction $\rho:V\to N$ of $N$ in $\R^b$ and replace $M$ by $H^{-1}(V)$ and $H$ by $\rho \circ H$. 
\end{proof}

\begin{prop}\label{extdpm}
Let $Q\subset \R^a$ be a Nash manifold with divisorial corners. Let $M\subset \R^a$ be a Nash envelope of $Q$ such that the Nash closure $X$ of $\partial Q$ in $M$ is a Nash normal crossing divisor. Let $X_1,\ldots,X_s$ be the irreducible components of $X$ and let $Y$ be a Nash monomial crossings in a Nash affine manifold $N\subset \R^b$. Let $f:Q\to N$ be an $\sS^\nu$ map such that each $f(\partial Q\cap X_i)$ is contained in an irreducible component $Y_{k(i)}$ of $Y$. Then for $M$ small enough there exists an $\sS^\nu$ extension $F:M\to N$ of $f$ such that $F(X_i)\subset Y_{k(i)}$ for $i=1,\ldots,s$.
\end{prop}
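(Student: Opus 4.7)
The plan is to reduce to constructing a suitable $\sS^\nu$ map $g:X\to N$ satisfying $g|_{\partial Q}=f|_{\partial Q}$ and $g(X_i)\subset Y_{k(i)}$: once such a $g$ exists, Lemma \ref{indextdpm} applied to the pair $(f,g)$ (which agree on $\partial Q$) produces the extension $F:M\to N$ with $F|_Q=f$ and $F|_X=g$, whence $F(X_i)\subset Y_{k(i)}$ as required.

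To build $g$, I will exploit the iterated face structure of $Q$ from Section \ref{facesNashmancorners}. Shrinking $M$ using Lemma \ref{envelopefaces} and Proposition \ref{facesoffaces}, for every iterated face $D$ of $Q$ the Nash closure $Z(D)\subset M$ is a Nash manifold that is a Nash envelope of $D$ (so $Z(D)\cap Q=D$ and the Nash closure of $\partial D$ in $Z(D)$ is a normal crossing divisor whose irreducible components are the $Z(D')$ for faces $D'$ of $D$); we may shrink $M$ further so that every component of every intersection $Z(D_1)\cap Z(D_2)$ meets $Q$ and is thus itself the Nash closure of an iterated face. To each $D$ I associate the index set $I(D)\subset\{1,\dots,s\}$ characterized by $D\subset D_i$ for $i\in I(D)$ (equivalently $Z(D)\subset X_i$); setting $J(D)=\{k(i):i\in I(D)\}$, the hypothesis on $f$ gives $f(D)\subset Y_{J(D)}:=\bigcap_{j\in J(D)}Y_j$, which is a non-empty Nash manifold by Proposition \ref{glgc}.

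I will construct $\sS^\nu$ maps $g_D:Z(D)\to Y_{J(D)}$ by descending induction on the codimension $p$ of $D$ in $Q$, requiring $g_D|_D=f|_D$ and $g_D|_{Z(D')}=g_{D'}$ for every face $D'$ of $D$. At the base $p=m-m_0$, $D$ is a Nash manifold without corners; after a further shrink discarding the components of $Z(D)$ not meeting $D$, we have $Z(D)=D$ and simply set $g_D=f|_D$. For the inductive step, the already-constructed $g_{D'}$ glue to a $^{\tt c}\sS^\nu$ map on $\bigcup_{D'}Z(D')$, which is exactly the Nash closure of $\partial D$ in $Z(D)$; consistency on a pairwise overlap $Z(D_1')\cap Z(D_2')$ holds because each of its components is $Z(\tilde D)$ for an iterated face $\tilde D$ that is a face of both $D_1'$ and $D_2'$, and $g_{D_j'}|_{Z(\tilde D)}=g_{\tilde D}$ by the inductive hypothesis. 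By Proposition \ref{cmap} this glued map is genuinely $\sS^\nu$, so Lemma \ref{indextdpm} applies with $Q'=D$, $M'=Z(D)$, $N'=Y_{J(D)}$, and with $f|_D$ and the glued map as the input data, producing $g_D$ with the desired properties.

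Finally, at the top level $p=1$ the maps $g_{D_i}:X_i\to Y_{k(i)}$ are pairwise compatible on $X_i\cap X_j$ by the same overlap argument, so they glue (again via Proposition \ref{cmap}) to an $\sS^\nu$ map $g:X\to N$ with $g(X_i)\subset Y_{k(i)}$ and $g|_{\partial Q}=f|_{\partial Q}$, after which one last application of Lemma \ref{indextdpm} delivers $F$. The main technical burden lies in the bookkeeping for the iterated face structure and the successive shrinkings of $M$: one must verify that Proposition \ref{facesoffaces} indeed permits discarding every component of every $Z(D_1)\cap Z(D_2)$ disjoint from $Q$ (those form finitely many closed semialgebraic subsets of $M\setminus Q$), so that inductive compatibility of the $g_{D'}$ on overlaps becomes automatic.
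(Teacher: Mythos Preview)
Your proposal is correct and follows essentially the same route as the paper: an induction over the iterated face structure of $Q$, building compatible $\sS^\nu$ maps $g_D:Z(D)\to Y_{J(D)}$ from the bottom-dimensional faces upward, gluing via Proposition \ref{cmap} and extending via Lemma \ref{indextdpm} at each step, with the repeated shrinkings of $M$ to discard components of the $Z(D_1)\cap Z(D_2)$ disjoint from $Q$. One small point: your inductive compatibility hypothesis should be stated for \emph{all} iterated sub-faces (equivalently, all Nash closures $Z'\subset Z(D)$ of iterated faces), not just direct faces $D'$ of $D$, since the component $Z(\tilde D)$ of an overlap $Z(D_1')\cap Z(D_2')$ need not be the Nash closure of a \emph{direct} face of $D_1'$ or $D_2'$; the paper's formulation (ii) handles this directly, and your version can be fixed by the obvious chaining argument.
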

\begin{proof}We will use the notion of iterated faces introduced in \ref{facesNashmancorners}. Recall that $Q$ is the unique $m$-face and for $d<m$ a $d$-face is a face of a $(d+1)$-face. This construction ends at some $d=m_0\ge 0$, so that the $m_0$-faces are affine Nash manifolds. For $M$ small enough we can assume the properties of Proposition \ref{facesoffaces}.

Now, to prove the statement it is enough to construct, for each Nash closure $Z$ of an iterated face, an $\sS^\nu$ map $F^Z: Z\to N$ such that

(i) $F^Z|_{Z\cap Q}=f|_{Z\cap Q}$,

\hspace{-1mm}(ii) if $Z'$ is the Nash closure of an iterated face with $Z'\subset Z$ then $F^{Z'}=F^{Z}|_{Z'}$,

\hspace{-2mm}(iii) if $Z\subset X_i$ then $\text{im}(F^Z)\subset Y_{k(i)}$.

We proceed by ascending induction on the dimension $d$ of the faces. For $d=m_0$,  every $m_0$-face is an affine Nash manifold contained in $Q$ and therefore it coincides with its Nash closure, so that we can just take the restriction of $f$ to that $m_0$-face. Now, fix the Nash closure $Z$ of a $d$-face $D$, $d>m_0$, for which we know $Z\cap Q=D$. Let $Z_1,\ldots,Z_\ell$ be the irreducible components of the Nash closure of $\partial D$ in $Z$.  
Note that each $Z_j$ is the Nash closure of a face $D_j$ of $D$, which is a $(d-1)$-face. By induction, properties (i)-(iii) hold for the $Z_j$'s. Let $I=\{i: Z\subset X_i\}$ and consider $Y_I=\bigcap_{i\in I}Y_{k(i)}$, which is an affine Nash manifold by Proposition \ref{glgc}. 

Let $F^{Z_j}:Z_j\to Y_I$ be the $\sS^\nu$ maps provided by the induction hypothesis for $j=1,\ldots, \ell$, which are well defined by property (iii). By Proposition \ref{facesoffaces}(3), the connected components of $Z_j\cap Z_{j'}$  meeting $Q$ are Nash closures of iterated faces (of smaller dimensions) and therefore, by (ii),  $F^{Z_j}$ and $F^{Z_{j'}}$ coincide on those connected components. Let $C$ be the union of the connected components of all intersections $Z_j\cap Z_{j'}$ that do not intersect $Q$. Since $C$ is a closed semialgebraic set disjoint from $Q$, we can shrink $M$ to $M\setminus C$. In particular, we can assume that the map
$$
G:\bigcup_{j=1}^\ell Z_j \to Y_I \quad \text{where } G=F_{Z_j} \text{ on } Z_j
$$
is a well-defined ${}^c\sS^\nu$ function. But that union $\bigcup_j Z_j$ is a normal crossings in $Z$, hence $G$ is $\sS^\nu$ by Proposition \ref{cmap}.

Finally, by (i), $G|_{\partial D}=f|_{\partial D}$ and we can apply  Lemma \ref{indextdpm} to find an $\sS^\nu$ map $F^Z:Z\to Y_I$ such that $F^Z|_{D}=f|_{D}$ and $F^Z|_{\partial D}=G|_{\partial D}$.  This as usual after shrinking $Z$ to $Z\setminus C$ for some closed semialgebraic set $C\subset Z$, that is, after shrinking $M$ to $M\setminus C$.
\end{proof}

\begin{stepsp}{\em Proof of Theorem \em\ref{difcor}.}\label{pfdifcor} 
We can suppose $Q_1$ and $Q_2$, hence their interiors, connected. Let $h:Q_1\to Q_2$ be an $\sS^\nu$ diffeomorphism. Let $M\subset \R^a$ and $N\subset \R^b$ be Nash envelopes of $Q_1$ and $Q_2$. Let $X$ and $Y$ be the Nash closures of $\partial Q_1$ and $\partial Q_2$ in $M$ and $N$. By hypothesis both $X$ and $Y$ are Nash normal crossing divisors. Now, since $h$ is an $\sS^\nu$ diffeomorphism, we show that \em it maps the intersection of $\partial Q$ with each irreducible component of $X$ into some irreducible component of $Y$\em. For, let $X'$ be an irreducible component of $\partial X$, that is, the Nash closure of a face $D$ of $\partial Q_1$. By definition, $D$ is the topological closure of a connected component $C$ of ${\tt Smooth}(\partial Q_1)$ and we can assume that $X'\cap \partial Q_1=D$. Clearly, $h(C)$ is an $\sS^\nu$ manifold open in $\partial Q_2$ and therefore $h(C)\subset {\tt Smooth}(\partial Q_2)$ (just note that in $\partial Q_2$ being a smooth point in the Nash or $\sS^\nu$ sense coincide). Then, $h(C)$ is contained in a connected component of ${\tt Smooth}(\partial Q_2)$ and therefore $h(C)\subset Y'$ where $Y'$ is an irreducible component of $Y$. In particular, $h(\partial Q_1\cap X')=h(D)=h(\overline{C})=\overline{h(C)}\subset \overline{Y'}=Y'$, as required.

Now, by Proposition \ref{extdpm} we can assume there is an $\sS^\nu$ extension $H:M\to N$ of $h$ such that $H(X)\subset Y$. This extension is a local diffeomorphism at every point of $Q_1$, hence shrinking $M$ and $N$ we may assume that $H$ is a local diffeomorphism. But it is a homeomorphism from $Q_1$ onto $Q_2$ and hence by Lemma \ref{exthomeo} after a new shrinking $H$ is a diffeomorphism. 

After this preparation, we use Theorem \ref{approxgc} and Remark \ref{rmk:qglobalnormal} to obtain a semialgebraic $\sS^{\nu-q}$ approximation of $g=H|_X:X\to Y$ by a Nash map $f:X\to Y$, where $q=m(m-1)$. Then, by Proposition \ref{closextmap}, $f$ has a Nash extension $F:M\to N$ which is $\sS^{\nu-q-m}$ close to $H$. Note that 
$$
\nu-q-m=\nu-m(m-1)-m=\nu-m^2\ge1.
$$
Since $H$ is a diffeomorphism, a close enough approximation $F$ is also a diffeomorphism (Fact \ref{diffo}). In 
particular, $X$ is Nash equivalent to $Y$.

Let us check that $F(Q_1)=Q_2$. Since the manifolds are the closures of their interiors, it is enough to see that $F$ maps $\Int(Q_1)$ onto $\Int(Q_2)$. We know that $F(X)=f(X)=Y$, and consequently 
$F(M\setminus X)=N\setminus Y$. Since $Q_1\cap X=\partial Q_1$, the interior $\Int(Q_1)$ is a connected component of $M\setminus X$, hence $F$ maps it onto one of $N\setminus Y$. Among these latter is $\Int(Q_2)$ (also $Q_2\cap Y=\partial Q_2$), and we know that $H(\Int(Q_1))=\Int(Q_2)$. Pick any point $x\in\Int(Q_1)$. Since $\Int(Q_2)$ is open in $N$, we have
$\dist(H(x),N\setminus\Int(R))>0$ and for $F$ close to $H$ we conclude $F(x)\notin N\setminus\Int(Q_2)$. Consequently $F(\Int(Q_1))\cap\Int(Q_2)\ne\varnothing$ and so $F(\Int(Q_1))=\Int(Q_2)$. 
\fin
\end{stepsp}
\begin{remark}Let us review the preceding proof for two Nash manifolds $Q_1,Q_2$ with boundary \em without corners\em. Notice that their boundaries $\partial Q_1, \partial Q_2$ are Nash smooth hypersurfaces of the Nash envelopes.

First of all,  the boundaries are their own Nash closures, that is, $X=\partial Q_1$ and  $Y=\partial Q_2$. Hence, Proposition \ref{extdpm} is not needed.  On the other hand, Theorem \ref{approxgc} can be replaced by Fact \ref{aproxM} and this approximation does not lower the differentiability class. Thus, we have a Nash map $f:X\to Y$ which is an $\sS^{\nu}$ approximation of $g=H|_X$.

Next, the Nash extension $F$ of $f$ provided by Proposition \ref{closextmap} is an $\sS^{\nu-1}$ approximation of $H$. Indeed, this proposition comes from:

(i) Lemma \ref{locpi} and Proposition \ref{genidsol}, here needed for one coordinate hyperplane only, and therefore giving $I^\nu(X)\subset I(X)\sS^{\nu-1}(M)$, and

\hspace{-1mm}(ii) Proposition \ref{closext}, which for a Nash smooth hypersurface $X$ of $M$ gives by (i) a Nash extension that is an $\sS^{\nu-1}$ approximation.

This leads to an $\sS^{\nu-1}$ approximation $F$ of $H$, as claimed.

All in all, we conclude: \em Two $m$-dimensional affine Nash manifolds with boundary without corners which are $\sS^2$ diffeomorphic are Nash diffeomorphic.\em 
\end{remark}

\end{document}